\definecolor{darkred}{rgb}{0.45,0,0}
\renewcommand*{\backref}[1]{}
\renewcommand*{\backrefalt}[4]{({%
    \ifcase #1 Not cited.%
    \or Cited on p.~#2%
    \else Cited on pp.~#2%
    \fi%
  })}
\newsavebox{\pullback}
\sbox\pullback{%
\begin{tikzpicture}%
\draw (0,0) -- (1ex,0ex);%
\draw (1ex,0ex) -- (1ex,1ex);%
\end{tikzpicture}}
\title{Displayed Type Theory and Semi-Simplicial Types}
\author{Astra Kolomatskaia\thanks{We acknowledge the support of the Natural
Sciences and Engineering Research Council of Canada (NSERC). Cette recherche a
\'et\'e financ\'ee par le Conseil de recherches en sciences naturelles et en
g\'enie du Canada (CRSNG). [funding reference number CGSD3-545891-2020] \\ This material is based upon work supported by the National Science Foundation under Grant No. DMS-2204304.} \and
Michael Shulman\thanks{This material is based upon work supported by the Air
Force Office of Scientific Research under award number FA9550-21-1-0009.}}
\date{\vspace{-5ex}}
\definecolor{codegreen}{rgb}{0,0.6,0}
\definecolor{codegray}{rgb}{0.5,0.5,0.5}
\definecolor{codepurple}{rgb}{0.58,0,0.82}
\definecolor{backcolour}{rgb}{0.95,0.95,0.92}
\newcommand{\sectend}{\triangleleft}
\newenvironment{sectendproof}{\begingroup\begin{proof}}{\end{proof}\endgroup}
\def\subell{_{\;\!\ell}}
\def\subellp{_{\;\!\ell\cblu{'}}}
\def\sell#1{_{\;\!\ell_{\cblu{#1}}}}
\def\sellj#1#2{_{\;\!\ell_{\cblu{#1}}\join \ell_{\cblu{#2}}}}
\def\M{\mathcal{\cred{M}}}
\def\MC{\mathcal{\cblu{C}}}
\def\MB{\mathcal{\cblu{B}}}
\def\op{{^{\mathsf{\cred{op}}}}}
\def\coop{^{\mathsf{\cred{coop}}}}
\def\Cat{\cred{\mathcal{C}\!\mathit{at}}}
\def\pr{\mathsf{\cred{pr}}}
\def\prl{{\mathsf{\cred{pr}}\subell}}
\def\Set{\mathsf{\cred{Set}}}
\def\Ty{\mathsf{\cred{Ty}}}
\def\Tyl{\Ty\subell}
\def\Tm{\mathsf{\cred{Tm}}}
\def\Tml{\Tm\subell}
\def\tpr{\mathsf{\cred{tpr}}}
\def\Tel{\mathsf{\cred{Tel}}}
\def\Telover{\mathsf{\cred{Tel}}\sslash}
\def\bTel{\mathbf{\cred{Tel}}}
\def\PSub{\mathsf{\cred{PSub}}}
\def\ce{\centerdot}
\newcommand{\leibhom}[2]{\cred{\widehat{\hom}}(#1,~#2)}
\newcommand{\Ftil}{\cred{\widetilde{F}}}
\def\dCoind{\mathsf{\cred{dCoind}}}
\def\head{\mathsf{\cred{head}}}
\def\tail{\mathsf{\cred{tail}}}
\def\corec{\mathsf{\cred{corec}}}
\DeclareFontFamily{U}{min}{}
\DeclareFontShape{U}{min}{m}{n}{<-> udmj30}{}
\newcommand{\yon}{\cred{\text{\usefont{U}{min}{m}{n}\symbol{'210}}}}
\newcommand{\drpullback}[1][dr]{\ar[#1,phantom,near start,"\lrcorner"]}
\DeclareSymbolFont{bbold}{U}{bbold}{m}{n}
\DeclareSymbolFontAlphabet{\mathbbb}{bbold}
\let\oldflat\flat
\def\dia{{\cred{\lozenge}}}
\def\undia{{\cred{\blacklozenge}}}
\def\tri{{\cred{\triangle}}}
\def\untri{{\cred{\blacktriangle}}}
\def\sq{{\cred{\square}}}
\def\unsq{{\cred{\blacksquare}}}
\def\tridia{{\cred{\triangle\!\!\lozenge}}}
\def\trisq{{\cred{\triangle\!\square}}}
\def\lock{\text{\faLock}}
\def\key{\text{\faKey}}
\def\ec{{\bkt{}}}
\def\esub{{[\;]}}
\def\locks{\mathsf{locks}}
\def\mode{\;\mathsf{mode}}
\def\flat{\;\mathsf{flat}}
\def\ctx{\;\mathsf{ctx}}
\def\tel{\;\mathsf{tel}}
\def\stel{\;\mathsf{stel}}
\def\inftel{\;\mathsf{stel}^\infty}
\def\ext{\mid}
\def\ob{\;\mathsf{ob}}
\def\type{\;\mathsf{type}}
\def\level{\;\mathsf{level}}
\def\lzero{\mathsf{\cred{lzero}}}
\def\lsuc{\mathsf{\cred{lsuc}}}
\def\eval{\mathsf{\cred{app}}}
\def\Type{\mathsf{\cred{Type}}}
\def\Disc{\mathsf{\cred{Disc}}}
\def\SST{\mathsf{\cred{SST}}}
\def\ASST{\mathsf{\cred{ASST}}}
\def\El{\mathsf{\cred{El}}}
\def\Code{\mathsf{\cred{Code}}}
\def\weak{\mathord{\uparrow}}
\def\d{{^{\mathsf{\cred{d}}}}}
\def\D{{^{\mathsf{\cred{D}}}}}
\def\dm{\mathsf{\cred{dm}}}
\def\sm{\mathsf{\cred{sm}}}
\def\inf{\mathsf{\cred{in_{dm}}}}
\def\ins{\mathsf{\cred{in_{sm}}}}
\def\inc{\mathsf{\cred{in_{fl}}}}
\def\lim{\mathsf{\cred{lim}}}
\def\res{\mathsf{\cred{res}}}
\def\nat{\mathsf{\cred{act}}}
\def\u!{\mathsf{!}}
\def\ft{\mathsf{\cred{pt}}}
\def\zv{\mathsf{\cred{zv}}}
\def\join{\cred{\sqcup}}
\def\evens{\mathsf{\cred{evens}}}
\def\ev{{^\mathsf{\cred{ev}}}}
\def\od{{^\mathsf{\cred{od}}}}
\def\smsq{\cblu{\scalebox{0.82}{$\square$}}}
\def\subsq{\cred{\scalebox{0.68}{$\square$}}}
\newcommand{\plus}[1]{#1^{\cblu{+}}}
\tikzset{
  on each segment/.style={
    decorate,
    decoration={
      show path construction,
      moveto code={},
      lineto code={
        \path [#1]
        (\tikzinputsegmentfirst) -- (\tikzinputsegmentlast);
      },
      curveto code={
        \path [#1] (\tikzinputsegmentfirst)
        .. controls
        (\tikzinputsegmentsupporta) and (\tikzinputsegmentsupportb)
        ..
        (\tikzinputsegmentlast);
      },
      closepath code={
        \path [#1]
        (\tikzinputsegmentfirst) -- (\tikzinputsegmentlast);
      },
    },
  },
  mid arrow/.style={postaction={decorate,decoration={
        markings,
        mark=at position .5 with {\arrow[scale=1.5, #1]{to}}
      }}},
}
\DeclareTextSymbolDefault{\cyrabhdze }{X2}
\newcommand{\oldze}{\text{\cyrabhdze}}
\newcommand{\ze}{\cblu{\oldze}}
\def\id{\cred{1}}
\let\oldtop\top
\renewcommand{\top}{\cred{\oldtop}}
\let\oldell\ell
\renewcommand{\ell}{\cblu{\oldell}}
\let\oldalpha\alpha
\renewcommand{\alpha}{\cblu{\oldalpha}}
\let\oldmu\mu
\renewcommand{\mu}{\cblu{\oldmu}}
\let\oldnu\nu
\renewcommand{\nu}{\cblu{\oldnu}}
\let\oldrho\rho
\renewcommand{\rho}{\cred{\oldrho}}
\let\oldlambda\lambda
\renewcommand{\lambda}{\cred{\oldlambda}}
\let\oldtheta\theta
\renewcommand{\theta}{\cblu{\oldtheta}}
\let\olddelta\delta
\renewcommand{\delta}{\cblu{\olddelta}}
\let\oldDelta\Delta
\renewcommand{\Delta}{\cblu{\oldDelta}}
\let\oldsigma\sigma
\renewcommand{\sigma}{\cblu{\oldsigma}}
\let\oldtau\tau
\renewcommand{\tau}{\cblu{\oldtau}}
\let\oldzeta\zeta
\renewcommand{\zeta}{\cblu{\oldzeta}}
\let\oldphi\phi
\renewcommand{\phi}{\cblu{\oldphi}}
\let\oldvarphi\varphi
\renewcommand{\varphi}{\cblu{\oldvarphi}}
\let\oldSigma\Sigma
\renewcommand{\Sigma}{\cred{\oldSigma}}
\let\oldint\int
\renewcommand{\int}{\cred{\oldint}}
\let\oldGamma\Gamma
\renewcommand{\Gamma}{\cblu{\oldGamma}}
\let\oldPhi\Phi
\renewcommand{\Phi}{\cblu{\oldPhi}}
\let\oldPsi\Psi
\renewcommand{\Psi}{\cblu{\oldPsi}}
\let\oldOmega\Omega
\renewcommand{\Omega}{\cblu{\oldOmega}}
\let\oldomega\omega
\renewcommand{\omega}{\cblu{\oldomega}}
\let\oldgamma\gamma
\renewcommand{\gamma}{\cblu{\oldgamma}}
\let\oldxi\xi
\renewcommand{\xi}{\cblu{\oldxi}}
\let\oldepsilon\epsilon
\renewcommand{\epsilon}{\cblu{\oldepsilon}}
\let\oldpi\pi
\renewcommand{\pi}{\cred{\oldpi}}
\let\oldTheta\Theta
\renewcommand{\Theta}{\cblu{\oldTheta}}
\let\oldUpsilon\Upsilon
\renewcommand{\Upsilon}{\cblu{\oldUpsilon}}
\let\oldupsilon\upsilon
\renewcommand{\upsilon}{\cblu{\oldupsilon}}
\let\oldparallel\parallel
\renewcommand{\parallel}{~\cred{\oldparallel}~}
\def\n{\cblu{n}}
\def\h{\cblu{h}}
\def\m{\cblu{m}}
\def\k{\cblu{k}}
\def\l{\cblu{l}}
\def\bB{\cred{\mathbb{B}}}
\def\One{\cred{\mathbb{1}}}
\def\Zero{\cred{\mathbb{0}}}
\def\es{\cred{\varnothing}}
\def\p{\cblu{p}}
\def\P{\cred{P}}
\def\q{\cblu{q}}
\def\x{\cblu{x}}
\def\xp{\cblu{x'}}
\def\y{\cblu{y}}
\def\yp{\cblu{y'}}
\def\z{\cblu{z}}
\def\zv{\cred{\mathsf{zv}}}
\def\u{\cblu{u}}
\def\t{\cblu{t}}
\def\s{\cblu{s}}
\def\ct{\cblu{\mathfrak{t}}}
\def\cs{\cblu{\mathfrak{s}}}
\def\f{\cblu{f}}
\def\g{\cblu{g}}
\def\a{\cblu{a}}
\def\b{\cblu{b}}
\def\c{\cblu{c}}
\def\A{\cblu{A}}
\def\cA{\cblu{\mathcal{A}}}
\def\B{\cblu{B}}
\def\cB{\cblu{\mathcal{B}}}
\def\C{\cblu{C}}
\def\cC{\cblu{\mathcal{C}}}
\def\X{\cblu{X}}
\def\Xtil{\widetilde{X}}
\def\Y{\cblu{Y}}
\def\E{\cblu{E}}
\def\F{\cred{F}}
\def\G{\cblu{G}}
\def\Z{\cred{\mathsf{Z}}}
\def\S{\cred{\mathsf{S}}}
\def\R{\cred{\mathsf{R}}}
\def\cB{\cblu{\mathcal{B}}}
\def\I{\cblu{I}}
\def\J{\cblu{J}}
\def\H{\cblu{H}}
\def\K{\cblu{K}}
\def\pair#1#2{\cred{\boldsymbol\langle}\; #1\;\cred{\boldsymbol,}\;#2 \;\cred{\boldsymbol\rangle}}
\def\smpair#1#2{\cred{\boldsymbol\langle}\, #1\,\cred{\boldsymbol,}\,#2 \,\cred{\boldsymbol\rangle}}
\def\angle#1{\cred{\boldsymbol\langle}#1\cred{\boldsymbol\rangle}}
\def\bt{\scalebox{0.9}{$\cred{\bigtimes}$}}
\newcommand{\bbox}{\rule{0em}{1ex}\hfill\ensuremath{\triangleleft}}
\newcommand{\set}[1]{\left\{#1\right\}}
\newcommand{\bkt}[1]{\left(#1\right)}
\newcommand{\sqbkt}[1]{[~#1~]}
\newcommand{\smsqbkt}[1]{[\,#1\,]}
\def\ddia{{\cred{\stackinset{c}{0pt}{c}{0em}{\scalebox{0.5}{\ensuremath{\blacklozenge}}}{{\ensuremath{\lozenge}}}}}}
\def\unddia{{\stackinset{c}{0pt}{c}{0em}{\scalebox{0.4}{\ensuremath{\cwhite{\blacklozenge}}}}{{\ensuremath{\cred{\blacklozenge}}}}}}
\newsavebox{\@brx}
\newcommand{\llangle}[1][]{\savebox{\@brx}{\(\m@th{#1\cred{\boldsymbol\langle}}\)}%
  \mathopen{\copy\@brx\kern-0.5\wd\@brx\usebox{\@brx}}}
\newcommand{\rrangle}[1][]{\savebox{\@brx}{\(\m@th{#1\cred{\boldsymbol\rangle}}\)}%
  \mathclose{\copy\@brx\kern-0.5\wd\@brx\usebox{\@brx}}}
\newsavebox{\@bry}
\newcommand{\newllparenthesis}[1][]{\savebox{\@bry}{\(\m@th{#1\cred{\boldsymbol(}}\)}%
  \mathopen{\copy\@bry\kern-0.5\wd\@bry\usebox{\@bry}}}
\newcommand{\newrrparenthesis}[1][]{\savebox{\@bry}{\(\m@th{#1\cred{\boldsymbol)}}\)}%
  \mathclose{\copy\@bry\kern-0.5\wd\@bry\usebox{\@bry}}}
\newcommand{\dbkt}[1]{\scaleleftright{\newllparenthesis}{~\rule[-2.5pt]{0pt}{1em}#1~}{\newrrparenthesis}}
\newcommand{\smdbkt}[1]{\scaleleftright{\newllparenthesis}{\,\rule[-2.5pt]{0pt}{1em}#1\,}{\newrrparenthesis}}
\newsavebox{\@brz}
\newcommand{\newllbracket}[1][]{\savebox{\@brz}{\(\m@th{#1\cred{\boldsymbol[}}\)}%
  \mathopen{\copy\@brz\kern-0.5\wd\@brz\usebox{\@brz}}}
\newcommand{\newrrbracket}[1][]{\savebox{\@brz}{\(\m@th{#1\cred{\boldsymbol]}}\)}%
  \mathclose{\copy\@brz\kern-0.5\wd\@brz\usebox{\@brz}}}
\newcommand{\dsqbkt}[1]{\newllbracket~#1~\newrrbracket}
\newcommand{\blank}{\mathord{\hspace{1pt}\text{--}\hspace{1pt}}}
\newcommand{\pblank}{(\blank)}
\newcommand{\slfrac}[2]{\left.#1\middle/#2\right.}
\newcommand{\sub}[1]{_{\scalebox{0.55}[1.0]{$\scriptscriptstyle #1$}}}
\newcommand{\ff}{\mathfrak{f}}
\newcommand{\minus}[1]{{\mathchoice%
  {\scalebox{0.6}[1.0]{$-$}#1}%
  {\scalebox{0.6}[1.0]{$-$}#1}%
  {\scalebox{0.6}[1.0]{$\scriptstyle -$}#1}%
  {\scalebox{0.6}[1.0]{$\scriptscriptstyle-$}#1}%
}}
\newcommand{\minusone}{\cred{\minus{1}}}
\newcommand{\minustwo}{\cred{\minus{2}}}
\newcommand{\bminusone}{\cblu{\minus{1}}}
\definecolor{darkgreen}{rgb}{0,0.4,0}
\newcommand{\cblu}[1]{{\color{blue}#1}}
\newcommand{\cred}[1]{{\color{darkgreen}#1}}
\newcommand{\cprime}[1]{{\color{red}#1}}
\newcommand{\cwhite}[1]{{\color{white}#1}}
\numberwithin{equation}{section}
\newtheorem{theorem}[equation]{Theorem}
\theoremstyle{definition}
\newtheorem{idea}[equation]{Idea}
\newtheorem{remark}[equation]{Remark}
\newtheorem{definition}[equation]{Definition}
\newtheorem{lemma}[equation]{Lemma}
\newtheorem{example}[equation]{Example}
\begin{document}

\maketitle

\begin{abstract}
We introduce \emph{Displayed Type Theory (dTT)}, a multi-modal homotopy type
theory with \emph{discrete} and \emph{simplicial} modes. In the intended
semantics, the discrete mode is interpreted by a model for an arbitrary
$\infty$-topos, while the simplicial mode is interpreted by Reedy fibrant
augmented semi-simplicial diagrams in that model. This simplicial structure is
represented inside the theory by a primitive notion of \emph{display} or
\emph{dependency}, guarded by modalities, yielding a partially-internal form of
unary parametricity.

Using the display primitive, we then give a coinductive definition, at
the simplicial mode, of a type $\SST$ of semi-simplicial
types. Roughly speaking, a semi-simplicial type $\X$ consists of a type
$\X_\cred{0}$ together with, for each $\x:\X_\cred{0}$, a displayed semi-simplicial
type over $\X$.  This mimics how simplices can be generated
geometrically through repeated cones, and is made possible by the
display primitive at the simplicial mode. The discrete part of $\SST$
then yields the usual infinite indexed definition of semi-simplicial
types, both semantically and syntactically. Thus, dTT enables working
with semi-simplicial types in full semantic generality.
\end{abstract}

\setcounter{tocdepth}{2}
\tableofcontents

\section{Introduction}
\label{sec:intro}

\paragraph{Semi-simplicial types.}

\emph{Homotopy Type Theory (HoTT)}~\cite{hottbook} is a perspective on intensional dependent type theory that regards types as homotopical spaces or $\infty$-groupoids.
It has proven remarkably successful as a synthetic context in which to do homotopy theory and algebraic topology, and as an internal language for $(\infty,1)$-toposes~\cite{shulman:univinj}.
However, an enduring frustration has been its apparent inability to define general homotopy-coherent structures.
Some infinite structures can be defined in HoTT, such as globular types and spectra; but others, such as $A_\infty$-spaces or $(\infty,1)$-categories, have so far resisted all attempts at definition.
We know no convincing explanation for why they should be impossible, but the fact that all attempts appear to fail in a similar way suggests the operation of an as-yet-unarticulated principle.

Specifically, stating an \emph{`infinite coherence'} property generally seems to require an infinite structure within which to assemble the coherences, while defining such a structure itself seems to require infinite coherence, leading to an infinite regress.
This is in contrast to the situation in classical homotopy theory where the infinite structures to describe coherence, such as operads and simplicial diagrams, can themselves be defined using \emph{strict} point-set-level equalities, which are then automatically fully coherent.
It is tempting to try to mimic this in homotopy type theory using \emph{definitional} equalities in place of point-set ones, but this is difficult because definitional equality is not reified in the theory and we have limited tools for forcing it to hold.

One of the more flexible ways to enforce definitional equalities is to use type dependency, moving from a \emph{fibred} perspective to an \emph{indexed} one.
In the simplest case, this means replacing a function $\p : \E \to \B$ with a type family $\cblu{\widetilde{E}} : \B \to \Type$.
This has a corresponding projection function $\pi_\cred{1} : \Sigma~\cblu{\widetilde{E}} \to \B$, but we can suppose a point $\cblu{\widetilde{e}} : \Sigma~\cblu{\widetilde{E}}$ with a definitional equality $\pi_\cred{1}~\cblu{\widetilde{e}} \equiv \cblu{b}$ by supposing a point $\cblu{e}: \cblu{\widetilde{E}}~\cblu{b}$ and letting $\cblu{\widetilde{e}} = \bkt{\cblu{b},~\cblu{e}}$.

Thus, it is natural to try to define infinitely coherent structures that can be expressed in a purely indexed way.
The example of this sort which has attracted the most attention is that of \emph{semi-simplicial types}, because they are well-known within homotopy theory and could be used to encode many, if not all, other infinitely coherent structures.
In indexed style, a semi-simplicial type consists of type families $\cred{A_0}$, $\cred{A_1}$, $\cred{A_2}$, and so on, having types that start out as follows:
\begin{align*}
  \cred{A_0} :~&\Type \\
  \cred{A_1} :~&{\normalcolor (\cblu{x\sub{01}} :
  \cred{A_0})}~{\normalcolor (\cblu{x\sub{10}} :
  \cred{A_0})} \to \Type \\
  \cred{A_2} :~&{\normalcolor (\cblu{x\sub{001}} :
  \cred{A_0})}~{\normalcolor (\cblu{x\sub{010}} :
  \cred{A_0})}
  ~{(\cblu{\beta\sub{011}} : \cred{A_1}~\cblu{x\sub{001}~x\sub{010}})}
  ~{\normalcolor (\cblu{x\sub{100}} : \cred{A_0})}
  ~{(\cblu{\beta\sub{101}} : \cred{A_1}~\cblu{x\sub{001}~x\sub{100}})}
  ~{(\cblu{\beta\sub{110}} : \cred{A_1}~\cblu{x\sub{010}~x\sub{100}})} \to
  \Type \\
  \cred{A_3} :~&{\normalcolor (\cblu{x\sub{0001}} :
  \cred{A_0})}~{\normalcolor (\cblu{x\sub{0010}} :
  \cred{A_0})}
  ~{(\cblu{\beta\sub{0011}} : \cred{A_1}~\cblu{x\sub{0001}~x\sub{0010}})}
  ~{\normalcolor (\cblu{x\sub{0100}} : \cred{A_0})}
  ~{(\cblu{\beta\sub{0101}} : \cred{A_1}~\cblu{x\sub{0001}~x\sub{0100}})}
  ~{(\cblu{\beta\sub{0110}} : \cred{A_1}~\cblu{x\sub{0010}~x\sub{0100}})} \\
  &{(\cblu{\ff\sub{0111}} :
  \cred{A_2}~\cblu{x\sub{0001}~x\sub{0010}~\beta\sub{0011}~x\sub{0100}~\beta\sub{0101}~\beta\sub{0110}})}
  ~{\normalcolor (\cblu{x\sub{1000}} : \cred{A_0})}
  ~{(\cblu{\beta\sub{1001}} : \cred{A_1}~\cblu{x\sub{0001}~x\sub{1000}})}
  ~{(\cblu{\beta\sub{1010}} : \cred{A_1}~\cblu{x\sub{0010}~x\sub{1000}})}\\
  &{(\cblu{\ff\sub{1011}} :
  \cred{A_2}~\cblu{x\sub{0001}~x\sub{0010}~\beta\sub{0011}~x\sub{1000}~\beta\sub{1001}~\beta\sub{1010}})}
  ~{(\cblu{\beta\sub{1100}} : \cred{A_1}~\cblu{x\sub{0100}~x\sub{1000}})}
  ~{(\cblu{\ff\sub{1101}} :
  \cred{A_2}~\cblu{x\sub{0001}~x\sub{0100}~\beta\sub{0101}~x\sub{1000}~\beta\sub{1001}~\beta\sub{1100}})}\\
  &{(\cblu{\ff\sub{1110}} :
  \cred{A_2}~\cblu{x\sub{0010}~x\sub{0100}~\beta\sub{0110}~x\sub{1000}~\beta\sub{1010}~\beta\sub{1100}})} \to
  \Type
\end{align*}
First, we have a type $\cred{A_0}$ of \emph{points}. Second, we
have for every two points $\cblu{x\sub{01}},\cblu{x\sub{10}} : \cred{A_0}$, a type
$\cred{A_1}~\cblu{x\sub{01}~x\sub{10}}$ of \emph{lines} joining $\cblu{x\sub{01}}$ and $\cblu{x\sub{10}}$.
Third, for every three points $\cblu{x\sub{001}},\cblu{x\sub{010}},\cblu{x\sub{100}} : \cred{A_0}$ and
three lines $\cblu{\beta\sub{011}} : \cred{A_0}~\cblu{x\sub{001}~x\sub{010}}$, $\cblu{\beta\sub{101}} :
\cred{A_0}~\cblu{x\sub{001}~x\sub{100}}$, and $\cblu{\beta\sub{110}} : \cred{A_0}~\cblu{x\sub{010}~x\sub{100}}$, a type
$\cred{A_1}~\cblu{x\sub{001}~x\sub{010}~\beta\sub{011}~x\sub{100}~\beta\sub{101}~\beta\sub{110}}$ of \emph{triangles} with the given
boundary. The pattern continues with \emph{tetrahedra}, which we may also, more
technically, call $\cred{3}$-simplices. In general, $\cred{A}_\n$ describes the type of
$\n$-simplices indexed by their boundaries.

\begin{remark}
  The binary subscripts on variables above follow a scheme that we learned from Tim Campion, although related schemas have been rediscovered many times.
  The $\cred{2}^{\n+\cred{1}}-\cred{1}$ simplices constituting an $\n$-simplex and its boundary are labeled by the numbers from $\cred{1}$ to $\cred{2}^{\n+\cred{1}}-\cred{1}$ written in binary, where the number of $\cred{1}$s in a binary number corresponds to the dimension of the simplex, and the binary numbers corresponding to the boundary of some simplex are obtained by replacing one or more of the $\cred{1}$s in its binary number by $\cred{0}$s.
  (As we will see later, the binary number $\cred{0}$ can also be regarded as denoting the unique $(\minusone)$-simplex in the boundary of an $\n$-simplex in an \emph{augmented} semi-simplicial set.)

  We then list these simplices in the order given by these numbers.
  This ordering may seem somewhat curious, compared to the more na\"ive approach of listing all the $\cred{0}$-simplices, then all the $\cred{1}$-simplices, and so on; but it does retain the important property that all the simplices in the boundary of some simplex are listed before it (thus, for instance, it makes the semi-simplex category into an \emph{`ordered direct category'}; see \cref{sec:correctness-sst}).
  We will see later that this ordering is what arises most naturally in (co)inductive constructions of semi-simplicial sets (which was also Campion's motivation).

  In addition to subscripting simplex variables by binary numbers according to this scheme, in this paper we will use different base \emph{letters} to indicate the \emph{dimension} of each variable.
  Thus for instance $\cblu{x\sub{001}}$, $\cblu{x\sub{010}}$, and $\cblu{x\sub{100}}$ are all $\cred{0}$-simplices, while $\cblu{\beta\sub{011}}$, $\cblu{\beta\sub{101}}$, and $\cblu{\beta\sub{110}}$ are all $\cred{1}$-simplices, and $\cblu{\ff\sub{111}}$ is a $\cred{2}$-simplex.
  We will not have much occasion to denote $\cred{3}$-simplices; for $(\minusone)$-simplices we use the Cyrillic letter $\ze$ \emph{(ze)}.
  We may also use different letters from the same alphabets for simplices in different semi-simplicial types, e.g.\ $\cblu{\gamma\sub{11}} : \cred{B_1}~\cblu{y\sub{01}}~\cblu{y\sub{10}}$ and $\cblu{\delta\sub{11}} : \cred{C_1}~\cblu{z\sub{01}}~\cblu{z\sub{10}}$.
  \bbox
\end{remark}

One may think of the terms $\cred{A_0}$, $\cred{A_1}$, $\cred{A_2}$, and so on as defining the fields of an infinite record type.
Terms of this infinite record type $\SST$ are known as \emph{semi-simplicial types}.
Thus, the problem is to define a type of semi-simplicial types within homotopy type theory, continuing the above pattern.
As a correctness criterion, one would expect that when interpreted in any $(\infty,1)$-topos the type $\SST$ becomes a classifier of semi-simplicial objects.
However, even this problem is still unsolved: every attempt to internally encode the combinatorics that generate the type of $\cred{A}_\n$, as a function of $\n$, seems to lead once again to an infinite regress.

In light of this situation, an alternative approach is to formulate more expressive type theories that can solve the problem of infinitely coherent objects.
One such proposal is \emph{Two-Level Type Theory (2LTT)}~\cite{ack:2ltt}, which introduces an \emph{`outer level'} of \emph{`exo-types'} that are not homotopy-invariant.
The exo-types admit a strict exo-equality type, essentially reifying definitional equality, which can then be used analogously to classical point-set equality to define infinitely coherent structures.
And by the results of~\cite{uskuplu:thesis}, 2LTT can be interpreted in any $(\infty,1)$-topos, so its semantics are not significantly less general than ordinary HoTT, and the type $\SST$ defined in 2LTT does interpret to the correct classifier.
However, although the exo-equality is assumed to satisfy Uniqueness of Identity Proofs, to keep type-checking decidable it cannot satisfy a reflection rule making exo-equalities into definitional equalities.
Thus, it can be quite cumbersome to work with in practice.

Another proposal is \emph{Simplicial Type Theory (STT)}~\cite{rs:stt}, which changes perspective to view individual types as \emph{simplicial} spaces, with additional primitives for manipulating the simplicial structure.
One can then simply impose conditions on one of these \emph{`simplicial types'} to make it represent (for instance) an $(\infty,1)$-category.
This suggests a `synthetic' approach to higher category theory analogous to ordinary HoTT's synthetic approach to homotopy theory, which is potentially quite powerful; and the results of~\cite{rs:stt,weinberger:stab-ext} imply that it can be interpreted in the category of simplicial objects in any $(\infty,1)$-topos.
However, the strength of the synthetic approach is also its weakness: because simplicial types are postulated rather than defined, what we can do with them is limited to what is expressed by the axiomatization.

\paragraph{A coinductive definition of semi-simplicial types.}

In this paper we propose a third enhancement of homotopy type theory, called \emph{Displayed Type Theory (dTT)}, in which it is possible to define and work with semi-simplicial types (and many other things).
This type theory is inspired by the following idea for a \emph{coinductive definition} of a type $\SST$ of semi-simplicial types:
\begin{idea}
  A semi-simplicial type $\A$ consists of
  \begin{itemize}
  \item a type $\Z~\A$, and
  \item for each $\x : \Z~\A$, a semi-simplicial type $\S~\A~\x$ \emph{over $\A$}.
  \end{itemize}
\end{idea}
\noindent
It may not be at all obvious why this should be a definition of semi-simplicial types, so let us unravel it a few steps:
\begin{enumerate}\setcounter{enumi}{-1}
\item Every semi-simplicial type $\A$ has a type $\Z~\A$, whose points we call \emph{$\cred{0}$-simplices} of $\A$.
  Thus we may also write $\A_\cred{0} = \Z~\A$.
\item Every $\cred{0}$-simplex $\x : \A_\cred{0}$ gives rise to a semi-simplicial type $\S~\A~\x$ over $\A$, called the \emph{slice} of $\A$ over $\x$.
  Of course, if we don't know what a semi-simplicial type is, we can't be expected to know what one semi-simplicial type over another one is --- at least, not completely.
  But we do know that every semi-simplicial type $\A$ has an underlying type $\Z~\A$, so it stands to reason that a semi-simplicial type $\B$ \emph{over} $\A$ should in particular have an underlying type $\Z\d~\B$ \emph{over $\Z~\A$}, i.e.\ a type family $\Z\d~\B: \Z~\A \to \Type$.
  Thus, in particular for every $\x : \A_\cred{0}$ we have $\Z\d~(\S~\A~\x) : \A_\cred{0} \to \Type$, hence for every additional $\y: \A_\cred{0}$ we have a type $\Z\d~(\S~\A~\x)~\y$.
  We call this the type $\A_\cred{1}~\x~\y$ of \emph{$\cred{1}$-simplices} from $\x$ to $\y$.
\item Now we know that every semi-simplicial type $\A$ has not only an underlying type of $\cred{0}$-simplices $\A_\cred{0}$, but for every $\cblu{x\sub{01}},\cblu{x\sub{10}} : \A_\cred{0}$ a type $\A_\cred{1}~\cblu{x\sub{01}}~\cblu{x\sub{10}}$ of $\cred{1}$-simplices.
  Therefore, it stands to reason that any semi-simplicial type $\B$ \emph{over} $\A$ should also have, not only a type family $\B_\cred{0}$ over $\A_\cred{0}$, but a type family $\B_\cred{1}$ over $\A_\cred{1}$.
  Thinking of $\A_\cred{1}$ as an indexed representation of a span $\A_\cred{0} \leftarrow \int~\A_\cred{1} \to \A_\cred{0}$, we deduce that $\B_\cred{1}$ should be an indexed representation of a span morphism
  \[
    \begin{tikzcd}
      \int~\B_\cred{0} \ar[d] & \int\int~\B_\cred{1} \ar[l] \ar[r] \ar[d] & \int~\B_\cred{0} \ar[d] \\
      \A_\cred{0} & \int~\A_\cred{1} \ar[l] \ar[r] & \A_\cred{0}
    \end{tikzcd}
  \]
  and therefore we should have
  \[\B_\cred{1} : (\cblu{y\sub{01}} : \A_{\cred{0}})~(\cblu{z\sub{01}} : \B_{\cred{0}}~\cblu{y\sub{01}})~(\cblu{y\sub{10}} : \A_{\cred{0}})~(\cblu{z\sub{10}} : \B_{\cred{0}}~\cblu{y\sub{10}})~(\cblu{\gamma\sub{11}} : \A_{\cred{1}}~\cblu{y\sub{01}}~\cblu{y\sub{10}}) \to \Type.\]

  More precisely, since every $\cred{0}$-simplex $\cblu{y\sub{01}}$ of $\A$ gives rise to a semi-simplicial type $\S~\A~\cblu{y\sub{01}}$ over $\A$, any $\cred{0}$-simplex $\cblu{z\sub{01}}$ of $\B$ over $\cblu{y\sub{01}}$ should give rise to a semi-simplicial type $\S\d~\B~\cblu{y\sub{01}}~\cblu{z\sub{01}}$ over both $\B$ and $\S~\A~\cblu{y\sub{01}}$.
  But the common dependence on $\A$ should be shared, so $\S\d~\B~\cblu{y\sub{01}}~\cblu{z\sub{01}}$ should live over the cospan $\B \rightarrow \A \leftarrow \S~\A~\cblu{y\sub{01}}$:
  \[
    \begin{tikzcd}
      \B \ar[d] & \S\d~\B~\cblu{y\sub{01}}~\cblu{z\sub{01}} \ar[l] \ar[d] \\
      \A & \S~\A~\cblu{y\sub{01}} \ar[l]
    \end{tikzcd}
  \]
  Passing to $\cred{0}$-simplices, this means that $\Z\d\d\bkt{\S\d~\B~\cblu{y\sub{01}}~\cblu{z\sub{01}}}$ should be a type dependent on $\cblu{y\sub{10}}:\A_{\cred{0}}$, $\cblu{z\sub{10}} : \B_{\cred{0}}~\cblu{y\sub{10}}$, and $\cblu{y_{11}}:(\S~\A~\cblu{y\sub{01}})_\cred{0}~\cblu{y\sub{10}} \equiv \A_{\cred{1}}~\cblu{y\sub{01}}~\cblu{y\sub{10}}$.
  Thus we can define the 1-simplices of $\B$ as $\B_{\cred{1}}~\cblu{y\sub{01}}~\cblu{z\sub{01}}~\cblu{y\sub{10}}~\cblu{z\sub{10}}~\cblu{\gamma\sub{11}} \equiv \Z\d\d\bkt{\S\d~\B~\cblu{y\sub{01}}~\cblu{z\sub{01}}}~\cblu{y\sub{10}}~\cblu{z\sub{10}}~\cblu{\gamma\sub{11}}$.

  In particular, therefore, since for any $\x : \A_\cred{0}$ we have a semi-simplicial type $\S~\A~\x$ over $\A$, we have
  \[ \bkt{\S~\A~\x}_\cred{1} : (\cblu{y\sub{01}} : \A_\cred{0})~(\cblu{z\sub{01}} : \bkt{\S~\A~\x}_\cred{0}~\cblu{y\sub{01}})~(\cblu{y\sub{10}} : \A_\cred{0})~(\cblu{\cblu{z\sub{10}}} : \bkt{\S~\A~\x}_\cred{0}~\cblu{y\sub{10}})~(\cblu{\cblu{\gamma\sub{11}}} : \A_\cred{1}~\cblu{y\sub{01}}~\cblu{y\sub{10}}) \to \Type. \]
  Since $\bkt{\S~\A~\x}_\cred{0}~\cblu{y\sub{01}} \equiv \A_\cred{1}~\x~\cblu{y\sub{01}}$ by definition, this is equivalently
  \[ \bkt{\S~\A~\x}_\cred{1} : (\cblu{y\sub{01}}:\A_\cred{0})~(\cblu{z\sub{01}} : \A_\cred{1}~\x~\cblu{y\sub{01}})~(\cblu{y\sub{10}}:\A_\cred{0})~(\cblu{\cblu{z\sub{10}}} : \A_\cred{1}~\x~\cblu{y\sub{10}})~(\cblu{\cblu{\gamma\sub{11}}} : \A_\cred{1}~\cblu{y\sub{01}}~\cblu{y\sub{10}}) \to \Type. \]
  Renaming the variables as $\cblu{y\sub{ab}}\equiv \cblu{x\sub{ab0}}$, $\cblu{\gamma\sub{ab}}\equiv \cblu{\beta\sub{ab0}}$, and $\cblu{z\sub{ab}}\equiv \cblu{\beta\sub{ab1}}$, and writing $\x\equiv \cblu{x\sub{001}}$, this becomes
  \[ \bkt{\S~\A~\cblu{x\sub{001}}}_\cred{1} : (\cblu{x\sub{010}}:\A_\cred{0})~(\cblu{\beta\sub{011}} : \A_\cred{1}~\cblu{x\sub{001}}~\cblu{x\sub{010}})~(\cblu{x\sub{100}}:\A_\cred{0})~(\cblu{\cblu{\beta\sub{101}}} : \A_\cred{1}~\cblu{x\sub{001}}~\cblu{x\sub{100}})~(\cblu{\cblu{\beta\sub{110}}} : \A_\cred{1}~\cblu{x\sub{010}}~\cblu{x\sub{100}}) \to \Type. \]
  Thus, this is precisely correct to be a type of $\cred{2}$-simplices:
  \[ \A_\cred{2}~\cblu{x\sub{001}}~\cblu{x\sub{010}}~\cblu{\beta\sub{011}}~\cblu{x\sub{100}}~\cblu{\cblu{\beta\sub{101}}}~\cblu{\cblu{\beta\sub{110}}} \equiv \bkt{\S~\A~\cblu{x\sub{001}}}_\cred{1}~\cblu{x\sub{010}}~\cblu{\beta\sub{011}}~\cblu{x\sub{100}}~\cblu{\cblu{\beta\sub{101}}}~\cblu{\cblu{\beta\sub{110}}}. \]
\end{enumerate}
This pattern may be visualised as follows:
\begin{center}
\begin{tikzpicture}[scale=1.3]
\draw (0,-0.2) node[anchor=south]{$\Z~\A$};
\filldraw[color=darkgreen] (0,-1) circle (1.5pt);
\draw (0,-1) node[anchor=south]{$\cblu{x\sub{1}}$};
\draw (2,-0.282) node[anchor=south]{$\Z\d~\bkt{\S~\A~\cblu{x\sub{01}}}~\cblu{x\sub{10}}$};
\filldraw[color=darkgreen, thick] (2,-1)--(2,-2.5);
\filldraw[color=red] (2,-1) circle (1.5pt);
\draw (2,-1) node[anchor=south]{$\cprime{x\sub{01}}$};
\filldraw[color=darkgreen] (2,-2.5) circle (1.5pt);
\draw (2,-2.5) node[anchor=north]{$\cblu{x\sub{10}}$};
\draw (2,-1.75) node[anchor=east]{$\cblu{x\sub{11}}$};
\draw (6.2,-0.307) node[anchor=south]{$\Z\d\d~\bkt{\S\d~\bkt{\S~\A~\cblu{x\sub{001}}}~\cblu{x\sub{010}}~\cblu{\beta\sub{011}}}~\cblu{x\sub{100}}~\cblu{\beta\sub{101}}~\cblu{\beta\sub{110}}$};
\filldraw[color=darkgreen, thick] (5,-1)--(6.3,-1.75);
\filldraw[color=darkgreen, thick] (5,-2.5)--(6.3,-1.75);
\filldraw[color=darkgreen] (6.3,-1.75) circle (1.5pt);
\fill[pattern=north west lines, pattern color=darkgreen] (5,-1)--(6.3,-1.75)--(5,-2.5);
\filldraw[color=red, thick] (5,-1)--(5,-2.5);
\filldraw[color=red] (5,-1) circle (1.5pt);
\draw (5,-1) node[anchor=south]{$\cprime{x\sub{001}}$};
\filldraw[color=red] (5,-2.5) circle (1.5pt);
\draw (5,-2.5) node[anchor=north]{$\cprime{x\sub{010}}$};
\draw (5,-1.75) node[anchor=east]{$\cprime{\beta\sub{011}}$};
\draw (6.3,-1.75) node[anchor=west]{$\cblu{x\sub{100}}$};
\draw (5.5,-1.75) node{$\cblu{\ff\sub{111}}$};
\draw (5.6,-1.45) node[anchor=south west]{$\cblu{\beta\sub{101}}$};
\draw (5.6,-2.05) node[anchor=north west]{$\cblu{\beta\sub{110}}$};
\end{tikzpicture}
\end{center}

An alternative, and more geometrical, viewpoint, is that the $\n$-simplex is the \emph{cone} of the $(\n-\cred{1})$-simplex.
Thus, if we already know that every semi-simplicial type has a suitably indexed type of $(\n-\cred{1})$-simplices, we can conclude the same about the $n$-simplices as follows.
For every $\cred{0}$-simplex $\x$, the dependent semi-simplicial type $\S~\A~\x$ has a type of \emph{`dependent $(\n-\cred{1})$-simplices'} indexed by the type of $(\n-\cred{1})$-simplices of $\A$.
Thus, an element of this type depends on $\x$ (the cone vertex) as well as an $(\n-\cred{1})$-simplex of $\A$ (the base face, opposite the cone vertex), and its boundary consisting of dependent $\k$-simplices for $\k<\n-\cred{1}$ that form cones over all the faces of the base $(\n-\cred{1})$-simplex sharing the same vertex $\x$.
Together, these form the boundary of an $\n$-simplex.
(In the ordering of variables induced by the above presentation, the simplices in the base face are interspersed with their dependent versions: thus in the case $\n\equiv\cred{2}$ the faces $\cblu{x\sub{010}},\cblu{x\sub{100}},\cblu{\beta\sub{110}}$ form the base $\cred{1}$-simplex with $\cblu{\beta\sub{011}},\cblu{\beta\sub{101}}$ the dependent $\cred{0}$-simplices (i.e. $\cred{1}$-simplices) forming a cone over the boundary $\cblu{x\sub{010}},\cblu{x\sub{100}}$.)
Hopefully this is sufficiently convincing for now; later we will give a precise justification.

As simple and appealing as this \emph{`definition'} is, it is not meaningful in ordinary dependent type theory.
The intuitive claim is that it defines a type $\SST$ by coinduction, with $\Z$ and $\S$ as destructors.
For $\Z$ this is unproblematic (it is not even corecursive).
However, the output of $\S$ is not an element of the type $\SST$ being defined, as would be usual for a corecursive destructor of a coinductive type, but a \emph{`dependent element'}, or \emph{`displayed element'}, of $\SST$ over the input of $\S$.
If we write $\SST\d$ for this putative family of \emph{`displayed elements'}, the types of $\Z$ and $\S$ are
\begin{equation}
  \label{eq:ZS-intro}
  \begin{aligned}
    \Z &: \SST \to \Type \\
    \S &: \bkt{\X :\SST} \to \Z~\X \to \SST\d~\X.
  \end{aligned}
\end{equation}
We would like to regard this as a sort of \emph{`higher coinductive type'}.
Just as a higher \emph{inductive} type can have constructors involving not just elements of the type being defined but also \emph{paths} therein, here we have a putative coinductive type whose destructors involve not just elements of the type being defined but also \emph{`displayed elements'} thereof.
Thus, to make sense of this we need a type theory with a primitive operation $\pblank\d$ associating to a type its family of \emph{`displayed elements'}.
As it turns out, the precise notion of $\pblank\d$ that we require is a variant of \emph{unary internal parametricity}.

\paragraph{External and internal parametricity.}

In general, by `parametricity' we mean a statement that every type (perhaps subject to contextual restrictions; see below) is equipped with a relation (of some arity), and every function (subject to the same restrictions) preserves those relations.
The original form of parametricity, such as in~\cite{wadler:free-theorems}, is a meta-theoretic statement \emph{about} type theory, in which the relations are meta-theoretic, and the contextual restriction is that it applies only to \emph{closed} types and terms (those defined in the empty context).
That is, in this \emph{`external'} parametricity, every closed type is given a relation on its closed terms, which is preserved by every closed function.
For instance, in the unary case, given a closed function $\ec \vdash \f : \A \to \B$, if the $\A$-relation holds of a closed term $\ec\vdash \a:\A$, then the $\B$-relation holds of the closed term $\ec\vdash \f~\a:\B$.
Indeed, the fact that $f$ satisfies this condition is exactly the statement that the $(\A\to\B)$-relation holds of it, and is thus a special case of the `fundamental theorem of logical relations' that \emph{every} closed term satisfies the relation on its type.
Semantically, external parametricity is obtained by interpreting type theory in a `gluing' or `relational' model.

By contrast, in type theories with fully \emph{internal} parametricity such as~\cite{bm:parametricity,bcm:pshf-parametric,moulin2016internalizing}, there is no contextual restriction, and the relations are internal (i.e.\ type families).
In the unary case, this means for any type in any context, say $\Gamma \vdash \A \type$, there is a type family that we will denote $\Gamma,~\x:\A \vdash \A\d~\x \type$, and every function $\Gamma \vdash \f : \A \to \B$ in any context lifts to a function $\Gamma \vdash \f\d : (\x:\A)(\x\cblu{'}:\A\d~\x) \to \B\d~(\f~\x)$ between these type families.
As in the external case, there is a formula for the relation of a function-type:
\begin{equation}
  \label{eq:d-pi-intro}
  \bkt{\bkt{\a : \A} \to \B~\sqbkt{\a}}\d~\f \equiv \bkt{\a : \A} \bkt{\a\cblu{'} : \A\d~\a} \to \B\d~\sqbkt{\a~, \a\cblu{'}}~\bkt{\f~\a}
\end{equation}
which says that so-called \emph{`computability witnesses'} for $\f$ take computability witnesses of $\a$ to computability witnesses of $\f~\a$.
Thus the statement that $\f$ preserves computability witnesses is equivalent to saying it lifts to an element $\f\d$ of this type, and is a special case of a general rule that \emph{every} term $\Gamma \vdash \a:\A$ lifts to a computability witness $\Gamma \vdash \a\d : \A\d~\a$.\footnote{The punning of notation is intentional, and indeed consistent, as we will see: the type $\pblank\d$ coincides with the term $\pblank\d$ applied to elements of the universe.}

Such an internalization of parametricity introduces the new possibility of \emph{iterating} it, leading to $\A\d\d$, $\A\d\d\d$, and so on.
Semantically, this means that each type must be interpreted by a \emph{cubical type} (or set), where the arity of the relations determines the number of `boundary points' on each side of the cubes, and the dependencies of each iterated relation on the previous ones supplies the \emph{faces} of a cube.
Our primary interest is in unary parametricity, where $\A\d$ depends only on one copy of $\A$; in this case the semantics involves \emph{`unary cubes'} where each edge has only one vertex.\footnote{Geometrically, these can be thought of as powers of a half-open interval $[0,1)^n$, or closed octants $[0,\infty)^n$ in $n$-dimensional Euclidean space.}

The rule lifting any $\a$ to $\a\d$ then implies that these cubical types must have \emph{degeneracies}, taking any cube to a higher-dimensional one with some boundaries trivial.
More surprisingly, it seems that to have good computational behavior and semantic models, the cubical types must also include \emph{symmetries} (transpositions): from $\cblu{d} \equiv \cred{\lambda}~\x.~\x\d : \bkt{\x : \X} \to \X\d~\x$, we can either directly obtain the type of $\cblu{d}\d$ as $\bkt{\x : \X} \bkt{\x\cblu{'} : \X\d~\x} \to \X\d\d~\x~\x\cblu{'}~\x\d$ or first perform the computation $\cblu{d}\d \equiv \cblu{\lambda} ~\x~\x\cblu{'}.~\x\cblu{'}\d$ and obtain its type as $\bkt{\x : \X} \bkt{\x\cblu{'} : \X\d~\x} \to \X\d\d~\x~\x\d~\x\cblu{'}$, and it these must be related by a symmetry operation.

The general advantages of internal parametricity over external are clear: we can reason about computability witnesses while staying within a single type theory, using a single proof assistant.
Moreover, internal parametricity allows us to at least try to make sense of a type $\SST$ with the destructors~\eqref{eq:ZS-intro}.
However, fully internal parametricity is not conservative over ordinary type theory: it is highly nonclassical, incompatible with axioms such as the law of excluded middle; and its semantics is not as general as we would like.
We would like our type theory to be interpretable in any $(\infty,1)$-topos (generalizing~\cite{shulman:univinj}), in such a way that our type $\SST$ is interpreted by a category-theoretic \emph{`classifier'} of semi-simplicial objects; but as we have just observed, the semantics of internal parametricity seems to live only in a category of cubical objects.
Now the category of cubical objects in an $(\infty,1)$-topos is again an $(\infty,1)$-topos, so (using~\cite{shulman:univinj}) we can expect to interpret an internally parametric type theory in the latter; but the connection of this interpretation to the \emph{original} $(\infty,1)$-topos is not clear.

\paragraph{Displayed Type Theory.}

Our solution is to use a \emph{less internally} parametric type theory.
We can think of internal parametricity as arising in three stages.
The first stage is an \emph{external} parametricity model, where types are interpreted by pairs $\bkt{\cblu{A_0}:\Type, ~\cblu{A_1} :\cblu{A_0} \to \Type}$.
In this case $\pblank\d$ maps one model to a \emph{different} model, interpreting an ordinary type by a pair of types; this yields parametricity results as metatheorems.
In the second stage, to make $\A\d$ live in the same model as $\A$, we iterate this construction infinitely many times; now types are interpreted by \emph{semi-cubical types}, with faces but not degeneracies.
Finally, in the third stage we add a \emph{`degeneracy'} operation $\bkt{\x:\A} \to \A\d~\x$ making every \emph{term} parametric.
This allows proving parametricity theorems \emph{inside} the type theory, and semantically moves us from semi-cubical types to cubical ones.

The solution to our problem is to stop after the \emph{second} stage.
This is advantageous semantically because semi-cubical sets are presheaves on a \emph{direct} category (i.e.\ covariant diagrams on an \emph{inverse} category), and in this case the model construction is much more concrete.
Specifically, in \cite{shulman15} it was shown that from any model of univalent dependent type theory inside of a type theoretic fibration category, one may form a derived model of Reedy fibrant presheaves on any direct category, with the type formers in the presheaf model constructed inductively in terms of those in the original model.
In particular, in degree $\cred{0}$ all the type-formers act exactly as they do in the original model.
Thus, semantically we can be sure that all our constructions, including $\SST$, specialise to something meaningful in an arbitrary $(\infty,1)$-topos.

dTT is a syntax corresponding to this model, which is likewise intermediate between external parametricity and fully internal parametricity: its parametricity primitive $\pblank\d$ has a contextual restriction that is weaker than the \emph{`only closed terms'} requirement of external parametricity, but stronger than the \emph{`any context goes'} laxity of internal parametricity.
We start by observing that in either cubical \emph{or} semi-cubical sets, the semantically fundamental parametricity operation actually changes the context: given $\cblu{\Gamma} \vdash \t : \A$, one has $\cblu{\Gamma}\D \vdash \t\d : \A\d~\t$, where $\cblu{\Gamma}\D$ augments $\Gamma$ by computability witnesses of all its variables.
For cubical sets with degeneracies, we can deduce a version of $\pblank\d$ that doesn't change the context by substituting along a degeneracy map $\Gamma \to \Gamma\D$ (e.g.\ this is the isomorphism between the \emph{`global'} and \emph{`local'} models of~\cite{acks:iparam-noint}).
But for semi-cubical sets this is impossible, so we have to bite the bullet and deal with context-modifying operations.

The notion of a non-binding operation that changes contexts is familiar from the realm of modal logic, where, to first approximation, a proof of necessity of some proposition, i.e. of $\sq~\A$, may only use necessary assumptions.
Modalities in dependent type theory have previously been used to internalise meta-theoretic operations that don't make sense in arbitrary contexts, such as the right adjoint to a $\Pi$-type in~\cite{lops:internal-universes}, and we use them similarly here.
Specifically, in dTT we have a modality $\trisq$ that partially internalises the notion of \emph{`closed term'} appearing in external parametricity, and which restricts the domain of $\pblank\d$.
Thus the only analogue of the above $\cblu{d} \equiv \cred{\lambda}~\x.~\x\d : \bkt{\x : \X} \to \X\d~\x$ in dTT is $\cblu{\widetilde{d}} \equiv \cred{\lambda}~\x.~\x\d : \bkt{\x :^\trisq \X} \to \X\d~\x$.
In particular, modal variables are protected from alteration \emph{by} $\pblank\d$, so that we have $\cblu{\widetilde{d}}\d \equiv \cred{\lambda}~\x.~\x\d\d : \bkt{\x :^\trisq \X} \to \X\d\d~\x~\x\d~\x\d$, thereby avoiding the need for symmetry.

In fact, to emphasise further that dTT retains general semantics over an arbitrary $(\infty,1)$-topos, we will use a multimodal type theory~\cite{gknb:mtt,gckgb:fitchtt} with two modes, one for the original topos and the other for the topos of semi-cubical sets.
These modes are related by modalities $\tri$ (the constant semi-cubical type), $\dia$ (the $\cred{0}$-cubes of a cubical type), and $\sq$ (the limit of a cubical type), and $\trisq = \tri\circ\sq$ is a composite endo-modality.
Only $\trisq$ is necessary to formulate display, but the other modalities are also useful to have around: in particular, $\dia$ internalises the process of passing from the model in semi-cubical types to the original model in $\MC$.
For instance, $\dia~\SST$ is what corresponds semantically to the classifier of semi-simplicial objects in $\MC$.

Furthermore, display itself may be thought of as a modality, albeit one that is indexed over the original type.
Display falls into into the new and yet underdeveloped framework of \emph{indexed modalities}, such as the path types of cubical type theory (treated modally in~\cite{gckgb:fitchtt}) and the identity types of the forthcoming \emph{Higher Observational Type Theory (HOTT)}~\cite{HOTT,acks:iparam-noint}.
Moreover, analogously to those cases, we could formulate display either as an inert type-former defined by abstraction over an \emph{`interval'} (like path types in cubical type theory), or as an operation that computes on most other canonical type-formers (like identity types in HOTT).
In this paper we make the latter choice, so that rules like~\cref{eq:d-pi-intro} are actually definitional equalities.
Formulating such rules computationally is actually easier for dTT than for HOTT, due mainly to the lack of symmetry (although there is a tradeoff, since the presence of modalities is an extra complicating factor).

Until now we have been talking about semi-cubical types to make the connection with parametricity clear, but in the unary case there is an intriging coincidence: the unary semi-cube category is isomorphic to the \emph{augmented} semi-\emph{simplex} category,\footnote{To see this geometrically, note that the $(\n+\cred{1})$-dimensional octant $[\cred{0},\cred{\infty})^{\n+\cred{1}}$ contains a standard face-preserving embedding of the $\n$-simplex, $\cred{\oldDelta}^\n = \{ (\x_\cred{0},\dots,\x_\n) \in [\cred{0},\cred{\infty})^{\n+\cred{1}} \mid \x_\cred{0}+\cdots +\x_\n = \cred{1}\}$, including the augmentation case $\cred{\oldDelta}^\minusone = \cred{\emptyset}$.} with a dimension shift: the $\n$-cube corresponds to the $(\n-\cred{1})$-simplex.
For this reason we refer to the two modes in our theory as the \emph{discrete mode} $\dm$ and the \emph{simplicial mode} $\sm$.
Thus, dTT can actually internalise semi-simplicial types in \emph{two} ways: as the coinductive type $\SST$ mentioned above, and as the \emph{universe} of types at the simplicial mode.
The latter suggests that dTT could also be used similarly to simplicial type theory, with types at the simplicial mode treated as synthetic (augmented semi-) simplicial types.

(Note that an augmented semi-simplicial type can be viewed as a family of ordinary semi-simplicial sets indexed by the type of $(\minusone)$-simplices.
Thus, our observation that augmented semi-simplicial types support a better internal language than ordinary ones is analogous to the observation of~\cite{rfl:synthetic-spectra} that \emph{parametrised} spectra are likewise preferable to unparametrised ones.)

\paragraph{Displayed structures.}

Our terminology \emph{display} for the operation $\d$ is inspired by the fact that when applied to record types whose elements are algebraic structures, it produces \emph{displayed structures} of the corresponding sort.
Here a \emph{`displayed structure'} over a structure $\B$ is a structure $\A$ of the same kind with a structure map $\A \to \B$, but reformulated in terms of the corresponding family of fibres $\B \to \Type$.
Working with displayed structures rather than morphisms is a technique for enforcing definitional equalities on images in $\B$.

The most common displayed structure is a \emph{displayed category}; here the terminology was introduced by~\cite{al:dispcat}.
This arises from the record type of categories, defined in the usual dependently typed way (where we omit the axioms for concision):
\begin{lstlisting}
(*\textbf{record}*) (*$\cred{\mathsf{Cat}}$*) : (*$\Type$*) (*\textbf{where}*)
  (*$\cred{\mathsf{ob}}$*) : (*$\Type$*)
  (*$\cred{\mathsf{hom}}$*) : (*$\cred{\mathsf{ob}}$*) (*$\to$*) (*$\cred{\mathsf{ob}}$*) (*$\to$*) (*$\Type$*)
  (*$\cred{\mathsf{id}}$*) : ((*$\x$*) : (*$\cred{\mathsf{ob}}$*)) (*$\to$*) (*$\cred{\mathsf{hom}}$*) (*$\x$*) (*$\x$*)
  (*$\cred{\mathsf{comp}}$*) : {(*$\x$*) (*$\y$*) (*$\z$*) : (*$\cred{\mathsf{ob}}$*)} (*$\to$*) (*$\cred{\mathsf{hom}}$*) (*$\y$*) (*$\z$*) (*$\to$*) (*$\cred{\mathsf{hom}}$*) (*$\x$*) (*$\y$*) (*$\to$*) (*$\cred{\mathsf{hom}}$*) (*$\x$*) (*$\z$*)
  (*$\cred{\textbf{\ldots}}$*)
\end{lstlisting}
We do not discuss record types (including $\Sigma$-types) in this paper, but the extension of $\d$ to them produces another record type whose fields have $\d$ applied to them.
For instance, from a $\Sigma$-type:\\
\begin{minipage}{\textwidth}
\begin{lstlisting}
(*\textbf{record}*) (*$\Sigma$*) ((*$\A$*) : (*$\Type$*)) ((*$\B$*) : (*$\A$*) (*$\to$*) (*$\Type$*)) : (*$\Type$*) (*\textbf{where}*)
  (*$\cred{\mathsf{fst}}$*) : (*$\A$*)
  (*$\cred{\mathsf{snd}}$*) : (*$\B$*) (*$\cred{\mathsf{fst}}$*)
\end{lstlisting}
\end{minipage}
We obtain:
\begin{lstlisting}
(*\textbf{record}*) (*$\Sigma\d$*) ((*$\A$*) : (*$\Type$*)) ((*$\A\cblu{'}$*) : (*$\Type\d$*) (*$\A$*)) ((*$\B$*) : (*$\A$*) (*$\to$*) (*$\Type$*))
         ((*$\B\cblu{'}$*) : ((*$\A$*) (*$\to$*) (*$\Type$*))(*$\d$*) (*$\B$*)) : (*$\Type\d$*) ((*$\Sigma$*) (*$\A$*) (*$\B$*)) (*\textbf{where}*)
  (*$\cred{\mathsf{fst}}\d$*) : (*$\A\d$*) (*$\cred{\mathsf{fst}}$*)
  (*$\cred{\mathsf{snd}}\d$*) : ((*$\B$*) (*$\cred{\mathsf{fst}}$*))(*$\d$*) (*$\cred{\mathsf{snd}}$*)
\end{lstlisting}
Applying~\eqref{eq:d-pi-intro} and the similar rule $\Type\d~\A \equiv \A \to \Type$, this becomes:
\begin{lstlisting}
(*\textbf{record}*) (*$\Sigma\d$*) ((*$\A$*) : (*$\Type$*)) ((*$\A\cblu{'}$*) : (*$\A$*) (*$\to$*) (*$\Type$*)) ((*$\B$*) : (*$\A$*) (*$\to$*) (*$\Type$*))
         ((*$\B\cblu{'}$*) : ((*$\x$*) : (*$\A$*)) (*$\to$*) (*$\A\cblu{'}$*) (*$\x$*) (*$\to$*) (*$\B$*) (*$\x$*) (*$\to$*) (*$\Type$*)) ((*$\s$*) : (*$\Sigma$*) (*$\A$*) (*$\B$*)) : (*$\Type$*) (*\textbf{where}*)
  (*$\cred{\mathsf{fst}}\d$*) : (*$\A\cblu{'}$*) ((*$\cred{\mathsf{fst}}$*) (*$\s$*))
  (*$\cred{\mathsf{snd}}\d$*) : (*$\B\cblu{'}$*) ((*$\cred{\mathsf{fst}}$*) (*$\s$*)) (*$\cred{\mathsf{fst}}\d$*) ((*$\cred{\mathsf{snd}}$*) (*$\s$*))
\end{lstlisting}
In a similar way, the above definition of the record type of categories yields:
\begin{lstlisting}
(*\textbf{record}*) (*$\cred{\mathsf{Cat}}\d$*) ((*$\MC$*) : (*$\cred{\mathsf{Cat}}$*)) : (*$\Type$*) (*\textbf{where}*)
  (*$\cred{\mathsf{ob}}\d$*) : (*$\cred{\mathsf{ob}}$*) (*$\MC$*) (*$\to$*) (*$\Type$*)
  (*$\cred{\mathsf{hom}}\d$*) : ((*$\x$*) : (*$\cred{\mathsf{ob}}$*) (*$\MC$*)) ((*$\x\cblu{'}$*) : (*$\cred{\mathsf{ob}}\d$*) (*$\x$*)) ((*$\y$*) : (*$\cred{\mathsf{ob}}$*) (*$\MC$*)) ((*$\y\cblu{'}$*) : (*$\cred{\mathsf{ob}}\d$*) (*$\y$*)) (*$\to$*) (*$\cred{\mathsf{hom}}$*) (*$\MC$*) (*$\x$*) (*$\y$*) (*$\to$*) (*$\Type$*)
  (*$\cred{\mathsf{id}}\d$*) : ((*$\x$*) : (*$\cred{\mathsf{ob}}$*) (*$\MC$*)) ((*$\x\cblu{'}$*) : (*$\cred{\mathsf{ob}}\d$*) (*$\x$*)) (*$\to$*) (*$\cred{\mathsf{hom}}\d$*) (*$\x$*) (*$\x\cblu{'}$*) (*$\x$*) (*$\x\cblu{'}$*) ((*$\cred{\mathsf{id}}$*) (*$\MC$*) (*$\x$*))
  (*$\cred{\mathsf{comp}}\d$*) : {(*$\x$*) : (*$\cred{\mathsf{ob}}$*) (*$\MC$*)} {(*$\x\cblu{'}$*) : (*$\cred{\mathsf{ob}}\d$*) (*$\x$*)} {(*$\y$*) : (*$\cred{\mathsf{ob}}$*) (*$\MC$*)} {(*$\y\cblu{'}$*) : (*$\cred{\mathsf{ob}}\d$*) (*$\y$*)} {(*$\z$*) : (*$\cred{\mathsf{ob}}$*) (*$\MC$*)}
    {(*$\z\cblu{'}$*) : (*$\cred{\mathsf{ob}}\d$*) (*$\z$*)} ((*$\alpha$*) : (*$\cred{\mathsf{hom}}$*) (*$\MC$*) (*$\y$*) (*$\z$*)) ((*$\alpha\cblu{'}$*) : (*$\cred{\mathsf{hom}}\d$*) (*$\y$*) (*$\y\cblu{'}$*) (*$\z$*) (*$\z\cblu{'}$*) (*$\alpha$*)) ((*$\cblu{\beta}$*) : (*$\cred{\mathsf{hom}}$*) (*$\MC$*) (*$\x$*) (*$\y$*))
    ((*$\cblu{\beta'}$*) : (*$\cred{\mathsf{hom}}\d$*) (*$\x$*) (*$\x\cblu{'}$*) (*$\y$*) (*$\y\cblu{'}$*) (*$\cblu{\beta}$*)) (*$\to$*) (*$\cred{\mathsf{hom}}\d$*) (*$\x$*) (*$\x\cblu{'}$*) (*$\z$*) (*$\z\cblu{'}$*) ((*$\cred{\mathsf{comp}}$*) (*$\MC$*) (*$\alpha$*) (*$\cblu{\beta}$*))
  (*$\cred{\textbf{\ldots}}$*)
\end{lstlisting}
Thus a displayed category over $\MC$ has a type of objects indexed by those of $\MC$, types of morphisms indexed by pairs of objects-over-objects and by a morphism of $\MC$, identity and composition operations on displayed objects and morphisms that lie strictly over those in $\MC$, and similarly for the axioms.

As observed in~\cite{al:dispcat}, one use of displayed categories is to state definitions such as Grothendieck fibrations in terms of the existence of cartesian liftings strictly over any morphism in $\MC$, without internalizing definitional equality.
Another is to construct categories and prove their properties in a modular way out of dependent pieces, just as we do for types using $\Sigma$-types and more general records.
It is \emph{`well-known'} by now that any sort of algebro-categorical structure has a \emph{`displayed version'} --- for instance, displayed bicategories were used in~\cite{afmvv:bicat-unimath} to prove univalence modularly --- but to our knowledge this has not previously been formalised.
Our \emph{Displayed Type Theory (dTT)} automatically generates the displayed version of any notion definable in type theory; hence the name.

\paragraph{Outline of the paper.}

The rest of this paper has three parts.
In \cref{sec:syntax} we describe the general syntax of dTT, including the modalities, the operation of display (in various different forms), and how they compute.
We do not prove any canonicity or normalization results, but we conjecture that they hold.

In \cref{sec:ssts} we extend the syntax of dTT to define a type $\SST$ of semi-simplicial types.
In fact, we obtain this as a special case of a general notion of \emph{`displayed coinductive type'}, which is easier to work with abstractly, and also includes other important examples such as the type of semi-simplicial morphisms between two semi-simplicial types.%
\footnote{One might hope that it would also include the displayed versions $\SST\d$, $\SST\d\d$, etc., but this does not seem to be the case unless we add symmetry to our theory.}
Then we explore a few applications, to make the point that this coinductive notion of semi-simplicial type is useful and practical.

Finally, in \cref{sec:semantics} we consider the semantics of dTT.
In particular, we will show that from any model $\MC$ of ordinary dependent type theory with countable inverse limits (roughly as considered in~\cite{kraus:proptrunc}), we can construct a model of dTT whose discrete mode is $\MC$ and whose simplicial mode is the category of Reedy fibrant augmented semi-simplicial diagrams in $\MC$, and that this model supports displayed coinductive types including a type $\SST$ of semi-simplicial types.
The underlying ordinary type theory of this model at the simplicial mode is an instance of the inverse diagram models of~\cite{shulman15,kl:hoinvdia}, but we construct it more explicitly by hand so as to be able to verify the needed formulas for the additional operations of dTT.

Thus, although dTT is (apparently) not conservative over ordinary dependent type theory, we can isolate exactly a kind of extra infinitary structure that yields a well-behaved theory for working with semi-simplicial types, which precisely includes the original model at one mode.
In particular, by~\cite{shulman:univinj} any $(\infty,1)$-topos can be presented by a type-theoretic model topos, which is a model of type theory with countable inverse limits, and thus also yields a model of dTT.
However, an object with the \emph{internal} universal property of $\SST$ expressable in dTT has the potential to exist even in models that lack such infinitary limits, which may have implications for a notion of elementary $(\infty,1)$-topos.

\paragraph{Acknowledgements.} Both authors would like to thank Tim Campion for bringing the binary ordering to their attention, via a talk given by Emily Riehl on her joint work with Tim. Astra would also like to thank Emily for for many discussions in the course of weekly advising meetings. Further, Astra is grateful to Steve Awodey for hosting her during the months of March and April 2023 at Carnegie Mellon University, and Emily for hosting her during the Fall 2023 semester at Johns Hopkins Univeristy. Many of the initial ideas regarding the semantics of dTT were developed during the CMU visit, and our construction of the simplicial model was worked out during the JHU visit. Mike is grateful to Thorsten Altenkirch and Ambrus Kaposi for many useful conversations while developing Higher Observational Type Theory that have also informed this work.

\setcounter{secnumdepth}{5}

\clearpage

\section{Syntax}
\label{sec:syntax}

As suggested in the introduction, dTT is based on a modal type theory roughly in the style of~\cite{gknb:mtt,gckgb:fitchtt}, with two modes, one for discrete types and one for (augmented semi-)simplicial types.
It then adds a notion of \emph{`display'} at the simplicial mode that partially internalises unary parametricity.

In addition, the general form of display, which is needed to state the computation rules for simple display, incorporates dependence on an arbitrary telescope (i.e.\ context extension).
Thus, we also have to include a calculus of telescopes in the theory.\footnote{It would probably be possible to collapse this to dependence on a single type, using $\Sigma$-types instead of telescope extension, as in~\cite{acks:iparam-noint}, but this would be unaesthetic and less practical for implementation.}
The fully general calculus of telescopes and display involves a lot of operations, but in syntax and in most models they are all definable from a smaller number of primitives.

This section is organised as follows.
In \cref{subsec:mode-theory} we define the mode theory, which is a 2-category describing the structure of the modal operators.
Then in \cref{subsec:modal-type-theory} we give the rules for the underlying modal type theory, with modalities but not display.

In \cref{sec:tel} we introduce the most basic notions of the telescope calculus: telescopes, partial substitutions (elements of telescopes), and types and terms dependent on a specified telescope (which we call \emph{`meta-abstractions'}).
These basic notions suffice to give the rules for display, defined mutually with a similar but non-indexed operation on telescopes that we call d\'ecalage, in \cref{sec:dd}.

The remaining two sections introduce further operations that are all essentially \emph{`definable'} in terms of the previous ones.
This is not strictly true at the level of algebraic syntax, where telescopes are just an additional sort of a generalised algebraic theory.
But in a model where telescopes are defined to be finite lists of types --- which is an option in any model, both the free syntactic model and in semantic models arising from categories --- the laws satisfied by these operations characterise them uniquely.
Specifically, in \cref{sec:tma2} we introduce meta-abstracted telescopes, telescope concatenation, and $\Pi$-telescopes, and then in \cref{sec:dtel} we introduce display for telescopes, and d\'ecalage for dependent telescopes.
These operations will be used in \cref{sec:ssts} to formulate displayed coinductive types, including the type of semi-simplicial types.

\subsection{The mode theory} \label{subsec:mode-theory}

We begin with a modal type theory based on the following 2-category $\M$:
\begin{itemize}
\item there are two modes (objects), $\dm$ for \emph{discrete} and $\sm$ for
\emph{simplicial}
\item there are five nonidentity morphisms, forming hom-posets:
  \begin{alignat*}{2}
    \M(\dm,\dm) &= \set{\id_\dm}&\qquad
    \M(\dm,\sm) &= \set{\tri}\\
    \M(\sm,\dm) &= \set{ \sq \le \dia }&\qquad
    \M(\sm,\sm) &= \set{ \trisq \le \id_\sm \le \tridia }
  \end{alignat*}
\item composition is defined by the following tables (plus identity laws)
  \[
    \begin{array}{cc|cccc}
      && \multicolumn{3}{c}{\cblu{\oldrho}} \\
      & \nu \circ \cblu{\oldrho} &  \tri & \tridia & \trisq\\\hline
      \multirow{4}{*}{$\nu$}& \dia & \id_\dm  & \dia & \sq \\
      & \sq  & \id_\dm  & \dia & \sq \\
      & \tridia   & \tri  & \tridia & \trisq \\
      & \trisq  & \tri  & \tridia & \trisq
    \end{array}
    \qquad
    \begin{array}{cc|cc}
      && \multicolumn{2}{c}{\cblu{\oldrho}} \\
      &\nu \circ \cblu{\oldrho} & \dia & \sq \\\hline
      \nu &\tri & \tridia & \trisq
    \end{array}
  \]
\end{itemize}
Intuitively, $\tri$ takes a discrete type and forms the constant (augmented semi-)simplicial type, while $\dia$ takes the $(\minusone)$-simplices of an (augmented semi-)simplicial type and $\sq$ takes the limit of an (augmented semi-)simplicial diagram.

One verifies that the following adjunctions hold in $\M$:
\begin{mathpar}
  \dia \dashv \tri \dashv \sq
  \and
  \tridia \dashv \trisq
  \and
  \id_\p \dashv \id_\p
\end{mathpar}
Thus every morphism in $\M$, except for $\dia$ and $\tridia$, has a right
adjoint. We refer to the morphisms $\dia$ and $\tridia$ as \emph{hazardous}, and
the others \emph{safe}. \bbox

\subsection{The modal type theory}
\label{subsec:modal-type-theory}

The basic syntactic structure of dTT follows MTT~\cite{gknb:mtt}.
Following Coquand, we parametrize the type judgment by a universe level; we assume these form a linear hierarchy generated by $\lzero$ and $\lsuc$, with a join operation $\join$, giving the judgement $\ell \level$.
Each mode has
its contexts, substitutions, types, and terms, so we have the following
judgements where $\p$ denotes an arbitrary mode ($\dm$ or $\sm$).
\begin{mathpar}
  \Gamma \ctx_\p
  \and
  \Gamma \vdash_\p \A \type_{\;\!\ell}
  \and
  \Gamma \vdash_\p \t: \A
  \and
  \theta : \Gamma \Rightarrow_\p \Theta
\end{mathpar}
Formally speaking, the inference rules we will give below for these judgments should be interpreted as defining a Generalised Algebraic Theory with these four generating sorts.
Later, we will also introduce some additional sorts.

\subsubsection{Contexts}
\label{sec:contexts}

Contexts are built up from the empty contexts by extending with modally
annotated variables and applying locks associated to modalities:
\begin{mathpar}
  \infer{\p \mode}{\ec_\p \ctx_\p}
  \and
  \infer{\mu:\p\to \q \\
    \Gamma \ctx_\q}{\bkt{\Gamma,~\lock_\mu} \ctx_\p}
  \and
  \infer{\mu:\p\to \q \\
    \Gamma \ctx_\q \\ \Gamma,~\lock_\mu \vdash_\p \A \type_{\;\!\ell}}{\bkt{\Gamma,~\x
    :^\mu \A} \ctx_\q}
\end{mathpar}
We additionally enforce the functoriality of locking, and the fact that some
locks preserve empty contexts.
\begin{mathpar}
  \bkt{\Gamma,~\lock_{\id_\p}} \equiv \Gamma
  \and
  \bkt{\Gamma,~\lock_\mu,~\lock_\nu} \equiv \bkt{\Gamma,~\lock_{\mu\circ\nu}}
  \and
  \bkt{\ec_\dm,~\lock_\sq} \equiv \ec_\sm
  \and
  \bkt{\ec_\sm,~\lock_\tri} \equiv \ec_\dm
\end{mathpar}
In fact, the last equality follows from the other three, since $\bkt{\ec_\sm,~\lock_\tri} = \bkt{\ec_\dm,~\lock_\sq,~\lock_\tri} = \bkt{\ec_\dm,~\lock_{\sq\circ\tri}} = \bkt{\ec_\dm,~\lock_{\id_\dm}} = \ec_{\dm}$.
Note that $\lock_\dia$ does \emph{not} preserve empty contexts.

These equalities mean that contexts no longer have a unique presentation using
the above rules. However, there are ways to select a canonical presentation for
any context. One is to interpret the above equalities as directed rewrites and
work with context presentations that are normal for this rewriting system; thus
there are no identity locks, no repeated locks, and $\lock_\sq$ and $\lock_\tri$ never occur immediately after an empty context.
Another way is to require that exactly one lock appears in between
any two variables.

Semantically, each lock is left adjoint to its corresponding modality.
Thus, $\lock_\sq$ is semantically the same as $\tri$, while $\lock_\tri$ is semantically the same as $\dia$.
The other lock, $\lock_\dia$, is not reified internally by a modality: intuitively, it takes a discrete type and makes it an (augmented semi-)simplicial type that is \emph{empty} at all dimensions $\n>\minusone$.

In particular, $\lock_\sq$ and $\lock_\tri$ have the further left adjoints $\lock_\tri$ and $\lock_\dia$, respectively, so that we will be able to represent their modalities $\sq$ and $\tri$ in Fitch-style as in~\cite{gckgb:fitchtt,shulman:matt}.

As far as $\lock_\dia$ goes, we can say that it is fully faithful and hence an equivalence onto its image, which consists of the simplicial types that are empty in dimensions $\n>\minusone$.
Moreover, since the initial object is strict, this subcategory is a \emph{sieve}: if we have a morphism $\Gamma\to \Delta$ and $\Delta$ lies in this subcategory, then so does $\Gamma$.
Therefore, while $\lock_\dia$ is not a right adjoint, it is a \emph{parametric} right adjoint, so we can still use the method of of~\cite{gckgb:fitchtt} for $\dia$.

However, rather than postulating an uninterpreted parametric left adjoint of $\lock_\dia$ as in~\cite{gckgb:fitchtt}, we can use our knowledge about how this left adjoint is defined semantically to give more specific rules.
Specifically, we can identify its domain as the subcategory of contexts in the essential image of $\lock_\dia$, and on that subcategory the left adjoint actually coincides with $\lock_\tri$.
To represent this semantically, we say that an $\sm$-context is \emph{flat} if, intuitively, it contains a $\lock_\dia$ which is not to the left of any $\lock_\tri$.
Formally, flatness is a predicate on contexts (an additional sort of the GAT) characterised by the following rules:
\begin{mathpar}
  \infer{\Gamma \ctx_\dm}{(\Gamma,~\lock_\dia) \flat}
  \and
  \infer{\Gamma \ctx_\sm \\ \Gamma \flat \\ \mu : \p \to \sm \\
  \Gamma,~\lock_\mu \vdash_\p \A \type_{\;\!\ell}}{(\Gamma,~\x :^\mu \A) \flat}
\end{mathpar}
Semantically, we think of these as the $\sm$-contexts that are empty above dimension $\n>\minusone$, on which the parametric left adjoint of $\lock_\dia$ will act as $\lock_\tri$.
(In our actual model we will do something more clever to avoid assuming the existence of strict initial objects.) \bbox

\subsubsection{Substitutions}
\label{sec:substitutions}

The judgment $\theta: \Gamma \Rightarrow_\p \Theta$ says that $\theta$ is a substitution from context $\Gamma$ to context $\Theta$ at mode $\p$.
It is generated by the following generating rules, which are the same as those given for MTT in~\cite{gknb:mtt}
\begin{mathpar}
  \infer{\Gamma \ctx_\p}{\id_\Gamma : \Gamma \Rightarrow_\p \Gamma}
  \and
  \infer{\Gamma \ctx_\p}{\esub_\p : \Gamma \Rightarrow_\p \ec_\p}
  \and
  \infer{\mu:\p\to \q \\ \Gamma \ctx_\q \\ \Gamma,~\lock_\mu \vdash_\p \A \type_{\;\!\ell}}{\weak^{\x:^\mu \A} : (\Gamma,~\x:^\mu \A) \Rightarrow_\q \Gamma}
  \and
  \infer{\theta : \Gamma \Rightarrow_\p \Theta \\ \nu : \Theta \Rightarrow_\p \Upsilon}{\nu \circ \theta : \Gamma \Rightarrow_\p \Upsilon}
  \and
  \infer{\theta : \Gamma \Rightarrow_\q \Theta \\ \Gamma,~\lock_\mu \vdash_\p \t
  : \A~\sqbkt{\theta,~\lock_\mu}} {\sqbkt{\theta,~\t} : \Gamma \Rightarrow_\q
  \bkt{\Theta,~\x:^\mu \A}}
  \and
  \infer{\theta : \Gamma \Rightarrow_\q \Theta}{\sqbkt{\theta,~\lock_\mu} : \bkt{\Gamma,~\lock_\mu} \Rightarrow_\p \bkt{\Theta,~\lock_\mu}}
  \and
  \infer{\mu \leq \nu}{\key^{\mu \leq \nu} : \bkt{\Gamma,~\lock_\nu} \Rightarrow_\p \bkt{\Gamma,~\lock_\mu}}
\end{mathpar}
With the the exceptional rule $\key^{\tridia \geq \id_\sm}$, which represents the fact that semantically $\lock_{\tridia}$ acts as the identity on flat contexts:
\[\infer{\Gamma \flat}{\key^{\tridia \geq \id_\sm} : \Gamma \Rightarrow_\sm \bkt{\Gamma,~\lock_\tridia}}\]
In practice, it is useful to iterate the weakening rule and combine it with the lock and key rules to obtain the following rule:
\[
  \infer{\theta : \Gamma \Rightarrow_\q \Theta \\ \mu \leq \locks(\Upsilon)}{\sqbkt{ \theta,~\weak^{\Upsilon}_\mu}: \bkt{\Gamma,~\Upsilon} \Rightarrow_\p \bkt{\Theta,~\lock_\mu}}
\]
In fact, we will generally use named variables and leave weakening implicit. \bbox

\subsubsection{Types}
\label{sec:types}

These are defined by several classes of type formers, including, at the
most basic level: \emph{$\Pi$-types} (parametrised by a modality, as in MTT), \emph{universes} (at each mode), and \emph{modal operators}.
\begin{mathpar}
  \infer{\Gamma,~\lock_\mu \vdash_\p \A \type_{\;\!\ell_\cblu{1}} \\ \Gamma,~\x :^\mu \A \vdash_\q
  \B \type_{\;\!\ell_\cblu{2}}}{\Gamma \vdash_\q \bkt{\x :^\mu \A} \to \B \type_{\;\!\ell_\cblu{1}\;\join\;\ell_\cblu{2}}}
  \\
  \infer{\ell \level}{\Gamma \vdash_\dm \Disc_{\;\!\ell} \type_{\;\!\;\!\lsuc\;\ell}}
  \and
  \infer{\ell \level}{\Gamma \vdash_\sm \Type_{\;\!\ell} \type_{\;\!\lsuc\;\ell}}
  \\
  \infer{\Gamma,~\lock_\sq \vdash_\sm \A \type_{\;\!\ell}}{\Gamma \vdash_\dm \sq~\A
  \type_{\;\!\ell}}
  \and
  \infer{\Gamma,~\lock_\tri \vdash_\dm \A \type_{\;\!\ell}}{\Gamma \vdash_\sm \tri~\A
  \type_{\;\!\ell}}
  \and
  \infer{\Gamma,~\lock_\dia \vdash_\sm \A \type_{\;\!\ell}}{\Gamma \vdash_\dm \dia~\A
  \type_{\;\!\ell}}
\end{mathpar}
We don't bother with primitive modal operators $\tridia$ or $\trisq$, since they can be obtained up to isomorphism by composing the others.

We will work with Tarski style universes, and thus require a decoding
operation:
\begin{mathpar}
  \infer{\Gamma \vdash_\dm \A : \Disc_{\;\!\ell}}{\Gamma \vdash_\dm \El~\A \type_{\;\!\ell}}
  \and
  \infer{\Gamma \vdash_\sm \A : \Type_{\;\!\ell}}{\Gamma \vdash_\sm \El~\A \type_{\;\!\ell}}
\end{mathpar}
Finally, we also have types that arise from substitution:
\begin{mathpar}
  \infer{\theta : \Gamma \Rightarrow_\p \Theta \\ \Theta \vdash_\p \A \type_{\;\!\ell}}{\Gamma \vdash_\p \A~\sqbkt{\theta} \type_{\;\!\ell}}
\end{mathpar}
As usual, substitution will be \emph{`eliminable'} in that $\A~\sqbkt{\theta}$ is always equal to something not involving $\sqbkt{\theta}$, but in the GAT presentation it is one of the generating rules like the others. \bbox

\subsubsection{Terms}
\label{sec:terms}

Terms are defined for each class of type former through introduction
and elimination rules. But first, we have variables.
There are two rules for variables: the ordinary one from MTT, and an \emph{`exceptional'} one arising, like the exceptional key $\key^{\tridia \geq \id_\sm}$, from the fact that $\lock_\tridia$ acts as the identity on flat contexts.
\begin{mathpar}
  \infer{\mu \leq \locks(\Theta)}{\Gamma,~\x:^\mu \A,~\Theta \vdash_\q \x :
  \A~\sqbkt{\id_\Gamma,~\weak^{\x:^\mu \A,\;\Theta}_\mu}}
  \and
  \infer{\Gamma \flat \\ \locks(\Theta) = \id_\sm}{\Gamma,~\x:^\tridia
  \A,~\Theta \vdash_\sm \x : \A~\sqbkt{\id_\Gamma,~\weak^{\x:^\tridia \A,\;\Theta}_\tridia}~\sqbkt{\key^{\tridia \geq \id_\sm}}}
\end{mathpar}
For $\Pi$-types, we have (as in MTT):
\begin{mathpar}
  \infer{\Gamma,~\x:^\mu \A \vdash_\q \t : \B}{\Gamma \vdash_\q \lambda~\x.~\t :
  \bkt{\x :^\mu \A} \to \B}
  \and
  \infer{\Gamma \vdash_\q \f : \bkt{\x :^\mu \A} \to \B \\ \Gamma,~\lock_\mu
  \vdash_\p \a : \A}{\Gamma \vdash_\q \f~\a : \B~\sqbkt{\a / \x}}
\end{mathpar}
For universes, we have a coding function:
\begin{mathpar}
  \infer{\Gamma \vdash_\dm \A \type_{\;\!\ell}}{\Gamma \vdash_\dm \Code~\A : \Disc_{\;\!\ell}}
  \and
  \infer{\Gamma \vdash_\sm \A \type_{\;\!\ell}}{\Gamma \vdash_\sm \Code~\A : \Type_{\;\!\ell}}
\end{mathpar}
For the modal operators, we have an introduction rule and negative \emph{`Fitch-style'}
elimination rules.
Following~\cite{gckgb:fitchtt}, we formulate these using parametric adjoints in the mode theory.
As noted in \cref{subsec:mode-theory}, the safe modalities have actual left adjoints, so their rules simplify as in~\cite{shulman:matt}.
And for $\lock_\dia$, we have observed that its parametric left adjoint is defined on the flat contexts, and on those it coincides with $\lock_\tri$.

\begin{mathpar}
  \infer{\Gamma,~\lock_\sq \vdash_\sm \t : \A}{\Gamma \vdash_\dm \sq~\t :
  \sq~\A}
  \and
  \infer{\Gamma,~\lock_\tri \vdash_\dm \t : \A}{\Gamma \vdash_\sm \tri~\t :
  \tri~\A}
  \and
  \infer{\Gamma,~\lock_\dia \vdash_\sm \t : \A}{\Gamma \vdash_\dm \dia~\t :
  \dia~\A}
  \\
  \infer{\Gamma,~\lock_\tri \vdash_\dm \t : \sq~\A}{\Gamma \vdash_\sm \unsq^\A~\t : \A~\sqbkt{\key^{\trisq \leq \id_\sm}}}
  \and
  \infer{\Gamma,~\lock_\dia \vdash_\sm \t : \tri~\A}{\Gamma \vdash_\dm \untri^\A~\t : \A}
  \and
  \infer{\Gamma \flat \\ \Gamma,~\lock_\tri \vdash_\dm \t : \dia~\A}{\Gamma \vdash_\sm \undia^\A~\t : \A~\sqbkt{\key^{\tridia \geq \id_\sm}}}
\end{mathpar}
Finally, we have terms that arise from substitution:
\begin{equation*}
  \infer{\theta : \Gamma \Rightarrow_\p \Theta \\ \Theta \vdash_\p \t : \A}{\Gamma \vdash_\p \t~\sqbkt{\theta} : \A~\sqbkt{\theta}}
  \eqno\sectend
\end{equation*}

\subsection{Telescopes and meta-abstractions, I}
\label{sec:tel}

\subsubsection{Telescopes}
\label{sec:telescopes}

Telescopes are suffixes of contexts, with the restriction that they may not contain locks.
The judgement $\Gamma \vdash_\p \Theta \tel_{\;\!\ell}$ denotes that $\Theta$ is a telescope in context $\Gamma$ of `level $\ell$', where the latter means that $\ell$ is greater than or equal to the level of the types occurring in $\Theta$.
We allow it to be strictly greater, and in particular allow an empty telescope to exist at all universe levels, for a reason to be explained in \cref{sec:teldisp}.
Formally, telescopes are an additional level-indexed sort of the GAT, with formation rules saying that there is an empty one and they can be built by concatenating types.
\begin{mathpar}
  \infer{\Gamma \ctx_\p}{\Gamma \vdash_\p \ec_\p \tel_{\;\!\ell}}
  \and
  \infer{\mu:\p\to \q \\
    \Gamma \vdash_\q \Theta \tel_{\;\!\ell} \\ \Gamma\ext\Theta,~\lock_\mu \vdash_\p \A \type_{\;\!\ell\cblu{'}} \\ \ell\cblu{'}\le\ell}{\Gamma \vdash_\q \bkt{\Theta,~\x:^\mu \A} \tel_{\;\!\ell}}
\end{mathpar}
Just as with ordinary contexts, we regard these rules as generating telescopes \emph{`inductively'}, although this is not formally the case syntactically.
We thus regard some other operation on telescopes as \emph{`defined'} when we specify rules for how it computes on these forms.
This is justified in most models, where we do actually define the judgment of telescopes inductively by the above rules.

For example, the premise of the second rule above requires knowing how to extend a context by a telescope.
We write this with a distinctive notation as $\Gamma\ext\Theta$, from which the reader can infer that $\Theta$ is a telescope.
Since $\ext$ is an operation on contexts, not a constructor of contexts, it computes on the constructors of telescopes:
\begin{mathpar}
  \infer{\Gamma \vdash_\p \Theta\tel_{\;\!\ell}}{(\Gamma\ext\Theta) \ctx}
  \and
  (\Gamma\ext\ec_\p) \equiv  \Gamma
  \and
  (\Gamma\ext(\Theta,~\x:^\mu \A)) \equiv  ((\Gamma\ext\Theta),~\x:^\mu \A)
\end{mathpar}
We consider the operation $\ext$ to be left-associative with the comma.
Thus, for instance, the context $\Gamma\ext\Theta,~\lock_\mu$ in the rule for extending a telescope by a type means $(\Gamma\ext\Theta),~\lock_\mu$.

By a \emph{strict telescope} we mean a telescope without any nontrivially modal variables.
\begin{mathpar}
  \infer{\Gamma \ctx_\p}{\Gamma \vdash_\p \ec_\p \stel_{\;\!\ell}}
  \and
  \infer{\Gamma \vdash_\p \Theta \stel_{\;\!\ell} \\ \Gamma\ext\Theta \vdash_\p \A \type_{\;\!\ell\cblu{'}} \\ \ell\cblu{'}\le\ell}{\Gamma \vdash_\p \bkt{\Theta,~\x:^{\id_\p} \A} \stel_{\;\!\ell}}
\end{mathpar}
As is evident, we do not distinguish syntactically between general telescopes and strict ones.
That is, we consider strictness to be a mere \emph{property} of a telescope, or alternatively we treat the obvious map from strict telescopes to telescopes as an implicit coercion.

Similarly, we can introduce a `lifting' operation taking a telescope to any higher level.
\[ \infer{\Gamma \vdash_\p \Theta \tel_{\;\!\ell} \\ \ell\le\ell\cblu{'}}{\Gamma \vdash_\p \Theta \tel_{\;\!\ell\cblu{'}} }
\]
As is evident from the notation, we also treat this as an implicit coercion.
Thus, when we define it recursively on the structure of a telescope, the rules look trivial unless we annotate them somehow with levels:
\begin{mathpar}
  \ec_\p \equiv \ec_\p
  \and
  \bkt{\Theta,~\x:^\mu \A} \equiv\bkt{\Theta,~\x:^\mu \A}
  \eqno\sectend
\end{mathpar}

\subsubsection{Partial substitutions}
\label{sec:part-subst}

If $\Gamma\vdash \Upsilon\tel_{\;\!\ell}$ there is a judgment $\Gamma \vdash \sigma : \Upsilon$ for the \emph{`elements'} of $\Upsilon$.
We call such $\sigma$ a `partial substitution', thinking of it as a substitution $\Gamma \Rightarrow (\Gamma\ext \Upsilon)$ that is the identity on $\Gamma$.
Formally, we specify that they can be built out of terms:
\begin{mathpar}
  \infer{\ }{\Gamma \vdash_\p~\esub_\p~: \ec_\p}
  \and
  \infer{\Gamma \vdash_\q \sigma : \Upsilon \\ \Gamma \ext \Upsilon,~\lock_\mu \vdash_\p \A \type_{\;\!\ell} \\  \Gamma,~\lock_\mu \vdash_\p \t : \A~\sqbkt{\id_\Gamma\ext\sigma,~\lock_\mu}}{\Gamma \vdash_\q \sqbkt{\sigma,~\t} : \bkt{\Upsilon,~\x :^\mu \A}}
\end{mathpar}
The second rule involves a notion of extending an ordinary substitution by a partial one.
\begin{mathpar}
  \infer{\theta : \Gamma \Rightarrow_\p \Delta \\ \Delta \vdash_\p \Upsilon \tel_{\;\!\ell} \\ \Gamma \vdash_\p \sigma : \Upsilon~\sqbkt{\theta}}
  {\sqbkt{\theta\ext\sigma} : \Gamma \Rightarrow_\p (\Delta \ext \Upsilon)}
  \and
  \sqbkt{\theta\ext\esub_\p} \equiv  \theta
  \and
  \sqbkt{\theta\ext\sqbkt{\sigma,~\t}} \equiv  \sqbkt{\sqbkt{\theta\ext\sigma},~\t}
\end{mathpar}
This gives a simple way to ensure that partial substitutions are uniquely determined by their components: their equality is detected by equality of the induced substitutions.
\begin{mathpar}
  \infer{\Gamma \vdash_\p \sigma : \Upsilon \\ \Gamma \vdash_\p \tau : \Upsilon \\ \sqbkt{\id_\Gamma\ext\sigma} \equiv  \sqbkt{\id_\Gamma\ext\tau} : \Gamma \Rightarrow_\p \bkt{\Gamma \ext \Upsilon}}{\Gamma \vdash_\p \sigma \equiv  \tau : \Upsilon}
\end{mathpar}
Note that a partial substitution \emph{does}, in fact, \emph{have} \emph{`components'}: given $\Gamma \vdash_\q \sigma : \bkt{\Upsilon,~\x :^\mu \A}$ we have
\begin{gather*}
 \sqbkt{\id_\Gamma\ext\sigma,~\x,~\lock_\mu} : (\Gamma,~\lock_\mu) \to (\Gamma\ext\Upsilon,~\x:^\mu \A,~\lock_\mu) \\
 \Gamma\ext\Upsilon,~\x:^\mu \A,~\lock_\mu \vdash \x : \A\\
 \Gamma,~\lock_\mu \vdash \x~\sqbkt{\id_\Gamma\ext\sigma,~\x,~\lock_\mu} : \A
\end{gather*}
We also have a notion of weakening for telescopes.  As before, we omit the equations that this must satisfy.
\begin{mathpar}
  \infer{\Gamma\vdash \Theta \tel_{\;\!\ell}}{\weak^\Theta : (\Gamma\ext \Theta) \Rightarrow \Gamma}
  \and
  \weak^{\ec_\p} \equiv  \id_\Gamma
  \and
  \weak^{\Theta,\;\x:^\mu \A} \equiv  \weak^\Theta \circ \weak^{\x:^\mu \A}
  \eqno\sectend
\end{mathpar}

\subsubsection{Meta-abstracted types and terms}
\label{sec:meta-abstractions}

We now introduce a new judgement form $\Gamma \vdash_\p \A \slfrac{\type_{\;\!\ell_\cblu{1}}}{_{\upsilon\;:\;\Upsilon}}$, where $\Gamma \vdash \Upsilon \tel_{\;\!\ell_\cblu{0}}$.
This should be thought of saying that $\A$ is a type depending on the variables $\upsilon$ in $\Upsilon$, i.e.\ belonging to a \emph{`framework-level $\Pi$-type'} $\A : (\upsilon:\Upsilon) \to \mathsf{type}_{\;\!\ell_\cblu{1}}$.
Accordingly, elements of this judgment are introduced by binding and eliminated by application, with a $\beta$ and $\eta$-rule.
\begin{mathpar}
  \infer{\Gamma \ext \bkt{\upsilon:\Upsilon} \vdash_\p \A \type_{\;\!\ell_\cblu{1}}}{\Gamma \vdash_\p \dbkt{\A}_{\upsilon\;:\;\Upsilon} \slfrac{\type_{\;\!\ell_\cblu{1}}}{_{\upsilon\;:\;\Upsilon}}}
  \and
  \infer{\Gamma \vdash_\p \cA \slfrac{\type_{\;\!\ell_\cblu{1}}}{_{\upsilon\;:\;\Upsilon}} \\ \Gamma \vdash_\p \sigma : \Upsilon}{\Gamma \vdash_\p \cA~\sigma \type_{\;\!\ell_\cblu{1}}}
  \and
  \infer{\Gamma \ext \bkt{\upsilon:\Upsilon} \vdash_\p \A \type_{\;\!\ell_\cblu{1}} \\ \Gamma \vdash_\p \sigma : \Upsilon}{\Gamma \vdash_\p \dbkt{\A}_{\upsilon\;:\;\Upsilon} ~\sigma \equiv  \A~\sqbkt{\id_\Gamma\ext \sigma}}
  \and
  \infer{\Gamma \vdash_\p \cA \slfrac{\type_{\;\!\ell_\cblu{1}}}{_{\upsilon\;:\;\Upsilon}} \\ \Gamma \vdash_\p \cB \slfrac{\type_{\;\!\ell_\cblu{1}}}{_{\upsilon\;:\;\Upsilon}} \\ \Gamma \ext \bkt{\upsilon:\Upsilon} \vdash_\p \cA~\upsilon \equiv  \cB~\upsilon }{\Gamma \vdash_\p \cA \equiv  \cB}
\end{mathpar}
We also regard $\cA \slfrac{\type_{\;\!\ell_\cblu{1}}}{_{\upsilon\;:\;\Upsilon}}$ as standing in for its own $\Pi$-type \emph{`$(\upsilon:\Upsilon) \to \cA~\upsilon$'}.
Thus, such an $\cA$ can have its own terms belonging to it, which are also introduced by binding and eliminated by application, with a $\beta$ and $\eta$-rule.
\begin{mathpar}
  \infer{\Gamma \vdash_\p \cA \slfrac{\type_{\;\!\ell_\cblu{1}}}{_{\upsilon\;:\;\Upsilon}} \\ \Gamma \ext \bkt{\upsilon:\Upsilon} \vdash_\p \t : \cA~\upsilon}{\Gamma \vdash_\p \dsqbkt{\t}_{\;\upsilon\;:\;\Upsilon} : \dbkt{\A}_{\upsilon\;:\;\Upsilon}}
  \and
  \infer{\Gamma \vdash_\p \cA \slfrac{\type_{\;\!\ell_\cblu{1}}}{_{\upsilon\;:\;\Upsilon}} \\ \Gamma \vdash_\p \ct : \cA \\ \Gamma \vdash_\p \sigma : \Upsilon}{\Gamma \vdash_\p \ct~\sigma~:~\cA~\sigma \type_{\;\!\ell_\cblu{1}}}
  \and
  \infer{\Gamma \vdash_\p \cA \slfrac{\type_{\;\!\ell_\cblu{1}}}{_{\upsilon\;:\;\Upsilon}} \\ \Gamma \ext \bkt{\upsilon:\Upsilon} \vdash_\p \t : \cA ~\upsilon\\  \Gamma \vdash_\p \sigma : \Upsilon}
  { \Gamma \vdash_\p \dsqbkt{\t}_{\;\upsilon\;:\;\Upsilon} ~ \sigma \equiv  \t~\sqbkt{\id_\Gamma\ext\sigma}}
  \and
  \infer{\Gamma \vdash_\p \cA \slfrac{\type_{\;\!\ell_\cblu{1}}}{_{\upsilon\;:\;\Upsilon}} \\\Gamma \vdash_\p \ct : \cA \\ \Gamma \vdash_\p \cs : \cA \\ \Gamma\ext (\upsilon : \Upsilon) \vdash_\p \ct~\upsilon \equiv  \cs~\upsilon}{\Gamma \vdash_\p \ct \equiv  \cs}
  \eqno\sectend
\end{mathpar}

\subsection{D\'ecalage and displayed types}
\label{sec:dd}

Semantically, the fundamental operation is shifting the dimensions of a simplicial type.
In classical simplicial homotopy theory, this is called \emph{d\'ecalage}:
\[ \bkt{\A\D}_\n = \A_{\n+\cred{1}} \]
The simplicial structure maps of $\A\D$ are a subset of those of $\A$, while the unused ones assemble into a simplicial map $\A\D \to \A$.
When $\A$ is a type at mode $\sm$, we will regard $\A\D$ as the projection from a type $\A\d$ dependent on $\A$; thus we have
\[ \A\D = (\x:\A,~\xp:\A\d~\x) \]
(Semantically, this is validated by the fact that if $\A$ is Reedy fibrant, then the map $\A\D \to \A$ is a Reedy fibration.)
These dependent types $\A\d$, which we call \emph{display}, are our version of the \emph{`logical relations'} assigned to every type by an internal parametricity theory.

\subsubsection{Display for types}
\label{sec:disp-ty}

In contrast to fully internal parametricity theories, because we don't have degeneracies in our cube category, d\'ecalage and display can only be applied in restricted contexts.
In external parametricity, the logical relations apply only to types in the empty context; but our modalities allow us to say more generally that they apply to any \emph{`boxed'} type.
Here by \emph{`box'} we mean not $\sq$ but the corresponding \emph{endofunctor} of the simplicial mode, namely $\trisq$.
Thus, informally display should have the type $\cred{d} : (\A :^\trisq \Type_{\;\!\ell}) \to \A \to \Type_{\;\!\ell}$, with computability witnesses being assigned by a function $\cred{d} : (\A :^\trisq \Type_{\;\!\ell})( \x :^\trisq \A) \to \A\d~\x$.
If we reformulate these without referring to $\Pi$-types, we obtain the following rules for our basic notion of \emph{displayed type}:
\begin{mathpar}
  \infer{\Gamma,~\lock_{\trisq} \vdash_\sm \A \type_{\;\!\ell} \\ \Gamma \vdash_\sm \t :
    \A~\sqbkt{\key^{\trisq \leq \id_\sm}}}{\Gamma \vdash_\sm \A\d~\t \type_{\;\!\ell}}
  \and
  \infer{\Gamma,~\lock_\trisq \vdash_\sm \t : \A}{\Gamma \vdash \t\d :
  \A\d~\big(\t~\sqbkt{\key^{\trisq \leq \id_\sm}}\big)}
\end{mathpar}

However, in order to compute with this, we need a version of it that incorporates dependence on a telescope to the right of the lock.
The corresponding action on that telescope is called d\'ecalage. \bbox

\subsubsection{Telescope d\'ecalage}
\label{sec:telescope-decalage}

As noted above, with display $\A\d$ defined as dependent on $\A$, d\'ecalage $\A\D$ is naturally not a single type but a telescope.
It is therefore natural to generalise its \emph{input} to be a telescope also.
This yields an operation that doubles the variables and groups each type with its displayed version, e.g.\
\[(\x:\A,~ \y:\B)\D \equiv  (\x:\A,~ \x\cblu{'}:\A\d~ \x,~ \y:\B,~ \y\cblu{'}:\B\d~ \y).\]
The classical projection from d\'ecalage to the identity, composed of the leftover face maps, becomes an \emph{`evens'} substitution $\Upsilon\D \to \Upsilon$ that throws away the elements of the displayed types (the primed variables in the above example).
(The corresponding `odds' substitution must wait until we introduce telescope display in \cref{sec:teldisp}.)
\begin{mathpar}
    \infer{\Gamma,~\lock_\trisq \vdash_\sm \Upsilon \tel_{\;\!\ell}}{\Gamma \vdash_\sm \Upsilon\D \tel_{\;\!\ell}}
  \and
  \infer{\Gamma,~\lock_\trisq \vdash_\sm \Upsilon \tel_{\;\!\ell} \\ \Gamma,~\lock_\trisq \vdash_\sm \sigma : \Upsilon }{\Gamma \vdash_\sm \sigma\D : \Upsilon\D}
  \\
  \infer{\Gamma \vdash_\sm \plus{\sigma} : \Upsilon\D}{\Gamma \vdash_\sm \plus{\sigma}\ev : \Upsilon~\sqbkt{\key^{\trisq\le \id_\sm}}}
  \and
  \sigma^{\mathsf{\cred{D}}\;\mathsf{\cred{ev}}} \equiv \sigma~\sqbkt{\key^{\trisq\le \id_\sm}}
\end{mathpar}
Notationally, we put a superscript `\cblu{+}' on variables and metavariables belonging to d\'ecalaged telescopes, and a prime on variables belonging to displayed types and telescopes.
These symbols are part of the variable name, e.g.\ $\plus{\sigma}$ above is a single variable that just happens to be named mnemonically.

At this point we can assert that d\'ecalage preserves empty telescopes.
\begin{mathpar}
  \ec^{\mathsf{\cred{D}}}_\sm \equiv \ec_\sm\and
  \esub^{\mathsf{\cred{D}}}_\sm \equiv \esub_\sm\and
  \esub^{\mathsf{\cred{ev}}}_\sm \equiv \esub_\sm\and
\end{mathpar}
D\'ecalage will also compute on telescopes extended by a type, but we wait to give these rules in \cref{sec:comp-tel-dec}, since they require more structure. \bbox

\subsubsection{Display for meta-abstractions}
\label{sec:displ-meta-abstr}

The more general version of display alluded to above can informally be thought of as having the following rule:
\begin{mathpar}
  \text{\cprime{\textquestiondown}}\quad\infer{\Gamma,~\lock_\trisq\ext \Upsilon \vdash_\sm \A \type_{\;\!\ell}}{\Gamma\ext \Upsilon\D, \a:\A \vdash_\sm \A\d~\a \type_{\;\!\ell}} \quad \cprime{?}
\end{mathpar}
However, this is not a well-behaved rule because the context of the conclusion is not fully general.
There are multiple ways to solve this problem; we will solve it by saying that general display acts on a meta-abstracted type.
\begin{mathpar}
  \infer{\Gamma,~\lock_\trisq \vdash_\sm \cA \slfrac{\type_{\;\!\ell_\cblu{1}}}{_{\upsilon\;:\;\Upsilon}}}{\Gamma \vdash_\sm \cA\d \slfrac{\type_{\;\!\ell_\cblu{1}}}{_{\upsilon^\cblu{+}\;:\;\Upsilon\D,\; \a\;:\;\cA\;[\;\key^{\trisq\le \id_\sm}\;]\;\upsilon^\cblu{+}\ev}}} \\
  \infer{\Gamma,~\lock_\trisq \vdash_\sm \cA \slfrac{\type_{\;\!\ell_\cblu{1}}}{_{\upsilon\;:\;\Upsilon}} \\ \Gamma,~\lock_\trisq \vdash_\sm \ct : \cA}{\Gamma \vdash_\sm \ct\d : \dbkt{\cA\d~\upsilon^\cblu{+}~\bkt{\ct~\upsilon^\cblu{+}\ev}}_{\;\upsilon^\cblu{+}\;:\;\Upsilon\D}}
\end{mathpar}
In general, this does not reduce to ordinary display, but it does when applied to a \emph{d\'ecalaged} partial substitution.
\begin{mathpar}
  \infer{\Gamma,~\lock_\trisq \vdash_\sm \cA \slfrac{\type_{\;\!\ell_\cblu{1}}}{_{\upsilon\;:\;\Upsilon}} \\ \Gamma,~\lock_\trisq \vdash_\sm \sigma : \Upsilon \\ \Gamma \vdash \t : (\cA~\sigma)~\sqbkt{\key^{\trisq\le \id_\sm}}}
  {\Gamma \vdash \cA\d~\sigma\D~\t \equiv (\cA~\sigma)\d~\t}
  \and
  \infer{\Gamma,~\lock_\trisq \vdash_\sm \cA \slfrac{\type_{\;\!\ell_\cblu{1}}}{_{\upsilon\;:\;\Upsilon}} \\ \Gamma,~\lock_\trisq \vdash_\sm \sigma : \Upsilon \\ \Gamma,~\lock_\trisq \vdash_\sm \ct : \cA}
  {\Gamma \vdash \ct\d~\sigma\D \equiv (\t~\sigma)\d}
\end{mathpar}
In particular, when $\Upsilon\equiv \ec_\sm$ these rules say that display for trivial meta-abstractions is equivalent to ordinary display. \bbox

\subsubsection{Computing d\'ecalage}
\label{sec:comp-tel-dec}

Now we can give the rules \emph{`defining'} telescope d\'ecalage on telescopes extended by a variable.
Specifically, when extending by a non-modal variable, we also extend by its displayed version.
But that displayed version needs to depend on $\Theta\D$, so we define it in terms of display for meta-abstractions.
Note that the well-typedness of $\t\d$ in these rules depends on the reduction of meta-abstraction display on displayed partial substitutions.
\begin{align*}
  \bkt{\theta:\Theta,~\x : \A}\D
  &\equiv \big(\theta^\cblu{+}:\Theta\D,~\x : \A~\sqbkt{\key^{\trisq\le \id_\sm} \ext \theta^\cblu{+}\ev},~\xp : \dbkt{\A}_{\theta\;:\;\Theta}\d~\theta^\cblu{+}~\x\big) \\
  \sqbkt{\sigma,~\t}\D
  &\equiv \sqbkt{\sigma\D,~\t,~\t\d}\\
  \sqbkt{\plus{\sigma},~\t,~\t\cblu{'}}\ev
  &\equiv \sqbkt{\plus{\sigma}\ev,~\t}
\end{align*}

The case of a nontrivially modal variable is actually simpler.
Note that in this case, the modality must be of the form $\tri \circ \mu$.
Recalling that semantically, $\tri$ constructs a constant simplicial type, we should have informally $(\tri~\A)\D = \tri~\A$, and therefore $(\tri~\A)\d$ is trivial.
For an action on types, this would mean that $(\tri~\A)\d~\x$ is the unit type; for our current action on telescopes, it means we can just omit the displayed variables.
\begin{align*}
  \bkt{\theta:\Theta,~\x :^{\tri \circ \mu} \A}\D
  &\equiv \big(\theta^\cblu{+}:\Theta\D,~\x :^{\tri\circ\mu} \A~\sqbkt{\key^{\trisq\le \id_\sm} \ext \theta^\cblu{+}\ev,~\lock_{\tri\circ\mu}}\big)\\
  \sqbkt{\sigma,~\t}\D
  &\equiv \sqbkt{\sigma\D,~\t}\\
  \sqbkt{\plus{\sigma},~\t}\ev
  &\equiv \sqbkt{\plus{\sigma}\ev,~\t}
  \tag*{\bbox}
\end{align*}

\subsubsection{Computing display}
\label{sec:comp-telesc-displ}

Recall from \cref{sec:intro} that we treat display computationally like the identity types of HOTT, so that it computes on the basic type-formers.
Note that the abstracting telescope changes as we compute, so these rules could not be stated for ordinary display alone.

\paragraph{Non-modal $\Pi$-Types}

This rule represents the traditional behavior of parametricity and logical relations on functions: a computability witness for a function says that it preserves computability witnesses.
\begin{align*}
    \dbkt{\big(\x : \A\big) \to \B}_{\upsilon\;:\;\Upsilon}\d &\equiv \scaleto{\newllparenthesis}{1.2em}\;\big(\x : \A~\sqbkt{\key^{\trisq\le \id_\sm} \ext \upsilon^\cblu{+}\ev}\big)\; \big(\xp : \dbkt{\A}_{\upsilon\;:\;\Upsilon}\d~\upsilon^\cblu{+}~\x\big) \to \\
    &\quad\quad\ \ \dbkt{\B}_{\upsilon\;:\;\Upsilon,\; \x\;:\;\A}\d~\upsilon^\cblu{+}~\x~\xp~\bkt{\f~\x}\;\scaleto{\newrrparenthesis}{1.2em}_{\upsilon^\cblu{+}\;:\;\Upsilon\D,\;\f\;:\;\bkt{\x\;:\;\A}\;\to\;\B} \\
    \dsqbkt{\lambda~\x.~\t}_{\;\upsilon\;:\;\Upsilon}\d &\equiv \dsqbkt{\lambda~\x~\xp.~\dsqbkt{\t}_{\;\upsilon\;:\;\Upsilon,\; \x\;:\;\A}\d~\upsilon^\cblu{+}~\x~\xp}_{\;\upsilon^\cblu{+}\;:\;\Upsilon\D} \\
    \dsqbkt{\f~\a}_{\;\upsilon\;:\;\Upsilon}\d &\equiv \dsqbkt{\big(\dsqbkt{\f}_{\;\upsilon\;:\;\Upsilon}\d~\upsilon^\cblu{+}\big)~\a~\big(\dsqbkt{\a}_{\;\upsilon\;:\;\Upsilon}\d~\upsilon^\cblu{+}\big)}_{\;\upsilon^\cblu{+}\;:\;\Upsilon\D}
\end{align*}

\paragraph{Nontrivially modal $\Pi$-Types}

As with d\'ecalage, here we use the fact that display of $\tri$ is trivial.
Note also that here a modal variable appears in the domain of a meta-abstraction.
This is the reason that we cannot restrict to strict telescopes in general.
\begin{align*}
    \dbkt{\big(\x :^{\tri\circ\mu} \A\big) \to \B}_{\upsilon\;:\;\Upsilon}\d &\equiv \scaleto{\newllparenthesis}{1.2em}\;\big(\x :^{\tri\circ\mu} \A ~\sqbkt{\key^{\trisq\le \id_\sm} \ext \upsilon^\cblu{+}\ev,~\lock_{\tri\circ\mu}}\big) \to \\
    &\quad\quad\ \ \dbkt{\B}_{\upsilon\;:\;\Upsilon,\; \x\;:^{\tri\circ\mu}\A}\d~\upsilon^\cblu{+}~\x~\bkt{\f~\x}\;\scaleto{\newrrparenthesis}{1.2em}_{\upsilon^\cblu{+}\;:\;\Upsilon\D,\;\f\;:\;\bkt{\x\;:^{\tri\circ\mu}\;\A}\;\to\;\B} \\
    \dsqbkt{\lambda~\x.~\t}_{\;\upsilon\;:\;\Upsilon}\d &\equiv \dsqbkt{\lambda~\x.~\dsqbkt{\t}_{\;\upsilon\;:\;\Upsilon,\;\x\;:^{\tri\circ\mu}\A}\d~\upsilon^\cblu{+}~\x}_{\;\upsilon^\cblu{+}\;:\;\Upsilon\D} \\
    \dsqbkt{\f~\a}_{\;\upsilon\;:\;\Upsilon}\d &\equiv \dsqbkt{\dsqbkt{\f}_{\;\upsilon\;:\;\Upsilon}\d~\upsilon^\cblu{+}~\a}_{\;\upsilon^\cblu{+}\;:\;\Upsilon\D}
\end{align*}

\paragraph{Universes}

As with $\Pi$-types, this rule represents the traditional behavior of parametricity and logical relations on universes: a computability witness for a type is a relation on that type.
\begin{align*}
    \dbkt{\Type_{\;\!\ell}}_{\upsilon\;:\;\Upsilon}\d &\equiv \dbkt{\El~\A \to \Type_{\;\!\ell}}_{\upsilon^\cblu{+}\;:\;\Upsilon\D,\;\A\;:\;\Type_{\;\!\ell}} \\
    \dsqbkt{\Code~\A}_{\;\upsilon\;:\;\Upsilon}\d &\equiv \dsqbkt{\lambda~\a.~\Code~\big(\dbkt{\A}_{\upsilon\;:\;\Upsilon}\d~\upsilon^\cblu{+}~\a\big)}_{\;\upsilon^\cblu{+}\;:\;\Upsilon\D} \\
    \dbkt{\El~\A}_{\upsilon\;:\;\Upsilon}\d &\equiv \dbkt{\El~\big(\dsqbkt{\A}_{\;\upsilon\;:\;\Upsilon}\d~\upsilon^\cblu{+}~\a\big)}_{\upsilon^\cblu{+}\;:\;\Upsilon\D,\;\a\;:\;\El\;\A}
    \tag*{\bbox}
\end{align*}

\subsection{Telescopes and meta-abstractions, II}
\label{sec:tma2}

The rules given so far essentially suffice to characterise the basic theory of dTT.
However, in order to formulate our definition of semi-simplicial types, we need a bit more structure.
To this end, in this section we introduce some more operations on telescopes that can be \emph{`defined'} in terms of those already given.

\subsubsection{Meta-abstracted telescopes}
\label{sec:meta-abstr-telesc}

We start with another judgement form $\Gamma \vdash_\p \Phi \slfrac{\tel_{\;\!\ell_\cblu{1}}}{_{\upsilon\;:\;\Upsilon}}$ for a telescope dependent on a telescope, with rules entirely analogous to those for types and terms in \cref{sec:meta-abstractions}.
\begin{mathpar}
  \infer{\Gamma \ext \bkt{\upsilon:\Upsilon} \vdash_\p \Phi \tel_{\;\!\ell_\cblu{1}}}{\Gamma \vdash_\p \dbkt{\Phi}_{\upsilon\;:\;\Upsilon} \slfrac{\tel_{\;\!\ell_\cblu{1}}}{_{\upsilon\;:\;\Upsilon}}}
  \and
  \infer{\Gamma \vdash_\p \Phi \slfrac{\tel_{\;\!\ell_\cblu{1}}}{_{\upsilon\;:\;\Upsilon}} \\ \Gamma \vdash_\p \sigma : \Upsilon}{\Gamma \vdash_\p \Phi~\sigma \tel_{\;\!\ell_\cblu{1}}}
  \and
  \infer{\Gamma \ext \bkt{\upsilon:\Upsilon} \vdash_\p \Phi \tel_{\;\!\ell_\cblu{1}} \\ \Gamma \vdash_\p \sigma : \Upsilon}{\Gamma \vdash_\p \dbkt{\Phi}_{\upsilon\;:\;\Upsilon} ~\sigma \equiv  \Phi~\sqbkt{\id_\Gamma\ext \sigma}}
  \and
  \infer{\Gamma \vdash_\p \Phi \slfrac{\tel_{\;\!\ell_\cblu{1}}}{_{\upsilon\;:\;\Upsilon}} \\ \Gamma \vdash_\p \Psi \slfrac{\tel_{\;\!\ell_\cblu{1}}}{_{\upsilon\;:\;\Upsilon}} \\ \Gamma \ext \bkt{\upsilon:\Upsilon} \vdash_\p \Phi~\upsilon \equiv  \Psi~\upsilon }{\Gamma \vdash_\p \Phi \equiv  \Psi}
  \and
  \infer{\Gamma \vdash_\p \Phi \slfrac{\tel_{\;\!\ell_\cblu{1}}}{_{\upsilon\;:\;\Upsilon}} \\ \Gamma \ext \bkt{\upsilon:\Upsilon} \vdash_\p \t : \Phi~\upsilon}{\Gamma \vdash_\p \dsqbkt{\t}_{\;\upsilon\;:\;\Upsilon} : \dbkt{\Phi}_{\upsilon\;:\;\Upsilon}}
  \and
  \infer{\Gamma \vdash_\p \Phi \slfrac{\tel_{\;\!\ell_\cblu{1}}}{_{\upsilon\;:\;\Upsilon}} \\ \Gamma \vdash_\p \ct : \Phi \\ \Gamma \vdash_\p \sigma : \Upsilon}{\Gamma \vdash_\p \ct~\sigma~:~\Phi~\sigma \tel_{\;\!\ell_\cblu{1}}}
  \and
  \infer{\Gamma \vdash_\p \Phi \slfrac{\tel_{\;\!\ell_\cblu{1}}}{_{\upsilon\;:\;\Upsilon}} \\ \Gamma \ext \bkt{\upsilon:\Upsilon} \vdash_\p \t : \Phi ~\upsilon\\  \Gamma \vdash_\p \sigma : \Upsilon}
  { \Gamma \vdash_\p \dsqbkt{\t}_{\;\upsilon\;:\;\Upsilon} ~ \sigma \equiv  \t~\sqbkt{\id_\Gamma\ext\sigma}}
  \and
  \infer{\Gamma \vdash_\p \Phi \slfrac{\tel_{\;\!\ell_\cblu{1}}}{_{\upsilon\;:\;\Upsilon}} \\\Gamma \vdash_\p \ct : \Phi \\ \Gamma \vdash_\p \cs : \Phi \\ \Gamma\ext (\upsilon : \Upsilon) \vdash_\p \ct~\upsilon \equiv  \cs~\upsilon}{\Gamma \vdash_\p \ct \equiv  \cs}
  \eqno\sectend
\end{mathpar}

\subsubsection{Telescope concatenation}
\label{sec:telesc-conc}

Telescope concatenation is not necessary for the syntactic definition of SSTs, but seems to be required for a clean description of the semantics.
It is essentially a $\Sigma$-type for telescopes, which is definitionally associative with context and telescope extension.

\begin{mathpar}
  \infer{\Gamma \vdash \Upsilon \tel_{\;\!\ell_\cblu{0}} \\ \Gamma\ext \Upsilon \vdash \Phi \tel_{\;\!\ell_\cblu{1}}}{\Gamma \vdash (\Upsilon\ext\Phi) \tel_{\;\!\ell_\cblu{0}\;\join\;\ell_\cblu{1}}}
  \and
  \infer{\Gamma \vdash \sigma : \Upsilon \\ \Gamma \vdash \delta : \Phi~\sqbkt{\id_\Gamma\ext\sigma}}
  {\Gamma \vdash \sqbkt{\sigma\ext\delta} : (\Upsilon\ext\Phi)}
  \and
  \infer{\Gamma \vdash \theta : (\Upsilon\ext\Phi)}{\Gamma \vdash \theta_\cred{0} : \Upsilon}
  \and
  \infer{\Gamma \vdash \theta : (\Upsilon\ext\Phi)}{\Gamma \vdash \theta_\cred{1} : \Phi~\theta_\cred{0}}
  \and
  \sqbkt{\sigma\ext\delta}_\cred{0} \equiv \sigma
  \and
  \sqbkt{\sigma\ext\delta}_\cred{1} \equiv \delta
  \and
  \sqbkt{\theta_\cred{0}\ext \theta_\cred{1}} \equiv \theta
  \and
  (\Gamma \ext (\Upsilon\ext \Phi)) \equiv ((\Gamma \ext \Upsilon)\ext \Phi)
  \and
  (\Upsilon \ext \ec_\p) \equiv \Upsilon
  \and
  (\Upsilon \ext \bkt{\Phi,~\x:^\mu \A}) \equiv ((\Upsilon\ext \Phi),~\x :^\mu \A)
\end{mathpar}
Note that to be at the right universe level, in the rule $(\Upsilon \ext \ec_\p) \equiv \Upsilon$, the right-hand side `$\Upsilon$' must be implicitly lifted to $\ell_\cblu{0}\;\join\;\ell_\cblu{1}$.\bbox

\subsubsection{$\Pi$-telescopes}
\label{sec:pi-telescopes}

To define the copointed endofunctors whose coalgebras are display inductive types, we will need $\Pi$-telescopes.
For simplicity, we require that the codomain be a strict telescope; this suffices for our application.
The basic rules are just like those for $\Pi$-types.

\begin{mathpar}
  \infer{\Gamma \vdash \Upsilon\tel_{\;\!\ell_\cblu{0}} \\ \Gamma\ext(\upsilon:\Upsilon) \vdash \Theta \stel_{\;\!\ell_\cblu{1}}}{\Gamma \vdash (\upsilon:\Upsilon) \to \Theta \stel_{\;\!\ell_\cblu{0}\;\join\;\ell_\cblu{1}}}
  \and
  \infer{\Gamma\ext(\upsilon:\Upsilon) \vdash \theta:\Theta}{\Gamma \vdash \lambda~\upsilon.~\theta : (\upsilon:\Upsilon) \to \Theta}
  \and
  \infer{\Gamma\vdash \delta : (\upsilon:\Upsilon) \to \Theta \\\Gamma \vdash \sigma:\Upsilon}{\Gamma \vdash \delta~\sigma : \Theta~\sqbkt{\id_\Gamma\ext\sigma}}
  \and
  \infer{\Gamma\ext(\upsilon:\Upsilon) \vdash \theta:\Theta\\\Gamma \vdash \sigma:\Upsilon}{\Gamma\vdash  (\lambda~\upsilon.~\theta)~\sigma \equiv \theta~\sqbkt{\id_\Gamma\ext\sigma}}
  \and
  \infer{\Gamma\vdash \delta : (\upsilon:\Upsilon) \to \Theta \\ \Gamma\vdash \delta\cblu{'} : (\upsilon:\Upsilon) \to \Theta \\ \Gamma\ext (\upsilon:\Upsilon) \vdash \delta~\upsilon \equiv \delta\cblu{'}~\upsilon}{\Gamma \vdash \delta\equiv \delta\cblu{'}}
\end{mathpar}
In addition, we assert computation laws for $\Pi$-telescopes when the domain or codomain is an extension.
\begin{mathpar}
  \infer{\Gamma \vdash \Upsilon\tel_{\;\!\ell}}{((\upsilon:\Upsilon)\to \ec_\p) \equiv \ec_\p}
  \and
  \infer{\Gamma \vdash \Theta\stel_{\;\!\ell}}{((\xi:\ec_\p)\to \Theta) \equiv \Theta}
  \\
  \lambda~\upsilon.~\esub_\p \equiv \esub_\p
  \and
  \lambda~(\upsilon:\ec_\p).~\theta \equiv \theta
  \and
  \infer{\Gamma \vdash \Upsilon\tel_{\;\!\ell_\cblu{0}} \\ \Gamma\ext(\upsilon:\Upsilon) \vdash \Theta \stel_{\;\!\ell_\cblu{1}} \\ \Gamma\ext(\upsilon:\Upsilon)\ext(\theta:\Theta)\vdash \B \type_{\;\!\ell_\cblu{2}}}{((\upsilon:\Upsilon) \to (\Theta,~ \y : \B)) \equiv (\delta:(\upsilon:\Upsilon) \to \Theta,~\epsilon:(\upsilon:\Upsilon) \to  \B~\sqbkt{\id_\Gamma\ext\upsilon\ext\delta~\upsilon})}
  \and
  \lambda~\upsilon.~\sqbkt{\theta,~\b}
  \equiv \sqbkt{\lambda~\upsilon.~\theta,~\lambda~\upsilon.~\b}
  \and
  \infer{\Gamma \vdash_\q \Upsilon\tel_{\;\!\ell_\cblu{0}} \\ \Gamma\ext(\upsilon:\Upsilon),~\lock_\mu \vdash_\p \A \type_{\;\!\ell_\cblu{1}} \\ \Gamma\ext(\upsilon:\Upsilon),~\x:^\mu \A \vdash_\q \Theta \tel_{\;\!\ell_\cblu{2}}}{((\upsilon:\Upsilon,~\x:^\mu \A) \to \Theta) \equiv ((\upsilon:\Upsilon) \to (\x:^\mu \A) \to \Theta)}
  \and
  \lambda~(\upsilon : (\Upsilon\ext\A)).~\theta
  \equiv \lambda~\upsilon.~\lambda~\x.~\theta
\end{mathpar}
\begin{mathpar}
  \infer{\Gamma \vdash \Upsilon\tel_{\;\!\ell_\cblu{0}} \\ \Gamma\ext(\upsilon:\Upsilon) \vdash \Theta \tel_{\;\!\ell_\cblu{1}} \\ \Gamma\ext(\upsilon:\Upsilon)\ext(\theta:\Theta)\vdash \Phi \tel_{\;\!\ell_\cblu{2}}}{((\upsilon:\Upsilon) \to (\Theta\ext\Phi)) \equiv (\delta:(\upsilon:\Upsilon) \to \Theta~\ext~\epsilon:(\upsilon:\Upsilon) \to (\Phi~\sqbkt{\id_\Gamma\ext\upsilon\ext\delta~\upsilon}))}
  \and
    \lambda~\upsilon.~\sqbkt{\theta\ext\phi}
  \equiv \sqbkt{\lambda~\upsilon.~\theta\ext \lambda~\upsilon.~\phi}
\end{mathpar}
When telescopes are definitionally lists of types, these rules suffice to compute any $\Pi$-telescope in terms of $\Pi$-types.
Note that in the rule $((\xi:\ec_\p)\to \Theta)\equiv \Theta$, the `$\Theta$' on the right-hand side must be implicitly lifted to the maximum of the levels of $\ec_\p$ and $\Theta$.\bbox

\subsection{Displayed telescopes}
\label{sec:dtel}

The structure in \cref{sec:dd}, with display acting on types and d\'ecalage on telescopes, is sufficient to determine the behavior of display.
However, in practice we will also need a notion of dependent d\'ecalage and display for telescopes.
When telescopes are lists of types, this is determined (like ordinary d\'ecalage) by display for types.

\subsubsection{Meta-abstracted d\'ecalage}
\label{sec:metaabsdec}

\begin{spreadlines}{10pt}
\begin{gather}
  \infer{\Gamma,~\lock_{\trisq} \vdash_\sm \Phi \slfrac{\tel_{\;\!\ell_\cblu{1}}}{_{\upsilon\;:\;\Upsilon}}}{\Gamma \vdash_\sm \Phi\D \slfrac{\tel_{\;\!\ell_\cblu{1}}}{_{\upsilon^\cblu{+}\;:\;\Upsilon\D}}}
  \hspace{1.2cm}
  \infer{\Gamma,~\lock_{\trisq} \vdash_\sm \Phi \slfrac{\tel_{\;\!\ell_\cblu{1}}}{_{\upsilon\;:\;\Upsilon}} \\ \Gamma,~\lock_{\trisq} \vdash_\sm \tau : \Phi}{\Gamma \vdash_\sm \tau\D : \Phi\D}
  \notag\\
  \infer{\Gamma,~\lock_{\trisq} \vdash_\sm \Phi \slfrac{\tel_{\;\!\ell_\cblu{1}}}{_{\upsilon\;:\;\Upsilon}} \\ \Gamma \vdash_\sm \plus{\tau} : \Phi\D}{\Gamma \vdash_\sm \plus{\tau}\ev : \dbkt{\Phi~\sqbkt{\key^{\trisq\le \id_\sm}}~\upsilon^\cblu{+}\ev}_{\upsilon^\cblu{+}\;:\;\Upsilon\D}}
  \notag\\
  \infer{\Gamma,~\lock_{\trisq} \vdash_\sm \Phi \slfrac{\tel_{\;\!\ell_\cblu{1}}}{_{\upsilon\;:\;\Upsilon}} \\ \Gamma,~\lock_{\trisq} \vdash_\sm \sigma : \Upsilon}{\Gamma \vdash_\sm \Phi\D~\sigma\D \equiv (\Phi~\sigma)\D}
  \label{eq:D-app}\\
  \infer{\Gamma,~\lock_{\trisq} \vdash_\sm \Phi \slfrac{\tel_{\;\!\ell_\cblu{1}}}{_{\upsilon\;:\;\Upsilon}} \\ \Gamma,~\lock_{\trisq} \vdash_\sm \sigma : \Upsilon \\ \Gamma,~\lock_{\trisq} \vdash_\sm \tau : \Phi}{\Gamma \vdash_\sm \tau\D~\sigma\D \equiv (\tau~\sigma)\D}
  \notag\\
  \infer{\Gamma,~\lock_{\trisq} \vdash_\sm \Phi \slfrac{\tel_{\;\!\ell_\cblu{1}}}{_{\upsilon\;:\;\Upsilon}} \\ \Gamma \vdash_\sm \plus{\sigma} : \Upsilon\D \\ \Gamma \vdash_\sm \tau : \Phi}{\Gamma \vdash_\sm \tau\D\;\ev~\plus{\sigma} \equiv \tau~\plus{\sigma}\ev}
  \notag
\end{gather}
\end{spreadlines}
We also require that this operation reduce to ordinary d\'ecalage on constant meta-abstractions, and commute appropriately with telescope concatenation, both globally and inside further meta-abstractions.
\begin{spreadlines}{10pt}
\begin{gather}
  \infer{\Gamma,~\lock_\trisq \vdash_\sm \Upsilon\tel_{\;\!\ell_\cblu{0}} \\ \Gamma,~\lock_\trisq \vdash_\sm \Phi\tel_{\;\!\ell_\cblu{1}} \\ \Gamma \vdash_\sm \plus{\sigma} : \Upsilon\D}{\Gamma \vdash_\sm \dbkt{\Phi}_{\upsilon\;:\;\Upsilon}\D~\plus{\sigma} \equiv \Phi\D}
  \notag\\
  \infer{\Gamma,~\lock_\trisq \vdash_\sm \Upsilon\tel_{\;\!\ell_\cblu{0}} \\ \Gamma,~\lock_\trisq \vdash_\sm \Phi\tel_{\;\!\ell_\cblu{1}} \\ \Gamma \vdash_\sm \plus{\sigma} : \Upsilon\D \\ \Gamma \vdash_\sm \delta : \Phi}{\Gamma \vdash_\sm \dsqbkt{\delta}_{\;\upsilon\;:\;\Upsilon}\D~\plus{\sigma} \equiv \delta\D}
  \notag\\
  \infer{\Gamma,~\lock_\trisq \vdash_\sm \Upsilon\tel_{\;\!\ell_\cblu{0}} \\ \Gamma,~\lock_\trisq\ext \Upsilon \vdash_\sm \Phi\tel_{\;\!\ell_\cblu{1}}}
  {(\Upsilon\ext\Phi)\D \equiv \big((\upsilon^\cblu{+}:\Upsilon\D) \ext \dbkt{\Phi}_{\upsilon\;:\;\Upsilon}\D~\upsilon^\cblu{+}\big)}
  \label{eq:D-tel-concat}
\end{gather}
\begin{gather}
  \infer{
    \Gamma,~\lock_\trisq\ext \Upsilon \vdash_\sm \Phi\tel_{\;\!\ell_\cblu{1}}\\\Gamma,~\lock_\trisq \vdash_\sm \sigma : \Upsilon \\ \Gamma,~\lock_\trisq \vdash_\sm \delta : \Phi~\sqbkt{\id_\Gamma,~\lock_\trisq\ext \sigma}}
  {\sqbkt{\sigma \ext \delta}\D \equiv \sqbkt{\sigma\D\ext \delta\D}}
  \label{eq:D-psub-concat}
  \\
  \infer{\Gamma,~\lock_\trisq\ext\Theta \vdash_\sm \Upsilon\tel_{\;\!\ell_\cblu{1}} \\ \Gamma,~\lock_\trisq\ext\Theta\ext \Upsilon \vdash_\sm \Phi\tel_{\;\!\ell_\cblu{2}}}
  {\dbkt{(\Upsilon\ext\Phi)}_{\theta\;:\;\Theta}\D \equiv \dbkt{(\upsilon^\cblu{+}:\dbkt{\Upsilon}_{\theta\;:\;\Theta}\D~\theta^\cblu{+}) \ext \dbkt{\Phi}_{\theta\;:\;\Theta,\;\upsilon\;:\;\Upsilon}\D~\theta^\cblu{+}~\upsilon^\cblu{+}}_{\theta^\cblu{+}\;:\;\Theta\D}}
  \notag\\
  \infer{\Gamma,~\lock_\trisq\ext\Theta \vdash_\sm \sigma : \Upsilon \\ \Gamma,~ \lock_\trisq\ext\Theta \vdash_\sm \delta : \Phi~\sqbkt{\id_\Gamma,~\lock_\trisq\ext\id_\Theta\ext \sigma}}
  {\dsqbkt{\sigma \ext \delta}_{\;\theta\;:\;\Theta}\D \equiv \dsqbkt{\dsqbkt{\sigma}_{\;\theta\;:\;\Theta}\D~\theta^\cblu{+} \ext \dsqbkt{\delta}_{\;\theta\;:\;\Theta}\D~\theta^\cblu{+}}_{\;\theta^\cblu{+}\;:\;\Theta\D}}
  \notag
\end{gather}
\end{spreadlines}
Note that in~\eqref{eq:D-tel-concat} we have to meta-abstract $\Phi$ in order to apply d\'ecalage, since $\Phi$ itself is not in a $\trisq$-locked context.
In~\eqref{eq:D-psub-concat}, $\delta\D$ is supposed to inhabit $\dbkt{\Phi}^{\mathsf{\cred{D}}}_{\upsilon\;:\;\Upsilon}~\sigma\D$, but by~\eqref{eq:D-app} and $\beta$-reduction for meta-abstractions this is equal to $(\Phi~\sqbkt{\id_\Gamma\ext \sigma})\D$, which is the natural type of $\delta\D$.
The last two equations are similar.

\begin{remark}\label{rmk:acks}
  The rules for telescope d\'ecalage from \cref{sec:telescope-decalage} and this section should be compared with the \emph{`local theory'} from~\cite{acks:iparam-noint}, with telescopes and telescope concatenation replacing types and $\Sigma$-types.
  The $\Upsilon\D$ from \cref{sec:telescope-decalage} corresponds to their $\forall\A$, while the $\dbkt{\Phi}_{\Upsilon}\D$ from this section corresponds to their $\forall\!\cblu{d} (\x.\B)$.
  The $\sigma\D$ from \cref{sec:telescope-decalage} corresponds to their $R$ (although with an added modal lock), while the $\dsqbkt{\delta}_{\;\upsilon\;:\;\Upsilon}\D$ from this section corresponds to their $\mathrm{\cred{apd}}$ (with modal lock).
  We don't have their $\mathrm{\cred{ap}}$ represented explicitly, but one of their rules says it is equivalent to $\mathrm{\cred{apd}}$ in a constant family.
  Our $\ev$ is their $\cblu{k}$ (in the unary case, so $\cblu{k}=\cred{0}$), and we do not have their $\S$; as discussed before, the modal guards make symmetry unnecessary.
  And, of course, we don't have $\Pi$-telescopes or a universe; we have those only for display, which is indexed. \bbox
\end{remark}

\subsubsection{Computing meta-abstracted d\'ecalage}
\label{sec:comp-metaabsdec}

Like ordinary d\'ecalage, meta-abstracted d\'ecalage computes on telescopes that are made out of types.
For brevity we omit the typing premises of these equalities, but we emphasize that all the meta-variables on the left-hand sides such as $\Theta,\A,\sigma,\t$ can depend nontrivially on the abstraction variables $\upsilon$ or $\plus{\upsilon}$.

\begin{align*}
  \dbkt{\theta:\Theta,~\x : \A}_{\upsilon\;:\;\Upsilon}\D
  &\equiv \scaleto{\newllparenthesis}{1.2em}\;\theta^\cblu{+}:\dbkt{\Theta}_{\upsilon\;:\;\Upsilon}\D~\upsilon^\cblu{+},~\x : \A~\sqbkt{\key^{\trisq\le \id_\sm} \ext\upsilon^\cblu{+}\ev\ext \theta^\cblu{+}\ev~\upsilon^\cblu{+}},\\
  &\quad\quad\quad\quad\quad\quad\quad\quad\quad\quad\quad\ \ \  \xp : \dbkt{\A}_{\upsilon\;:\;\Upsilon\;\ext\;\theta\;:\;\Theta}\d~\upsilon^\cblu{+}~\theta^\cblu{+}~\x\;\scaleto{\newrrparenthesis}{1.2em}_{\upsilon^\cblu{+}\;:\;\Upsilon\D}
  \\
  \dsqbkt{\sigma,~\t}_{\;\upsilon \;:\; \Upsilon}\D
  &\equiv \dsqbkt{\dsqbkt{\sigma}_{\;\upsilon\;:\;\Upsilon}\D~\upsilon^\cblu{+},~\dsqbkt{\t}_{\;\upsilon\;:\;\Upsilon}~\upsilon^\cblu{+}\ev,~\dsqbkt{\t}_{\;\upsilon\;:\;\Upsilon}\d~\upsilon^\cblu{+}}_{\;\upsilon^\cblu{+}\;:\;\Upsilon\D}\\
  \dsqbkt{\plus{\sigma},~\t,~\t\cblu{'}}_{\;\plus{\upsilon}\;:\;\Upsilon\D}\ev
  &\equiv \dsqbkt{\plus{\sigma}\ev,~\t}_{\;\plus{\upsilon}\;:\;\Upsilon\D}\\
  \dbkt{\theta:\Theta,~\x :^{\tri \circ \mu} \A}_{\upsilon\;:\;\Upsilon}\D
  &\equiv \scaleto{\newllparenthesis}{1.2em}\;\theta^\cblu{+} : \dbkt{\Theta}_{\upsilon\;:\;\Upsilon}\D~\upsilon^\cblu{+}, \\
  & \quad\quad\ \; \x :^{\tri\circ\mu} \A~\sqbkt{\key^{\trisq\le \id_\sm} \ext\upsilon^\cblu{+}\ev \ext \theta^\cblu{+}\ev~\upsilon^\cblu{+},~\lock_{\tri\circ\mu}}\;\scaleto{\newrrparenthesis}{1.2em}_{\upsilon^\cblu{+}\;:\;\Upsilon\D}\\
  \dsqbkt{\sigma,~\t}_{\;\upsilon \;:\; \Upsilon}\D
  &\equiv \dsqbkt{\dsqbkt{\sigma}_{\;\upsilon\;:\;\Upsilon}\D~\upsilon^\cblu{+},~\dsqbkt{\t}_{\;\upsilon\;:\;\Upsilon}~\upsilon^\cblu{+}\ev}_{\;\upsilon^\cblu{+}\;:\;\Upsilon\D}\\
  \dsqbkt{\plus{\sigma},~\t}_{\;\plus{\upsilon}\;:\;\Upsilon\D}\ev
  &\equiv \dsqbkt{\plus{\sigma}\ev,~\t}_{\;\plus{\upsilon}\;:\;\Upsilon\D}
  \tag*{\bbox}
\end{align*}

\subsubsection{Telescope display}
\label{sec:teldisp}

We also need a notion of indexed display for telescopes.
Note that this always gives a \emph{strict} telescope, even if its input is not.
\begin{mathpar}
  \infer{\Gamma,~\lock_\trisq \vdash_\sm \Upsilon \tel_{\;\!\ell} \\ \Gamma \vdash_\sm \sigma : \Upsilon~\sqbkt{\key^{\trisq\le \id_\sm}}}{\Gamma \vdash_\sm (\Upsilon\d~\sigma) \stel_{\;\!\ell}}
  \and
  \infer{\Gamma,~\lock_\trisq \vdash_\sm \Upsilon \tel_{\;\!\ell} \\ \Gamma,~\lock_\trisq \vdash_\sm \sigma : \Upsilon}{\Gamma \vdash_\sm \sigma\d : (\Upsilon\d~(\sigma~\sqbkt{\key^{\trisq\le \id_\sm}}))}
  \and
\end{mathpar}
Like d\'ecalage, telescope display computes on empty telescopes, and on telescopes extended by a type:
\begin{alignat*}{2}
  \ec^{\mathsf{\cred{d}}}_\sm~\esub_\sm &\equiv \ec_\sm\\
  \esub^{\mathsf{\cred{d}}}_{\sm} &\equiv \esub_\sm \\
  \bkt{\theta:\Theta,~\x : \A}\d~\sqbkt{\sigma,\t}
  &\equiv \mathrlap{\bkt{\theta\cblu{'}:\Theta\d~\sigma,~\xp : \dbkt{\A}_{\theta\;:\;\Theta}\d~\pair{\sigma}{\theta\cblu{'}}~\t}} \\
  \sqbkt{\sigma,~\t}\d
  &\equiv \sqbkt{\sigma\d,~\t\d} && \qquad \text{(for a non-modal variable)}\\
  \bkt{\theta:\Theta,~\x :^{\tri \circ \mu} \A}\d~\sqbkt{\sigma,\t}
  &\equiv \Theta\d~\sigma\\
  \sqbkt{\sigma,~\t}\d
  &\equiv \sigma\d&& \qquad \text{(for a modal variable)}
\end{alignat*}
As promised, this is the reason that the empty telescope must exist at all levels: if $\Upsilon$ consists only of modal variables, then $\Upsilon\d$ is empty, but it must be at the same level as $\Upsilon$.

Note that compared to d\'ecalage, telescope display reorders the variables.
For instance, we have
\begin{align*}
  (\x:\A,~ \y:\B)\D &\equiv  (\x:\A,~ \x\cblu{'}:\A\d~ \x,~ \y:\B,~ \y\cblu{'}:\B\d~ \y)\\
  (\x:\A,~ \y:\B)\d &\equiv \dbkt{\x\cblu{'}:\A\d~ \x,~ \y\cblu{'}:\B\d~ \y }_{\x\;:\;\A,\;\y\;:\;\B}\\
  (\x:\A,~ \y:\B) \ext (\x:\A,~ \y:\B)\d &\equiv (\x:\A,~\y:\b,~ \x\cblu{'}:\A\d~ \x,~ \y\cblu{'}:\B\d~ \y)\\
  &\not\equiv (\x:\A,~ \y:\B)\D.
\end{align*}
Thus, we instead relate telescope display to d\'ecalage by an \emph{`odds'} operation that picks out the elements of displayed types, and a \emph{`pairing'} operation that interleaves them together, such that evens and odds together form an isomorphism with pairing as inverse.
\begin{mathpar}
  \infer{\Gamma \vdash_\sm \plus{\sigma} : \Upsilon\D}{\Gamma \vdash_\sm \plus{\sigma}\od : \Upsilon\d~\sigma\ev}
  \and
  \infer{\Gamma \vdash_\sm \sigma : \Upsilon~\sqbkt{\key^{\trisq\le \id_\sm}} \\ \Gamma \vdash_\sm \sigma\cblu{'} : \Upsilon\d~\sigma}{\Gamma \vdash_\sm \pair{\sigma}{\sigma\cblu{'}} : \Upsilon\D}
  \\
  \plus{\sigma} \equiv \pair{\plus{\sigma}\ev}{\plus{\sigma}\od}
  \and
  \pair{\sigma}{\sigma\cblu{'}}\ev \equiv \sigma
  \and
  \pair{\sigma}{\sigma\cblu{'}}\od \equiv \sigma\cblu{'}
  \\
  \sigma^{\mathsf{\cred{D}}\;\mathsf{\cred{od}}} \equiv \sigma\d
  \and
  \pair{\sigma~\sqbkt{\key^{\trisq\le \id_\sm}}}{\sigma\d} \equiv \sigma\D
\end{mathpar}
These operations also compute on empty telescopes and on telescopes extended by a type:
\begin{alignat*}{2}
  \esub^{\mathsf{\cred{od}}}_\sm &\equiv \esub_\sm \\
  \pair{\esub_\sm}{\esub_\sm} &\equiv \esub_\sm\\
  \sqbkt{\plus{\sigma},~\t,~\t\cblu{'}}\od
  &\equiv \sqbkt{\plus{\sigma}\od,~\t\cblu{'}} && \quad\text{(for a non-modal variable)}\\
  \pair{\sqbkt{\sigma,~\t}}{\sqbkt{\sigma\cblu{'},~\t\cblu{'}}}
  &\equiv \sqbkt{\pair{\sigma}{\sigma\cblu{'}},~\t,~\t\cblu{'}} && \quad\text{(for a non-modal variable)}\\
  \sqbkt{\plus{\sigma},~\t}\od
  &\equiv \plus{\sigma}\od && \quad\text{(for a modal variable)}\\
  \pair{\sqbkt{\sigma,~\t}}{\sigma\cblu{'}}
  &\equiv \sqbkt{\pair{\sigma}{\sigma\cblu{'}},~\t} && \quad\text{(for a modal variable)}
\end{alignat*}

\subsubsection{Meta-abstracted telescope display}
\label{sec:meta-abstr-displ}

Unsurprisingly, we generalise telescope display to apply to meta-abstracted telescopes as well, with rules combining those of \cref{sec:displ-meta-abstr,sec:teldisp}.
First we have the basic rules:
\begin{mathpar}
  \infer{\Gamma,~\lock_\trisq \vdash_\sm \Phi \slfrac{\tel_{\;\!\ell_\cblu{1}}}{_{\upsilon\;:\;\Upsilon}}}{\Gamma \vdash_\sm \Phi\d \slfrac{\stel_{\;\!\ell_\cblu{1}}}{_{\plus{\upsilon}\;:\;\Upsilon\D,\;\phi\;:\;\Phi\;\sqbkt{\key^{\trisq\le \id_\sm}}\;\plus{\upsilon}\ev}}}
  \and
  \infer{\Gamma,~\lock_\trisq \vdash_\sm \Phi \slfrac{\tel_{\;\!\ell_\cblu{1}}}{_{\upsilon\;:\;\Upsilon}} \\ \Gamma,~\lock_\trisq \vdash_\sm \delta : \Phi}{\Gamma \vdash_\sm \delta\d: \dbkt{\Phi\d~\plus{\upsilon}~(\delta~\plus{\upsilon}\ev)}_{\plus{\upsilon}\;:\;\Upsilon\D}}
  \and
  \infer{\Gamma,~\lock_\trisq \vdash_\sm \Phi \slfrac{\tel_{\;\!\ell_\cblu{1}}}{_{\upsilon\;:\;\Upsilon}} \\ \Gamma,~\lock_\trisq \vdash_\sm \sigma : \Upsilon \\ \Gamma \vdash_\sm \t : (\Phi~\sigma)~\sqbkt{\key^{\trisq\le \id_\sm}}}
  {\Gamma \vdash_\sm \Phi\d~\sigma\D~\t \equiv (\Phi~\sigma)\d~\t}
  \and
  \infer{\Gamma,~\lock_\trisq \vdash_\sm \Phi \slfrac{\tel_{\;\!\ell_\cblu{1}}}{_{\upsilon\;:\;\Upsilon}} \\ \Gamma,~\lock_\trisq \vdash_\sm \sigma : \Upsilon \\ \Gamma,~\lock_\trisq \vdash_\sm \ct : \Phi}
  {\Gamma \vdash_\sm \ct\d~\sigma\D \equiv (\t~\sigma)\d}
\end{mathpar}
Then we have the odds/pairing isomorphism:
\begin{mathpar}
  \infer{\Gamma,~\lock_\trisq \vdash_\sm \Phi \slfrac{\tel_{\;\!\ell_\cblu{1}}}{_{\upsilon\;:\;\Upsilon}}\\ \Gamma \vdash_\sm \delta^\cblu{+} : \Phi\D}{\Gamma \vdash_\sm \delta^\cblu{+}\od : \dbkt{\Phi\d~\plus{\upsilon}~  (\delta^\cblu{+}\ev~\plus{\upsilon})}_{\plus{\upsilon}\;:\;\Upsilon\D}}
  \and
  \infer{\Gamma,~\lock_\trisq \vdash_\sm \Phi \slfrac{\tel_{\;\!\ell_\cblu{1}}}{_{\upsilon\;:\;\Upsilon}} \\
    \Gamma \vdash_\sm \delta : \dbkt{\Phi~\sqbkt{\key^{\trisq\le \id_\sm}}~\plus{\upsilon}\ev}_{\plus{\upsilon}\;:\;\Upsilon\D} \\
    \Gamma \vdash_\sm \delta\cblu{'} : \dbkt{\Phi\d~\plus{\upsilon}~(\delta~\plus{\upsilon})}_{\plus{\upsilon}\;:\;\Upsilon\D}}
  {\Gamma \vdash_\sm \pair{\delta}{\delta\cblu{'}} : \Phi\D}
  \\
  \delta \equiv \pair{\delta\ev}{\delta\od}
  \and
  \pair{\delta}{\delta\cblu{'}}\ev \equiv \delta
  \and
  \pair{\delta}{\delta\cblu{'}}\od \equiv \delta\cblu{'}
  \\
  \delta^{\mathsf{\cred{D}}\;\mathsf{\cred{od}}} \equiv \delta\d
  \and
  \pair{\delta~\sqbkt{\key^{\trisq\le \id_\sm}}}{\delta\d} \equiv \delta\D
\end{mathpar}
On constant meta-abstractions, this reduces to ordinary telescope display:
\begin{mathpar}
  \infer{\Gamma,~\lock_\trisq \vdash_\sm \Upsilon\tel_{\;\!\ell_\cblu{0}} \\ \Gamma,~\lock_\trisq \vdash_\sm \Phi\tel_{\;\!\ell_\cblu{1}} \\ \Gamma \vdash_\sm \plus{\sigma} : \Upsilon\D \\ \Gamma \vdash_\sm \delta : \Phi~\sqbkt{\key^{\trisq\le \id_\sm}\ext \plus{\sigma}}}{\Gamma \vdash_\sm \dbkt{\Phi}_{\upsilon\;:\;\Upsilon}\d~\plus{\sigma}~\delta \equiv \Phi\d~\delta}
  \and
  \infer{\Gamma,~\lock_\trisq \vdash_\sm \Upsilon\tel_{\;\!\ell_\cblu{0}} \\ \Gamma,~\lock_\trisq \vdash_\sm \Phi\tel_{\;\!\ell_\cblu{1}} \\ \Gamma \vdash_\sm \plus{\sigma} : \Upsilon\D \\ \Gamma \vdash_\sm \delta : \Phi~\sqbkt{\key^{\trisq\le \id_\sm}\ext \plus{\sigma}}}{\Gamma \vdash_\sm \dsqbkt{\delta}_{\;\upsilon\;:\;\Upsilon}\d~\plus{\sigma} \equiv \delta\d}
\end{mathpar}
And we have computation rules for telescope extensions:
\begin{mathpar}
  \infer{\Gamma,~\lock_\trisq \vdash_\sm \Upsilon\tel_{\;\!\ell_\cblu{0}} \\ \Gamma,~\lock_\trisq\ext \Upsilon \vdash_\sm \Phi\tel_{\;\!\ell_\cblu{1}}\\
  \Gamma \vdash_\sm \sigma : \Upsilon~\sqbkt{\key^{\trisq\le \id_\sm}} \\ \Gamma \vdash_\sm \delta : \Phi~\sqbkt{\key^{\trisq\le \id_\sm}\ext \sigma}}
  {(\Upsilon\ext\Phi)\d~\sigma~\delta \equiv \left((\upsilon\cblu{'}:\Upsilon\d~\sigma) \ext \dbkt{\Phi}_{\upsilon\;:\;\Upsilon}\d~\pair{\sigma}{\upsilon\cblu{'}}~\delta\right)}
  \and
  \infer{\Gamma,~\lock_\trisq \vdash_\sm \Upsilon\tel_{\;\!\ell_\cblu{0}} \\ \Gamma,~\lock_\trisq\ext \Upsilon \vdash_\sm \Phi\tel_{\;\!\ell_\cblu{1}}\\
         \Gamma,~\lock_\trisq \vdash_\sm \sigma : \Upsilon \\ \Gamma,~\lock_\trisq \vdash_\sm \delta : \Phi~\sqbkt{\id_\Gamma,~\lock_\trisq\ext \sigma}}
  {\sqbkt{\sigma \ext \delta}\d \equiv \sqbkt{\sigma\d\ext \delta\d}}
  \\
  \infer{\Gamma,~\lock_\trisq\ext\Theta \vdash_\sm \Upsilon\tel_{\;\!\ell_\cblu{0}} \\ \Gamma,~\lock_\trisq\ext\Theta\ext \Upsilon \vdash_\sm \Phi\tel_{\;\!\ell_\cblu{1}}}
  {\dbkt{(\Upsilon\ext\Phi)}_{\theta\;:\;\Theta}\d \equiv \dbkt{(\upsilon\cblu{'}:\dbkt{\Upsilon}_{\theta\;:\;\Theta}\d~\theta^\cblu{+}~\upsilon) \ext \dbkt{\Phi}_{\theta\;:\;\Theta,\;\upsilon\;:\;\Upsilon}\d~\theta^\cblu{+}~\pair{\upsilon}{\upsilon\cblu{'}}~\phi}_{\theta^\cblu{+}\;:\;\Theta\D,\;\upsilon\;:\;\Upsilon,\;\phi\;:\;\Phi}}
  \and
  \infer{\Gamma,~\lock_\trisq\ext\Theta \vdash_\sm \sigma : \Upsilon \\ \Gamma,~\lock_\trisq\ext\Theta \vdash_\sm \delta : \Phi~\sqbkt{\id_\Gamma,~\lock_\trisq\ext\id_\Theta\ext \sigma}}
  {\dsqbkt{\sigma \ext \delta}_{\;\theta\;:\;\Theta}\d \equiv \dsqbkt{\dsqbkt{\sigma}_{\;\theta\;:\;\Theta}\d~\theta^\cblu{+} \ext \dsqbkt{\delta}_{\;\theta\;:\;\Theta}\d~\theta^\cblu{+}}_{\;\theta^\cblu{+}\;:\;\Theta\D}}
  \eqno\sectend
\end{mathpar}

\subsubsection{Computing meta-abstracted telescope display}
\label{sec:comp-matd}

These computation rules are analogous to the previous ones, first in the non-modal case:
\begin{align*}
  \dbkt{\theta:\Theta,~\x : \A}_{\upsilon\;:\;\Upsilon}\d~\sqbkt{\plus{\sigma}\ext\delta,~\t}
  &
  \begin{multlined}[t][0.5\linewidth]
    \equiv \left(\theta\cblu{'}:\dbkt{\Theta}_{\upsilon\;:\;\Upsilon}\d~\plus{\sigma}~\delta,\right.\\
    \left.\xp : \dbkt{\A}_{\upsilon\;:\;\Upsilon,\;\theta\;:\;\Theta}\d~\sigma~\pair{\delta}{\theta\cblu{'}}~\t\right)
  \end{multlined}
  \\
  \dsqbkt{\delta,~\t}_{\;\upsilon\;:\;\Upsilon}\d~\plus{\sigma}
  &\equiv \sqbkt{\dsqbkt{\delta}_{\;\upsilon\;:\;\Upsilon}\d~\plus{\sigma},~\dsqbkt{\t}_{\;\upsilon\;:\;\Upsilon}\d~\plus{\sigma}}\\
  \dsqbkt{\plus{\delta},~\t,~\t\cblu{'}}_{\;\plus\upsilon\;:\;\Upsilon\D}\od~\plus{\sigma}
  &\equiv \sqbkt{\dsqbkt{\plus{\delta}}_{\;\plus\upsilon\;:\;\Upsilon\D}\od~\plus{\sigma},~\dsqbkt{\t\cblu{'}}_{\;\plus\upsilon\;:\;\Upsilon\D}~\plus{\sigma}}\\
  \pair{\dsqbkt{\delta,~\t}_{\;\plus\upsilon\;:\;\Upsilon\D}}{\dsqbkt{\delta\cblu{'},~\t\cblu{'}}_{\;\plus\upsilon\;:\;\Upsilon\D}}~\plus{\sigma}
  &\equiv
  \begin{multlined}[t][0.5\linewidth]
    [\;\pair{\dsqbkt{\delta}_{\;\plus\upsilon\;:\;\Upsilon\D}}{\dsqbkt{\delta\cblu{'}}_{\;\plus\upsilon\;:\;\Upsilon\D}}~\plus{\sigma},\\
    \dsqbkt{\t}_{\;\plus\upsilon\;:\;\Upsilon\D}~\plus{\sigma},~\dsqbkt{\t\cblu{'}}_{\;\plus\upsilon\;:\;\Upsilon\D}~\plus{\sigma}\;]
  \end{multlined}
\end{align*}
and then the modal case:
\begin{align*}
  \dbkt{\theta:\Theta,~\x :^{\tri \circ \mu} \A}_{\upsilon\;:\;\Upsilon}\d~\sqbkt{\plus{\sigma}\ext\delta,~\t}
  &\equiv \dbkt{\Theta}_{\upsilon\;:\;\Upsilon}\d~\plus{\sigma}~\delta\\
  \dsqbkt{\delta,~\t}_{\;\upsilon\;:\;\Upsilon}\d~\plus{\sigma}
  &\equiv \dsqbkt{\delta}_{\;\upsilon\;:\;\Upsilon}\d~\plus{\sigma}\\
  \dsqbkt{\delta,~\t}_{\;\plus\upsilon\;:\;\Upsilon\D}\od~\plus{\sigma}
  &\equiv \dsqbkt{\delta}_{\;\plus\upsilon\;:\;\Upsilon\D}\od~\plus{\sigma}\\
  \pair{\dsqbkt{\delta,~\t}_{\;\plus\upsilon\;:\;\Upsilon\D}}{\dsqbkt{\delta\cblu{'}}_{\;\plus\upsilon\;:\;\Upsilon\D}}~\plus{\sigma}
  &\equiv
  \begin{multlined}[t][0.4\linewidth]
    [\;\pair{\dsqbkt{\delta}_{\;\plus\upsilon\;:\;\Upsilon\D}}{\dsqbkt{\delta\cblu{'}}_{\;\plus\upsilon\;:\;\Upsilon\D}}~\plus{\sigma},\\
    \dsqbkt{\t}_{\;\plus\upsilon\;:\;\Upsilon\D}~\plus{\sigma}\;]
    \tag*{\bbox}
  \end{multlined}
\end{align*}

\subsubsection{Computing display on $\Pi$-telescopes}
\label{sec:computing-display-pi}

Finally, we give the following rules for computing display and meta-abstracted display of a $\Pi$-telescope.
These are consistent with the rules for computing display on telescopes extended by a type.
\begin{gather*}
  ((\phi:\Phi) \to \Theta~\phi)\d~\delta
  \equiv (\plus{\phi} : \Phi\D) \to \Theta\d~\plus{\phi}\ev~(\delta~\plus{\phi}\ev)\\[10pt]
  \begin{multlined}[][0.9\linewidth]
    \dbkt{(\phi:\Phi~\upsilon) \to \Theta~\upsilon~\phi}_{\upsilon \;:\; \Upsilon}\d\\
    \equiv \dbkt{(\plus\phi:\Phi\D~\plus\upsilon) \to \Theta\d~\plus\upsilon~\plus\phi~(\delta~\plus\phi\ev)}_{\plus{\upsilon}\;:\;\Upsilon\D, \;\delta\;:\;(\phi\;:\;\Phi\;\plus{\upsilon}\ev)\;\to\; \Theta\;\plus\upsilon\ev\;\phi}
  \end{multlined}
\end{gather*}
However, it does not seem possible to give rules for computing \emph{d\'ecalage} on a $\Pi$-telescope that are similarly consistent.
Fortunately, we will not need such rules.
Of course, since d\'ecalage and $\Pi$-telescopes both compute independently on telescopes extended by a type, so does their combination.

This concludes our description of the ambient syntax of dTT. \bbox

\section{Semi-Simplicial and Displayed Coinductive Types}
\label{sec:ssts}

Recall from the introduction that our primary goal in formulating dTT (at the moment) is to have a type theory in which we can make precise our coinductive definition of the type $\SST$ of semi-simplicial types.
In this section we give that definition, making use of the \emph{`display'} primitives of dTT that was introduced in \cref{sec:syntax}.
The basic definition is contained in \cref{subsec:semi-simplicial-types}, followed by an exploration of some examples in \cref{sec:eg-sst}.
Then in \cref{sec:dcoind} we describe a more general notion of \emph{`displayed coinductive type'} that has $\SST$ as a special case, and in \cref{sec:exampl-displ-coind} we explore a few other examples of the general notion.

\subsection{Semi-simplicial types}
\label{subsec:semi-simplicial-types}

In an ideal version of displayed type theory, one could define semi-simplicial types as an instance of a general \textbf{codata} decleration. We would expect to write this in a proof assistant with a syntax like the following, which generalises Agda-like syntax for records by allowing the coinductive input of each destructor to be specified explicitly and referred to in its type:
\begin{lstlisting}
(*\textbf{codata}*) (*$\SST$*) : (*$\Type$*) (*\textbf{where}*)
  (*$\Z$*) : (*$\SST$*) (*$\to$*) (*$\Type$*)
  (*$\S$*) : ((*$\A$*) : (*$\SST$*)) (*$\to$*) (*$\Z$*) (*$\A$*) (*$\to$*) (*$\SST\d$*) (*$\A$*)
\end{lstlisting}
It is beyond the scope of this paper to give a sufficiently broad framework to generally encompass such definitions, but we will describe one general paradigmatic class of them, analogous to W-types as paradigmatic inductive types and M-types as paradigmatic coinductive types.
However, we begin by discussing the concrete example of $\SST$ in more detail, to help motivate the general case.

\subsubsection{SST basics}
\label{sec:sst-basics}

We begin by giving the type formation law and destructors.
Of course, since $\SST$ is a sort of \emph{`universe'}, its elements consisting of types, it must also be parametrized by a level.

\vspace{-0.4cm}
\[\infer{\hspace{0.1cm}}{\Shortstack{{$\Gamma \vdash_\sm \SST_{\;\!\ell} \type_{\;\!\lsuc\;\ell}$} \hspace{0.1cm} {$\Gamma \vdash_\sm \Z : \dbkt{\Type_{\;\!\ell}}_{\A\;:\;\SST_{\ell}}$} \hspace{0.1cm} {$\Gamma \vdash_\sm \S: \dbkt{\SST^{\mathsf{\cred{d}}}_{\ell}~\A}_{\A\;:\;\SST_{\ell},\;\a\;:\;\El\;\bkt{\Z\;\A}}$}}}\]
Note that the destructors are defined as terms belonging to `meta-abstractions' as introduced in \cref{sec:meta-abstractions}.
We have chosen this over the more common method of supplying the arguments in premises, e.g.
\[ \infer{\Gamma \vdash_\sm \A : \SST_{\;\!\ell}}{\Gamma \vdash_\sm \Z~\A : \Type_{\;\!\ell}}, \]
because it makes it easier to compute $\d$ of them:
\begin{gather*}
\Gamma \vdash_\sm \SST^{\mathsf{\cred{d}}}_{\ell} \slfrac{\type_{\;\!\lsuc\;\ell}}{_{\A_\ze\;:\;\SST_{\ell}}}
\\
\Gamma \vdash_\sm \Z\d : \dbkt{\El~\bkt{\Z~\A_\ze} \to \Type_{\;\!\ell}}_{\set{\A_\ze\;:\;\SST_{\ell}},\;\A_\x\;:\;\SST^{\mathsf{\cred{d}}}_{\ell}~\A_\ze}
\\
\Gamma \vdash_\sm \S\d : \dbkt{\SST^{\mathsf{\cred{dd}}}_{\ell}\;\A_\ze\;\A_\x\;\bkt{\S\;\A_\ze\;\a_\ze}}_{\set{\A_\ze\,:\,\SST_{\ell}},\,\A_\x\,:\,\SST^{\mathsf{\cred{d}}}_{\ell}\,\A_\ze,\,\a_\ze\,:\,\El\,\bkt{\Z\,\A_\ze},\,\a_\x\,:\,\El\,\bkt{\Z\d\,\A_\x\,\a_\ze}}
\end{gather*}
What this calculation suggests is that the family $\SST\d$ should behave as though defined by computing $\d$ on all of the destructors in the code block above:
\begin{lstlisting}
(*\textbf{codata}*) (*$\SST\d$*) ((*$\A_\ze$*) : (*$\SST$*)) : (*$\Type$*) (*\textbf{where}*)
  (*$\Z\d$*) : (*$\SST\d$*) (*$\A$*) (*$\to$*) (*$\Z$*) (*$\A$*) (*$\to$*) (*$\Type$*)
  (*$\S\d$*) : ((*$\A_\x$*) : (*$\SST\d$*) (*$\A$*)) ((*$\a_{\ze}$*) : (*$\Z$*) (*$\A_\ze$*)) (*$\to$*) (*$\Z\d$*) (*$\A_\x$*) (*$\a_{\ze}$*) (*$\to$*) (*$\SST\d\d$*) (*$\A_\ze$*) (*$\A_\x$*) ((*$\S$*) (*$\A_\ze$*) (*$\a_{\ze}$*))
\end{lstlisting}
Unfortunately, as we will see this is not actually possible in our theory, but it is a useful intuition.
In general, the types obtained by iterating $\d$ $\n$-times on $\Z$ and $\S$ will begin by taking a $\n$-fold dependent $\SST$ in a generic augmented simplicial context of $\SST$s of lower dependency. This context can be generally inferred from the type of $\n$-fold dependent $\SST$, and we have thus chosen to make those arguments implicit, which aligns with the syntactic presentation in the introduction. In particular, the formula for $\A_\cred{2}$ is given by:
\[\Z\d\d~\bkt{\S\d~\bkt{\S~\A~\cblu{x\sub{001}}}~\cblu{x\sub{010}}~\cblu{\beta\sub{011}}} ~\cblu{x\sub{100}}~\cblu{\beta\sub{101}}~\cblu{\beta\sub{110}}\]
as opposed to:
\[\Z\d\d~\A~(\S~\A~\cblu{x\sub{001}})~(\S~\A~\cblu{x\sub{010}})~ \bkt{\S\d~\A~\bkt{\S~\A~\cblu{x\sub{001}}}~\cblu{x\sub{010}}~\cblu{\beta\sub{011}}} ~\cblu{x\sub{100}}~\cblu{\beta\sub{101}}~\cblu{\beta\sub{110}}. \eqno\bbox\]

\subsubsection{The coinduction principle}
\label{sec:sst-coinduction}

Suppose that we want to construct a function mapping into $\SST$ from a telescope of arbitrary length. We first think purely in terms of code, written in the style of \texttt{Agda}-esque copattern matching, with the goal of writing down something that can conceivably be justified:
\begin{lstlisting}
(*$\f$*) : (*$\X$*) (*$\to$*) (*$\SST$*)
(*$\Z$*) ((*$\f$*) (*$\t$*)) = ((*\cprime{?$_{\mathsf{Z}_0}$}*) : (*$\Type$*))
(*$\S$*) ((*$\f$*) (*$\t$*)) (*$\a$*) = (*$\f\d$*) (*$\t$*) ((*\cprime{?$_{\mathsf{S}_0}$}*) : (*$\X\d$*) (*$\t$*))

(*$\g$*) : ((*$\t$*) : (*$\X$*)) (*$\to$*) (*$\Y$*) (*$\t$*) (*$\to$*) (*$\SST$*)
(*$\Z$*) ((*$\g$*) (*$\t$*) (*$\s$*)) = ((*\cprime{?$_{\mathsf{Z}_1}$}*) : (*$\Type$*))
(*$\S$*) ((*$\g$*) (*$\t$*) (*$\s$*)) (*$\a$*) = (*$\g\d$*) (*$\t$*) ((*\cprime{?$_{\mathsf{S}_1}$}*) : (*$\X\d$*) (*$\t$*)) (*$\s$*) ((*\cprime{?$_{\mathsf{S}_2}$}*) : (*$\Y\d$*) (*$\t$*) (*\cprime{?$_{\mathsf{S}_1}$}*) (*$\s$*))
\end{lstlisting}
Here, suppose that $\Gamma,~\lock_\trisq \vdash_\sm \Upsilon \tel_{\;\!\ell^\cblu{'}}$. If we think of $\Upsilon$ as a \emph{state space} and $\sqbkt{\sigma : \Upsilon}$ as a \emph{state}. Then the above definition suggests that we are able to define $\f : \bkt{\upsilon : \Upsilon} \to \SST_{\ell}$ provided that we are able to provide two ingredients. First, we need a way of extracting $\sqbkt{\cred{\widetilde{\Z}}~\sigma : \Type_{\;\!\ell}}$, a type of $\cred{0}$-simplices, from a state $\sigma$. Second, we need a way of extracting $\sqbkt{\cred{\widetilde{\S}}~\sigma~\a : \Upsilon\d~\sigma}$, a dependent section of $\Upsilon$ over $\sigma$, from a state $\sigma$ and a $\cred{0}$-simplex $\sqbkt{\a : \cred{\widetilde{\Z}}~\sigma}$. This suggests that a reasonable coinduction principle for $\SST_{\ell}$ is the following:
\begin{mathpar}
\infer{\Gamma,~\lock_\trisq \vdash_\sm \Upsilon \ctx_{\;\!\ell^\cblu{'}} \\ \Gamma,~\lock_\trisq \vdash_\sm \cred{\widetilde{\Z}} : \dbkt{\Type_{\;\!\ell}}_{\delta\;:\;\Upsilon} \\ \Gamma,~\lock_\trisq \vdash_\sm \cred{\widetilde{\S}} :  \dbkt{\Upsilon\d~\delta}_{\;\delta\;:\;\Upsilon,\;\a\;:\;\El\;\bkt{\cred{\widetilde{\Z}}\;\delta}}}{\Gamma \vdash_\sm \mathsf{\R_\Upsilon}~\cred{\widetilde{\Z}} ~\cred{\widetilde{\S}} : \dbkt{\SST_{\ell}}_{\delta\;:\;\Upsilon}}
\end{mathpar}
and that its computation rules should be:
\begin{align*}
\Z~\big(\mathsf{\R_\Upsilon}~\cred{\widetilde{\Z}} ~\cred{\widetilde{\S}}~\sigma\big) &\equiv \cred{\widetilde{\Z}}~\sigma \\
\S~\big(\mathsf{\R_\Upsilon}~\cred{\widetilde{\Z}} ~\cred{\widetilde{\S}}~\sigma\big)~\a_\ze &\equiv \big(\mathsf{\R_\Upsilon}~\cred{\widetilde{\Z}} ~\cred{\widetilde{\S}}\big)\d~\pair{\sigma}{\cred{\widetilde{\S}}~\sigma~\a_\ze}.
\end{align*}
Now, the expression $\big(\mathsf{\R_\Upsilon}~\cred{\widetilde{\Z}} ~\cred{\widetilde{\S}}\big)\d$ defines a meta abstracted-term of meta-abstracted type $\dbkt{\SST^{\mathsf{\cred{d}}}_{\ell}~\big(\mathsf{\R_\Upsilon}~\cred{\widetilde{\Z}} ~\cred{\widetilde{\S}}~\upsilon^\cblu{+}\ev\big)}_{\upsilon^\cblu{+}\;:\;\Upsilon\D}$. One reasonable hope is that the display in the above line could be computed in terms of a corecursor for $\SST^{\mathsf{\cred{d}}}_{\ell}$. However, this approach runs into issues. Towards this aim, let us more generally try to work out the coinduction principle that would let us define $\f : \bkt{\x : \X} \to \SST^{\mathsf{\cred{d}}}_{\ell}~\bkt{\A~\x}$ for $\Gamma,~\lock_\trisq \vdash_\sm \A : \X \to \SST^{\mathsf{\cred{d}}}_{\ell}$. We apply the same methodology as before, and start by writing down reasonable looking code:
\begin{lstlisting}
(*$\f$*) : ((*$\t$*) : (*$\X$*)) (*$\to$*) (*$\SST\d$*) ((*$\A$*) (*$\t$*))
(*$\Z\d$*) ((*$\f$*) (*$\t$*)) (*$\a$*) = ((*\cprime{?$_{\mathsf{Z}_2}$}*) : (*$\Type$*))
(*$\S\d$*) ((*$\f$*) (*$\t$*)) (*$\a$*) (*$\b$*) = (*$\f\d$*) (*$\t$*) ((*\cprime{?$_{\mathsf{S}_3}$}*) : (*$\X\d$*) (*$\t$*))
\end{lstlisting}
However, we then have:
\begin{align*}
\Gamma,~\lock_\trisq,~\t : \X,~\a : \El~\bkt{\Z~\bkt{\A~\t}}, ~\b : \El~\cprime{?_{\mathsf{Z}_2}} &\vdash_\sm \S\d~\bkt{\f~\t}~\a~\b : \SST^{\mathsf{\cred{dd}}}_{\ell} ~\bkt{\A~\t}~\bkt{\f~\t}~\bkt{\S~\bkt{\A~\t}~\a} \\
\Gamma,~\lock_\trisq,~\t : \X,~\a : \El~\bkt{\Z~\bkt{\A~\t}}, ~\b : \El~\cprime{?_{\mathsf{Z}_2}} &\vdash_\sm \f\d~\t~\cprime{?_{\mathsf{S}_3}} : \SST^{\mathsf{\cred{dd}}}_{\ell}~\bkt{\A~\t}~\bkt{\A\d~\t~\cprime{?_{\mathsf{S}_3}}}~\bkt{\f~\t}
\end{align*}
We see then that there is an index ordering mismatch that seems to prevent us from writing down a coinduction principle for $\SST^{\mathsf{\cred{d}}}_{\ell}$ corresponding to a simple class of syntactic tricks as above. If dTT were extended to have \emph{symmetries}, then we could make progress here by lining up the $\f~\t$ indices and imposing the definitional equality $\S~\bkt{\A~\t}~\a \equiv \A\d~\t~\cprime{?_{\mathsf{S}_3}}$ as a corecursor premise. On the other hand, without the ability to line up the two $\f~\t$ indicies, trying to instead impose definitional equalities involving $\f$, the very term being defined, creates a vicious cycle, since whether or not a definition of $\f$ is well-typed would depend on checking a definitional equality with $\f$, which presupposes that $\f$ is well-typed. Since, for the present, we have chosen to develop a theory without symmetries, we must abandon this approach.

To salvage this, we will leave $\big(\mathsf{\R_\Upsilon}~\cred{\widetilde{\Z}} ~\cred{\widetilde{\S}}\big)\d$ as a stuck form in the theory, but will specify how to compute $\Z\d$ and $\S\d$ on this normal form. The main idea is that if we define:
\begin{lstlisting}
(*$\f$*) : (*$\X$*) (*$\to$*) (*$\SST$*)
(*$\Z$*) ((*$\f$*) (*$\t$*)) = (*$\cblu{\mathfrak{z}}$*) (*$\t$*)
(*$\S$*) ((*$\f$*) (*$\t$*)) (*$\a$*) = (*$\f\d$*) (*$\t$*) ((*$\cblu{\mathfrak{s}}$*) (*$\t$*) (*$\a$*))
\end{lstlisting}
then we can compute display on each line of this definition to obtain:
\begin{lstlisting}
(*$\f\d$*) : ((*$\t$*) : (*$\X$*)) (*$\to$*) (*$\X\d$*) (*$\t$*) (*$\to$*) (*$\SST\d$*) ((*$\f$*) (*$\t$*))
(*$\Z\d$*) ((*$\f\d$*) (*$\t$*) (*$\s$*)) = (*$\lambda$*) (*$\a$*) (*$\to$*) (*$\cblu{\mathfrak{z}}\d$*) (*$\t$*) (*$\s$*) (*$\a$*)
(*$\S\d$*) ((*$\f\d$*) (*$\t$*) (*$\s$*)) (*$\cblu{a\sub{0}}$*) (*$\cblu{a\sub{1}}$*) = (*$\f\d\d$*) (*$\t$*) (*$\s$*) ((*$\cblu{\mathfrak{s}}$*) (*$\t$*) (*$\cblu{a\sub{0}}$*)) ((*$\cblu{\mathfrak{s}}\d$*) (*$\t$*) (*$\s$*) (*$\cblu{a\sub{0}}$*) (*$\cblu{a\sub{1}}$*))
\end{lstlisting}
Thus we obtain the computation laws:
\begin{align*}
\Z\d~\big(\big(\mathsf{\R_\Upsilon}~\cred{\widetilde{\Z}} ~\cred{\widetilde{\S}}\big)\d~\sigma^\cblu{+}\big)~\cblu{a} &\equiv \cred{\widetilde{\Z}}\d~\sigma^\cblu{+}~\cblu{a} \\
\S\d~\big(\big(\mathsf{\R_\Upsilon}~\cred{\widetilde{\Z}} ~\cred{\widetilde{\S}}\big)\d~\sigma^\cblu{+}\big)~\cblu{a\sub{0}}~\cblu{a\sub{1}} &\equiv \big(\mathsf{\R_\Upsilon}~\cred{\widetilde{\Z}} ~\cred{\widetilde{\S}}\big)\d\d~\pair{\sigma^\cblu{+}}{\cred{\widetilde{\S}}\D~\sigma^\cblu{+}~\cblu{a\sub{0}}~\cblu{a\sub{1}}}
\end{align*}
Note that these computation rules were exactly obtained by applying display to both sides of the equation in the initial computation rules. We can iterate this to obtain:
\begin{align*}
\Z^{\cred{\mathsf{d}}^\n} ~\big(\big(\mathsf{\R_\Upsilon}~\cred{\widetilde{\Z}} ~\cred{\widetilde{\S}}\big)^{\cred{\mathsf{d}}^\n}~\sigma^\n\big)~\cblu{\partial}\a &\equiv \cred{\widetilde{\Z}}^{\cred{\mathsf{d}}^\n}~\sigma^\n~\cblu{\partial}\a \\
\S^{\cred{\mathsf{d}}^\n}~\big(\big(\mathsf{\R_\Upsilon}~\cred{\widetilde{\Z}} ~\cred{\widetilde{\S}}\big)^{\cred{\mathsf{d}}^\n}~\sigma^\n\big)~\cblu{\partial}\a~\a &\equiv \big(\mathsf{\R_\Upsilon}~\cred{\widetilde{\Z}} ~\cred{\widetilde{\S}}\big)^{\cred{\mathsf{d}}^{\n+\cred{1}}}~\pair{\sigma^\n}{\cred{\widetilde{\S}}^{\cred{\mathsf{D}}^\n}~\sigma^\n~\cblu{\partial}\a ~\a}
\end{align*}
The situation on our hands is not unlike that of \texttt{Agda}, where a definition of $\f$ made by (co)pattern matching defines a new normal form and does not expand to a first class intro or elim form when normalised\footnote{The culprit here is not a lack of first-class forms, since \texttt{Agda} has pattern matching lambdas. Rather, the restriction is made primarily to control such runaway unfolding that would substantially affect the performance of type-checking and normalisation. As a consequence, two structurally identical top-level definitions of functions $\f$ and $\g$ made by pattern matching are not definitionally equal.}; such names only reduce when their defining patterns occur. This specific point does not itself inhibit $\textsf{\cred{Nat}}$ canonicity (which \texttt{Cubical Agda} otherwise currently lacks due to its treatment of transport in indexed inductives).

We conjecture that dTT, including its treatment of $\SST$, is fully computational in the sense of $\textsf{\cred{Nat}}$ canonicity, normalization, and decidable typechecking.
More precisely, although this may very well not hold \emph{verbatim} of the theory as written down in this paper, we expect it to hold of a modified presentation fitting within the general framework of ideas.
In particular, while we have presented dTT only as a Generalised Algebraic Theory, all the equations have a clear direction and there are no obvious stuck terms. \bbox

\subsection{Examples of semi-simplicial types}
\label{sec:eg-sst}

Of course, simply \emph{defining} a type of semi-simplicial types is only the first step: we also want to be able to work with such things conveniently.
Developing a full theory of semi-simplicial types is beyond the scope of this paper, but in this section we will give a few examples to suggest that this at least may be possible with our definition of $\SST$ and its corecursion principle.
We will use Agda-esque copattern-matching, and assume that our type theory has plenty of other structure rather than the bare-bones version of dTT that we have studied formally in this paper.

\subsubsection{The singular semi-simplicial types}
\label{subsec:sing}

Thus far we have not discussed propositional equality at all, and the reason for this is that the implementation of display is independent from any implementation of equality, whether that be Martin-L\"of, cubical, or observational. However, we now want to define a semi-simplicial type that arises from the $\infty$-groupoid structure of a type in HoTT.
For concreteness we will do this using a cubical notion of equality, with notation that aligns with \texttt{Cubical Agda}.

When dTT is combined with cubical type theory, we expect display on cubical path types should work as follows.
We have:
\begin{align*}
    \A : \Type_{\;\!\ell},~\x:\A,~\y:\A &\vdash_\sm \cred{\mathsf{Path}}~\A~\x~\y \type_{\;\!\ell} \\
    \A : \Type_{\;\!\ell},~\cblu{P} : \A \to \Type_{\;\!\ell},~\x:\A,~\xp:\cblu{P}~\x, \quad\quad\quad\quad\quad\quad\;\;\;\!& \\ \y:\A,~\yp:\cblu{P}~\y,~\p:\cred{\mathsf{Path}}~\A~\x~\y &\vdash_\sm \cred{\mathsf{PathP}} \bkt{\lambda~\cblu{i}.~ \cblu{P}~\bkt{\p~\cblu{i}}}~\xp~\yp \type_{\;\!\ell},
\end{align*}
so the latter has the right type to be the display of the former.
Thus we expect:
\begin{multline*}
\dbkt{\cred{\mathsf{Path}}~\A~\x~\y}_{\A \;:\; \Type_{\;\!\ell},\;\x\;:\;\A,\;\y\;:\;\A}\d
  \equiv\\
  \dbkt{\cred{\mathsf{PathP}} \bkt{\lambda~\cblu{i}.~ \cblu{P}~\bkt{\p~\cblu{i}}}~\xp~\yp}_{\A \;:\; \Type_{\;\!\ell},\;\cblu{P} \;:\; \A \;\to\; \Type_{\;\!\ell},\;\x\;:\;\A,\;\xp\;:\;\cblu{P}~\x,\;\y\;:\;\A,\;\yp\;:\;\cblu{P}\;\y,\; \p\;:\;\cred{\mathsf{Path}}\;\A\;\x\;\y}.
\end{multline*}

With this given, the singular semi-simplicial types are defined by corecursion.
Rather than write this explicitly using the corecursor from \cref{subsec:semi-simplicial-types}, we use a copattern-matching syntax, including a \emph{`displayed corecursive call'} $\cred{\mathsf{Sing}}\d$.
\begin{lstlisting}
(*$\cred{\mathsf{Sing}}$*) : (*$\Type$*) (*$\to$*) (*$\SST$*)
(*$\Z$*) ((*$\cred{\mathsf{Sing}}$*) (*$\A$*)) = (*$\A$*)
(*$\S$*) ((*$\cred{\mathsf{Sing}}$*) (*$\A$*)) (*$\x$*) = (*$\cred{\mathsf{Sing}}\d$*) (*$\A$*) ((*$\lambda$*) (*$\y$*) (*$\to$*) (*$\cred{\mathsf{Path}}$*) (*$\A$*) (*$\x$*) (*$\y$*))
\end{lstlisting}
A calculation then yields:
\begin{lstlisting}
(*$\Z$*) ((*$\cred{\mathsf{Sing}}$*) (*$\A$*)) = (*$\A$*)
(*$\Z\d$*) ((*$\S$*) ((*$\cred{\mathsf{Sing}}$*) (*$\A$*)) (*$\cblu{x\sub{01}}$*)) (*$\cblu{x\sub{10}}$*) = (*$\cred{\mathsf{Path}}$*) (*$\A$*) (*$\cblu{x\sub{01}}$*) (*$\cblu{x\sub{10}}$*)
(*$\Z\d\d$*) ((*$\S\d$*) ((*$\S$*) ((*$\cred{\mathsf{Sing}}$*) (*$\A$*)) (*$\cblu{x\sub{001}}$*)) (*$\cblu{x\sub{010}}$*) (*$\cblu{\beta\sub{011}}$*)) (*$\cblu{x\sub{100}}$*) (*$\cblu{\beta\sub{101}}$*) (*$\cblu{\beta\sub{110}}$*)
    = (*$\cred{\mathsf{PathP}}$*) ((*$\lambda$*) (*$\cblu{i}$*) (*$\to$*) (*$\cred{\mathsf{Path}}$*) (*$\A$*) (*$\cblu{x\sub{001}}$*) (*$\cblu{\beta\sub{110}}$*) (*$\cblu{i}$*)) (*$\cblu{\beta\sub{011}}$*) (*$\cblu{\beta\sub{101}}$*)
(*$\Z\d\d\d$*) ((*$\S\d\d$*) ((*$\S\d$*) ((*$\S$*) ((*$\cred{\mathsf{Sing}}$*) (*$\A$*)) (*$\cblu{x\sub{0001}}$*)) (*$\cblu{x\sub{0010}}$*) (*$\cblu{\beta\sub{0011}}$*)) (*$\cblu{x\sub{0100}}$*) (*$\cblu{\beta\sub{0101}}$*) (*$\cblu{\beta\sub{0110}}$*) (*$\cblu{\ff\sub{0111}}$*)) (*$\cblu{x\sub{1000}}$*) (*$\cblu{\beta\sub{1001}}$*) (*$\cblu{\beta\sub{1010}}$*) (*$\cblu{\ff\sub{1011}}$*) (*$\cblu{\beta\sub{1100}}$*) (*$\cblu{\ff\sub{1101}}$*) (*$\cblu{\ff\sub{1110}}$*)
    = (*$\cred{\mathsf{PathP}}$*) ((*$\lambda$*) (*$\cblu{i}$*) (*$\to$*) (*$\cred{\mathsf{PathP}}$*) ((*$\lambda$*) (*$\cblu{j}$*) (*$\to$*) (*$\cred{\mathsf{Path}}$*) (*$\A$*) (*$\cblu{x\sub{0001}}$*) (*$\cblu{\ff\sub{1110}}$*) (*$\cblu{i}$*) (*$\cblu{j}$*)) (*$\cblu{\beta\sub{0011}}$*) ((*$\cblu{\ff\sub{1101}}$*) (*$\cblu{i}$*))) (*$\cblu{\ff\sub{0111}}$*) (*$\cblu{\ff\sub{1011}}$*)
\end{lstlisting}

\subsubsection{Nerves of categories}
\label{sec:nerves}

The semi-simplicial nerve of a 1-category can also be defined by corecursion.
Let $\cred{\mathsf{Cat}}$ denote the type of 1-categories, defined as a record inside dTT (extended by record types), and recall that in \cref{sec:intro} we computed $\cred{\mathsf{Cat}}\d$ to consist of \emph{`displayed categories'} in the usual sense~\cite{al:dispcat}.
Thus we can define:
\begin{lstlisting}
(*$\cred{\mathsf{Nerve}}$*) : (*$\cred{\mathsf{Cat}}$*) (*$\to$*) (*$\SST$*)
(*$\Z$*) ((*$\cred{\mathsf{Nerve}}$*) (*$\MC$*)) = (*$\cred{\mathsf{ob}}$*) (*$\MC$*)
(*$\S$*) ((*$\cred{\mathsf{Nerve}}$*) (*$\MC$*)) (*$\x$*) = (*$\cred{\mathsf{Nerve}}\d$*) (*$\MC$*) ((*$\cred{\mathsf{coslice}}$*) (*$\MC$*) (*$\x$*))
\end{lstlisting}
Here for a category $\MC$ and object $\x : \cred{\ob}~\MC$, by $\cred{\mathsf{coslice}}~\MC~\x$ we mean the coslice category $\x/\MC$, regarded as a displayed category over $\MC$ via the forgetful functor.
Note that a definition of $\cred{\mathsf{coslice}} : \bkt{\MC : \cred{\mathsf{Cat}}} \to \cred{\mathsf{ob}}~\MC \to \cred{\mathsf{Cat}}\d~\MC$ at the global level in dTT automatically induces the definition of the dependent coslice $\cred{\mathsf{coslice}}\d$. A similar idea works for bicategories, and any other kind of category for which we can define a displayed (co)slice. \bbox

\subsubsection{Topological singular complexes}
\label{sec:topsing}

In \cref{subsec:sing} we constructed the singular semi-simplicial type associated to the intrinsic $\infty$-groupoid structure of any type.
But we can also construct a more classical singular semi-simplicial set associated to a topological space.
For any type $\cred{\mathsf{Top}}$ of \emph{`topological space'} definable inside of dTT as a record, we have a displayed version $\mathsf{\cred{Top}}\d$.
In some cases, particularly \emph{`nonalgebraic'} ones such as open-set spaces, an element of $\cred{\mathsf{Top}}\d~\X$ is more general than an $\Y:\cred{\mathsf{Top}}$ with a map $\Y\to\X$; but at least \emph{from} such a $\Y$ we can construct its fibers as a displayed space.
Thus, as long as we can construct, for any $\x:\X$, a space of \emph{`continuous paths in $\X$ starting at $\x$'} with an endpoint projection down to $\X$, we can make it a displayed space $\cred{\mathsf{paths}}~\X~\x$ over $\X$, and use this to construct the singular semi-simplicial types:
\begin{lstlisting}
(*$\cred{\mathsf{Sing}}$*) : (*$\cred{\mathsf{Top}}$*) (*$\to$*) (*$\SST$*)
(*$\Z$*) ((*$\cred{\mathsf{Sing}}$*) (*$\X$*)) = (*$\cred{\mathsf{pt}}$*) (*$\X$*)
(*$\S$*) ((*$\cred{\mathsf{Sing}}$*) (*$\X$*)) (*$\x$*) = (*$\cred{\mathsf{Sing}}\d$*) (*$\X$*) ((*$\cred{\mathsf{paths}}$*) (*$\X$*) (*$\x$*))
\end{lstlisting}

\subsubsection{Fibers and higher spans}
\label{sec:fibers}

As we will see in \cref{sec:semantics}, semantically \emph{each} type at mode $\sm$ is already an augmented semi-simplicial type.
We expect that if we fix a particular $(\minusone)$-simplex in an augmented semi-simplicial type, we should obtain an (unaugmented) semi-simplicial type as its \emph{`fibre'}.
And indeed, we can define this operation:
\begin{lstlisting}
(*$\cred{\mathsf{Fib}}$*) : ((*$\X$*) :(*$^\trisq$*) (*$\Type$*)) ((*$\ze$*) : (*$\X$*)) (*$\to$*) (*$\SST$*)
(*$\Z$*) ((*$\cred{\mathsf{Fib}}$*) (*$\X$*) (*$\ze$*)) = (*$\X\d$*) (*$\ze$*)
(*$\S$*) ((*$\cred{\mathsf{Fib}}$*) (*$\X$*) (*$\ze$*)) (*$\x$*) = (*$\cred{\mathsf{Fib}}\d$*) (*$\X$*) (*$\ze$*) (*$\x$*)
\end{lstlisting}
Note that $\X$ is required to be modal so that we can take display of it.
Then we have, for instance, if $\cblu{\ze\sub{00}}:\X$ and $\cblu{x\sub{01}}, \cblu{x\sub{10}} : \X\d~\cblu{\ze\sub{00}}$,
\begin{align*}
  (\cred{\mathsf{Fib}}~\X~\cblu{\ze\sub{00}})_\cred{0} &\equiv \Z~(\cred{\mathsf{Fib}}~\X~\cblu{\ze\sub{00}})\\
  &\equiv \X\d~\cblu{\ze\sub{00}}\\
  (\cred{\mathsf{Fib}}~\X~\cblu{\ze\sub{00}})_\cred{1}~\cblu{x\sub{01}}~\cblu{x\sub{10}} &\equiv \Z\d~(\S~(\cred{\mathsf{Fib}}~\X~\cblu{\ze\sub{00}})~\cblu{x\sub{01}})~\cblu{x\sub{10}}\\
  &\equiv \Z\d~(\cred{\mathsf{Fib}}\d~\X~\cblu{\ze\sub{00}}~\cblu{x\sub{01}})~\cblu{x\sub{10}}\\
  &\equiv \X\d\d~\cblu{\ze\sub{00}}~\cblu{x\sub{01}}~\cblu{x\sub{10}}
\end{align*}
and as a last example
\begin{align*}
(\cred{\mathsf{Fib}}~\X~\cblu{\ze\sub{000}})_\cred{2}~\cblu{x\sub{001}}~\cblu{x\sub{010}}~\cblu{\beta\sub{011}}~\cblu{x\sub{100}}~\cblu{\beta\sub{101}}~\cblu{\beta\sub{110}}
  &\equiv
  \Z\d\d~(\S\d~(\S~(\cred{\mathsf{Fib}}~\X~\cblu{\ze\sub{000}})~\cblu{x\sub{001}})~\cblu{x\sub{010}}~\cblu{\beta\sub{011}})~\cblu{x\sub{100}}~\cblu{\beta\sub{101}}~\cblu{\beta\sub{110}}\\
  &\equiv
  \Z\d\d~(\S\d~(\cred{\mathsf{Fib}}\d~\X~\cblu{\ze\sub{000}}~\cblu{x\sub{001}})~\cblu{x\sub{010}}~\cblu{\beta\sub{011}})~\cblu{x\sub{100}}~\cblu{\beta\sub{101}}~\cblu{\beta\sub{110}}\\
  &\equiv
  \Z\d\d~(\cred{\mathsf{Fib}}\d\d~\X~\cblu{\ze\sub{000}}~\cblu{x\sub{001}}~\cblu{x\sub{010}}~\cblu{\beta\sub{011}})~\cblu{x\sub{100}}~\cblu{\beta\sub{101}}~\cblu{\beta\sub{110}}\\
  &\equiv
  \cred{\mathsf{Fib}}\d\d~\X~\cblu{\ze\sub{000}}~\cblu{x\sub{001}}~\cblu{x\sub{010}}~\cblu{\beta\sub{011}}~\cblu{x\sub{100}}~\cblu{\beta\sub{101}}~\cblu{\beta\sub{110}}.
\end{align*}
In particular, if we let $\X=\Type_{\;\!\ell}$ be a universe and $\ze = \top$ be a unit type, we have
\begin{align*}
  (\cred{\mathsf{Fib}}~\Type_{\;\!\ell}~\top)_\cred{0} &\equiv \Type^{\mathsf{\cred{d}}}_{\;\!\ell}~\top\\
  &\equiv \top \to \Type_{\;\!\ell}\\
  &\cong \Type_{\;\!\ell}\\
  (\cred{\mathsf{Fib}}~\Type_{\;\!\ell}~\top)_\cred{1}~\cblu{X\sub{01}}~\cblu{X\sub{10}} &\equiv \Type^{\mathsf{\cred{dd}}}_{\;\!\ell}~\top~\cblu{X\sub{01}}~\cblu{X\sub{10}}\\
  &\cong \cblu{X\sub{01}} \to \cblu{X\sub{10}} \to \Type_{\;\!\ell}\\
  (\cred{\mathsf{Fib}}~\Type_{\;\!\ell}~\top)_\cred{2}~\cblu{X\sub{001}}~\cblu{X\sub{010}}~\cblu{B\sub{011}}~\cblu{X\sub{100}}~\cblu{B\sub{101}}~\cblu{B\sub{110}}
  &\equiv \Type^{\mathsf{\cred{ddd}}}_{\;\!\ell}~\top~\cblu{X\sub{001}}~\cblu{X\sub{010}}~\cblu{B\sub{011}}~\cblu{X\sub{100}}~\cblu{B\sub{101}}~\cblu{B\sub{110}}\\
  &\cong
  \begin{aligned}[t]
    &(\cblu{x\sub{001}} : \cblu{X\sub{001}}) (\cblu{x\sub{010}} : \cblu{X\sub{010}}) (\cblu{\beta\sub{011}} : \cblu{B\sub{011}}~\cblu{x\sub{001}}~\cblu{x\sub{010}})\\
    &(\cblu{x\sub{100}} : \cblu{X\sub{100}})
    (\cblu{\beta\sub{101}} : \cblu{B\sub{101}}~\cblu{x\sub{001}}~\cblu{x\sub{100}})
    (\cblu{\beta\sub{110}} : \cblu{B\sub{110}}~\cblu{x\sub{010}}~\cblu{x\sub{100}})\to \Type_{\;\!\ell}
  \end{aligned}
\end{align*}
Thus $\cred{\mathsf{Fib}}~\Type_{\;\!\ell}~\top$ is the semi-simplicial type of types, spans, and a sort of simplicial \emph{`higher spans'} that could also be called \emph{`heterogeneous simplices'}.
More generally, $\cred{\mathsf{Fib}}~\Type_{\;\!\ell}~\A$ for any type $\A$ consists of types, spans, and simplicial higher spans indexed by $\A$. \bbox

\subsubsection{Operations on semi-simplicial types}
\label{sec:oper-sst}

We can also use corecursion to define operations on semi-simplicial types that are essentially levelwise.
For instance, any two semi-simplicial types have a product:
\begin{lstlisting}
(*$\cred{\_\!\times\!\_}$*) : (*$\SST$*) (*$\to$*) (*$\SST$*) (*$\to$*) (*$\SST$*)
(*$\Z$*) ((*$\X$*) (*$\cred\times$*) (*$\Y$*)) = (*$\Z$*) (*$\X$*) (*$\cred\times$*) (*$\Z$*) (*$\Y$*)
(*$\S$*) ((*$\X$*) (*$\cred\times$*) (*$\Y$*)) (*$\cred\langle$*) (*$\x$*) (*$\cred{,}$*) (*$\y$*) (*$\cred\rangle$*) = ((*$\S$*) (*$\X$*) (*$\x$*)) (*$\cred{\times\d}$*) ((*$\S$*) (*$\Y$*) (*$\y$*))
\end{lstlisting}
Here in the $\S$ case, we have treated the non-displayed arguments of $\cred{\times\d}$ as implicit: its full type is
\[ \underline{~~}\cred{\times\d}\underline{~~} : \{\X : \SST\} ~(\cblu{X'} : \SST\d~\X)~ \{\Y : \SST\}~ (\cblu{Y'} : \SST\d~\Y) \to \SST\d~(\X\mathbin{\cred{\times}}\Y) \]
There is a similar dependently-typed version, i.e.\ a $\Sigma$-semi-simplicial-type:
\begin{lstlisting}
(*$\Sigma$*) : ((*$\X$*) : (*$\SST$*)) (*$\to$*) (*$\SST\d~\X$*) (*$\to$*) (*$\SST$*)
(*$\Z$*) ((*$\Sigma$*) (*$\X$*) (*$\Y$*)) = (*$\Sigma$*) ((*$\Z$*) (*$\X$*)) ((*$\Z\d$*) (*$\Y$*))
(*$\S$*) ((*$\Sigma$*) (*$\X$*) (*$\Y$*)) (*$\cred{\boldsymbol\langle}$*) (*$\x$*) (*$\cred{,}$*) (*$\y$*) (*$\cred{\boldsymbol\rangle}$*) = (*$\Sigma\d$*) ((*$\S$*) (*$\X$*) (*$\x$*)) ((*$\S$*) (*$\Y$*) (*$\y$*))
\end{lstlisting}
There is an empty semi-simplicial type.
Note that the $\S$ case can be omitted, since one of its arguments would belong to the empty type $\cred{\bot}$.
\begin{lstlisting}
(*$\cred{\varnothing}$*) : (*$\SST$*)
(*$\Z$*) (*$\cred{\varnothing}$*) = (*$\cred{\bot}$*)
\end{lstlisting}
Similarly, there is a trivial one:
\begin{lstlisting}
(*$\top$*) : (*$\SST$*)
(*$\Z$*) (*$\top$*) = (*$\top$*)
(*$\S$*) (*$\top$*) (*$\u$*) = (*$\top\d$*)
\end{lstlisting}
We can also take the product of any family of semi-simplicial types indexed by a \emph{discrete} type.
Note that the discreteness of $\A$ means that it doesn't need a displayed version when we apply $\bt\d$ in the $\S$ case.
\begin{lstlisting}
(*$\bt$*) : ((*$\A$*) :(*$^\tri$*) (*$\Disc$*)) (*$\to$*) (((*$\a$*) :(*$^\tri$*) (*$\A$*))) (*$\to$*) (*$\SST$*)) (*$\to$*) (*$\SST$*)
(*$\Z$*) ((*$\bt$*) (*$\A$*) (*$\X$*)) = (((*$\a$*) :(*$^\tri$*) (*$\A$*)) (*$\to$*) (*$\Z$*) ((*$\X$*) (*$\a$*)))
(*$\S$*) ((*$\bt$*) (*$\A$*) (*$\X$*)) (*$\p$*) = (*$\bt\d$*) (*$\A$*) (*$\X$*) ((*$\lambda$*) (*$\a$*) (*$\to$*) (*$\S$*) ((*$\X$*) (*$\a$*)) ((*$\p$*) (*$\a$*)))
\end{lstlisting}

However, there are some things we would naturally expect to be able to define that do not seem possible with our current theory.
For example, the disjoint union of semi-simplicial types should certainly have the disjoint union of $\cred{0}$-simplices, but the slice over a 0-simplex should come only from one of the two sides.
That is, $\S~(\X~\cred{+}~\Y)~(\cred{\mathsf{inl}}~\x)$ should be morally just $\S~\X~\x$.
However, $\S~\X~\x$ belongs to $\SST\d~\X$, whereas $\S~(\X~\cred{+}~\Y)~(\cred{\mathsf{inl}}~\x)$ must belong to $\SST\d~(\X~\cred{+}~\Y)$; thus we need to take its disjoint union with an empty semi-simplicial type displayed over $\Y$.

We defined a \emph{`global'} empty semi-simplicial type above, and it seems intuitively that we should be able to define a \emph{`constant'} version of this displayed over $\Y$.
But as noted in \cref{subsec:semi-simplicial-types}, without symmetry it does not seem possible to formulate a useful corecursor for $\SST\d$, and without such a thing it is unclear how to define \emph{`constantly displayed'} semi-simplicial types.
This suggests that further work in this direction might require the addition of symmetries. \bbox

\subsection{Displayed coinductive types}
\label{sec:dcoind}

Generalizing the discussion of $\SST$, we now formulate a fairly general notion of \emph{`indexed displayed coinductive type'}.
It depends on a telescope $\Phi$ of \emph{`non-uniform parameters'}, and every element of it has a \emph{`head'} belonging to some specified type family $\A$ and a \emph{`tail'}, depending on a telescope of parameters $\MB$, and belonging to the displayed version of the coinductive type itself. The parameters of this displayed version of the very type being defined are $\Phi\D$, which we can assemble provided that the data of the old parameters $\varphi : \Phi$, the head $\x : \A~\varphi$, and the new dependencies $\mathfrak{\cblu{b}} : \MB~\varphi~\a$, are sufficient to extract a section $\sigma~\varphi~\x~\mathfrak{\cblu{b}} : \Phi\d~\varphi$. The idea is analogous to an \emph{`indexed M-type'}, but with the output of the tail being displayed, and with $\MB$ being a telescope rather than a simple type. The pseudo-\texttt{Agda} corresponding to this would be:
\begin{lstlisting}
(*\textbf{module}*) (*\cred{\_}*) ((*$\Phi$*) :(*$^\trisq$*) (*$\Tel$*)) ((*$\A$*) :(*$^\trisq$*) (*$\Phi$*) (*$\to$*) (*$\Type$*)) ((*$\MB$*) :(*$^\trisq$*) ((*$\varphi$*) : (*$\Phi$*)) ((*$\a$*) : (*$\A$*) (*$\varphi$*)) (*$\to$*) (*$\Tel$*))
        (*\;*)((*$\sigma$*) :(*$^\trisq$*) ((*$\varphi$*) : (*$\Phi$*)) ((*$\a$*) : (*$\A$*) (*$\varphi$*)) ((*$\mathfrak{\b}$*) : (*$\MB$*) (*$\varphi$*) (*$\a$*)) (*$\to$*) (*$\Phi\d$*) (*$\varphi$*)) (*\textbf{where}*)
  (*\textbf{codata}*) (*$\dCoind$*) ((*$\varphi$*) : (*$\Phi$*)) : (*$\Type$*) (*\textbf{where}*)
    (*$\head$*) : (*$\dCoind$*) (*$\varphi$*) (*$\to$*) (*$\A$*) (*$\varphi$*)
    (*$\tail$*) : ((*$\x$*) : (*$\dCoind$*) (*$\varphi$*)) ((*$\mathfrak{\b}$*) : (*$\MB$*) (*$\varphi$*) ((*$\head$*) (*$\x$*))) (*$\to$*) (*$\dCoind\d$*) (*$\cred{\boldsymbol\langle}$*)(*\;*)(*$\varphi$*) (*$\cred{\boldsymbol,}$*) (*$\sigma$*) (*$\varphi$*) ((*$\head$*) (*$\x$*)) (*$\mathfrak{\b}$*)(*\;*)(*$\cred{\boldsymbol\rangle}$*) (*$\x$*)
\end{lstlisting}
We can thus write down the formation and introduction rules for $\dCoind$ as follows:
\begin{mathpar}
  \infer{\Gamma,~\lock_\trisq \vdash_\sm \Phi \tel_{\;\!\ell_\cblu{0}} \\
    \Gamma,~\lock_\trisq \vdash_\sm \A \slfrac{\type_{\;\!\ell_\cblu{1}}}{_{\;\varphi\;:\;\Phi}} \\ \Gamma,~\lock_\trisq  \vdash_\sm \MB \slfrac{\tel_{\;\!\ell_\cblu{2}}}{_{\varphi\;:\;\Phi,~\a\;:\;\A~\varphi}} \\ \Gamma,~\lock_\trisq \vdash_\sm \sigma : \dbkt{\Phi\d~\varphi}_{\;\varphi\;:\;\Phi,~ \a\;:\;\A~\varphi,~ \mathfrak{\b}\;:\;\MB~\varphi~\x}}
  {\Gamma \vdash_\sm \dCoind_{\;\sqbkt{\Phi,\;\A,\;\MB,\;\sigma}} \slfrac{\type_{\;\!\ell_\cblu{1}\;\join\;\ell_\cblu{2}}}{_{\varphi\;:\;\Phi~\sqbkt{\key^{\trisq\le\id_\sm}}}} \\
    \Gamma \vdash_\sm \head_{\;\sqbkt{\Phi,\;\A,\;\MB,\;\sigma}} : \dbkt{\A~\sqbkt{\key^{\trisq\le\id_\sm}}~\varphi}_{\varphi,~ \x\;:\; \dCoind_{\;\sqbkt{\Phi,\;\A,\;\MB,\;\sigma}}~\varphi}\\
    \Gamma\vdash_\sm \tail_{\smsqbkt{\Phi,\A,\MB,\sigma}} : \smdbkt{\dCoind_{\smsqbkt{\Phi,\A,\MB,\sigma}}\d \,\smpair{\varphi}{\sigma\,\varphi\,\bkt{\head\,\x}\,\mathfrak{\b}}\,\x}_{\varphi,\,\x,\,\mathfrak{\b}:\MB\smsqbkt{\key^{\trisq\le\id_\sm}}\,\varphi\,\bkt{\head\,\varphi\,\x}}
}
\end{mathpar}
Note that the universe level of $\dCoind$ is governed by those of $\A$ and $\MB$, but does not depend on the level of the telescope of non-uniform parameters $\Phi$.

Following the example of $\SST$, we will begin by attempting to write down a reasonable template for a coinduction principle. In the same module context, we can attempt to map into a $\dCoind$ type from a length two context as follows:
\begin{lstlisting}
(*$\f$*) : ((*$\t$*) : (*$\X$*)) ((*$\s$*) : (*$\Y$*) (*$\t$*)) (*$\to$*) (*$\dCoind$*) ((*$\phi$*) (*$\t$*) (*$\s$*))
(*$\head$*) ((*$\f$*) (*$\t$*) (*$\s$*)) = ((*\cprime{?$_{\mathsf{h}_1}$}*) : (*$\A$*) ((*$\phi$*) (*$\t$*) (*$\s$*)))
(*$\tail$*) ((*$\f$*) (*$\t$*) (*$\s$*)) (*$\mathfrak{\b}$*) = (*$\f\d$*) (*$\t$*) ((*\cprime{?$_{\mathsf{t}_1}$}*) : (*$\X\d$*) (*$\t$*)) (*$\s$*) ((*\cprime{?$_{\mathsf{t}_2}$}*) : (*$\Y\d$*) (*$\t$*) (*\cprime{?$_{\mathsf{t}_1}$}*) (*$\s$*))
\end{lstlisting}
The types that we have are then:
\begin{align*}
\tail~\bkt{\f~\t~\s}~\mathfrak{\b} &: \dCoind\d~\pair{\phi~\t~\s}{\sigma~\bkt{\phi~\t~\s}~\cprime{?_{\mathsf{h}_1}}~\mathfrak{\b}}~\bkt{\f~\t~\s}\\
\f\d~\t~\cprime{?_{\mathsf{t}_1}}~\s~\cprime{?_{\mathsf{t}_2}} &: \dCoind\d~\pair{\phi~\t~\s}{\phi\d~\t~\cprime{?_{\mathsf{t}_1}}~\s~\cprime{?_{\mathsf{t}_2}}}~\bkt{\f~\t~\s}
\end{align*}
Thus there is a non-trivial condition that needs to be imposed for this definition template to be well typed. Fortunately, unlike in the case of $\SST\d$, we generally have terms lining up in the sense that the terminal $\bkt{\f~\t~\s}$ terms align, which avoids the vicious cycle from before. We get the following rule:
\begin{mathpar}
  \infer{\Gamma,~\lock_\trisq \vdash_\sm \Upsilon \tel_{\;\!\ell^\cblu{'}} \\
    \Gamma,~\lock_\trisq \vdash_\sm \phi : \dbkt{\Phi}{_{\upsilon\;:\;\Upsilon}}\\
    \Gamma,~\lock_\trisq \vdash_\sm \cblu{\widetilde{h}} : \dbkt{\A~\bkt{\phi~\upsilon}}_{\upsilon\;:\;\Upsilon}\\
    \Gamma,~\lock_\trisq \vdash_\sm \cblu{\widetilde{\tau}} : \dbkt{\Upsilon\d~\upsilon}_{\upsilon\;:\;\Upsilon,~ \mathfrak{\b} \;:\; \MB~\bkt{\phi~\upsilon}~(\cblu{\widetilde{h}}~\upsilon)}\\
    \Gamma,~\lock_\trisq \ext \upsilon:\Upsilon,~ \mathfrak{\b} : \MB~(\phi~\upsilon)~ (\cblu{\widetilde{h}}~\upsilon) \vdash
    \phi\d~\pair{\upsilon}{\cblu{\widetilde{\tau}}~\upsilon~\mathfrak{\b}} \equiv \sigma~(\phi~\upsilon)~(\cblu{\widetilde{h}}~\upsilon)~\mathfrak{\b}
  }
  {\Gamma \vdash_\sm \corec_{\;\sqbkt{\Phi,\;\A,\;\MB,\;\sigma}}~\sqbkt{\Upsilon,\;\phi,\;\cblu{\widetilde{h}},\;\cblu{\widetilde{\tau}}} : \dbkt{\dCoind_{\;\sqbkt{\Phi,\;\A,\;\MB,\;\sigma}}~(\phi~\upsilon)}_{\upsilon\;:\;\Upsilon} }
\end{mathpar}
This comes with the following computation rules:
\begin{align*}
\head~(\phi~\upsilon)~\bkt{\corec_{\;\sqbkt{\Phi,\;\A,\;\MB,\;\sigma}} ~\sqbkt{\Upsilon,\;\phi,\;\cblu{\widetilde{h}},\;\cblu{\widetilde{\tau}}}~\upsilon} &\equiv \cblu{\widetilde{h}}~\upsilon \\
\tail~(\phi~\upsilon)~\bkt{\corec_{\;\sqbkt{\Phi,\;\A,\;\MB,\;\sigma}} ~\sqbkt{\Upsilon,\;\phi,\;\cblu{\widetilde{h}},\;\cblu{\widetilde{\tau}}}~\upsilon}~\mathfrak{\b} &\equiv \bkt{\corec_{\;\sqbkt{\Phi,\;\A,\;\MB,\;\sigma}}~\sqbkt{\Upsilon,\;\phi,\;\cblu{\widetilde{h}},\;\cblu{\widetilde{\tau}}}}\d ~\pair{\upsilon}{\cblu{\widetilde{\tau}}~\upsilon~\mathfrak{\b}}
\end{align*}

Now we can define $\SST$ as a particular instance of an indexed displayed coinductive type (which happens to have trivial indexing).
In fact, it is in some sense the \emph{universal} such instance, where the family $\MB$ indexed by $\A$ is the universal family $\El$ indexed by $\Type_{\;\!\ell}$.
This may be compared with the fact that the $W$-type of $\El$ is the type of \emph{`presentations of well-founded sets'}~\cite{aczel:ttint-cst}, while its $M$-type is the type of \emph{`presentations of ill-founded sets'}~\cite{lindstrom:nwfs-mltt}.
\begin{equation*}
  \SST_\ell \equiv \dCoind~\left[\;\ec_\sm,~{\dbkt{\Type_{\;\!\ell}}}_{\varphi\;:\;\ec_\sm}, ~{\dbkt{\El~\X}}_{\X\;:\;\Type_{\;\!\ell}}, ~{\dsqbkt{\esub_\sm}}_{\;\X\;:\;\Type_{\;\!\ell},\;\x\;:\;\El\;\X}\;\right]
\end{equation*}
This is the end of all the primitive rules and definitions we have to give.
From here, we can \emph{deduce} the rules for $\SST_\ell$, defining $\Z \equiv \head$ and $\S \equiv \tail$ and $\R \equiv \corec$:
\begin{mathpar}
  \infer{ }{\Gamma \vdash_\sm \Z : \dbkt{\Type_{\;\!\ell}}_{\X\;:\;\SST_\ell}}
  \and
  \infer{ }{\Gamma \vdash_\sm \S : \dbkt{\SST^{\mathsf{\cred{d}}}_\ell~\X}_{\X\;:\;\SST_\ell,\;\x\;:\;\El\;(\Z\;\X)}}
  \and
  \infer{\Gamma,~\lock_\trisq \vdash_\sm \Upsilon \tel_{\;\!\ell^\cblu{'}} \\
    \Gamma,~\lock_\trisq \vdash_\sm \cblu{\widetilde{Z}} : \dbkt{\Type_{\;\!\ell}}_{\upsilon\;:\;\Upsilon} \\
    \Gamma,~\lock_\trisq \vdash_\sm \cblu{\widetilde{S}} : \dbkt{\Upsilon\d~\upsilon}_{\upsilon\;:\;\Upsilon,\;\x\;:\; \El\;(\cblu{\widetilde{Z}}\;\upsilon)}\\
  }
  {\Gamma \vdash_\sm \R_\Upsilon~\cblu{\widetilde{Z}}~\cblu{\widetilde{S}} : \dbkt{\SST_{\;\!\ell}}_{\upsilon~:~\Upsilon} }
\end{mathpar}

The problem of giving a corecursion rule for $\SST\d$ carries over to the general case in the following way.
Just as $\d$ of a $\Pi$-type is another $\Pi$-type and so on for records and ordinary coinductive types,
We'd like to compute $\d$ of a $\dCoind$ to be another $\dCoind$, with something like the following:
\begin{equation*}
  \text{\cprime{\textquestiondown}}\quad
  \begin{multlined}[c]
    \dCoind_{\;\sqbkt{\Phi,\;\A,\;\MB,\;\sigma}}\d \equiv
    \dCoind
    ~\Big[\bkt{\varphi^\cblu{+} : \Phi\D,~ \c : \dCoind_{\;\sqbkt{\Phi,\;\A,\;\MB,\;\sigma}}~\varphi^\cblu{+}\ev},\\
    \dbkt{\A\d~\varphi^\cblu{+}~\bkt{\head~\c}}_{\;\varphi^\cblu{+},\;\c},~ \dbkt{\MB\D~\varphi^\cblu{+}~\bkt{\head~\c}~\a\cblu{'}}_{\varphi^\cblu{+},\;\x,\;\a\cblu{'}},\\
    \dsqbkt{\sigma\D~\varphi^\cblu{+}~\sqbkt{\head~\c,~\a\cblu{'}}~\mathfrak{\b}^\cblu{+},~\tail~\c~\mathfrak{\b}^\cblu{+}\ev
      }_{\;\varphi^\cblu{+},\;\c,\;\a\cblu{'},\;\mathfrak{\b}^\cblu{+}}
      \Big]
  \end{multlined}
\quad \cprime{?}
\end{equation*}
To see whether this is well-typed, observe that we have
\begin{align*}
  \varphi:\Phi,~\a:\A~\varphi,~\mathfrak{\b}:\MB~\varphi~\a &\vdash_\sm \sigma~\varphi~\a~\mathfrak{\b} : \Phi\d~\varphi\\
  \varphi^\cblu{+}:\Phi\D,~\a:\A,~\a\cblu{'}:\A\d~\a,~\mathfrak{\b}^\cblu{+} :\MB\D~\varphi^\cblu{+}~\sqbkt{\a,~\a\cblu{'}} &\vdash_\sm \sigma\D~\varphi^\cblu{+} ~\sqbkt{\a,~\a\cblu{'}}~\mathfrak{\b}^\cblu{+} : \Phi\d\D~\varphi^\cblu{+}
\end{align*}
whereas the $\sigma$ of the resulting $\dCoind$ must lie in $\bkt{\varphi^\cblu{+} : \Phi\D,~ \c : \dCoind_{\;\sqbkt{\Phi,\;\A,\;\MB,\;\sigma}}~\varphi^\cblu{+}\ev}\d$.
Thus, in particular, we need to compare $\Phi\d\D~\varphi^\cblu{+}$ to $\Phi\D\d~\varphi^\cblu{+}$, where $\varphi^\cblu{+}:\Phi\D$.
In the case of a one-type telescope $\Phi=\bkt{\a:\A}$, this becomes
\begin{align*}
  \Phi\D &\equiv \bkt{\a:\A,~\a\cblu{'}:\A\d~\a}\\
  \Phi\d~\a &\equiv \bkt{\a\cblu{'}:\A\d~\a}\\
  \Phi\d\D~\sqbkt{\a,\a\cblu{'}} &\equiv \bkt{\a\cblu{''} : \A\d~\a, ~ \a\cblu{'''} : \A\d\d~\a~\a\cblu{'}~\a\cblu{''}}\\
  \Phi\D\d~\sqbkt{\a,\a\cblu{'}} &\equiv \bkt{\a\cblu{''} : \A\d~\a, ~ \a\cblu{'''} : \A\d\d~\a~\a\cblu{''}~\a\cblu{'}}
\end{align*}
Unfortunately the last two are \emph{not} the same!
This is not just about ordering the variables in a telescope; although the second and third arguments of $\A\d\d$ both lie in $\A\d~\a$, it need not be symmetrical with respect to those arguments.
So again we see that without adding symmetry to the theory, it seems we can't give a general corecursor for $\dCoind\d$, and hence we can't compute $\corec\d$ to something more primitive. \bbox

\subsection{Examples of displayed coinductive types}
\label{sec:exampl-displ-coind}

We now continue our exploration of the theory of semi-simplicial types from \cref{sec:eg-sst}, now using the general notion of displayed coinductive type.
As in \cref{sec:eg-sst}, we will use Agda-esque codata and copattern-matching definitions, and assume that our type theory has plenty of other structure.

We have already noted that $\SST$ is in some sense the \emph{`universal'} (unparametrised) displayed coinductive type, whose determining family $\x : \A \vdash \MB~\x \type_{\;\!\ell_\cblu{2}}$ is the universal one $\X:\Type_{\;\!\ell} \vdash \El~\X \type_{\;\!\ell}$.
Moreover, it seems likely that in order for an unparametrised displayed coinductive type to be interesting, the types $\A$ and $\MB$ must have nontrivial display structure, i.e. they must not be discrete.
But the simplicial universe $\Type_{\;\!\ell}$ is the primary source of types with nontrivial display, just as the universe in homotopy type theory is a primary source of types with higher homotopy structure.
(In \cref{subsec:conjectural-syntax} we will speculate about a notion of \emph{`display inductive type'} analogous to higher inductive types, which are the other source of higher homotopy structure in homotopy type theory.)
For these reasons, we do not have a lot of interesting examples of other \emph{unparametrised} displayed coinductive types, but there is at least one: augmented semi-simplicial types.

\subsubsection{Augmented semi-simplicial types}
\label{sec:asst}

If we simply omit the $\Z$ input of $\S$ in the definition of $\SST$, we obtain a definition of \emph{augmented} semi-simplicial types.
(Recall from \cref{sec:intro} that these are equivalently \emph{unary semi-cubical types}.)
\begin{lstlisting}
(*\textbf{codata}*) (*$\ASST$*)  : (*$\Type$*) (*\textbf{where}*)
  (*$\Z^\cred{+}$*) : (*$\ASST$*) (*$\to$*) (*$\Type$*)
  (*$\S^\cred{+}$*) : ((*$\A$*) : (*$\ASST$*)) (*$\to$*) (*$\ASST\d$*) (*$\A$*)
\end{lstlisting}
We can convince ourselves of this by extracting types of low-dimensional simplices from an $\X:\ASST$:
\begin{align*}
  &\vdash \X_{\minusone} \equiv \Z^\cred{+}~\X\\
  \ze\sub{\cblu{0}} : \X_{\minusone} &\vdash \X_{\cblu{0}}~\ze\sub{\cblu{0}} \equiv \Z^\cred{+}\d~(\S^\cred{+}~\X)\\
  \ze\sub{\cblu{00}} : \X_{\minusone},~ \x\sub{\cblu{01}} : \X_\cred{\cblu{0}}~\ze\sub{\cblu{00}}, ~ \x\sub{\cblu{10}} : \X_{\cblu{0}}~\ze\sub{\cblu{00}}
  &\vdash
  \X_\cred{\cblu{1}}~\ze\sub{\cblu{00}}~\x\sub{\cblu{01}}~\x\sub{\cblu{10}} \equiv
  \Z^\cred{+}\d\d~(\S^\cred{+}\d~(\S^\cred{+}~\X))~\ze\sub{\cblu{00}}~\x\sub{\cblu{01}}~\x\sub{\cblu{10}}
\end{align*}
and so on.
Now we can observe that the construction $\cred{\mathsf{Fib}}$ of \cref{sec:fibers} factors through $\ASST$ via a pair of maps, both defined by copattern-matching:
\begin{lstlisting}
(*$\cred{\mathsf{Int}}$*) : ((*$\X$*) (*$:^\trisq$*) (*$\Type$*)) (*$\to$*) (*$\ASST$*)
(*$\Z^\cred{+}$*) ((*$\cred{\mathsf{Int}}$*) (*$\X$*)) = (*$\X$*)
(*$\S^\cred{+}$*) ((*$\cred{\mathsf{Int}}$*) (*$\X$*)) = (*$\cred{\mathsf{Int}}\d$*) (*$\X$*)
\end{lstlisting}

\begin{lstlisting}
(*$\cred{\mathsf{Fib}'}$*) : ((*$\X$*) : (*$\ASST$*)) ((*$\x$*) : (*$\Z^\cred{+}$*) (*$\X$*)) (*$\to$*) (*$\SST$*)
(*$\Z$*) ((*$\cred{\mathsf{Fib}'}$*) (*$\X$*) (*$\x$*)) = (*$\Z^\cred{+}\d$*) ((*$\S^\cred{+}$*) (*$\X$*)) (*$\x$*)
(*$\S$*) ((*$\cred{\mathsf{Fib}'}$*) (*$\X$*) (*$\x$*)) (*$\y$*) = (*$\cred{\mathsf{Fib}'}\d$*) (*$\X$*) ((*$\S^\cred{+}$*) (*$\X$*)) (*$\x$*) (*$\y$*)
\end{lstlisting}

\subsubsection{Pointed semi-simplicial types}
\label{sec:psst}

More interesting examples of displayed coinductive types have nontrivial parametrizations, often involving more semi-simplicial types.
For instance, we can define the structure of a \emph{pointing} on a semi-simplicial type displayed-coinductively:
\begin{lstlisting}
(*\textbf{codata}*) (*$\mathsf{\cred{Pt}}$*) ((*$\X$*) : (*$\SST$*)) : (*$\Type$*) (*\textbf{where}*)
  (*$\mathsf{\cred{zp}}$*) : (*$\mathsf{\cred{Pt}}$*) (*$\X$*) (*$\to$*) (*$\Z$*) (*$\X$*)
  (*$\mathsf{\cred{sp}}$*) : ((*$\p$*) : (*$\mathsf{\cred{Pt}}$*) (*$\X$*)) (*$\to$*) (*$\mathsf{\cred{Pt}}\d$*) (*$\X$*) ((*$\S$*) (*$\X$*) ((*$\mathsf{\cred{zp}}$*) (*$\p$*))) (*$\p$*)
\end{lstlisting}
We then have, for $\p : \mathsf{\cred{Pt}}~\X$,
\begin{align*}
  \mathsf{\cred{zp}}~\p &: \Z~\X \equiv \X_\cred{0}\\
  \mathsf{\cred{zp}}\d~(\mathsf{\cred{sp}}~\p) &: \Z\d~(\S~\X~(\mathsf{\cred{zp}}~\p))~(\mathsf{\cred{zp}}~\p) \equiv \X_\cred{1}~(\mathsf{\cred{zp}}~\p)~(\mathsf{\cred{zp}}~\p)\\
  \mathsf{\cred{zp}}\d\d~(\mathsf{\cred{sp}}\d~(\mathsf{\cred{sp}}~\p)) &: \X_\cred{2}~(\mathsf{\cred{zp}}~\p)~(\mathsf{\cred{zp}}~\p)~(\mathsf{\cred{zp}}\d~(\mathsf{\cred{sp}}~\p))~(\mathsf{\cred{zp}}~\p)~(\mathsf{\cred{zp}}\d~(\mathsf{\cred{sp}}~\p))~(\mathsf{\cred{zp}}\d~(\mathsf{\cred{sp}}~\p))
\end{align*}
and so on.
That is, an element of $\mathsf{\cred{Pt}}~\X$ equips $\X$ with a `fat point', i.e.\ a chosen $\cred{0}$-simplex $\cred{zp}$ that comes with all of the higher \emph{`degenerate simplices'} that one would expect to be associated to $\cred{zp}$ if it were in a \emph{simplicial} set rather than a semi-simplicial one. \bbox

\subsubsection{Morphisms of semi-simplicial types}
\label{sec:mor-sst}

With a double parametrization, we can define a type of morphisms of semi-simplicial types.
\begin{lstlisting}
(*\textbf{codata}*) (*$\mathsf{\cred{Hom}}$*) ((*$\X$*) (*$\Y$*) : (*$\SST$*)) : (*$\Type$*) (*\textbf{where}*)
  (*$\mathsf{\cred{zhom}}$*) : (*$\mathsf{\cred{Hom}}$*) (*$\X$*) (*$\Y$*) (*$\to$*) (*$\Z$*) (*$\X$*) (*$\to$*) (*$\Z$*) (*$\Y$*)
  (*$\mathsf{\cred{shom}}$*) : ((*$\f$*) : (*$\mathsf{\cred{Hom}}$*) (*$\X$*) (*$\Y$*)) ((*$\x$*) : (*$\Z$*) (*$\X$*)) (*$\to$*) (*$\mathsf{\cred{Hom}}\d$*) (*$\X$*) ((*$\S$*) (*$\X$*) (*$\x$*)) (*$\Y$*) ((*$\S$*) (*$\Y$*) ((*$\mathsf{\cred{zhom}}$*) (*$\f$*) (*$\x$*))) (*$\f$*)
\end{lstlisting}
As usual, we can unravel this a few steps to see what it looks like.
$\cred{\mathsf{zhom}}~\f$ is a function between types of $\cred{0}$-simplices, which we may denote $\f_\cred{0}$.
At the next dimension we have:
\begin{equation*}
  \cred{\mathsf{zhom}}\d~(\cred{\mathsf{shom}}~\f~\x\sub{\cblu{01}})~\x\sub{\cblu{01}}~\cblu{\beta\sub{11}}
  : \Z\d~(\S~\Y~(\cred{\mathsf{zhom}}~\f~\x\sub{\cblu{01}}))~(\cred{\mathsf{zhom}}~\f~\x\sub{\cblu{01}})
\end{equation*}
which is to say
\[ \cred{\mathsf{zhom}}\d~(\cred{\mathsf{shom}}~\f~\x\sub{\cblu{01}})~\x\sub{\cblu{01}} : \X_\cred{1}~\x\sub{\cblu{01}}~\x\sub{\cblu{10}} \to \Y_\cred{1}~(\f_\cred{0}~\x\sub{\cblu{01}})~(\f_\cred{0}~\x\sub{\cblu{10}}). \]
We may denote this function by $\f_\cred{1}$, and go on to extract a function $\f_\cred{2}$ between types of $\cred{2}$-simplices and so on.
We expect other basic operations on semi-simplicial types to be internalizable in a similar way. \bbox

\section{Semantics}
\label{sec:semantics}

We now discuss the semantics of dTT.
Specifically, we will show that from any model of ordinary dependent type theory with infinite limits, we can construct a model of dTT in which the original model sits as the discrete mode.

\begin{remark}
  Actually we will not quite model all of dTT as presented in \cref{sec:syntax}: we omit the type-former $\tri$ and its associated introduction and elimination rule.
  This is purely for reasons of simplicity and space.
  It should be possible to model $\tri$ as well as long as the starting discrete model has a unit type, but we leave the details for the future.
  We will, however, still model $\tri$-annotated variables and function types such as $(\x :^\tri \A) \to \B~\x$.
\end{remark}

In \cref{sec:categories-with-families} we review the semantics of ordinary dependent type theory, introduce some notation, extend this to our calculus of telescopes and meta-abstractions, and define what it means for such a model to have countable infinite limits.
Then in \cref{sec:simplicial-model} we construct a model of augmented semi-simplicial Reedy diagrams, starting with any model of dependent type theory having countable infinite limits.
This is essentially an instance of the general inverse diagram models constructed in~\cite{shulman15,kl:hoinvdia}, but we give an explicit inductive construction that avoids category-theoretic machinery and builds display and d\'ecalage in from the beginning.
In \cref{sec:modal-semantics} we add modalities to this model, and then in \cref{sec:generic-semantics} we discuss the \emph{general} notion of model of dTT and show that the simplicial model is in fact such.
Finally, in \cref{sec:sem-sst} we construct displayed coinductive types in these models, including the type $\SST$ of semi-simplicial types.

\subsection{The semantics of dependent type theory}
\label{sec:categories-with-families}

We approach semantics from the perspective of \emph{Categories with Families} (CwF)~\cite{CASCWF}. Here we will recount the relevant categorical concepts while providing a translation into language reminiscent of a type theoretic logical framework.

At the most basic level, a category with families is just a category with a terminal object and distinguished substructure of objects and morphisms that behave like \emph{types} and \emph{terms} in a dependent type theory. In the absence of any other structure, the only way in which this behaviour is manifested is through the presence of \emph{substitution}, which categorically corresponds to a choice of definitionally functorial distinguished pullbacks. Here, instead of giving the substructure as a proposition on objects and morphisms, we first give it as presheaves, and then use representability to overlay this structure into the category.
\vspace{0.2cm}

\subsubsection{Categories with Families}

A `CwF with levels' consists of a category $\cC$, along with a chosen terminal object $\One$, and equipped with the data of two families of presheaves, indexed by $\ell \level$:
\begin{mathpar}
    \Tyl : \cC\op \to \Set
    \and
    \Tml : \bkt{\int^\cC~\Tyl}\op \to \Set,
\end{mathpar}
such that for every $\Gamma : \mathsf{ob}_\cC$ and $\A : \Tyl~\Gamma$, there is a chosen representation of the presheaf:
\[\Delta~\mapsto \bkt{\sigma~:~\mathsf{mor}_\cC\bkt{\Delta\;,\;\Gamma}} \times \Tml\bkt{\Delta\;,\;\A^\sigma}.
  \eqno\sectend
\]

\subsubsection{Notation}

The objects of the category $\cC$ are called \emph{contexts} and denoted by $\Gamma,\;\!\Delta$. For $\Gamma : \mathsf{ob}_\cC$ we write:
\[\Gamma \ob\]
\emph{The empty context} is the chosen terminal object $\One$, and is denoted by:
\[\ec \ob\]
The morphisms of $\cC$ are called \emph{substitutions} and denoted by $\sigma,\;\!\tau$. For $\sigma : \mathsf{mor}_{\cC}\bkt{\Delta,\Gamma}$ we write:
\[\sigma : \Delta \to \Gamma\]
The unique substitution into the empty context is denoted by:
\[\esub : \Gamma \to \ec\]
The notations $\Gamma \ob$ and $\sigma : \Delta \to \Gamma$ are examples of \emph{`absolute'} or \emph{`context-less'} \emph{judgments}.
In the notation of this section, each absolute judgment asserts that some element belongs to some set, such as the set of objects or a hom-set of $\cC$.
Note that a set can be \emph{`dependent'} on elements of another set, such as the hom-set $\mathsf{mor}_{\cC}\bkt{\Delta,\Gamma}$ depending on $\Delta,\;\!\Gamma : \mathsf{ob}_{\cC}$; and thus elements of one absolute judgment can appear in another absolute judgment, such as $\sigma : \Delta \to \Gamma$ where $\Delta \ob$ and $\Gamma\ob$.
Operations on sets can be written as rules, such as composition of morphisms:
\[\infer{\sigma: \Delta\to\Gamma \\ \tau : \Gamma \to \Theta}{\tau\circ\sigma : \Delta\to\Theta}.
\]
Equations between these operations can likewise be written as rules, where we use $\equiv$ for equality since it corresponds to definitional equality in syntax:
\[\infer{\sigma: \Delta\to\Gamma \\ \tau : \Gamma \to \Theta \\ \upsilon : \Theta \to \Upsilon}{\upsilon\circ(\tau\circ\sigma) \equiv (\upsilon\circ\tau)\circ \sigma}.
\]

The elements of $\Tyl$ are called \emph{types of level $\ell$} and denoted by $\A,\B$.  For $\A : \Tyl~\Gamma$ we write:
\[\gamma : \Gamma \vdash \A~\gamma \type_{\;\!\ell}\]
Similarly, the elements of $\Tml$ are called \emph{terms} and denoted by $\t,\;\!\s$.  For $\t : \Tml\bkt{\Gamma,\A}$ we write:
\[\gamma : \Gamma \vdash \t~\gamma : \A~\gamma\]
These notations are examples of \emph{`hypothetical'} or \emph{`contextual'} judgments.
In the notation of this section, each hypothetical judgment asserts that some element belongs to a value of some \emph{presheaf}, such as $\Tm_{\;\!\ell}$ or $\Ty_{\;\!\ell}$.
The object of $\cC$ at which the presheaf is evaluated (in these examples, $\Gamma$) is written on the left-hand side of the turnstile $\vdash$.
We annotate it with a formal \emph{`variable'} such as $\gamma$, and \emph{`apply'} all the elements appearing on the right-hand side to that element; at the moment this is just a convention of notation.
As with absolute judgments, one presheaf can be dependent on another, such as $\Tm_{\;\!\ell}(\Gamma,\A)$ depending on $\A : \Ty_{\;\!\ell}~\Gamma$; and thus the elements of one hypothetical judgment can appear in another hypothetical judgment, such as $\gamma : \Gamma \vdash \t~\gamma : \A~\gamma$ where $\gamma : \Gamma \vdash \A~\gamma \type_{\;\!\ell}$.

For a morphism $\sigma : \Delta \to \Gamma$, we denote its functorial action on types $\A$ and terms $\t$ by $\A^\sigma$ and $\t^\sigma$; thus the presheaf actions of $\Ty_{\;\!\ell}$ and $\Tm_{\;\!\ell}$ can be expressed by rules
\begin{mathpar}
  \infer{\sigma : \Delta \to \Gamma \\ \gamma:\Gamma \vdash \A~\gamma \type_{\;\!\ell}}{\delta:\Delta \vdash \A^\sigma~\delta \type_{\;\!\ell}}
  \and
  \infer{\sigma : \Delta \to \Gamma \\ \gamma:\Gamma \vdash \t:\A}{\delta:\Delta \vdash \t^\sigma~\delta : \A^\sigma~\delta}
\end{mathpar}
We allow ourselves to write this alternatively by taking the formal variables $\gamma,\delta$ more seriously and \emph{`applying'} $\sigma$ to them:
\begin{mathpar}
    \delta : \Delta \vdash \A~\bkt{\sigma~\delta} \type_{\;\!\ell}
    \and
    \delta : \Delta \vdash \t~\bkt{\sigma~\delta} : \A~\bkt{\sigma~\delta}
\end{mathpar}
Formally, this is justified by interpretation in the internal type theory of the presheaf category $\Set^{\cC\op}$.
By virtue of functoriality, for $\sigma : \Delta \to \Gamma$ and $\tau : \Omega \to \Delta$ we have that
\begin{mathpar}
    \A~\bkt{\sigma~\bkt{\tau~\omega}} \equiv \A~\bkt{\bkt{\sigma \circ \tau}~\omega}
    \and
    \t~\bkt{\sigma~\bkt{\tau~\omega}} \equiv \t~\bkt{\bkt{\sigma \circ \tau}~\omega}
\end{mathpar}

Now let us consider the representation hypothesis.
In categorical notation, for $\Gamma : \mathsf{ob}_{\cC}$ and $\A : \Ty_{\;\!\ell}~\Gamma$ we have a representing object $\Gamma \ce \A : \mathsf{ob}_\cC$ called the \emph{context extension}.
In type-theoretic notation, we denote the context extension by
\[\infer{\Gamma \ob \\ \gamma:\Gamma\vdash \A~\gamma \type_{\;\!\ell}}{\bkt{\gamma : \Gamma,~\a : \A~\gamma} \ob}\]
Note that this is an operation that takes an element of one set (the absolute judgment $\Gamma\ob$) and an element of one presheaf (the hypothetical judgment $\gamma:\Gamma\vdash \A~\gamma \type_{\;\!\ell}$) and produces an element of a set (the absolute judgment $\bkt{\gamma : \Gamma,~\a : \A~\gamma} \ob$).
As before, at the moment the \emph{`variables'} $\gamma$ and $\a$ are just a convention of notation.

We then have a family of bijections, natural in $\Delta$:
\[\bkt{\Delta \to \bkt{\gamma : \Gamma,~\a : \A~\gamma}} \simeq \bkt{\bkt{\sigma~:~\Delta \to \Gamma} \times \bkt{\delta : \Delta \vdash \t~\delta : \A~\bkt{\sigma~\delta}}}\]
Note that this says that a substitution into an extended context $\bkt{\gamma : \Gamma,~\a : \A~\gamma}$ is precisely a substitution $\sigma$ into $\Gamma$, along with some term $\t$ of type $\A^\sigma$.
By the Yoneda lemma, the left-to-right part of this bijection is determined by setting $\Delta$ to $\bkt{\gamma : \Gamma,~\a : \A~\gamma}$ and evaluating at the identity $\id_{\bkt{\gamma\;:\;\Gamma,\;\a\;:\; \A\;\gamma}}$.
The first component gives us a substitution $\ft^{\A} : \bkt{\gamma : \Gamma,~\a : \A~\gamma} \to \Gamma$, which we call a \emph{fundamental context projection} or \emph{parent map}\footnote{From the perspective of type theoretic fibration categories, an alternative approach to semantics from that of CwFs, the role of fundamental context projections is instead played by \emph{fibrations}.} and denote by:
\[\ft^{\A} : \bkt{\gamma : \Gamma,~\a : \A~\gamma} \rightarrowtriangle \Gamma\]
The second component then gives us a term that we call the \emph{zero variable} and denote by $\zv^{\A} : \Tm(\bkt{\gamma : \Gamma,~ \a : \A~\gamma},\A^{\ft^\A})$, or:
\[\delta : \bkt{\gamma : \Gamma,~ \a : \A~\gamma} \vdash \zv^{\A}~\delta : \A~\bkt{\ft^{\A}~\delta}\]
Note that the forward direction of the bijection sends a substitution $\tau : \Delta \to \bkt{\gamma : \Gamma, ~\a : \A~\gamma}$ to the pair $\bkt{\ft^\A \circ \tau, ~\bkt{\zv^{\A}}^\tau}$. That this map is a bijection is witnessed by the existence of a substitution extension operation:
\begin{mathpar}
    \infer{\sigma : \Delta \to \Gamma \\ \delta : \Delta \vdash \t~\delta : \A~\bkt{\sigma~\delta}}{\sqbkt{\sigma,~\t} : \Delta \to \bkt{\gamma : \Gamma,~\a : \A~\gamma}}
\end{mathpar}
such that for $\sigma : \Delta \to \Gamma$ and $\delta : \Delta \vdash \t~\delta : \A~\bkt{\sigma~\delta}$, we have:
\begin{align}
  \ft^\A \circ \sqbkt{\sigma,~\t} &\equiv \sigma \label{eq:ft-sqbkt}\\
  \bkt{\zv^\A}^{\sqbkt{\sigma,~\t}} &\equiv \t\label{eq:zv-sqbkt}
\end{align}
and, conversely, for $\tau : \Delta \to \bkt{\gamma : \Gamma,~\a : \A~\gamma}$, we have:
\begin{equation}
  \label{eq:sqbkt-ft-zv}
  \sqbkt{ \ft^\A \circ \tau,~ \bkt{\zv^\A}^\tau} \equiv \tau.
\end{equation}
As a corollary of this we have that the following diagram is a pullback:
\[\begin{tikzcd}
\bkt{\delta : \Delta,~\a : \A~\bkt{\sigma~\delta}} \arrow[rrr, "\sqbkt{\sigma ~\circ~ \ft^{\A^\sigma},~\zv^{\A^\sigma}}"] \arrow[d,-{Triangle[open]},"\ft^{\A^\sigma}"']
\arrow[drrr, phantom, "\usebox\pullback" , very near start, color=black] & & &\bkt{\gamma : \Gamma,~\a : \A~\gamma} \arrow[d,-{Triangle[open]},"\ft^\A"] \\
\Delta \arrow[rrr,"\sigma"'] & & & \Gamma
\end{tikzcd}\]
So, in particular, in a CwF we have a distinguished pullback of any parent map along an arbitrary morphism, as another parent map, and this choice of pullbacks is definitionally functorial. We will often omit the superscripts from $\ft$ and $\zv$.

The map constructed from $\sigma$ in the top row of the diagram above will occur frequently in the exposition that follows, and we shall refer to it as the \emph{weakening two} of $\sigma$ by $\A$. Formally, given $\sigma : \Delta \to \Gamma$, and $\gamma : \Gamma \vdash \A~\gamma \type_{\;\!\ell}$, we have:
\begin{align*}
    &\mathsf{\cred{W}}_{\cred{2}}^\A~\sigma : \bkt{\theta : \Theta,~\a : \A~\bkt{\sigma~\theta}} \to \bkt{\gamma : \Gamma,~\a : \A~\gamma} \\
    &\mathsf{\cred{W}}_{\cred{2}}^\A~\sigma = \sqbkt{\sigma \circ \ft,~\zv}
\end{align*}

Finally, when working with hypothetical judgments in an extended context, we can also treat the variables more traditionally.
Instead of $\delta : \bkt{\gamma:\Gamma,~\a:\A~\gamma} \vdash \B~\delta\type_{\;\!\ell_\cblu{1}}$, we write $\gamma:\Gamma,~\a:\A~\gamma \vdash \B~\gamma~\a\type_{\;\!\ell_\cblu{1}}$, and so on.
In particular, the zero variable $\zv$ can be written more simply as
\[\gamma : \Gamma,~ \a : \A~\gamma \vdash \a : \A~\gamma\]
As before, this can be justified formally as an interpretation in the internal type theory of $\Set^{\cC\op}$.\bbox
\vspace{0.2cm}

\subsubsection{$\Pi$-Types}
\label{sec:pi-types}

All the basic type-forming operations in syntax translate into structure on a CwF.
For instance, a \emph{$\Pi$-structure} on a CwF with levels consists of the following structure and properties:
\begin{mathpar}
\infer{\gamma : \Gamma \vdash \A~\gamma \type_{\;\!\ell_\cblu{0}} \\ \gamma : \Gamma, ~\a : \A~\gamma \vdash \B~\gamma~\a \type_{\;\!\ell_\cblu{1}}}{\gamma : \Gamma \vdash \bkt{\cred{\Pi}~\A~\B}~\gamma \type_{\;\!\ell_\cblu{0}\;\join\;\ell_\cblu{1}}}
\\
\infer{\gamma : \Gamma, ~\a : \A~\gamma \vdash \t~\gamma~\a : \B~\gamma~\a}{\gamma : \Gamma \vdash \bkt{\lambda~\t}~\gamma : \bkt{\cred{\Pi}~\A~\B}~\gamma}
\\
\infer{\gamma : \Gamma \vdash \f~\gamma : \bkt{\cred{\Pi}~\A~\B}~\gamma \\ \gamma : \Gamma \vdash \s~\gamma : \A~\gamma}{\gamma : \Gamma \vdash \bkt{\eval~\f~\s}~\gamma : \B^{\sqbkt{\id_\Gamma,\;\s}}~\gamma}
\end{mathpar}
The above notation lets us talk about types in point-free notation, e.g. $\cred{\Pi}~\A~\B : \Ty_{\;\!\ell}~\Gamma$. When the explicit dependence on $\gamma$ is written, we can propagate the notation as follows:
\begin{align*}
\bkt{\cred{\Pi}~\A~\B}~\gamma &\equiv \bkt{\a : \A~\gamma} \to \B~\gamma~\a \\
\bkt{\lambda~\t}~\gamma &\equiv \lambda~\a.~\t~\gamma~\a \\
\bkt{\eval~\f~\s}~\gamma &\equiv \eval~\gamma~\bkt{\f~\gamma}~\bkt{\s~\gamma}
\end{align*}
Such that the $\beta$ and $\eta$ laws hold:
\begin{mathpar}
\infer{\gamma : \Gamma, ~\a : \A~\gamma \vdash \t~\gamma~\a : \B~\gamma~\a \\ \gamma : \Gamma \vdash \s~\gamma : \A~\gamma}{\gamma : \Gamma \vdash \bkt{\eval~\bkt{\lambda~\t} ~\s}~\gamma \equiv \t^{\sqbkt{\id_\Gamma,\;\s}}~\gamma : \B^{\sqbkt{\id_\Gamma,\;\s}}~\gamma}
\\
\infer{\gamma : \Gamma \vdash \f~\gamma: \bkt{\a : \A~\gamma} \to \B~\gamma~\a}{\gamma : \Gamma \vdash \bkt{\lambda~\bkt{\eval~\f^{\;\!\ft}~\zv}}~\gamma \equiv \f~\gamma : \bkt{\cred{\Pi}~\A~\B}~\gamma}
\end{mathpar}
Or, in indexed notation:
\begin{align*}
\gamma : \Gamma \vdash \eval~\gamma~\bkt{\lambda~\a.~\t~\gamma~\a}~\bkt{\s~\gamma} &\equiv  \t~\gamma~\bkt{\s~\gamma} : \B~\gamma~\bkt{\s~\gamma} \\
\gamma : \Gamma \vdash \lambda~\a.~\eval~\sqbkt{\gamma,~\a}~\bkt{\f~\gamma}~\a &\equiv \f~\gamma  : \bkt{\a : \A~\gamma} \to \B~\gamma~\a
\end{align*}
And such that the above constructs are stable under substitution, i.e. for $\sigma : \Delta \to \Gamma$:
\begin{align*}
\bkt{\cred{\Pi}~\A~\B}^\sigma &\equiv \cred{\Pi}~\A^\sigma~\B^{\mathsf{\cred{W}}_{\cred{2}}^\A \sigma} \\
\bkt{\lambda~\t}^\sigma &\equiv \lambda~\t^{\mathsf{\cred{W}}_{\cred{2}}^\A \sigma} \\
\bkt{\eval~\f~\s}^\sigma &\equiv \eval~\f^\sigma~\s^\sigma
\tag*{$\sectend$}
\end{align*}

\subsubsection{Universes}
\label{sec:universes}

A \emph{$\mathcal{U}$-structure} on a CwF with levels consists of the following structure and properties:
\begin{mathpar}
\infer{\ell \level}{\gamma : \Gamma \vdash \Type_{\;\!\ell}~\gamma \type_{\;\!\lsuc\;\ell}}
\and
\infer{\gamma : \Gamma \vdash \A~\gamma \type_{\;\!\ell}}{\gamma : \Gamma \vdash \Code~\A~\gamma : \Type_{\;\!\ell}~\gamma}
\and
\infer{\gamma : \Gamma \vdash \A~\gamma : \Type_{\;\!\ell}~\gamma}{\gamma : \Gamma \vdash \El~\A~\gamma \type_{\;\!\ell}}
\end{mathpar}
Such that $\Code$ and $\El$ are mutual inverses:
\begin{mathpar}
\infer{\gamma : \Gamma \vdash \A~\gamma \type_{\;\!\ell}}{\gamma : \Gamma \vdash \El~\bkt{\Code~\A}~\gamma \equiv \A~\gamma}
\and
\infer{\gamma : \Gamma \vdash \A~\gamma : \Type_{\;\!\ell}~\gamma}{\gamma : \Gamma \vdash \Code~\bkt{\El~\A}~\gamma \equiv \A~\gamma : \Type_{\;\!\ell}~\gamma}
\end{mathpar}
And such that the above constructs are stable under substitution, i.e. for $\sigma : \Delta \to \Gamma$:
\begin{align*}
\Type^{\sigma}_{\;\!\ell} &\equiv \Type_{\;\!\ell} \\
\bkt{\Code~\A}^\sigma &\equiv \Code~\A^\sigma \\
\bkt{\El~\A}^\sigma &\equiv \El~\A^\sigma
\tag*{$\sectend$}
\end{align*}

\subsubsection{Natural models}
\label{sec:natural-models-1}

We recall from~\cite{awodey:natmodels} that a CwF can equivalently be described as a \emph{natural model}: a category $\MC$ together with an (algebraically) representable natural transformation $\pr : \Tm \to \Ty$ of presheaves on $\MC$.
This amounts to representing the dependency of terms on types in \emph{`fibered'} rather than \emph{`indexed'} style, which is different from the usual type-theoretic notation, but it has the advantage that various operations can cleanly be described in terms of $\pr$.
Similarly, of course, a CwF with levels can be described by a family of representable transformations $\prl : \Tml \to \Tyl$.

In particular, like any morphism in a locally cartesian closed category, any such $\pr$ induces a polynomial endofunctor $\P_\pr$ of the presheaf category, where for any presheaf (i.e.\ judgment) $\X$, morphisms $\yon\Gamma \to \P_\pr(\X)$ (i.e.\ elements of the presheaf $\P_\pr(\X)$) are bijectively related to pairs consisting of a type $\A\in \Ty(\Gamma)$ in context $\Gamma$ and a morphism $\yon(\gamma:\Gamma, \a:\A~\gamma) \to \X$ (i.e.\ an element of $\X(\gamma:\Gamma, \a:\A~\gamma)$).
In syntax, this is a bidirectional rule, indicating a bijection between the data above and below the lines:
\begin{mathpar}
  \mprset{fraction={===}}
  \infer{\gamma:\Gamma \vdash \A~\gamma \type \\ \gamma:\Gamma, \a:\A~\gamma \vdash \xi~\gamma~\a : \X}{\gamma:\Gamma \vdash \widetilde{\xi}~\gamma : \P_\pr(\X)}
\end{mathpar}

Thus, for instance, $\P_{\pr_{\;\!\ell_\cblu{0}}}(\Ty_{\;\!\ell_\cblu{1}})$ represents families of types of level $\ell_\cblu{1}$ indexed by a type of level $\ell_\cblu{0}$.
Therefore, formation rules such as those for for $\Pi$-types and $\Sigma$-types:
\begin{mathpar}
  \infer{\gamma : \Gamma \vdash \A~\gamma \type_{\;\!\ell_\cblu{0}} \\ \gamma : \Gamma, ~\a : \A~\gamma \vdash \B~\gamma~\a \type_{\;\!\ell_\cblu{1}}}{\gamma : \Gamma \vdash \bkt{\cred{\Pi}~\A~\B}~\gamma \type_{\;\!\ell_\cblu{0}\;\join\;\ell_\cblu{1}}}
  \and
  \infer{\gamma : \Gamma \vdash \A~\gamma \type_{\;\!\ell_\cblu{0}} \\ \gamma : \Gamma, ~\a : \A~\gamma \vdash \B~\gamma~\a \type_{\;\!\ell_\cblu{1}}}{\gamma : \Gamma \vdash \bkt{\cred{\Sigma}~\A~\B}~\gamma \type_{\;\!\ell_\cblu{0}\;\join\;\ell_\cblu{1}}}
\end{mathpar}
are represented by morphisms $\cred{\Pi},\Sigma : \P_{\pr_{\;\!\ell_\cblu{0}}}(\Ty_{\;\!\ell_\cblu{1}}) \to \Ty_{\;\!\ell_\cblu{0}\join\ell_\cblu{1}}$.

The rules for terms can also be represented in this language.
For instance, a natural model has $\Pi$-types if and only if there is a pullback square
\[
  \begin{tikzcd}
    \P_{\pr_{\;\!\ell_\cblu{0}}}(\Tm_{\;\!\ell_\cblu{1}}) \ar[d,"\P_{\pr_{\;\!\ell_\cblu{0}}}(\pr_{\;\!\ell_\cblu{1}})"']\ar[r]\drpullback & \Tm_{\;\!\ell_\cblu{0}\join\ell_\cblu{1}} \ar[d,"\pr_{\;\!\ell_\cblu{0}\join\ell_\cblu{1}}"] \\
    \P_{\pr_{\;\!\ell_\cblu{0}}}(\Ty_{\;\!\ell_\cblu{1}}) \ar[r,"\cred{\Pi}"'] & \Ty_{\;\!\ell_\cblu{0}\join\ell_\cblu{1}},
  \end{tikzcd}
\]
meaning that there is a bijection
\begin{mathpar}
  \mprset{fraction={===}}
  \infer{\gamma : \Gamma, ~\a : \A~\gamma \vdash \t~\gamma~\a : \B~\gamma~\a}{\gamma:\Gamma \vdash (\lambda~\t)~\gamma : (\cred{\Pi}~\A~\B)~\gamma}
\end{mathpar}

Polynomial functors can also be composed, yielding another polynomial functor.
For instance, in a CwF without levels, $\P_\pr \circ \P_\pr$ is the functor such that $(\P_\pr\circ\P_\pr)(\X)$ represents elements of $\X$ in a doubly-extended context $(\gamma:\Gamma,~\a:\A~\gamma,~\b:\B~\gamma~\a)$, which means that it is the polynomial functor associated to the map $\pr^\cred{2} : \Tm^\cred{2} \to \P_\pr(\Ty)$ where the fiber of $\Tm^\cred{2}(\Gamma)$ over $(\A,\B)$ consists of a pair of terms $\gamma:\Gamma \vdash \a:\A~\gamma$ and $\gamma:\Gamma \vdash \b : \B~\gamma~\a$.
In particular, a CwF $\cC$ has $\Sigma$-types if and only if $\Sigma : \P_\pr(\Ty) \to \Ty$ represents any such pair by a single term, i.e.\ there is a pullback square
\[
  \begin{tikzcd}
    \Tm^\cred{2} \ar[d,"\pr^\cred{2}"']\ar[r]\drpullback & \Tm \ar[d,"\pr"] \\
    \P_\pr(\Ty) \ar[r,"\Sigma"'] & \Ty,
  \end{tikzcd}
\]
This is equivalent to a (cartesian) morphism of polynomial functors $\P_\pr \circ \P_\pr \to \P_\pr$.
Of course, there is an analogous version for a CwF with levels.\bbox

\subsubsection{Telescopes}
\label{sec:semtel}

We will often have finite towers of types:
\begin{align*}
    &\gamma : \Gamma \vdash \A~\gamma \type_{\;\!\ell_\cblu{0}} \\
    &\gamma : \Gamma,~\a : \A \vdash \B~\gamma~\a \type_{\;\!\ell_\cblu{1}} \\\
    &\gamma : \Gamma,~\a : \A,~ \b : \B~\gamma~\a~\b \vdash \C~\gamma~\a~\b \type_{\;\!\ell_\cblu{2}}
\end{align*}
We represent these with a single judgement, defining a \emph{telescope}:
\[\gamma : \Gamma \vdash \bkt{\a : \A~\gamma, ~\b : \B~\gamma~\a, ~\c : \C~\gamma~\a~\b}~\gamma \tel_{\;\!\ell}.\]
Formally speaking, telescopes and their elements (which we call \emph{partial substitutions}) are another CwF structure on the same category of contexts, which are related to the original one by specified operations.
In natural model style, the definition is:

\begin{definition}
  A natural model with levels $\cC$ \textbf{has telescopes} if it is equipped with:
  \begin{itemize}
  \item Another family of (algebraically) representable natural transformations $\tpr\subell : \PSub\subell \to \Tel\subell$.
    This yields two families of judgments for telescopes and partial substitutions:
    \begin{mathpar}
      \gamma:\Gamma \vdash \Upsilon~\gamma \tel_{\;\!\ell} \and
      \Gamma:\Gamma \vdash \upsilon:\Upsilon
    \end{mathpar}
    Their representability yields the extension of a context by a telescope:
    \begin{mathpar}
      \infer{\Gamma \ob \\ \gamma:\Gamma \vdash \Upsilon~\gamma \tel_{\;\!\ell}}{(\gamma :\Gamma \ext \upsilon:\Upsilon~\gamma) \ob.}
    \end{mathpar}
  \item Morphisms of polynomial functors $\id_\cC \to \P_{\tpr\subell}$, i.e.\ pullback squares
    \[
      \begin{tikzcd}
        \id \ar[r] \ar[d] \drpullback & \PSub\subell \ar[d,"\tpr\subell"] \\
        \id \ar[r,"\ec"'] & \Tel\subell.
      \end{tikzcd}
    \]
    This gives `empty telescopes' $\gamma:\Gamma \vdash \ec \tel\subell$ containing exactly one partial substitution $\gamma:\Gamma \vdash \esub : \ec$.
  \item Morphisms of polynomial functors $\P_{\tpr\subell} \circ \P_{\pr\subellp} \to \P_{\tpr\subell}$ whenever $\ell\cblu{'}\le\ell$.
    This says how to extend a telescope by a type:\footnote{Note that $\P_\tpr \circ \P_\pr$ is the polynomial functor associated to a map whose codomain is $\P_\tpr(\Ty)$, which is the presheaf of types in a context extended by a telescope.}
    \begin{mathpar}
      \infer{\gamma:\Gamma \vdash \Upsilon~\gamma \tel_{\;\!\ell} \\ \gamma:\Gamma \ext \upsilon:\Upsilon~\gamma \vdash \A~\gamma~\upsilon \type_{\;\!\ell\cblu{'}} \\ \ell\cblu{'}\le\ell}
      {\gamma:\Gamma \vdash (\upsilon:\Upsilon~\gamma,~\a:\A~\gamma~\upsilon) \tel_{\;\!\ell_\cblu{0}\;\join\;\ell}}
    \end{mathpar}
    such that the partial substitutions in $(\upsilon:\Upsilon~\gamma,~\a:\A~\gamma~\upsilon)$ are exactly pairs of a partial substitution in $\Upsilon$ and a term in $\A$, just as for $\Sigma$-types.
    Thus we get the rules from \cref{sec:part-subst}.
  \item The rules $(\Gamma\ext\ec_\p) \equiv \Gamma$ and $(\Gamma\ext(\Theta,~\x: \A)) \equiv  ((\Gamma\ext\Theta),~\x: \A)$ from \cref{sec:telescopes} hold.
    Note that these are equalities of objects of $\MC$, and in particular only make sense if $\pr$ and $\tpr$ are \emph{algebraically} representable.
  \item A morphism of polynomial functors $\P_{\tpr\sell0} \circ \P_{\tpr\sell1} \to \P_{\tpr\sellj01}$, giving the extension of telescopes by telescopes $(\upsilon:\Upsilon\ext \phi:\Phi~\upsilon)$ from \cref{sec:telesc-conc}, such that the rules from that section hold:
    \begin{mathpar}
      (\Gamma \ext (\Upsilon\ext \Phi)) \equiv ((\Gamma \ext \Upsilon)\ext \Phi)
      \and
      (\Upsilon \ext \ec) \equiv \Upsilon
      \and
      (\Upsilon \ext \bkt{\Phi, \x: \A}) \equiv ((\Upsilon\ext \Phi), \x : \A)
    \end{mathpar}
  \end{itemize}
\end{definition}

Syntactically, this definition represents the rules from \cref{sec:telescopes,sec:part-subst,sec:telesc-conc}, in the non-modal case.
Because it is phrased in terms of presheaves and operations on them, it implicitly includes substitution into telescopes that commute with the other operations.
Some of these commutation properties refer to weakening-two, which can be characterised in terms of them as well, for instance:
\begin{mathpar}
    \infer{\gamma : \Gamma \vdash \Upsilon~\gamma \tel_{\;\!\ell} \\ \sigma : \Delta \to \Gamma}{\delta : \Delta \vdash \Upsilon~\bkt{\sigma~\delta} \tel_{\;\!\ell}}
    \and
    \infer{\sigma : \Delta \to \Gamma \\ \gamma : \Gamma \vdash \Upsilon~\gamma \tel_{\;\!\ell}}{\mathsf{\cred{W}}_{\cred{2}}^\Upsilon~\sigma : \bkt{\delta : \Delta,~\upsilon : \Upsilon~\bkt{\sigma~\delta}} \to \bkt{\gamma : \Gamma, ~\upsilon : \Upsilon~\gamma}}
    \\
    \ec^\sigma \equiv \ec
    \and
    \bkt{\upsilon : \Upsilon~\gamma, ~\a : \A~\gamma~\upsilon}^\sigma~\delta \equiv \bkt{\delta : \Upsilon~\bkt{\sigma~\delta},~\a : \A~\bkt{\bkt{\mathsf{\cred{W}}_{\cred{2}}^\Upsilon~\sigma} ~\sqbkt{\delta, ~\upsilon}}}
    \\
    \mathsf{\cred{W}}_{\cred{2}}^{\ec}~\sigma \equiv \sigma
    \and
    \mathsf{\cred{W}}_{\cred{2}}^{\bkt{\Upsilon,\;\A}}~\sigma \equiv
    \mathsf{\cred{W}}_{\cred{2}}^{\A} ~\bkt{\mathsf{\cred{W}}_{\cred{2}}^\Upsilon~\sigma}
\end{mathpar}
For example, we have:
\[\bkt{\a : \A~\gamma, ~\b : \B~\gamma~\a}^\sigma~\delta \equiv \bkt{\a : \A~\bkt{\sigma~\delta}, ~\b : \B~\bkt{\sigma~\bkt{\ft ~\sqbkt{\delta,~\a}}}~\bkt{\zv~\sqbkt{\delta,~\a}}}\]
When we allow ourselves to use variables in the usual way, justified by the internal type theory of presheaves, we can write this as:
\[\bkt{\a : \A~\gamma, ~\b : \B~\gamma~\a}^\sigma~\delta \equiv \bkt{\a : \A~\bkt{\sigma~\delta}, ~\b : \B~\bkt{\sigma~\delta}~\a}\]
Note that the meaning of this construction is simply iterating the canonical construction of pullbacks over a tower:
\[\begin{tikzcd}
\bkt{\delta : \Delta,~\a : \A~\bkt{\sigma~\delta}, ~\b : \B~\bkt{\sigma~\delta}~\a} \arrow[d,-{Triangle[open]}] \arrow[drrrr, phantom, "\usebox\pullback" , very near start, color=black] \arrow[rrrr, "\sqbkt{\sqbkt{\sigma~\circ~\ft,~\zv}~\circ ~\ft,~\zv}"] &&&& \bkt{\gamma : \Gamma,~\a : \A~\gamma,~\b : \B~\gamma~\a} \arrow[d,-{Triangle[open]}] \\
\bkt{\delta : \Delta,~\a : \A~\bkt{\sigma~\delta}} \arrow[rrrr, "\sqbkt{\sigma ~\circ~ \ft,~\zv}"] \arrow[d,-{Triangle[open]}]
\arrow[drrrr, phantom, "\usebox\pullback" , very near start, color=black] &&&& \bkt{\gamma : \Gamma,~\a : \A~\gamma} \arrow[d,-{Triangle[open]}] \\
\Delta \arrow[rrrr,"\sigma"] &&&& \Gamma
\end{tikzcd}\]

We can also characterise the $\Pi$-telescopes from \cref{sec:pi-telescopes} just like the $\Pi$-types in \cref{sec:pi-types}.

\begin{definition}
  A CwF with levels $\cC$ with telescopes \textbf{has $\Pi$-telescopes} if:
  \begin{itemize}
  \item There is a pullback square
    \[
      \begin{tikzcd}
        \P_{\tpr\sell0}(\PSub\sell1) \ar[d,"\P_{\tpr\sell0}(\tpr\sell1)"']\ar[r]\drpullback & \PSub\sellj01 \ar[d,"\tpr\sellj01"] \\
        \P_{\tpr\sell0}(\Tel\sell1) \ar[r,"\cred{\Pi}"'] & \Tel\sellj01,
      \end{tikzcd}
    \]
  \item The computation rules from \cref{sec:pi-telescopes} hold.
  \end{itemize}
\end{definition}

Now the above rules allow us to build telescopes up from the empty telescope by adding types, just as the rules of a CwF allow us to build contexts from the empty context by adding types.
However, just as in the case of contexts, the rules do not stipulate that every telescope is obtained in that way.
Indeed, there is no way to assert such a thing in a Generalised Algebraic Theory.
However, it holds \emph{`admissibly'} in the initial syntactic model, and any CwF can be extended with telescopes in this way:

\begin{theorem}\label{thm:sctx-as-lists}
  Any CwF with levels can be equipped with telescopes.
  If it has $\Pi$-types, it also has $\Pi$-telescopes.
\end{theorem}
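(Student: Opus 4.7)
The plan is to construct telescopes and partial substitutions explicitly as finite lists of types and terms respectively, by simultaneous recursion on length, and then verify the required coherences against the given CwF structure.

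First, I would define the presheaves $\Tel$ and $\PSub$ together with the context extension $(\Gamma \ext \Upsilon)$ stratified by a length index $n$. Set $\Tel_0(\Gamma) = \{\ec\}$, $\PSub_0(\Gamma,\ec) = \{\sqbkt{}\}$, and $\Gamma\ext\ec \equiv \Gamma$. Assuming these have been defined at length $n$ compatibly with substitution, let $\Tel_{n+1}(\Gamma)$ consist of pairs $(\Upsilon,\A)$ with $\Upsilon\in\Tel_n(\Gamma)$ and $\A\in\Ty(\Gamma\ext\Upsilon)$; extend by $\Gamma\ext(\Upsilon,\A)\equiv (\Gamma\ext\Upsilon,~\a:\A)$ using the CwF's own context extension (so the rule $\bkt{\Gamma\ext\bkt{\Theta,\x:\A}}\equiv\bkt{\bkt{\Gamma\ext\Theta},\x:\A}$ holds definitionally). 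Similarly, $\PSub_{n+1}(\Gamma,(\Upsilon,\A))$ consists of pairs $(\sigma,\t)$ with $\sigma\in\PSub_n(\Gamma,\Upsilon)$ and $\t\in\Tm(\Gamma,\A^{\sqbkt{\id\ext\sigma}})$. Set $\Tel = \bigsqcup_n \Tel_n$ and $\PSub = \bigsqcup_n \PSub_n$. Substitution $\Upsilon^\sigma$ acts recursively on the list using the CwF's substitution on each component.

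Second, algebraic representability of $\tpr:\PSub\to\Tel$ is established by induction on $n$: the base case is trivial, and the inductive step uses the CwF's representability of $\pr$ applied to the type $\A$ in context $\Gamma\ext\Upsilon$. The morphism of polynomial functors $\id_\cC \to \P_\tpr$ is the nil constructor, while $\P_\tpr \circ \P_\pr \to \P_\tpr$ is literally the cons constructor $(\Upsilon,\A)\mapsto(\Upsilon,\A)$; both are cartesian by construction. Telescope concatenation $\Upsilon\ext\Phi$ is then defined by recursion on the length of $\Phi$: concatenation with $\ec$ is the identity, and $\Upsilon\ext(\Phi,\A) \equiv ((\Upsilon\ext\Phi),\A)$. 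The required equations $(\Gamma\ext(\Upsilon\ext\Phi))\equiv ((\Gamma\ext\Upsilon)\ext\Phi)$ and $(\Upsilon\ext(\Phi,\x:\A))\equiv((\Upsilon\ext\Phi),\x:\A)$ then fall out by a routine induction on $\Phi$, and the polynomial morphism $\P_\tpr\circ\P_\tpr\to\P_\tpr$ assembles from concatenation plus the analogous operation on partial substitutions.

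Third, when $\cC$ has $\Pi$-types, I would build $\Pi$-telescopes by iterating them. Define $((\upsilon:\Upsilon)\to \Theta)$ and the corresponding $\lambda$ and application by double recursion on $\Upsilon$ and $\Theta$: collapsing the domain by $((\upsilon:\ec)\to\Theta)\equiv\Theta$ and $((\upsilon:(\Upsilon,\x:\A))\to\Theta)\equiv((\upsilon:\Upsilon)\to(\x:\A)\to\Theta)$, and the codomain by $((\upsilon:\Upsilon)\to\ec)\equiv\ec$ and $((\upsilon:\Upsilon)\to(\Theta,\y:\B))\equiv((\delta:(\upsilon:\Upsilon)\to\Theta)\ext(\epsilon:(\upsilon:\Upsilon)\to\B\sqbkt{\id\ext\upsilon\ext\delta~\upsilon}))$. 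This gives a cone over $\P_\tpr(\tpr)$ and $\tpr$; one checks it is a pullback by producing the inverse bijection via the same recursion.

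The main obstacle is not any single deep step but the bookkeeping needed to ensure that substitution commutes strictly (not merely up to isomorphism) with all these operations, and that the several computation rules asserted for $\Pi$-telescopes in \cref{sec:pi-telescopes} are consistent with the recursive definition --- in particular, that the two possible ways of unfolding $((\upsilon:(\Upsilon,\x:\A))\to(\Theta,\y:\B))$ by first reducing the domain or first reducing the codomain yield literally the same telescope. Once the recursion is set up so that these confluences hold on the nose, the rest of the verification is mechanical.
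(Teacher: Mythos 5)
Your proposal is correct and takes essentially the same approach as the paper: the paper likewise defines $\tpr$ so that $\P_\tpr = \sum_{n\in\mathbb{N}}\P_\pr\circ\cdots\circ\P_\pr$ (telescopes as finite towers of types), obtains the polynomial-functor morphisms as the evident nil/cons and concatenation operations, iterates context extension, and defines $\Pi$-telescopes by the computation rules of \cref{sec:pi-telescopes}. Your write-up simply makes explicit the inductive bookkeeping that the paper's one-paragraph proof leaves implicit.
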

\begin{sectendproof}
  We define $\tpr\subell$ to be the map such that
  \[\P_{\tpr\subell} = \sum_{\n\in\cred{\mathbb{N}}\atop \forall \cblu{i}\le\n, \ell_{\cblu{i}}\le \ell} {\P_{\pr\sell0} \circ \cdots \circ \P_{\pr\sell\n}}.\]
  Thus an element of $\Tel\subell(\Gamma)$ is a tower of $\n$ types over $\Gamma$ of level $\le \ell$, and similarly for terms.
  The two morphisms of polynomial functors are then immediate.
  We define context extension in the obvious way by iterating context extension by types, and the equations hold.
  (This is the \emph{initial} structure of telescopes on $\cC$ in a straightforward sense.)
  Similarly, we define $\Pi$-telescopes by using the rules for computing them on extended telescopes.
\end{sectendproof}

\subsubsection{Meta-abstractions}
\label{sec:meta-abs}

Because meta-abstractions are not \emph{`reified'} in the theory as types, they do not require assuming any structure beyond that which is already present in the presheaf category.
Specifically, the rules for the judgment $\Gamma \vdash \A \slfrac{\type}{_{\delta\;:\;\Upsilon}}$ simply say that it should be (up to isomorphism) the object $\P_\tpr(\Ty)$ that classifies types indexed by a telescope.
Similarly, the rules for the elements of a meta-abstraction simply say that these are the object $\P_\tpr(\Tm)$ that classifies terms indexed by a telescope.
In other words, meta-abstractions of types and their terms are classified by a map (isomorphic to) $\P_\tpr(\pr)$.
Likewise, meta-abstractions of telescopes (the judgment $\Gamma \vdash \Theta \slfrac{\tel}{_{\delta\;:\;\Upsilon}}$) are classified by a map isomorphic to $\P_\tpr(\tpr)$.
This gives all the rules from \cref{sec:meta-abstractions,sec:meta-abstr-telesc}; thus any natural model with telescopes also has meta-abstractions of types and telescopes.

Semantically, we don't ever need to discuss meta-abstractions explicitly, since to judge $\Gamma \vdash \A \slfrac{\type_{\;\!\ell}}{_{\delta\;:\;\Upsilon}}$ is equivalent to judging $\Gamma \ext \Upsilon \vdash \A \type_{\;\!\ell}$, and so on.
Thus we will generally talk only about types and telescopes in contexts. \bbox

\subsubsection{Infinite Telescopes}
\label{sec:inftel}

We now define a new judgement $\gamma : \Gamma \vdash \cblu{\widetilde{\Upsilon}}~\gamma \inftel_{\;\!\ell}$ whose elements are `infinite telescopes'.
As with meta-abstractions, this is not (yet) introducing new structure on a CwF, rather it is a definition that can be made in the presheaf category of any CwF.
The idea is that an infinite telescope consists of an infinite sequence of types each dependent on all the previous ones:
\begin{align*}
  \gamma:\Gamma &\vdash \cblu{\widetilde{\Upsilon}}^\cred{0}~\gamma \type_{\;\!\ell_{\cblu{0}}}\\
  \gamma:\Gamma,~ \upsilon^\cblu{0} :\cblu{\widetilde{\Upsilon}}^\cred{0}~\gamma  &\vdash \cblu{\widetilde{\Upsilon}}^\cred{1}~\gamma~\upsilon^\cblu{0} \type_{\;\!\ell_{\cblu{1}}}\\
  \gamma:\Gamma,~ \upsilon^\cblu{0} :\cblu{\widetilde{\Upsilon}}^\cred{0}~\gamma,~ \upsilon^\cblu{1} : \cblu{\widetilde{\Upsilon}}^\cred{1}~\gamma~\upsilon^\cblu{0}
  &\vdash \cblu{\widetilde{\Upsilon}}^\cred{1}~\gamma~\upsilon^\cblu{0}~\upsilon^\cblu{1} \type_{\;\!\ell_{\cblu{2}}}\\
  &\vdots
\end{align*}
where $\ell_{\n}\le \ell$ for all $\n$.
Formally, we define this along with its approximating finite telescopes:
\begin{align*}
  \cblu{\widetilde{\Upsilon}}^{\cred{\partial}\cred{0}}~\gamma &\equiv \ec\\
  \cblu{\widetilde{\Upsilon}}^{\cred{\partial 1}}~\gamma &\equiv \bkt{\upsilon^\cblu{0} :\cblu{\widetilde{\Upsilon}}^\cred{0}~\gamma}\\
  \cblu{\widetilde{\Upsilon}}^\cred{\partial 2}~\gamma &\equiv \bkt{\upsilon^\cblu{0} :\cblu{\widetilde{\Upsilon}}^\cred{0}~\gamma,~\upsilon^\cblu{1} : \cblu{\widetilde{\Upsilon}}^\cred{1}~\gamma~\upsilon^\cblu{0}}\\
  &\vdots
\end{align*}
so that we can say that in general $\cblu{\widetilde{\Upsilon}}^{\n}$ is a type in context $(\gamma:\Gamma\ext \upsilon: \cblu{\widetilde{\Upsilon}}^{\cred{\partial}\n})$.
In syntax, this means we give the following bidirectional rule with infinitely many premises:
\begin{mathpar}
  \mprset{fraction={===}}
  \infer{\big(\gamma:\Gamma \vdash \cblu{\widetilde{\Upsilon}}^{\cred{\partial}\n}~\gamma \tel_{\;\!\ell}\big)_{\n\in \mathbb{\cred{N}}}\\
    \big(\gamma:\Gamma \ext \cblu{\partial}\upsilon:\cblu{\widetilde{\Upsilon}}^{\cred{\partial}\n}~\gamma \vdash \cblu{\widetilde{\Upsilon}}^\n~\gamma~\cblu{\partial}\upsilon \type_{\;\!\ell_\n}\big)_{\n\in \mathbb{\cred{\cred{N}}}}\\
    (\ell_\n \le \ell)_{\n\in \mathbb{\cred{N}}}\\\\
    \cblu{\widetilde{\Upsilon}}^{\cred{\partial 0}}~\gamma \equiv \ec \\
    \big(\gamma:\Gamma \vdash \cblu{\widetilde{\Upsilon}}^{\cred{\partial}(\n+\cred{1})}~\gamma \equiv (\cblu{\partial}\upsilon: \cblu{\widetilde{\Upsilon}}^{\cred{\partial}\n}~\gamma , ~ \upsilon : \cblu{\widetilde{\Upsilon}}^{\n}~\gamma~\cblu{\partial}\upsilon)\big)_{\n\in \mathbb{\cred{N}}}
  }{\gamma:\Gamma \vdash \cblu{\widetilde{\Upsilon}}~\gamma \inftel_{\;\!\ell}}
\end{mathpar}
(It would also be possible to define infinite contexts coinductively, but for our purposes this concrete definition is easier to work with.)

We have already defined substitution on finite telescopes, and that definition extends level-wise to infinite telescopes. Given $\sigma : \Delta \to \Gamma$ and $\gamma : \Gamma \vdash \cblu{\widetilde{\Upsilon}}~\gamma \inftel_\ell$, we define $\delta : \Delta \vdash \cblu{\widetilde{\Upsilon}}~\bkt{\sigma~\delta} \inftel_\ell$ to consist of the data:
\[\delta : \Delta,~\cblu{\partial}\upsilon : \cblu{\widetilde{\Upsilon}}^\n~\bkt{\sigma~\delta} \vdash \cblu{\widetilde{\Upsilon}}^\n~\bkt{\sigma~\delta}~\cblu{\partial}\upsilon \type_{\;\!\ell_\n}\]

Similarly, we would like to define infinite partial substitutions as infinite lists of terms sectioning an infinite telescope. This is encapsulated by the judgement $\gamma : \Gamma \vdash \cblu{\widetilde{\upsilon}}~\gamma : \cblu{\widetilde{\Upsilon}}~\gamma$, which is characterised by a similar bidirectional rule:
\begin{mathpar}
  \mprset{fraction={===}}
  \infer{(\gamma:\Gamma \vdash \cblu{\widetilde{\upsilon}}^{\cred{\partial}\n}~\gamma : \cblu{\widetilde{\Upsilon}}^{\cred{\partial}\n}~\gamma)_{\n\in \mathbb{\cred{N}}}\\
    (\gamma:\Gamma \vdash \cblu{\widetilde{\upsilon}}^{\n}~\gamma : \cblu{\widetilde{\Upsilon}}^\n~\gamma~(\cblu{\widetilde{\upsilon}}^{\cred{\partial}\n}~\gamma))_{\n\in \mathbb{\cred{N}}}\\\\
    \cblu{\widetilde{\upsilon}}^{\cred{\partial 0}}~\gamma \equiv \esub
    \\
    (\gamma:\Gamma \vdash \cblu{\widetilde{\upsilon}}^{\cred{\partial}(\n+\cred{1})}~\gamma \equiv \sqbkt{\cblu{\widetilde{\upsilon}}^{\cred{\partial}\n}~\gamma , ~ \cblu{\widetilde{\upsilon}}^{\n}~\gamma})_{\n\in \mathbb{\cred{N}}}
  }{\gamma:\Gamma \vdash\cblu{\widetilde{\upsilon}}~\gamma : \cblu{\widetilde{\Upsilon}}~\gamma}
\end{mathpar}
Pullback of infinite partial substitutions is defined, as before, to consist of the data:
\[\delta : \Delta \vdash \cblu{\widetilde{\upsilon}}^\n ~\bkt{\sigma~\delta} :\cblu{\widetilde{\Upsilon}}^\n ~\bkt{\sigma~\delta}~\bkt{\cblu{\widetilde{\upsilon}}^{\cred{\partial}\n} ~\bkt{\sigma~\delta}}\]

Categorically, these rules mean we define the map $\pr^\cred{\infty}\subell : \PSub^\cred{\infty}\subell \to \Tel^\cred{\infty}\subell$ to be the limit of the sequence:
\[ \cdots \to \pr\subell^\n \to \cdots \to \pr\subell^\cred{3} \to \pr\subell^\cred{2} \to \pr\subell \to \One \]
where $\pr\subell^\n$ is the map such that
\[\P_{\pr\subell^\n} = \sum_{\forall \cblu{i}\le\n, \ell_{\cblu{i}}\le \ell} {\P_{\pr\sell0} \circ \cdots \circ \P_{\pr\sell\n}}.\]
In particular, $\P_\One$ is the identity functor and $\One$ is the identity map of the terminal object.
There is only one natural map $\prl^{\n+\cred{1}} \to \prl^\n$, which discards the last type in a telescope of length $\n+\cred{1}$; it is not possible to discard any of the other types and get a telescope of length $\n$. \bbox

\subsubsection{$\oldomega$-Limits}
\label{sec:inflim}

Finally, we define the structure of infinite (sequential, Reedy) limits on a CwF.
These are an \emph{`infinitary rule'} (i.e.\ a non-elementary structure) that is not part of dTT or any implementable type theory, but we will use them to build our intended models of dTT.
Syntactically, they are essentially just a kind of $\Sigma$-type of an infinite telescope.

\begin{definition}
  A CwF with levels has \textbf{$\oldomega$-limits} if it is equipped with pullback squares
  \[
    \begin{tikzcd}
      \PSub\subell^\cred{\infty} \ar[d,"\pr^\cred{\infty}\subell"']\ar[r,"\lim"]\drpullback & \Tml \ar[d,"\prl"] \\
      \Tel\subell^\cred{\infty} \ar[r,"\lim"'] & \Tyl,
    \end{tikzcd}
  \]
\end{definition}

In syntax, this means we have the following structure and properties.
Firstly, having a merely \emph{commutative} square as above gives the following rules:
\begin{mathpar}
\infer{\gamma : \Gamma \vdash \cblu{\widetilde{\Upsilon}}~\gamma \inftel_\ell}{\gamma : \Gamma \vdash \lim~\big(\cblu{\widetilde{\Upsilon}}~\gamma\big) \type_{\;\!\ell}}
\and
\infer{\gamma : \Gamma \vdash \cblu{\widetilde{\upsilon}}~\gamma : \cblu{\widetilde{\Upsilon}}~\gamma}{\gamma : \Gamma \vdash \lim~\big(\cblu{\widetilde{\upsilon}}~\gamma\big) : \lim~\big(\cblu{\widetilde{\Upsilon}}~\gamma\big)}
\end{mathpar}
Secondly,
\begin{mathpar}
\infer{\gamma : \Gamma \vdash \cblu{\mathfrak{u}} : \lim~\big(\cblu{\widetilde{\Upsilon}}~\gamma\big)}{\gamma : \Gamma \vdash \res^{\cred{\partial}\n}~\gamma~\cblu{\mathfrak{u}} : \cblu{\widetilde{\Upsilon}}^{\cred{\partial}\n}~\gamma}
\and
\infer{\gamma : \Gamma \vdash \cblu{\mathfrak{u}} : \lim~\big(\cblu{\widetilde{\Upsilon}}~\gamma\big)}{\gamma : \Gamma \vdash \res^\n~\gamma~\cblu{\mathfrak{u}} : \cblu{\widetilde{\Upsilon}}^\n~\gamma~\big(\res^{\cred{\partial}\n}~\gamma ~\cblu{\mathfrak{u}}\big)}
\end{mathpar}
We require that $\res^{\cred{\partial}\n}$ is derived from $\res^\n$ via:
\begin{align*}
\res^\cred{\partial 0}~\gamma~\cblu{\mathfrak{u}} &\equiv \esub \\
\res^{\cred{\partial}\bkt{\n+\cred{1}}} ~\gamma~\cblu{\mathfrak{u}} &\equiv \sqbkt{\res^{\cred{\partial}\n}~\gamma~\cblu{\mathfrak{u}}, ~\res^\n~\gamma~\cblu{\mathfrak{u}}}
\end{align*}
and that the following computation and uniqueness rules hold:
\begin{align*}
  \res^{\cred{\partial}\n}~\gamma ~\bkt{\lim\bkt{\cblu{\widetilde{a}}~\gamma}} &\equiv \cblu{\widetilde{a}}^{\cred{\partial}\n}~\gamma \\
  \res^\n~\gamma ~\bkt{\lim\bkt{\cblu{\widetilde{a}}~\gamma}} &\equiv \cblu{\widetilde{a}}^\n~\gamma\\
  \cblu{\mathfrak{u}} &\equiv \lim\bkt{\res^\n~\gamma~\cblu{\mathfrak{u}}}_\n
\end{align*}
Of course, all these constructions must also be stable under substitution:
\begin{align*}
  \bkt{\lim\bkt{\cblu{\widetilde{\Upsilon}}~\gamma}}^\sigma &\equiv \lim\bkt{\cblu{\widetilde{\Upsilon}}~\bkt{\sigma~\theta}} \\
  \bkt{\lim\bkt{\cblu{\widetilde{a}}~\gamma}}^\sigma &\equiv \lim\bkt{\cblu{\widetilde{a}}~\bkt{\sigma~\theta}} \\
  \bkt{\res^{\cred{\partial}\n} ~\gamma~\bkt{\cblu{\mathfrak{a}}~\gamma}}^\sigma &\equiv \res^{\cred{\partial}\n}~\theta~\bkt{\cblu{\mathfrak{a}} ~\bkt{\sigma~\theta}} \\
  \bkt{\res^\n ~\gamma~\bkt{\cblu{\mathfrak{a}}~\gamma}}^\sigma &\equiv \res^\n~\theta~\bkt{\cblu{\mathfrak{a}} ~\bkt{\sigma~\theta}}
  \tag*{\bbox}
\end{align*}

\subsection{The simplicial model}
\label{sec:simplicial-model}

In this section we fix a model of dependent type theory with all of the structure described above, which we call the \emph{discrete model} ($\dm$). From it, we will construct a derived model called the \emph{simplicial model} ($\sm$). We will do this, first, by way of constructing the \emph{truncated simplicial models} ($\sm^\n$) for $\n \geq \minustwo$.

\subsubsection{The Augmented Semi-Simplex Category}
\label{sec:asscat}

Let $\bB$ be the type of binary digits, which are $\Zero,~\One : \bB$.  For $\n \geq \m \geq \minusone$, let $\bB^{\angle{\n},\angle{\m}}$ be the type of length $\n+\cred{1}$ binary sequences such that exactly $\m+\cred{1}$ of the digits have value $\One$. When $\b_\cblu{1} : \bB^{\angle{\n},\angle{\m}}$ and $\b_\cblu{0} : \bB^{\angle{\m},\angle{\k}}$, we have a composition $\b_\cblu{1} \circ \b_\cblu{0} : \bB^{\angle{\n},\angle{\k}}$ obtained by replacing the $\One$ digits in $\b_\cblu{1}$ with the digits of $\b_\cblu{0}$. For example $\One\Zero\One\Zero\Zero\One\One \circ \Zero\One\One\Zero = \Zero\Zero\One\Zero\Zero\One\Zero$. The category whose objects are $\angle{\n}$ and whose morphisms $\angle{\m} \to \angle{\n}$ are $\bB^{\angle{\n},\angle{\m}}$ is the augmented semi-simplex category $\cred{\oldDelta^+}$.  Note that each of the representables $\bB^{\angle{\n},\blank}$ only has finitely many elements.  We write $\es$ for the length-zero sequence, which is the unique element of $\bB^{\angle{\minusone},\angle{\minusone}}$.

The identities $\id_{\angle{\n}}$ are given by length $\n + \cred{1}$ sequences of the digit $\One$. Further, for any $\b : \bB^{\angle{\n},\angle{\k}}$, we obtain $\Zero\b : \bB^{\angle{\n+\cred{1}},\angle{\k}}$ and $\One\b : \bB^{\angle{\n+\cred{1}},\angle{\k+\cred{1}}}$ by left appending the indicated digit.  The following identities hold:
\begin{align*}
    &\Zero\b_\cblu{1} \circ \b_\cblu{0} \equiv \Zero\bkt{\b_\cblu{1} \circ \b_\cblu{0}} \\
    &\One\b_\cblu{1} \circ \One\b_\cblu{0} \equiv \One \bkt{\b_\cblu{1} \circ \b_\cblu{0}} \\
    &\One\b_\cblu{1} \circ \Zero\b_\cblu{0} \equiv \Zero\bkt{\b_\cblu{1} \circ \b_\cblu{0}}
\end{align*}
Note that by the second rule, along with the fact that $\One\id_{\angle{\n}} \equiv \id_{\angle{\n+\cred{1}}}$, the assignments $\angle{\n} \mapsto \angle{\n + \cred{1}}$ and $\b \mapsto \One\b$ define an endofunctor of $\cred{\oldDelta^+}$.

Additionally, for every $\n \geq \minustwo$, we have the full subcategory $\cred{\oldDelta^{+}_\n}$ of $\cred{\oldDelta^{+}}$ on those objects $\angle{\k}$ with $\k \leq \n$.
Thus $\cred{\oldDelta^{+}_\minustwo}$ is the empty category, while $\cred{\oldDelta^{+}_{\minusone}}$ is the terminal category. \bbox

\subsubsection{Truncated Simplicial Objects}
The objects of the $\n$-truncated simplicial model $\sm^{\n}$ are $\MC$-valued presheaves on $\cred{\oldDelta^{+}_\n}$, denoted:
\[\Gamma \ob_{\sm^\n}\]
Thus the underlying category of $\sm^\n$ is $\cC^{\cred{\oldDelta^{+}_\n}}$.
For each such presheaf and $\n \geq \m \geq \minustwo$, we have $\Gamma_\m \ob_\dm$, where $\Gamma_\minustwo \equiv \ec_\dm$ is the distinguished terminal object of $\MC$.

Further, if we have $\b : \bB^{\angle{\n},\angle{\m}}$, then $\Gamma^\b : \Gamma_\n \to \Gamma_\m$, and this assignment is contravariantly functorial on the nose. We also sometimes write $\gamma^\b$ for $\Gamma^\b~\gamma$. Morphisms of simplicial objects are natural transformations. The data of $\sigma : \Delta \to \Gamma$ thus consists of a morphism $\sigma_\n : \Delta_\n \to \Gamma_\n$ for each $\n$, such that for any $\b : \bB^{\angle{\n},\angle{\m}}$, we have:
\[\Delta^\b \circ \sigma_\n \equiv \sigma_\m \circ \Gamma^\b\]
There are two additional functors of relevance relating the truncated simplicial models at different dimensions: \emph{truncation} and \emph{d\'ecalage}.
\begin{align*}
&\pi : \MC^{\cred{\oldDelta^{+}_{\n+\cred{1}}}} \to \MC^{\cred{\oldDelta^{+}_\n}}
& &\bkt{\blank}\D : \MC^{\cred{\oldDelta^{+}_{\n+\cred{1}}}} \to \MC^{\cred{\oldDelta^{+}_\n}} \\
&\bkt{\pi\Gamma}_{\m+\cred{1}}  \equiv \Gamma_{\m+\cred{1}}  & &\bkt{\Gamma\D}_{\m+\cred{1}} \equiv \Gamma_{\m+\cred{2}} \\
&\bkt{\pi\Gamma}^\b \equiv \Gamma^\b & &\bkt{\Gamma\D}^\b \equiv \Gamma^{\One\b} \\
&\bkt{\pi\sigma}_{\m+\cred{1}}  \equiv \sigma_{\m+\cred{1}}  & &\bkt{\sigma\D}_{\m+\cred{1}} \equiv \sigma_{\m+\cred{2}}
\end{align*}
There is a natural transformation between them:
\begin{align*}
&\rho : \bkt{\blank}\D \Rightarrow \pi \\
&\bkt{\rho_\Gamma}_{\m+\cred{1}} \equiv \Gamma^{\Zero\id_{\angle{\m+\cred{1}}}}
\end{align*}
Note that $\rho_\Gamma : \Gamma\D \to \pi\Gamma$ is a morphism of presheaves since for $\b : \bB^{\angle{\n+\cred{1}},\angle{\m+\cred{1}}}$, we have:
\begin{align*}
    \big(\pi\Gamma\big)^\b \circ \bkt{\rho_\Gamma}_{\n+\cred{1}} &\equiv \Gamma^{\b} \circ \Gamma^{\Zero\id_{\angle{\n+\cred{1}}}} \equiv \Gamma^{\Zero\id_{\angle{\n+\cred{1}}} \circ \b} \equiv \Gamma^{\Zero\bkt{\id_{\angle{\n+\cred{1}}} \circ \b}} \equiv \Gamma^{\Zero\b} \\
    &\equiv \Gamma^{\Zero\bkt{\b\circ\id_{\angle{\m+\cred{1}}}}} \equiv \Gamma^{\One\b \circ \Zero\id_{\angle{\m+\cred{1}}}} \equiv \Gamma^{\Zero\id_{\angle{\m+\cred{1}}}} \circ \Gamma^{\One\b} \equiv \bkt{\rho_\Gamma}_{\m+\cred{1}} \circ \big(\Gamma\D\big)^\b
\end{align*}
A similar proof shows that $\rho$ is natural, as its components arise from morphisms in $\cred{\oldDelta^{+}_\n}$, and any morphism of presheaves must respect these. \bbox

\subsubsection{Intuition} We will now construct the type-theoretical/fibrant structure of the truncated simplicial model. This will be done concretely through a series of mutually inductive definitions that will require substantially strengthening the inductive hypothesis for the sake of making everything well-typed.

However, before we launch into that, it would be useful to keep in mind where we are headed. At the most basic level, we would like to define the judgement
\[\gamma : \Gamma \vdash_{\sm^{\n+\cred{1}}} \A~\gamma \type_{\;\!\ell}\]
A simplicial type consists entirely of the data of its discrete $\m$-simplex types for $\m \leq \n + \cred{1}$, all of which live at the same level $\ell$:
\begin{align*}
    \gamma_\bminusone : \Gamma_\minusone &\vdash_\dm \A_\minusone~\gamma_\bminusone \type_{\;\!\ell} \\
    \gamma_\cblu{0} : \Gamma_\cred{0}, ~\cblu{\ze\sub{0}} : \A_\minusone~\gamma_\cblu{0}^{\Zero} &\vdash_\dm \A_\cred{0}~\gamma_\cblu{0}~\cblu{\ze\sub{0}} \type_{\;\!\ell} \\
    \gamma_\cblu{1} : \Gamma_\cred{1}, ~\cblu{\ze\sub{00}} : \A_\minusone~\gamma_\cblu{1}^{\Zero\Zero}, ~\cblu{x\sub{01}} : \A_\cred{0}~\gamma_\cblu{1}^{\Zero\One}~\cblu{\ze\sub{00}}, ~\cblu{x\sub{10}}:\A_\cred{0}~\gamma_\cblu{1}^{\One\Zero}~\cblu{\ze\sub{00}} &\vdash_\dm \A_\cred{1}~\gamma_\cblu{1}~\cblu{\ze\sub{00}}~\cblu{x\sub{01}}~\cblu{x\sub{10}} \type_{\;\!\ell} \\
    &\;\vdots
\end{align*}
Note that $\Gamma_{\cred{0}}$ (for example) denotes the $\cred{0}$-component of the presheaf $\Gamma$, which is an object at $\dm$, while $\gamma_{\cblu{0}}$ is an atomic variable name belonging to this object.
As another example, the type annotation on the variable $\cblu{x\sub{01}}$ is well-typed because the outer square of the following diagram is a distinguished pullback:
\[\begin{tikzcd}
\big(\gamma_\cblu{1} : \Gamma_\cred{1}, ~\ze\sub{\cblu{00}} : \A_\minusone~\bkt{\gamma_\cblu{1}^{\Zero\One}}{}^{\Zero}\big) \arrow[r] \arrow[d,-{Triangle[open]}]
\arrow[dr, phantom, "\usebox\pullback" , very near start, color=black] &
\big(\gamma_\cblu{0} : \Gamma_\cred{0},~\ze\sub{\cblu{0}} : \A_\minusone~\gamma_\cblu{0}^{\Zero}\big) \arrow[r] \arrow[d,-{Triangle[open]}]
\arrow[dr, phantom, "\usebox\pullback" , very near start, color=black] & \big(\gamma_\bminusone : \Gamma_\minusone , ~\ze\sub{\cblu{\varnothing}} : \A_\minusone~\gamma_\bminusone\big) \arrow[d,-{Triangle[open]}] \\
\big(\gamma_\cblu{1} : \Gamma_\cred{1}\big) \arrow[r,"\Gamma^{\Zero\One}"]
\arrow[rr, bend right=20,swap, "\Gamma^{\Zero} \circ \Gamma^{\Zero\One} \equiv \Gamma^{\Zero\Zero}"]&
\big(\gamma_\cblu{0} : \Gamma_\cred{0}\big) \arrow[r,"\Gamma^{\Zero}"] & \big(\gamma_\bminusone : \Gamma_\minusone\big)
\end{tikzcd}\]

We will write the type declarations of $\A_\n$ generically as:
\[\gamma_{\cblu{\n+1}} : \Gamma_{\n+\cred{1}}, ~\cblu{\partial}\a : \pi\A_{\cred{\partial}\bkt{\n+\cred{1}}}~\gamma_{\cblu{\n+1}} \vdash_\dm \A_{\n+\cred{1}}~\gamma_{\cblu{\n+1}}~\cblu{\partial}\a \type_{\;\!\ell_\cblu{n+1}}.\]
Here $\pi\A_{\cred{\partial}\bkt{\n+\cred{1}}}$ is a telescope consisting of the \emph{`boundary'} of an $(\n+\cred{1})$-simplex, also known as the Reedy \emph{`matching object'} of an augmented semi-simplicial type.
For example, we will have:
\begin{align*}
  \A_{\cred{\partial}(\minusone)}~\gamma_\minusone &\equiv \ec\\
  \A_{\cred{\partial}\cred{0}}~\gamma_{\cred{0}} &\equiv \bkt{\cblu{\ze\sub{0}} : \A_\minusone~\gamma_\cblu{0}^{\Zero}}\\
  \A_{\cred{\partial}\cred{1}}~\gamma_{\cred{1}} &\equiv \bkt{\cblu{\ze\sub{00}} : \A_\minusone~\gamma_\cblu{1}^{\Zero\Zero}, ~\cblu{x\sub{01}} : \A_\cred{0}~\gamma_\cblu{1}^{\Zero\One}~\cblu{\ze\sub{00}}, ~\cblu{x\sub{10}}:\A_\cred{0}~\gamma_\cblu{1}^{\One\Zero}~\cblu{\ze\sub{00}}}.
\end{align*}

Similarly, we would like to define simplicial terms to consist of the data of their discrete $\m$-simplex terms for $\m \leq \n + \cred{1}$. The judgement
\[\gamma : \Gamma \vdash_{\sm^{\n+\cred{1}}} \t~\gamma : \A~\gamma\]
will be defined to consist of the data:
\begin{align*}
    \gamma_\bminusone : \Gamma_\minusone &\vdash_\dm \t_\minusone~\gamma_\bminusone : \A_\minusone~\gamma_\bminusone \\
    \gamma_\cblu{0} : \Gamma_\cred{0} &\vdash_\dm \t_\cred{0} ~\gamma_\cblu{0} : \A_\cred{0}~\gamma_\cblu{0}~\bkt{\t_\minusone~{\gamma_\cblu{0}}^\Zero} \\
    \gamma_\cblu{1} : \Gamma_\cred{1} &\vdash_\dm \t_\cred{1} ~\gamma_\cblu{1} :
    \A_\cred{1}~\gamma_\cblu{1}~\bkt{\t_\minusone~{\gamma_\cblu{0}}^{\Zero\Zero}} ~\bkt{\t_\cred{0}~{\gamma_\cblu{1}}^{\Zero\One}} ~\bkt{\t_\cred{0}~{\gamma_\cblu{1}}^{\One\Zero}} \\
    &\;\vdots
\end{align*}
Similarly to before, we will write this generically as
\[\gamma_{\cblu{\n+1}} : \Gamma_{\n+\cred{1}} \vdash_\dm \t_{\n+\cred{1}}~\gamma_{\cblu{\n+1}} :  \A_{\n+\cred{1}}~\gamma_{\cblu{\n+1}} ~\bkt{\pi\t_{\cred{\partial}\bkt{\n+\cred{1}}}~\gamma_{\cblu{\n+1}}} \]
where $\pi\t_{\cred{\partial}\bkt{\n+\cred{1}}}~\gamma_{\cblu{\n+1}}$ denotes the action of the lower-dimensional parts of $\t$ on the boundary of $\gamma_{\cblu{\n+1}}$. \bbox

\subsubsection{Fibrant Structure}
\label{sec:fibrant-structure}

As suggested above, the basic structure of the fibrant theory of the models $\sm^{\n}$ will be defined by mutual induction.
In this section our goal is to define the presheaves of types and terms in $\sm^{\n}$, along with the context extension operation (but not yet its universal property).
This requires defining several other notions mutually, including a type-theoretic version of Reedy \emph{`matching objects'} and a truncated version of display that decreases dimension.

\paragraph{Declarations and Simple Cases}
\label{sec:decl-simple-cases}

We start by declaring the type of all these structures and operations, and giving those definitions that are direct.
First, we will have \emph{matching telescopes} and \emph{matching substitutions}:
\begin{mathpar}
\infer{\gamma^\cblu{\blank} : \pi\Gamma \vdash_{\sm^\n} \A~\gamma^\cblu{\blank} \type_{\;\!\ell}}
{\gamma_{\cblu{\n+1}} : \Gamma_{\n+\cred{1}} \vdash_\dm \A_{\cred{\partial}\bkt{\n+\cred{1}}} ~\gamma_{\cblu{\n+1}} \tel_{\;\!\ell}}
\and
\infer{\gamma^\cblu{\blank} : \pi\Gamma \vdash_{\sm^\n} \t~\gamma^\cblu{\blank} : \A~\gamma^\cblu{\blank}}
{\gamma_{\cblu{\n+1}} : \Gamma_{\n+\cred{1}} \vdash_\dm \t_{\cred{\partial}\bkt{\n+\cred{1}}} ~\gamma_{\cblu{\n+1}} : \A_{\cred{\partial}\bkt{\n+\cred{1}}} ~\gamma_{\cblu{\n+1}}}
\end{mathpar}
The inductive definitions of these telescopes and substitutions will be given in \cref{sec:inductive-cases}.
However, in terms of them, we are able to define the types and terms of $\sm^{\n+\cred{1}}$, as pairs of a type or term in $\sm^\n$ with a discrete type or term over its matching object.
We can formulate these definitions type-theoretically as bidirectional rules.
\begin{mathpar}
  \mprset{fraction={===}}
\infer{\quad \gamma^\cblu{\blank} : \pi\Gamma \vdash_{\sm^\n} \pi\A~\gamma^\cblu{\blank} \type_{\;\!\ell} \quad \\ \gamma_{\cblu{\n+1}} : \Gamma_{\n+\cred{1}},~\cblu{\partial}\a : \pi\A_{\cred{\partial}\bkt{\n+\cred{1}}}~\gamma_{\cblu{\n+1}} \vdash_\dm \A_{\n+\cred{1}}~\gamma_{\cblu{\n+1}}~\cblu{\partial}\a \type_{\;\!\ell}}{\gamma : \Gamma \vdash_{\sm^{\n+\cred{1}}} \A~\gamma \type_{\;\!\ell}}
\and
\infer{\quad\gamma^\cblu{\blank} : \pi\Gamma \vdash_{\sm^\n} \pi\t~\gamma^\cblu{\blank} :  \pi\A~\gamma^\cblu{\blank}\quad \\ \gamma_{\cblu{\n+1}} : \Gamma_{\n+\cred{1}} \vdash_\dm \t_{\n+\cred{1}} ~\gamma_{\cblu{\n+1}} : \A_{\n+\cred{1}}~\gamma_{\cblu{\n+1}} ~\bkt{\pi\t_{\cred{\partial}\bkt{\n+\cred{1}}}~\gamma_{\cblu{\n+1}}}}{\gamma : \Gamma \vdash_{\sm^{\n+\cred{1}}} \t~\gamma : \A~\gamma}
\end{mathpar}
Extension of contexts by a type $\gamma : \Gamma \vdash_{\sm^{\n+\cred{1}}} \A~\gamma \type_{\;\!\ell}$, and of a substitution by a term $\gamma : \Gamma \vdash_{\sm^{\n+\cred{1}}} \t~\gamma:\A~\gamma$, are then obtained as follows:
\begin{align*}
\big(\gamma : \Gamma,~\a : \A~\gamma\big)_{\m+\cred{1}} &\equiv \big(\gamma^\cblu{\blank} : \pi\Gamma,~\a^\cblu{\blank} : \pi\A~\gamma^\cblu{\blank}\big)_{\m+\cred{1}} \quad \text{for} \quad \m < \n
\\
\big(\gamma : \Gamma,~\a : \A~\gamma\big)_{\n+\cred{1}} &\equiv \big(\gamma_{\cblu{\n+1}} : \Gamma_{\n+\cred{1}},~\cblu{\partial}\a : \pi\A_{\cred{\partial}\bkt{\n+\cred{1}}}~\gamma_\cblu{\n+1},~\a:\A_{\n+\cred{1}}~\gamma_{\cblu{\n+1}} ~\cblu{\partial}\a\big) \\
\sqbkt{\sigma,~\t}_{\m+\cred{1}} &\equiv \sqbkt{\pi\sigma,~\pi\t}_{\m+\cred{1}} \quad \text{for} \quad \m < \n
\\
\sqbkt{\sigma,~\t}_{\n+\cred{1}} &\equiv \sqbkt{\sigma_{\n+\cred{1}},~\pi\t_{\cred{\partial}\bkt{\n+\cred{1}}},~\t_{\n+\cred{1}}}.
\end{align*}
So far this is just a definition of the family of discrete \emph{objects} underlying $\bkt{\gamma : \Gamma,~\a : \A~\gamma}$; we will enhance it to a diagram in~\eqref{eq:functorial-ext} below.

We will also prove that matching telescopes and substitutions are stable under substitution, such that for $\sigma : \Delta \to \Gamma$ in $\MC^{\cred{\oldDelta^{+}_{\n+\cred{1}}}}$, we have:
\begin{mathpar}
\big(\A^{\pi\sigma}\big)_{\cred{\partial}\bkt{\n+\cred{1}}} \equiv \big(\A_{\cred{\partial}\bkt{\n+\cred{1}}}\big)^{\sigma_{\n+\cred{1}}}
\and
\big(\t^{\pi\sigma}\big)_{\cred{\partial}\bkt{\n+\cred{1}}} \equiv \big(\t_{\cred{\partial}\bkt{\n+\cred{1}}}\big)^{\sigma_{\n+\cred{1}}}
\end{mathpar}
Substitution on types $\gamma : \Gamma \vdash_{\sm^{\n+\cred{1}}} \A~\gamma \type_{\;\!\ell}$ and terms $\gamma : \Gamma \vdash_{\sm^{\n+\cred{1}}} \t~\gamma : \A~\gamma$ can then be defined as:
\begin{align*}
\pi\big(\A^\sigma\big) &\equiv \pi\A^{\pi\sigma} &
\big(\A^\sigma\big)_{\n+\cred{1}} &\equiv \A_{\n+\cred{1}}^{\mathsf{\cred{W}}_{\cred{2}}^{\pi\A_{\cred{\partial}\bkt{\n+\cred{1}}}} \sigma_{\n+\cred{1}}} \\
\pi\big(\t^\sigma\big) &\equiv \pi\t^{\pi\sigma} & \big(\t^\sigma\big)_{\n+\cred{1}} &\equiv \t_{\n+\cred{1}}^{\sigma_{\n+\cred{1}}}.
\end{align*}
Functoriality of substitutions in $\sm^{\n+\cred{1}}$ then follows from that of $\sm^\n$ and $\dm$.

In order to define the matching telescopes and substitutions, we will require the definition of \emph{display} to be part of the mutual induction.
As noted above, when working with truncated diagrams, display takes an $(\n+\cred{1})$-truncated semi-simplicial diagram $\A$ to an $\n$-truncated one that's dependent
on $\pi\A$. Since we have no modal locks available yet, we are also forced to take this version of display to be in a totally d\'ecalaged context; recall that d\'ecalage makes sense for arbitrary (non-fibrant) contexts.
\begin{mathpar}
\infer{\gamma : \Gamma \vdash_{\sm^{\n+\cred{1}}} \A~\gamma \type_{\;\!\ell}}
{\gamma^\cblu{+} : \Gamma\D, ~\a : \pi\A^{\rho_\Gamma} ~\gamma^\cblu{+} \vdash_{\sm^\n} \A\d ~\gamma^\cblu{+}~\a \type_{\;\!\ell}}
\and
\infer{\gamma : \Gamma \vdash_{\sm^{\n+\cred{1}}} \t~\gamma : \A~\gamma}
{\gamma^\cblu{+} : \Gamma\D \vdash_{\sm^\n} \t\d~\gamma^\cblu{+} : \A\d ~\gamma^\cblu{+}~\pi\t^{\rho_\Gamma}}
\end{mathpar}
We will prove that display is stable under substitution by $\sigma : \Delta \to \Gamma$ in $\MC^{\cred{\oldDelta^{+}_{\n+\cred{1}}}}$, and satisfies the expected formulas relating it to d\'ecalage:
\begin{align}
\bkt{\A^\sigma}\d &\equiv \bkt{\A\d}^{\mathsf{\cred{W}}_{\cred{2}}^{\pi\A^{\rho_\Gamma}}\!\sigma\D}\notag
\\
\bkt{\t^\sigma}\d &\equiv \bkt{\t\d}^{\sigma\D}\notag
\\
\bkt{\gamma : \Gamma,~\a : \A~\gamma}\D &\equiv \bkt{\gamma^\cblu{+} : \Gamma\D,~\a : \pi\A^{\rho_\Gamma}~\gamma^\cblu{+}, ~\a\cblu{'} : \A\d~\gamma^\cblu{+}~\a}\label{eq:trunc-Dce}
\\
\sqbkt{\sigma,~\t}\D &\equiv \sqbkt{\sigma\D,~\pi\t^{\rho_\Delta},~\t\d}. \label{eq:trunc-Dsqbkt}
\end{align}

Finally, we will also define substitutions that give an the actions of morphisms in $\cred{\oldDelta^{+}_{\n+\cred{1}}}$ on matching telescopes and on types:
\begin{mathpar}
\infer{\gamma^\cblu{\blank} : \pi\Gamma \vdash_{\sm^\n} \A~\gamma^\cblu{\blank} \type_{\;\!\ell} \\ \b : \bB^{\angle{\n+\cred{1}},\angle{\m+\cred{1}}}}{\gamma_{\cblu{\n+1}} : \Gamma_{\n+\cred{1}}, ~\cblu{\partial}\a : \A_{\cred{\partial}\bkt{\n+\cred{1}}}~\gamma_{\cblu{\n+1}} \vdash_\dm \nat^{\A}_{\cred{\partial}\b}~ \gamma_{\cblu{\n+1}}~\cblu{\partial}\a : \pi^{\n-\m}\A_{\cred{\partial}\bkt{\m+\cred{1}}}~\bkt{\Gamma^\b~\gamma_{\cblu{\n+1}}}}
\\
\infer{\gamma : \Gamma \vdash_{\sm^{\n+\cred{1}}} \A~\gamma \type_{\;\!\ell} \\ \b : \bB^{\angle{\n+\cred{1}},\angle{\m+\cred{1}}}}
{\text{\(
  \begin{aligned}
    \gamma_{\cblu{\n+1}} : \Gamma_{\n+\cred{1}}, ~&\cblu{\partial}\a : \pi\A_{\cred{\partial}\bkt{\n+\cred{1}}}~\gamma_{\cblu{\n+1}},~\a : \A_{\n+\cred{1}}~\gamma_{\cblu{\n+1}}~\cblu{\partial}\a \\ &\vdash_\dm \nat^{\A}_\b ~\gamma_{\cblu{\n+1}}~\cblu{\partial}\a~\a : \pi^{\n-\m}\A_{\m+\cred{1}}~\bkt{\Gamma^\b~\gamma_{\cblu{\n+1}}}~\big(\nat_{\cred{\partial}\b} ~\gamma_{\cblu{\n+1}}~\cblu{\partial}\a\big)
  \end{aligned}\)}
}
\end{mathpar}
We will show that these compute to the identities (i.e. weakening) when $\b = \id_{\angle{\n+\cred{1}}}$, are functorial such that for $\b_\cblu{1} : \bB^{\angle{\n+\cred{1}},\angle{\m+\cred{1}}}$ and $\b_\cblu{0} : \bB^{\angle{\m+\cred{1}},\angle{\k+\cred{1}}}$:
\begin{align}
  \label{eq:act-functorial-1}
    \nat^{\pi^{\n-\m}\A}_{\cred{\partial}\b_\cblu{0}} ~\bkt{\Gamma^{\b_\cblu{1}} ~\gamma_{\cblu{\n+1}}} ~\big(\nat^{\A}_{\cred{\partial}\b_\cblu{1}}~\gamma_{\cblu{\n+1}} ~\cblu{\partial}\a\big) &\equiv \nat^{\A}_{\cred{\partial}\bkt{\b_\cblu{1}\circ\b_{\cblu{0}}}} ~\gamma_{\cblu{\n+1}}~\cblu{\partial}\a \\
  \label{eq:act-functorial-2}
    \nat^{\pi^{\n-\m}\A}_{\b_\cblu{0}}~\bkt{\Gamma^{\b_\cblu{1}} ~\gamma_{\cblu{\n+1}}} ~\big(\nat^{\pi\A}_{\cred{\partial}\b_\cblu{1}}~\gamma_{\cblu{\n+1}} ~\cblu{\partial}\a\big) ~\big(\nat^{\A}_{\b_\cblu{1}}~\gamma_{\cblu{\n+1}} ~\cblu{\partial}\a~\a\big) &\equiv \nat^{\A}_{\b_\cblu{1}\circ\b_{\cblu{0}}} ~\gamma_{\cblu{\n+1}}~\cblu{\partial}\a~\a
\end{align}
and are also stable under substitution in the sense that for $\sigma : \Delta \to \Gamma$ in $\MC^{\cred{\oldDelta^{+}_{\n+\cred{1}}}}$:
\begin{align}
  \label{eq:act-substable-1}
\nat^{\A^{\pi\sigma}}_{\cred{\partial}\b} &\equiv \bkt{\nat^{\A}_{\cred{\partial}\b}}^{\mathsf{\cred{W}}_{\cred{2}}^{\A_{\cred{\partial}\bkt{\n+\cred{1}}}}\!\sigma_{\n+\cred{1}}}
\\\label{eq:act-substable-2}
\nat^{\A^\sigma}_\b &\equiv \bkt{\nat^{\A}_\b}^{\mathsf{\cred{W}}_{\cred{2}}^{\A_{\n+\cred{1}}}\mathsf{\cred{W}}_{\cred{2}}^{\pi\A_{\cred{\partial}\bkt{\n+\cred{1}}}}\!\sigma_{\n+\cred{1}}}.
\end{align}
Given these, we can then define the functorial structure of the putative object $\bkt{\gamma : \Gamma, ~\a : \A~\gamma}$: a morphism $\b_\cblu{1} : \bB^{\angle{\n+\cred{1}},\angle{\m+\cred{1}}}$ acts on it by:
\begin{equation}
  \label{eq:functorial-ext}
  \bkt{\gamma : \Gamma, ~\a : \A~\gamma}^\b~\gamma_{\cblu{\n+1}} ~\cblu{\partial}\a~\a \equiv \sqbkt{\Gamma^\b~\gamma_{\cblu{\n+1}}, ~\nat^{\pi\A}_{\cred{\partial}\b}~\gamma_{\cblu{\n+1}} ~\cblu{\partial}\a, ~\nat^{\A}_\b~\gamma_{\cblu{\n+1}} ~\cblu{\partial}\a~\a}
\end{equation}
\Cref{eq:act-functorial-1,eq:act-functorial-2} tell us that the assignment $\bkt{\gamma : \Gamma, ~\a : \A~\gamma}^\b$ is functorial, while \cref{eq:act-substable-1,eq:act-substable-2} tell us that the extension $\sqbkt{\sigma,~\t}$ is a morphism of presheaves.

The above is the complete list of constructions and theorems that we need in order to inductively define the type and term presheaves in the models $\sm^\n$ and their context extension function.

\paragraph{The Inductive Cases}
\label{sec:inductive-cases}

Now we give the inductive definitions and proofs of the objects and theorems declared previously.
The model $\sm^{\minustwo}$ is the terminal CwF on the terminal category. For $\sm^\minusone$, we have that:
\begin{mathpar}
\A_{\cred{\partial}\bkt{\minusone}} \equiv \ec_\dm
\and
\t_{\cred{\partial}\bkt{\minusone}} \equiv \esub_\dm
\end{mathpar}
from which the rest of the definitions and theorems evidently follow.

Suppose now that the model $\sm^{\n+\cred{1}}$ has been defined with all of the above structure and properties. We first define matching telescopes and substitutions as follows:
\begin{align*}
  \A_{\cred{\partial}\bkt{\n+\cred{2}}}~\gamma_{\cblu{\n+2}} &\equiv
  \begin{aligned}[t]
    \Big(\cblu{\partial}\a : \bkt{\pi\A^{\rho_{\pi\Gamma}}}_{\cred{\partial}\bkt{\n+\cred{1}}}~\gamma_{\cblu{\n+2}},~
    &\a : \bkt{\A^{\rho_\Gamma}}_{\n+\cred{1}}~\gamma_{\cblu{\n+2}}~\cblu{\partial}\a, \\
    &\cblu{\partial}\a\cblu{'} : \bkt{\A\d}_{\cred{\partial}\bkt{\n+\cred{1}}}~\sqbkt{\gamma_{\cblu{\n+2}},~\cblu{\partial}\a,~\a}\Big)
  \end{aligned}
  \\
\t_{\cred{\partial}\bkt{\n+\cred{2}}}~\gamma_{\cblu{\n+2}} &\equiv \sqbkt{\bkt{\pi\t^{\rho_{\pi\Gamma}}}_{\cred{\partial}\bkt{\n+\cred{1}}},~\bkt{\t^{\rho_\Gamma}}_{\n+\cred{1}}, ~\bkt{\t\d}_{\cred{\partial}\bkt{\n+\cred{1}}}}.
\end{align*}
The stability of these under substitution follows from that of the constituent constructions in the previous dimension; for $\sigma : \Delta \to \Gamma$ in $\MC^{\cred{\oldDelta^{+}_{\n+\cred{2}}}}$:
\begin{align*}
\MoveEqLeft\big(\A^{\pi\sigma}\big)_{\cred{\partial}\bkt{\n+\cred{2}}}~\delta_{\cblu{\n+2}} \\
&\equiv
\begin{aligned}[t]
  \Big(\cblu{\partial}\a : \bkt{\pi\A^{\pi\pi\sigma \circ \rho_{\pi\Delta}}}_{\cred{\partial}\bkt{\n+\cred{1}}}~\delta_{\cblu{\n+2}},~
  &\a : \bkt{\A^{\pi\sigma \circ \rho_\Delta}}_{\n+\cred{1}}~\delta_{\cblu{\n+2}}~\cblu{\partial}\a, \\
  &\cblu{\partial}\a\cblu{'} : \bkt{\bkt{\A^{\pi\sigma}}\d}_{\cred{\partial}\bkt{\n+\cred{1}}}~\sqbkt{\delta_{\cblu{\n+2}},~\cblu{\partial}\a,~\a}\Big)
\end{aligned}
\\
&\equiv
\begin{aligned}[t]
  \Big(\cblu{\partial}\a : \bkt{\pi\A^{\rho_{\pi\Gamma} \circ \pi\sigma\D}}_{\cred{\partial}\bkt{\n+\cred{1}}}~\delta_{\cblu{\n+2}},~
  &\a : \bkt{\A^{\rho_\Gamma \circ \sigma\D }}_{\n+\cred{1}}~\delta_{\cblu{\n+2}}~\cblu{\partial}\a,\\
  &\cblu{\partial}\a\cblu{'} : \Big(\bkt{\A\d}^{\pi\mathsf{\cred{W}}_{\cred{2}} ^{\A^{\rho_{\Gamma}}}\!\sigma\D}\Big)_{\cred{\partial}\bkt{\n+\cred{1}}}~\sqbkt{\delta_{\cblu{\n+2}},~\cblu{\partial}\a,~\a}\Big)
\end{aligned}
\\
&\equiv
\begin{aligned}[t]
  \Big(\cblu{\partial}\a : \bkt{\pi\A^{\rho_{\pi\Gamma}}} _{\cred{\partial}\bkt{\n+\cred{1}}} ~\bkt{\sigma^{\mathsf{\cred{D}}}_{\n+\cred{1}}~\delta_{\cblu{\n+2}}},~
  &\a : \bkt{\A^{\rho_\Gamma }}_{\n+\cred{1}} ~\bkt{\sigma^{\mathsf{\cred{D}}}_{\n+\cred{1}}~\delta_{\cblu{\n+2}}}~\cblu{\partial}\a, \\
  &\cblu{\partial}\a\cblu{'} : \bkt{\A\d}_{\cred{\partial}\bkt{\n+\cred{1}}}~\sqbkt{\bkt{\sigma^{\mathsf{\cred{D}}}_{\n+\cred{1}}~\delta_{\cblu{\n+2}}},~\cblu{\partial}\a,~\a}\Big)
\end{aligned}
\\
&\equiv
\begin{aligned}[t]
  \Big(\cblu{\partial}\a : \bkt{\pi\A^{\rho_{\pi\Gamma}}}_{\cred{\partial}\bkt{\n+\cred{1}}} ~\bkt{\sigma_{\n+\cred{2}}~\delta_{\cblu{\n+2}}},~
  &\a : \bkt{\A^{\rho_\Gamma}}_{\n+\cred{1}} ~\bkt{\sigma_{\n+\cred{2}}~\delta_{\cblu{\n+2}}}~\cblu{\partial}\a, \\
  &\cblu{\partial}\a\cblu{'} : \bkt{\A\d}_{\cred{\partial}\bkt{\n+\cred{1}}}~\sqbkt{\bkt{\sigma_{\n+\cred{2}}~\delta_{\cblu{\n+2}}},~\cblu{\partial}\a,~\a}\Big)
\end{aligned}
\\
&\equiv
\begin{aligned}[t]
  \Big(\cblu{\partial}\a : \bkt{\pi\A^{\rho_{\pi\Gamma}}}_{\cred{\partial}\bkt{\n+\cred{1}}} ~\gamma_{\cblu{\n+2}},~
  &\a : \bkt{\A^{\rho_\Gamma}}_{\n+\cred{1}} ~\gamma_{\cblu{\n+2}}~\cblu{\partial}\a, \\
  &\cblu{\partial}\a\cblu{'} : \bkt{\A\d}_{\cred{\partial}\bkt{\n+\cred{1}}}~\sqbkt{\gamma_{\cblu{\n+2}},~\cblu{\partial}\a,~\a}\Big)^{\sigma_{\n+\cred{2}}}
\end{aligned}
\\
&\equiv \A_{\cred{\partial}\bkt{\n+\cred{2}}} ~\bkt{\sigma_{\n+\cred{2}}~\delta_{\cblu{\n+2}}}.
\end{align*}
For display, we define:
\begin{align*}
\pi\big(\A\d\big) &\equiv \pi\A\d & \big(\A\d\big)_{\n+\cred{1}} &\equiv \A_{\n+\cred{2}} \\
\pi\big(\t\d\big) &\equiv \pi\t\d & \big(\t\d\big)_{\n+\cred{1}} &\equiv \t_{\n+\cred{2}}.
\end{align*}
This definition is well typed because the expected typing judgement for $\bkt{\A\d}_{\n+\cred{1}}$ is:
\begin{align*}
&\gamma_\cblu{\n+2} : \Gamma_{\n+\cred{2}},~\cblu{\partial}\a : \bkt{\pi\pi\A^{\rho_{\pi\Gamma}}}_{\cred{\partial}\bkt{\n+\cred{1}}} ~\gamma_\cblu{\n+2}, ~\a : \bkt{\pi\A^{\rho_{\Gamma}}}_{\n+\cred{1}}~\gamma_\cblu{\n+2} ~\cblu{\partial}\a,\\
&\quad\quad\quad\quad\quad\quad\cblu{\partial}\a\cblu{'} : \bkt{\pi\A\d}_{\cred{\partial}\bkt{\n+\cred{1}}} ~\gamma_\cblu{\n+2}~\cblu{\partial}\a~\a \vdash_\dm \bkt{\A\d}_{\n+\cred{1}}~\sqbkt{\gamma_\cblu{\n+2}, ~\cblu{\partial}\a,~\a}~\cblu{\partial}\a\cblu{'} \type_{\;\!\ell}
\end{align*}
and the context in which $\A_{\n+\cred{2}}$ lives expands to this by the definition of matching telescopes:
\begin{align*}
&\gamma_{\cblu{\n+2}} : \Gamma_{\n+\cred{2}},~\cblu{\partial}\a : \pi\A_{\cred{\partial}\bkt{\n+\cred{2}}}~\gamma_{\cblu{\n+2}} \vdash_\dm \A_{\n+\cred{2}} ~\gamma_{\cblu{\n+2}}~\cblu{\partial}\a  \type_{\;\!\ell}.
\end{align*}
We can now check \eqref{eq:trunc-Dce} at the level of $\n+\cred{1}$ simplices:
\begin{align*}
&\Big(\big(\gamma : \Gamma,~\a : \A~\gamma\big)\D\Big)_{\n+\cred{1}} \\
&\quad \equiv \big(\gamma : \Gamma,~\a : \A~\gamma\big)_{\n+\cred{2}} \\
&\quad \equiv \big(\gamma_{\cblu{\n+2}} : \Gamma_{\n+\cred{2}}, ~\cblu{\partial}\a : \pi\A_{\cred{\partial}\bkt{\n+\cred{2}}}~\gamma_{\cblu{\n+2}}, ~\a : \A_{\n+\cred{2}} ~\gamma_{\cblu{\n+2}} ~\cblu{\partial}\a\big) \\
&\quad \equiv
\begin{aligned}[t]
\big(\gamma_{\cblu{\n+2}} : \Gamma_{\n+\cred{2}}, ~&\cblu{\partial}\a : \bkt{\pi\pi\A^{\rho_{\pi\Gamma}}}_{\cred{\partial}\bkt{\n+\cred{1}}}~\gamma_{\cblu{\n+2}},~
\a : \bkt{\pi\A^{\rho_\Gamma}}_{\n+\cred{1}}~\gamma_{\cblu{\n+2}}~\cblu{\partial}\a, \\
&\cblu{\partial}\a\cblu{'} : \bkt{\pi\A\d}_{\cred{\partial}\bkt{\n+\cred{1}}}~\sqbkt{\gamma_{\cblu{\n+2}},~\cblu{\partial}\a,~\a},~\a\cblu{'} : \bkt{\A\d}_{\n+\cred{1}} ~\sqbkt{\gamma_{\cblu{\n+2}},~\cblu{\partial}\a,~\a} ~\cblu{\partial}\a\cblu{'}\big)
\end{aligned} \\
&\quad \equiv \big(\gamma^\cblu{+} : \Gamma\D,~\a : \pi\A^{\rho_\Gamma}~\gamma^\cblu{+}, ~\a\cblu{'} : \A\d~\gamma^\cblu{+}~\a\big)_{\n+\cred{1}},
\end{align*}
where \eqref{eq:trunc-Dsqbkt} follows similarly. Stability under substitutions follows inductively:
\begin{align*}
\pi\Big(\big(\A^\sigma\big)\d\Big)_{\n+\cred{1}}
&\equiv \big(\pi\A^{\pi\sigma}\big)\d \\
&\equiv \big(\pi\A\d\big)^{\mathsf{\cred{W}}_{\cred{2}}^{\pi\pi\A^{\rho_{\pi\Gamma}}}\!\pi\sigma\D} \\
&\equiv \big(\pi\A\d\big)^{\pi\mathsf{\cred{W}}_{\cred{2}}^{\pi\A^{\rho_{\Gamma}}}\!\sigma\D} \\
&\equiv \pi\Big(\big(\A\d\big)^{\mathsf{\cred{W}}_{\cred{2}}^{\pi\A^{\rho_{\Gamma}}}\!\sigma\D}\Big) \\
\Big(\big(\A^\sigma\big)\d\Big)_{\n+\cred{1}} &\equiv \big(\A^\sigma\big)_{\n+\cred{2}} \\
&\equiv \A_{\n+\cred{2}}^{\mathsf{\cred{W}}_{\cred{2}}^{\pi\A_{\cred{\partial}\bkt{\n+\cred{2}}}} \sigma_{\n+\cred{2}}} \\
&\equiv \A_{\n+\cred{2}}^{\mathsf{\cred{W}}_{\cred{2}}^{(\pi\A\d)_{\cred{\partial}\bkt{\n+\cred{1}}}} \mathsf{\cred{W}}_{\cred{2}}^{(\pi\A^{\rho_{\Gamma}})_{\n+\cred{1}}}\mathsf{\cred{W}}_{\cred{2}}^{(\pi\pi\A^{\rho_{\pi\Gamma}})_{\cred{\partial}\bkt{\n+\cred{1}}}}\sigma _{\n+\cred{2}}} \\
&\equiv \Big(\big(\A\d\big)_{\n+\cred{1}}\Big)^{\mathsf{\cred{W}}_{\cred{2}}^{(\pi\A\d)_{\cred{\partial}\bkt{\n+\cred{1}}}} \bkt{\mathsf{\cred{W}}_{\cred{2}}^{\pi\A^{\rho_{\Gamma}}} \sigma\D}_{\n+\cred{1}}} \\
&\equiv \Big(\big(\A\d\big)^{\mathsf{\cred{W}}_{\cred{2}}^{\pi\A^{\rho_{\Gamma}}} \sigma\D}\Big)_{\n+\cred{1}}.
\end{align*}
Lastly, we define the components of the functorial action on presheaves as follows:
\begin{align*}
  \nat^{\A}_{\cred{\partial}\bkt{\Zero\b}}~\gamma_{\cblu{\n+2}} ~\sqbkt{\cblu{\partial}\a,~\a,~\cblu{\partial}\a\cblu{'}}
  &\equiv \nat^{\pi\A^{\rho_{\pi\Gamma}}}_{\cred{\partial}\b}~\gamma_{\cblu{\n+2}} ~\cblu{\partial}\a \\
  \nat^{\A}_{\Zero\b}~\gamma_{\cblu{\n+2}} ~\sqbkt{\cblu{\partial}\a,~\a,~\cblu{\partial}\a\cblu{'}}~\a\cblu{'}
  &\equiv \nat^{\pi\A^{\rho_{\Gamma}}}_\b~\gamma_{\cblu{\n+2}} ~\cblu{\partial}\a~\a \\
  \nat^{\A}_{\cred{\partial}\bkt{\One\b}}~\gamma_{\cblu{\n+2}} ~\sqbkt{\cblu{\partial}\a,~\a,~\cblu{\partial}\a\cblu{'}}
  &\equiv \\
  \MoveEqLeft[4] \sqbkt{\nat^{\pi\A^{\rho_{\pi\Gamma}}}_{\cred{\partial}\b}~\gamma_{\cblu{\n+2}}~\cblu{\partial}\a, ~\nat^{\A^{\rho_{\Gamma}}}_\b~\gamma_{\cblu{\n+2}}~\cblu{\partial}\a~\a, ~\nat^{\A\d}_{\cred{\partial}\b}~\sqbkt{\gamma_{\cblu{\n+2}}, ~\cblu{\partial}\a,~\a}~\cblu{\partial}\a\cblu{'}} \\
  \nat^{\A}_{\One\b}~\gamma_{\cblu{\n+2}} ~\sqbkt{\cblu{\partial}\a,~\a,~\cblu{\partial}\a\cblu{'}}~\a\cblu{'}
  &\equiv \nat^{\A\d}_\b~\sqbkt{\gamma_{\cblu{\n+2}}, ~\cblu{\partial}\a,~\a}~\cblu{\partial}\a\cblu{'}~\a\cblu{'},
\end{align*}
where the last definition is well typed because $\bkt{\A\d}_{\n+\cred{1}} \equiv \A_{\n+\cred{2}}$.
We check functoriality:
\begin{align*}
\MoveEqLeft\nat^{\pi^{\bkt{\n+\cred{1}}-\m}\A}_{\cred{\partial}\b_\cblu{0}} ~\bkt{\Gamma^{\Zero\b_\cblu{1}} ~\gamma_{\cblu{\n+2}}} ~\big(\nat^{\A}_{\cred{\partial}\bkt{\Zero\b_\cblu{1}}}~\gamma_{\cblu{\n+2}} ~\sqbkt{\cblu{\partial}\a,~\a,~\cblu{\partial}\a\cblu{'}}\big) \\
&\equiv \nat^{\pi^{\n-\m}\pi\A}_{\cred{\partial}\b_\cblu{0}} ~\big(\big(\rho_\Gamma\big)_{\m+\cred{1}}~ \big(\big(\Gamma\D\big)^{\b_\cblu{1}} ~\gamma_{\cblu{\n+2}}\big)\big) ~\big(\nat^{\A}_{\cred{\partial}\bkt{\Zero\b_\cblu{1}}}~\gamma_{\cblu{\n+2}} ~\cblu{\partial}\a\big)\\
&\equiv \nat^{\pi^{\n-\m}\pi\A^{\rho_{\pi\Gamma}}}_{\cred{\partial}\b_\cblu{0}} ~\big(\big(\Gamma\D\big)^{\b_\cblu{1}} ~\gamma_{\cblu{\n+2}}\big) ~\big(\nat^{\pi\A^{\rho_{\pi\Gamma}}}_{\cred{\partial}\b_\cblu{1}}~\gamma_{\cblu{\n+2}} ~\cblu{\partial}\a\big) \\
&\equiv \nat^{\pi\A^{\rho_{\pi\Gamma}}}_{\cred{\partial}\bkt{\b_\cblu{1} \circ \b_\cblu{0}}}~\gamma_{\cblu{\n+2}} ~\cblu{\partial}\a \\
&\equiv \nat^{\A}_{\cred{\partial}\bkt{\Zero\b_\cblu{1} \circ \b_\cblu{0}}}~\gamma_{\cblu{\n+2}} ~\sqbkt{\cblu{\partial}\a,~\a,~\cblu{\partial}\a\cblu{'}}
\end{align*}
and stability under substitutions:
\begin{align*}
\nat^{\A^{\pi\sigma}}_{\cred{\partial}\bkt{\Zero\b}}~\delta_{\cblu{\n+2}} ~\sqbkt{\cblu{\partial}\a,~\a,~\cblu{\partial}\a\cblu{'}}
&\equiv \nat^{\pi\A^{\pi\pi\sigma \circ \rho_{\pi\Delta}}}_{\cred{\partial}\b} ~\delta_{\cblu{\n+2}} ~\cblu{\partial}\a \\
&\equiv \nat^{\pi\A^{\rho_{\pi\Gamma} \circ \pi\sigma\D}}_{\cred{\partial}\b} ~\delta_{\cblu{\n+2}} ~\cblu{\partial}\a \\
&\equiv \nat^{\pi\A^{\rho_{\pi\Gamma}}}_{\cred{\partial}\b} ~\bkt{\sigma^{\mathsf{\cred{D}}}_{\n+\cred{1}}~\delta_{\cblu{\n+2}}} ~\cblu{\partial}\a \\
&\equiv \nat^{\A}_{\cred{\partial}\bkt{\Zero\b}}~\bkt{\sigma_{\n+\cred{2}} ~\delta_{\cblu{\n+2}}} ~\sqbkt{\cblu{\partial}\a,~\a,~\cblu{\partial}\a\cblu{'}}.
\end{align*}
All omitted verifications are similar to the cases presented. This completes the construction of the type and term presheaves and their context extension function, plus display, for the truncated simplicial models $\sm^\n$. \bbox

\subsubsection{Variables} To make the models $\sm^\n$ into CwFs, what is missing from the above construction are the fundamental context projections and variables. In this section we will now define these:
\begin{mathpar}
\infer{\gamma : \Gamma \vdash_{\sm^\n} \A~\gamma \type_{\;\!\ell}}{\ft^{\A}_{\sm^\n} : \bkt{\gamma : \Gamma, ~\a : \A~\gamma} \to \Gamma}
\and
\infer{\gamma : \Gamma \vdash_{\sm^\n} \A~\gamma \type_{\;\!\ell}}{\gamma : \Gamma, ~\a : \A~\gamma \vdash_{\sm^\n} \zv^{\A}_{\sm^\n}~\gamma~\a: \A^\ft~\gamma~\a}.
\end{mathpar}
We now construct variables and parent maps in $\sm^\n$ inductively, with all of the hypothesise \cref{eq:ft-sqbkt,eq:zv-sqbkt,eq:sqbkt-ft-zv} outlined before assumed at all prior levels. This construction will be performed such that the following theorems hold inductively:
\begin{align}
\bkt{\ft^{\A}_{\sm^{\n+\cred{1}}}}\D &\equiv \ft^{\pi\A^{\rho_\Gamma}}_{\sm^\n} \circ \ft^{\A\d}_{\sm^\n} \label{eq:D-ft} \\
\bkt{\zv^{\A}_{\sm^{\n+\cred{1}}}}\d &\equiv \zv^{\A\d}_{\sm^\n}\label{eq:d-zv} \\
\bkt{\zv^{\pi\A}_{\sm^\n}}^{\rho_{\bkt{\Gamma,\;\A}}} &\equiv \bkt{\zv^{\pi\A^{\rho_\Gamma}}_{\sm^\n}}^{\ft^{\A\d}_{\sm^\n}}.\label{eq:pt-zv}
\end{align}
Note that the above equations are well typed by way of the formulas for d\'ecalage given in the fibrant construction above. Now for $\sm^\minusone$, we define:
\begin{align*}
\bkt{\ft^{\A}_{\sm^\minusone}}_\minusone &\equiv \ft^{\A_\minusone}_\dm \\
\bkt{\zv^{\A}_{\sm^\minusone}}_\minusone &\equiv \zv^{\A_\minusone}_\dm.
\end{align*}
Then we inductively define:
\begin{align*}
\bkt{\ft^{\A}_{\sm^{\n+\cred{1}}}}_{\m+\cred{1}} &\equiv \bkt{\ft^{\pi\A}_{\sm^{\n}}}_{\m+\cred{1}} \quad \text{for} \quad \m < \n \\
\bkt{\ft^{\A}_{\sm^{\n+\cred{2}}}}_{\n+\cred{2}} &\equiv  \bkt{\ft^{\pi\A^{\rho_\Gamma}}_{\sm^{\n+\cred{1}}}}_{\n+\cred{1}} \circ \big(\ft^{\A\d}_{\sm^{\n+\cred{1}}}\big)_{\n+\cred{1}} \\
\pi\bkt{\zv^{\A}_{\sm^{\n+\cred{2}}}} &\equiv \zv^{\pi\A}_{\sm^\n} \\
\bkt{\zv^{\A}_{\sm^{\n+\cred{2}}}}_{\n+\cred{2}} &\equiv \big(\zv^{\A\d}_{\sm^{\n+\cred{1}}}\big)_{\n+\cred{1}}.
\end{align*}
This says that the constructions are performed level-wise. From this, theorems \cref{eq:D-ft,eq:d-zv} then follow inductively, since the hypothesised d\'ecalage and display formulas were used to define each successive level.

Are these definitions correct? We gave well typed definitions, but to show that they give a notion of parent maps and zero variables, we have to verify that equations \cref{eq:ft-sqbkt,eq:zv-sqbkt,eq:sqbkt-ft-zv} hold. These verification appear in \cref{sec:appendix:variables}. \bbox

\subsubsection{$\Pi$-Types}

We construct $\Pi$-types inductively, with all of the assumptions of a $\Pi$-type structure outlined before assumed at all prior levels. Now note that we have the following two types in the same context:
\begin{align*}
&\gamma^\cblu{+} : \Gamma\D, ~\f : \bkt{\cred{\Pi}^{\sm^\n}~\pi\A~\pi\B}^{\rho_\Gamma}~\gamma^\cblu{+} \vdash_{\sm^\n} \big(\cred{\Pi}^{\sm^{\n+\cred{1}}}~\A~\B\big)\d~\gamma^\cblu{+}~\f \type_{\;\!\ell} \\
&\gamma^\cblu{+} : \Gamma\D, ~\f : \bkt{\cred{\Pi}^{\sm^\n}~\pi\A~\pi\B}^{\rho_\Gamma}~\gamma^\cblu{+} \vdash_{\sm^\n} \\
&\quad\quad\quad\quad \big(\a : \pi\A^{\rho_\Gamma}~\gamma^\cblu{+}\big) \bkt{\a\cblu{'} : \A\d~\gamma^\cblu{+}~\a} \to \B\d~\sqbkt{\gamma^\cblu{+}, ~\a, ~\a\cblu{'}}~\bkt{\eval~\sqbkt{\gamma^\cblu{+}, ~\f, ~\a, ~\a\cblu{'}}~\f~\a} \type_{\;\!\ell}.
\end{align*}
We will prove inductively along with our definition that these two types are equal.
In point-free notation, this means we will have:
\begin{align}
\big(\cred{\Pi}^{\sm^{\n+\cred{1}}}~\A~\B\big)\d &\equiv \cred{\Pi}^{\sm^\n} \bkt{\pi\A^{\rho_\Gamma}}^\ft~ \cred{\Pi}^{\sm^\n} \bkt{\A\d}^{\mathsf{\cred{W}}_{\cred{2}}^{\pi\A^{\rho_\Gamma}}\ft} ~\bkt{\B\d}^{\sqbkt{\mathsf{\cred{W}}_{\cred{2}}^{\A\d} \mathsf{\cred{W}}_{\cred{2}}^{\pi\A^{\rho_\Gamma}}\ft,\;\eval\;\zv^{\ft\circ\ft}\;\zv^\ft}} \label{eq:pi-d} \\
\big(\lambda^{\sm^{\n+\cred{1}}}~\t\big)\d &\equiv \lambda^{\sm^\n}~\bkt{\lambda^{\sm^\n}~\t\d} \label{eq:lambda-d} \\
\big(\eval^{\sm^{\n+\cred{1}}}~\f~\s\big)\d &\equiv \eval^{\sm^\n} ~\bkt{\eval^{\sm^\n}~\f\d~\pi\s^{\rho_\Gamma}}~\s\d. \label{eq:eval-d}
\end{align}
Now to start on the induction, for $\sm^\minusone$ we define:
\begin{align*}
\big(\cred{\Pi}^{\sm^\minusone}~\A~\B\big)_\minusone &\equiv \cred{\Pi}^\dm~\A_\minusone~\B_\minusone \\
\big(\lambda^{\sm^\minusone}~\t\big)_\minusone &\equiv \lambda^\dm~\t_\minusone \\
\big(\eval^{\sm^\minusone}~\f~\s\big)_\minusone &\equiv \eval^\dm~\f_\minusone~\s_\minusone.
\end{align*}
Then we inductively define:
\begin{align*}
\pi\big(\cred{\Pi}^{\sm^{\n+\cred{2}}}~\A~\B\big) &\equiv \cred{\Pi}^{\sm^{\n+\cred{1}}}~\pi\A~\pi\B \\
\big(\cred{\Pi}^{\sm^{\n+\cred{2}}}~\A~\B\big)_{\n+\cred{2}} &\equiv \Big(\cred{\Pi}^{\sm^{\n+\cred{1}}} \bkt{\pi\A^{\rho_\Gamma}}^\ft~ \cred{\Pi}^{\sm^{\n+\cred{1}}} \bkt{\A\d}^{\mathsf{\cred{W}}_{\cred{2}}^{\pi\A^{\rho_\Gamma}}\ft} ~\bkt{\B\d}^{\sqbkt{\mathsf{\cred{W}}_{\cred{2}}^{\A\d} \mathsf{\cred{W}}_{\cred{2}}^{\pi\A^{\rho_\Gamma}}\ft,\;\eval\;\zv^{\ft\circ\ft}\;\zv^\ft}}\Big)_{\n+\cred{1}}  \\
\pi\big(\lambda^{\sm^{\n+\cred{2}}}~\t\big) &\equiv \lambda^{\sm^{\n+\cred{1}}}~\pi\t \\
\big(\lambda^{\sm^{\n+\cred{2}}}~\t\big)_{\n+\cred{2}} &\equiv \Big(\lambda^{\sm^{\n+\cred{1}}}~\bkt{\lambda^{\sm^{\n+\cred{1}}}~\t\d}\Big)_{\n+\cred{1}} \\
\pi\big(\eval^{\sm^{\n+\cred{2}}}~\f~\s\big) &\equiv \eval^{\sm^{\n+\cred{1}}}~\pi\f~\pi\s \\
\big(\eval^{\sm^{\n+\cred{2}}}~\f~\s\big)_{\n+\cred{2}} &\equiv \Big(\eval^{\sm^{\n+\cred{1}}} ~\bkt{\eval^{\sm^{\n+\cred{1}}}~\f\d~\pi\s^{\rho_\Gamma}}~\s\d\Big)_{\n+\cred{1}}.
\end{align*}
As before, this says that the constructions are performed level-wise. From this, theorems \cref{eq:pi-d,eq:lambda-d,eq:eval-d} then follow inductively, since the hypothesised display formulas were used to define each successive level. The correctness of these definitions will follow from verifying the $\beta$ and $\eta$ laws in \cref{sec:appendix:pi-types}. \bbox

\subsubsection{Universes}

The universes of the discrete model are denoted $\Disc_{\;\!\ell}$. We construct universes in $\sm^\n$ inductively, with all of the assumptions of a $\mathcal{U}$-type structure outlined before assumed at all prior levels. We will inductively have that:
\begin{align}
\big(\Type^{\sm^{\n+\cred{1}}}_{\;\!\ell}\big)\d &\equiv \cred{\Pi}^{\sm^\n}~\bkt{\El~\zv}~\Type^{\sm^\n}_{\;\!\ell} \label{eq:type-d} \\
\big(\Code^{\sm^{\n+\cred{1}}}~\A\big)\d &\equiv \lambda^{\sm^\n}~\bkt{\Code^{\sm^\n}~\A\d} \label{eq:code-d} \\
\big(\El^{\sm^{\n+\cred{1}}}~\A\big)\d &\equiv \El^{\sm^\n}~\big(\eval^{\sm^\n} \bkt{\A\d}^\ft~\zv\big). \label{eq:el-d}
\end{align}
For $\sm^\minusone$, we define:
\begin{align*}
\big(\Type^{\sm^\minusone}_{\;\!\ell}\big)_\minusone &\equiv \Disc_{\;\!\ell} \\
\big(\Code^{\sm^\minusone}~\A\big)_\minusone &\equiv \Code^\dm~\A_\minusone \\
\big(\El^{\sm^\minusone}~\A\big)_\minusone &\equiv \El^\dm~\A_\minusone.
\end{align*}
Then we inductively define:
\begin{align*}
\pi\big(\Type^{\sm^{\n+\cred{2}}}_{\;\!\ell}\big) &\equiv \Type^{\sm^{\n+\cred{1}}}_{\;\!\ell} \\
\big(\Type^{\sm^{\n+\cred{2}}}_{\;\!\ell}\big)_{\n+\cred{2}} &\equiv \Big(\cred{\Pi}^{\sm^{\n+\cred{1}}}~\bkt{\El~\zv}~\Type^{\sm^{\n+\cred{1}}}_{\;\!\ell}\Big)_{\n+\cred{1}}  \\
\pi\big(\Code^{\sm^{\n+\cred{2}}}~\A\big) &\equiv \Code^{\sm^{\n+\cred{1}}}~\pi\A\\
\big(\Code^{\sm^{\n+\cred{2}}}~\A\big)_{\n+\cred{2}} &\equiv \Big(\lambda^{\sm^{\n+\cred{1}}}~\big(\Code^{\sm^{\n+\cred{1}}}~\A\d\big)\Big)_{\n+\cred{1}} \\
\pi\big(\El^{\sm^{\n+\cred{2}}}~\A\big) &\equiv \El^{\sm^{\n+\cred{1}}}~\pi\A\\
\big(\El^{\sm^{\n+\cred{2}}}~\A\big)_{\n+\cred{2}} &\equiv \Big(\El^{\sm^{\n+\cred{1}}}~\big(\eval^{\sm^{\n+\cred{1}}}\bkt{\A\d}^\ft~\zv \big)\Big)_{\n+\cred{1}}.
\end{align*}
Again, this says that the constructions are performed level-wise. From this, theorems \cref{eq:type-d,eq:code-d,eq:el-d} then follow inductively, since the hypothesised display formulas were used to define each successive level. The correctness of these definitions will follow from verifying that $\Code$ and $\El$ are mutual inverses in \cref{sec:appendix:universes}. \bbox

\subsubsection{$\oldomega$-Limits}

If $\oldomega$-limits are defined in $\sm^\n$, then given an infinite telescope $\gamma : \Gamma \vdash_{\sm^{\n+\cred{1}}} \widetilde{\Upsilon}~\gamma \inftel_{\;\!\ell}$ or infinite partial substitution $\gamma : \Gamma \vdash_{\sm^{\n+\cred{1}}} \widetilde{\upsilon}~\gamma : \widetilde{\Upsilon}~\gamma$ in $\sm^\n$, we can meaningfully give a type declaration of its display through use of limits:
\begin{mathpar}
\infer{\gamma : \Gamma \vdash_{\sm^{\n+\cred{1}}} \widetilde{\Upsilon}~\gamma \inftel_{\;\!\ell}}{\gamma^\cblu{+} : \Gamma\D, ~\cblu{\mathfrak{u}} : \lim^{\sm^\n}~\pi\widetilde{\Upsilon}^{\rho_\Gamma}~\gamma^\cblu{+} \vdash_{\sm^\n} \cblu{\widetilde{\Upsilon}}\d~\gamma^\cblu{+}~\cblu{\mathfrak{u}} \inftel_{\;\!\ell}}
\and
\infer{\gamma : \Gamma \vdash_{\sm^{\n+\cred{1}}} \widetilde{\upsilon}~\gamma : \widetilde{\Upsilon}~\gamma \inftel_{\;\!\ell}}{\gamma^\cblu{+} : \Gamma\D \vdash_{\sm^\n} \widetilde{\upsilon}\d~\gamma^\cblu{+} : \widetilde{\Upsilon}\d~\gamma^\cblu{+}~\bkt{\lim^{\sm^\n}~\pi\widetilde{\upsilon}^{\rho_\Gamma}}}
\end{mathpar}
We then define these by:
\begin{align*}
\big(\cblu{\widetilde{\Upsilon}}\d~\gamma^\cblu{+}~\cblu{\mathfrak{u}}\big)^{\cred{\partial}\m} &\equiv \big(\cblu{\widetilde{\Upsilon}}^{\cred{\partial}\m}\big)\d~\gamma^\cblu{+}~\bkt{\res^{\cred{\partial}\m}_{\sm^\n}~\gamma^\cblu{+}~\cblu{\mathfrak{u}}} \\
\big(\cblu{\widetilde{\Upsilon}}\d~\gamma^\cblu{+}~\cblu{\mathfrak{u}}\big)^\m &\equiv \big(\cblu{\widetilde{\Upsilon}}^\m\big)\d~\gamma^\cblu{+}~\bkt{\res^{\m}_{\sm^\n}~\gamma^\cblu{+}~\cblu{\mathfrak{u}}} \\
\big(\cblu{\widetilde{\upsilon}}\d\big)^{\cred{\partial}\m} &\equiv \big(\cblu{\widetilde{\upsilon}}^{\cred{\partial}\m}\big)\d\\
\big(\cblu{\widetilde{\upsilon}}\d\big)^\m &\equiv \big(\cblu{\widetilde{\upsilon}}^\m\big)\d.
\end{align*}
The third declaration, for example, is well typed because its expected type is:
\begin{align*}
&\big(\widetilde{\Upsilon}\d~\gamma^\cblu{+}~\bkt{\lim^{\sm^\n}~\pi\widetilde{\upsilon}^{\rho_\Gamma}}\big)^{\cred{\partial}\m} \\
&\quad \equiv \big(\cblu{\widetilde{\Upsilon}}^{\cred{\partial}\m}\big)\d~\gamma^\cblu{+}~\bkt{\res^{\cred{\partial}\m}_{\sm^\n}~\gamma^\cblu{+}~\bkt{\lim^{\sm^\n}~\pi\widetilde{\upsilon}^{\rho_\Gamma}}} \\
&\quad \equiv \big(\cblu{\widetilde{\Upsilon}}^{\cred{\partial}\m}\big)\d~\gamma^\cblu{+}~\big(\pi\big(\widetilde{\upsilon}^{\cred{\partial}\m}\big)^{\rho_\Gamma}\big).
\end{align*}
We now construct $\oldomega$-limits in $\sm^\n$ inductively, with all of the assumptions of a $\oldomega$-structure outlined before assumed at all prior levels. This construction will be performed such that the following theorems hold inductively:
\begin{align}
\big(\lim^{\sm^{\n+\cred{1}}}~\cblu{\widetilde{\Upsilon}}\big)\d &\equiv \lim^{\sm^\n}~\cblu{\widetilde{\Upsilon}}\d \label{eq:lim-type-d} \\
\big(\lim^{\sm^{\n+\cred{1}}}~\cblu{\widetilde{\upsilon}}\big)\d &\equiv \lim^{\sm^\n}~\cblu{\widetilde{\upsilon}}\d \label{eq:lim-term-d} \\
\big(\res^{\cred{\partial}\m}_{\sm^{\n+\cred{1}}}~\cblu{\mathfrak{u}}\big)\d &\equiv \res^{\cred{\partial}\m}_{\sm^\n}~\cblu{\mathfrak{u}}\d \label{eq:res-type-d}\\
\big(\res^{\m}_{\sm^{\n+\cred{1}}}~\cblu{\mathfrak{u}}\big)\d &\equiv \res^{\m}_{\sm^\n}~\cblu{\mathfrak{u}}\d. \label{eq:res-term-d}
\end{align}
For $\sm^\minusone$, we define:
\begin{align*}
\big(\lim^{\sm^\minusone}~\cblu{\widetilde{\Upsilon}}\big)_\minusone &\equiv \lim^\dm~\cblu{\widetilde{\Upsilon}}_\minusone \\
\big(\lim^{\sm^\minusone}~\cblu{\widetilde{\upsilon}}\big)_\minusone &\equiv \lim^\dm~\cblu{\widetilde{\upsilon}}_\minusone \\
\big(\res^{\cred{\partial}\m}_{\sm^\minusone}~\cblu{\mathfrak{u}}\big)_\minusone &\equiv \res^{\cred{\partial}\m}_\dm~\cblu{\mathfrak{u}}_\minusone \\
\big(\res^\m_{\sm^\minusone}~\cblu{\mathfrak{u}}\big)_\minusone &\equiv \res^{\m}_\dm~\cblu{\mathfrak{u}}_\minusone.
\end{align*}
We then inductively define:
\begin{align*}
\pi\big(\lim^{\sm^{\n+\cred{2}}}~\cblu{\widetilde{\Upsilon}}\big) &\equiv \lim^{\sm^{\n+\cred{1}}}~\pi\cblu{\widetilde{\Upsilon}}\\
\big(\lim^{\sm^{\n+\cred{2}}}~\cblu{\widetilde{\Upsilon}}\big)_{\n+\cred{2}} &\equiv \big(\lim^{\sm^\n}~\cblu{\widetilde{\Upsilon}}\d\big)_{\n+\cred{1}} \\
\pi\big(\lim^{\sm^{\n+\cred{2}}}~\cblu{\widetilde{\upsilon}}\big) &\equiv \lim^{\sm^{\n+\cred{1}}}~\pi\cblu{\widetilde{\upsilon}}\\
\big(\lim^{\sm^{\n+\cred{2}}}~\cblu{\widetilde{\upsilon}}\big)_{\n+\cred{2}} &\equiv \big(\lim^{\sm^\n}~\cblu{\widetilde{\upsilon}}\d\big)_{\n+\cred{1}} \\
\pi\big(\res^{\cred{\partial}\m}_{\sm^{\n+\cred{2}}}~\cblu{\mathfrak{u}}\big) &\equiv \res^{\cred{\partial}\m}_{\sm^{\n+\cred{1}}}~\pi\cblu{\mathfrak{u}} \\
\big(\res^{\cred{\partial}\m}_{\sm^{\n+\cred{2}}}~\cblu{\mathfrak{u}}\big)_{\n+\cred{2}} &\equiv \big(\res^{\cred{\partial}\m}_{\sm^\n}~\cblu{\mathfrak{u}}\d\big)_{\n+\cred{1}} \\
\pi\big(\res^\m_{\sm^{\n+\cred{2}}}~\cblu{\mathfrak{u}}\big) &\equiv \res^{\m}_{\sm^{\n+\cred{1}}}~\pi\cblu{\mathfrak{u}} \\
\big(\res^{\m}_{\sm^{\n+\cred{2}}}~\cblu{\mathfrak{u}}\big)_{\n+\cred{2}} &\equiv \big(\res^{\m}_{\sm^\n}~\cblu{\mathfrak{u}}\d\big)_{\n+\cred{1}}.
\end{align*}
As always, this says that the constructions are performed level-wise. From this, theorems \cref{eq:lim-type-d,eq:lim-term-d,eq:res-type-d,eq:res-term-d} then follow inductively, since the hypothesised display formulas were used to define each successive level. The correctness of these definitions will follow from verifying laws in \cref{sec:appendix:omega-limits}. \bbox

\subsubsection{The Simplicial Model} Having constructed the truncated simplicial models $\sm^\n$, we obtain the \emph{simplicial model} fairly directly by taking a limit. In order to state this, we first define a \emph{tail-cutting truncation functor} and extend \emph{d\'ecalage} to an endofunctor:
\begin{align*}
&\pi_\n : \MC^{\cred{\oldDelta^{+}}} \to \MC^{\cred{\oldDelta^{+}_\n}}
& &\bkt{\blank}\D : \MC^{\cred{\oldDelta^{+}}} \to \MC^{\cred{\oldDelta^{+}}} \\
&\bkt{\pi_\n\Gamma}_{\m+\cred{1}}  \equiv \Gamma_{\m+\cred{1}}  & &\bkt{\Gamma\D}_{\m+\cred{1}} \equiv \Gamma_{\m+\cred{2}} \\
&\bkt{\pi_\n\Gamma}^\b \equiv \Gamma^\b & &\bkt{\Gamma\D}^\b \equiv \Gamma^{\One\b} \\
&\bkt{\pi_\n\sigma}_{\m+\cred{1}}  \equiv \sigma_{\m+\cred{1}}  & &\bkt{\sigma\D}_{\m+\cred{1}} \equiv \sigma_{\m+\cred{2}}
\end{align*}
Since d\'ecalage is now an endofunctor, $\rho$ no longer involves truncation:
\begin{align*}
&\rho : \bkt{\blank}\D \Rightarrow \id_{\MC^{\cred{\oldDelta^{+}}}} \\
&\bkt{\rho_\Gamma}_{\m+\cred{1}} \equiv \Gamma^{\Zero\id_{\angle{\m+\cred{1}}}}
\end{align*}
Now we define the types and terms in $\sm$ to be compatible towers of types and terms in the truncated models $\sm^\n$.
In syntax this can be expressed by the following infinitary bidirectional rules:
\begin{mathpar}
\mprset{fraction={===}}
\infer{\bkt{\gamma : \pi_\n \Gamma \vdash_{\sm_\n} \pi_\n\A ~\gamma \type_{\;\!\ell}}_{\n \geq \minustwo} \\
  (\pi\bkt{\pi_{\n+\cred{1}}\A} \equiv \pi_\n\A)_{\n \geq \minustwo}}{\gamma : \Gamma \vdash_{\sm} \A~\gamma \type_{\;\!\ell}}
\\
\infer{\bkt{\gamma : \pi_\n \Gamma \vdash_{\sm_\n} \pi_\n\t~\gamma :  \pi_\n\A~\gamma}_{\n \geq\minustwo} \\ (\pi\bkt{\pi_{\n+\cred{1}}\t} \equiv \pi_\n\t)_{\n \geq\minustwo}}{\gamma : \Gamma \vdash_{\sm} \t~\gamma : \A~\gamma}
\end{mathpar}
We also define:
\begin{mathpar}
\infer{\gamma : \Gamma \vdash_{\sm} \A~\gamma \type_{\;\!\ell}}{\A_{\n+\cred{1}} \equiv \bkt{\pi_{\n+\cred{1}}\A}_{\n+\cred{1}}}
\and
\infer{\gamma : \Gamma \vdash_{\sm} \t~\gamma : \A~\gamma}{\t_{\n+\cred{1}} \equiv \bkt{\pi_{\n+\cred{1}}\t}_{\n+\cred{1}}}.
\end{mathpar}
The the above introduction rules equivalently then say that $\A$ and $\t$ in $\sm$ are defined by the data of each of the simplex levels $\A_{\n+\cred{1}}$ and $\t_{\n+\cred{1}}$. At this point, every single construction in $\sm^\n$ performed above extends to $\sm$ levelwise, since it is preserved strictly by all the finite truncation functors. In lieu of listing all of them, we will only give the case of \emph{display}, which is slightly modified in the absence of truncation:
\begin{mathpar}
\infer{\gamma : \Gamma \vdash_{\sm} \A~\gamma \type_{\;\!\ell}}
{\gamma^\cblu{+} : \Gamma\D, ~\a : \A^{\rho_\Gamma} ~\gamma^\cblu{+} \vdash_{\sm} \A\d ~\gamma^\cblu{+}~\a \type_{\;\!\ell}}
\and
\infer{\gamma : \Gamma \vdash_{\sm} \t~\gamma : \A~\gamma}
{\gamma^\cblu{+} : \Gamma\D \vdash_{\sm} \t\d~\gamma^\cblu{+} : \A\d ~\gamma^\cblu{+}~\t^{\rho_\Gamma}}.
\end{mathpar}
The computation rules for display on variables, $\Pi$-types, universes, and $\oldomega$-limits similarly hold in $\sm$ when modified to exclude $\pi$. \bbox

\subsection{Modalities}
\label{sec:modal-semantics}

We now relate the discrete ($\dm$) and simplicial ($\sm$) models by way of modalities, and introduce modal variants of the structural operations of a CwF.

The interesting facet of our approach is our treatment of $\bkt{\Gamma, ~\lock_\dia}$ and $\bkt{\gamma : \Gamma, ~\a :^\tri \A~\gamma}$. These examples concern the passage from $\dm$ to $\sm$. Both examples construct a context in $\sm$, but where (part of) the starting data is discrete --- $\Gamma$ in the first example and $\A$ in the second example. One naive approach to this construction would be to convert the discrete data to simplicial data (fibrantly so in the second case) --- in the first example we would set values of the presheaf at $\m+\cred{2}$ to be zeros, and in the second example we would set the simplex types at levels $\m+\cred{2}$ to be units. However, this would require us to assume that the starting CwF has, respectively, an initial object and unit types. The approach that we take avoids these assumptions, and also ensures that all computation laws have definitionally strict interpretations.

\subsubsection{Pieces of the triangle modality}
\label{sec:tri}

We begin by dealing with $\tri$.
The modality $\tri$ is supposed to construct a constant (augmented \mbox{semi-)}simplicial diagram, while its left adjoint $\lock_\tri$ picks out the object of $(\minusone)$-simplices.
Both of these operations are determined levelwise by their behavior on truncated diagrams, which is where most of the work is.
Recalling that we will not be modeling the modality $\tri$ on \emph{types} itself, since it would require assuming the existence of unit types in $\dm$, in this section we describe the other aspects of $\tri$ in the models $\sm^{\n+\cred{1}}$ and how they fit together on $\sm$.

We begin by defining a functor $\bkt{\blank,~\lock_{\tri_{\n+\cred{1}}}} : \MC^{\cred{\oldDelta}^{\cred{+}}_{\n+\cred{1}}}\to \MC$ via:
\begin{align*}
\bkt{\gamma : \Gamma,~\lock_{\tri_{\n+\cred{1}}}} &\equiv \Gamma_\minusone \\
\sqbkt{\sigma,~\lock_{\tri_{\n+\cred{1}}}} &\equiv \sigma_\minusone
\end{align*}
Then we construct modal extension for $\tri_{\n+\cred{1}}$ in $\sm^{\n+\cred{1}}$:
\begin{mathpar}
\infer{\Gamma \ob_{\sm^{\n+\cred{1}}} \\ \gamma  : \Gamma, ~\lock_{\tri_{\n+\cred{1}}} \vdash_\dm \A~\gamma \type_{\;\!\ell}}{\bkt{\gamma : \Gamma,~\a :^{\tri_{\n+\cred{1}}} \A~\gamma} \ob_{\sm^{\n+\cred{1}}}}
\and
\infer{\sigma : \Delta \to_{\sm^{\n+\cred{1}}} \Gamma \\ \gamma  : \Gamma, ~\lock_{\tri_{\n+\cred{1}}} \vdash_\dm \t~\gamma : \A~\gamma}{\sqbkt{\sigma,~\t}_{\tri_{\n+\cred{1}}} : \Delta \to_{\sm^{\n+\cred{1}}} \bkt{\gamma : \Gamma,~\a :^{\tri_{\n+\cred{1}}} \A~\gamma}}
\end{mathpar}
These are defined such that:
\begin{align*}
\bkt{\gamma : \Gamma,~\a :^{\tri_{\n+\cred{2}}} \A~\gamma}\D &\equiv \bkt{\gamma^\cblu{+} : \Gamma\D,~\a :^{\tri_{\n+\cred{1}}} \A^{[\rho_\Gamma,\;\lock_{\tri_{\n+\cred{1}}}]}~\gamma^\cblu{+}} \\
\bkt{\sqbkt{\sigma,~\t}_{\tri_{\n+\cred{2}}}}\D &\equiv \sqbkt{\sigma\D, ~\t^{[\rho_\Gamma,\;\lock_{\tri_{\n+\cred{1}}}]}}_{\tri_{\n+\cred{1}}}
\end{align*}
For dimension $\minusone$, we define:
\begin{align*}
\big(\gamma : \Gamma,~\a :^{\tri_\minusone} \A~\gamma\big)_{\minusone} &\equiv \big(\gamma_\bminusone : \Gamma_\minusone, ~\a : \A~\gamma_\bminusone\big) \\
\big(\sqbkt{\sigma,~\t}_{\tri_\minusone}\big)_{\minusone} &\equiv \sqbkt{\sigma_{\minusone}, ~\t}
\end{align*}
Then we inductively define:
\begin{align*}
\big(\gamma : \Gamma,~\a :^{\tri_{\n+\cred{2}}} \A~\gamma\big)_{\m+\cred{1}} &\equiv \big(\gamma^\cblu{\blank} : \pi\Gamma,~\a :^{\tri_{\n+\cred{1}}} \A~\gamma^\cblu{\blank}\big)_{\m+\cred{1}} \quad\text{for}\quad \m \leq \n \\
\big(\gamma : \Gamma,~\a :^{\tri_{\n+\cred{2}}} \A~\gamma\big)_{\n+\cred{2}} &\equiv \big(\gamma^\cblu{+} : \Gamma\D,~\a :^{\tri_{\n+\cred{1}}} \A^{[\rho_\Gamma,\;\lock_{\tri_{\n+\cred{1}}}]}~\gamma^\cblu{+}\big)_{\n+\cred{1}} \\
\big(\sqbkt{\sigma,~\t}_{\tri_{\n+\cred{2}}}\big)_{\m+\cred{1}} &\equiv \big(\sqbkt{\pi\sigma,~\t}_{\tri_{\n+\cred{1}}}\big)_{\m+\cred{1}} \quad\text{for}\quad \m \leq \n \\
\big(\sqbkt{\sigma,~\t}_{\tri_{\n+\cred{2}}}\big)_{\n+\cred{2}} &\equiv \big(\sqbkt{\sigma\D, ~\t^{[\rho_\Gamma,\;\lock_{\tri_{\n+\cred{1}}}]}}_{\tri_{\n+\cred{1}}}\big)_{\n+\cred{1}}
\end{align*}
We next have fundamental context projections and zero variables:
\begin{mathpar}
\infer{\gamma  : \Gamma, ~\lock_{\tri_{\n+\cred{1}}} \vdash_\dm \A~\gamma \type_{\;\!\ell}}{\ft^{\A}_{\tri_{\n+\cred{1}}} : \bkt{\gamma : \Gamma,~\a :^{\tri_{\n+\cred{1}}} \A} \to_{\sm^{\n+\cred{1}}} \Gamma}
\and
\infer{\gamma  : \Gamma, ~\lock_{\tri_{\n+\cred{1}}} \vdash_\dm \A~\gamma \type_{\;\!\ell}}{\gamma  : \Gamma, ~\a:^{\tri_{\n+\cred{1}}} \A~\gamma, ~\lock_{\tri_{\n+\cred{1}}} \vdash_\dm \zv^{\A}_{\tri_{\n+\cred{1}}}~\gamma~\a : \A^{[\ft,\;\lock_{\tri_{\n+\cred{1}}}]}~\gamma ~\a}
\end{mathpar}
These are defined such that:
\[\big(\ft^{\A}_{\tri_{\n+\cred{2}}}\big)\D \equiv \ft^{\A^{[\rho_\Gamma,\;\lock_{\tri_{\n+\cred{1}}}]}}_{\tri_{\n+\cred{1}}}\]
For dimension $\minusone$, we define:
\begin{align*}
\big(\ft^{\A}_{\tri_\minusone}\big)_\minusone &\equiv \ft^{\A}_\dm \\
\zv^{\A}_{\tri_\minusone} &\equiv \zv^{\A}_\dm
\end{align*}
Then we inductively define:
\begin{align*}
\pi\big(\ft^{\A}_{\tri_{\n+\cred{2}}}\big) &\equiv \ft^{\A}_{\tri_{\n+\cred{1}}} \\
\big(\ft^{\A}_{\tri_{\n+\cred{2}}}\big)_{\n+\cred{2}} &\equiv \big(\ft^{\A^{[\rho_\Gamma,\;\lock_{\tri_{\n+\cred{1}}}]}}_{\tri_{\n+\cred{1}}}\big)_{\n+\cred{1}} \\
\zv^{\A}_{\tri_{\n+\cred{2}}} &\equiv \zv^{\A}_{\tri_{\n+\cred{1}}}
\end{align*}
Finally, we construct modal $\Pi$-types:
\begin{mathpar}
\infer{\gamma : \Gamma, ~\lock_\tri \vdash_\dm \A~\gamma \type_{\;\!\ell_\cblu{0}} \\ \gamma : \Gamma,~\a :^{\tri_{\n+\cred{1}}} \A~\gamma \vdash_{\sm^{\n+\cred{1}}} \B~\gamma~\a \type_{\;\!\ell_\cblu{1}}}{\gamma : \Gamma \vdash_{\sm^{\n+\cred{1}}} \cred{\Pi}^{\sm^{\n+\cred{1}}}_{\tri}~\A~\B~\gamma \type_{\;\!\ell_\cblu{0}\;\join\;\ell_\cblu{1}}}
\and
\infer{\gamma : \Gamma, ~\a :^{\tri_{\n+\cred{1}}} \A~\gamma \vdash_{\sm^\n} \t~\gamma~\a : \B~\gamma~\a}{\gamma : \Gamma \vdash_\sm \lambda^{\sm^{\n+\cred{1}}}_\tri~\t~\gamma : \cred{\Pi}^{\sm^{\n+\cred{1}}}_\tri~\A~\B~\gamma}
\and
\infer{\gamma : \Gamma \vdash_{\sm^\n} \f~\gamma :  \cred{\Pi}^{\sm^\n}_\tri~\A~\B~\gamma \\ \gamma : \Gamma, ~\lock_\tri \vdash_\dm \s~\gamma : \A~\gamma}{\gamma : \Gamma \vdash_\sm \eval^{\sm^\n}_\tri~\f~\s~\gamma : \B^{\sqbkt{\id_\Gamma,\;\s}_\tri}~\gamma}
\end{mathpar}
This construction follows the pattern of the non-modal truncated case and is performed level-wise. We will inductively assert the following formulas for display:
\begin{align*}
\big(\cred{\Pi}^{\sm^{\n+\cred{2}}}_\tri~\A~\B\big)\d &\equiv \cred{\Pi}^{\sm^{\n+\cred{1}}}_\tri \big(\A^{[\rho_\Gamma,\;\lock_\tri]}\big)^\ft~\bkt{\B\d}^{\sqbkt{\mathsf{\cred{W}}_{\cred{2}}^{\A^{[\rho_\Gamma,\;\lock_\tri]}}\ft,\;\eval^{\sm^{\n+\cred{1}}}_\tri\;\zv^{\ft\circ\ft}\;\zv^{\ft}_{\tri}}} \\
\big(\lambda^{\sm^{\n+\cred{2}}}_\tri~\t\big)\d &\equiv \lambda^{\sm^{\n+\cred{1}}}_\tri~\t\d \\
\big(\eval^{\sm^{\n+\cred{2}}}_\tri~\f~\s\big)\d &\equiv \eval^{\sm^{\n+\cred{1}}}_\tri~\f\d~\s.
\end{align*}
In dimension $\minusone$ we set:
\begin{align*}
\big(\cred{\Pi}^{\sm^\minusone}_\tri~\A~\B\big)_\minusone &\equiv \cred{\Pi}^\dm~\A~\B_\minusone \\
\big(\lambda^{\sm^\minusone}_\tri~\t\big)_\minusone &\equiv \lambda^\dm~\t_\minusone \\
\big(\eval^{\sm^\minusone}_\tri~\f~\s\big)_\minusone &\equiv \eval^\dm~\f_\minusone~\s.
\end{align*}
Then we inductively define:
\begin{align*}
\pi\big(\cred{\Pi}^{\sm^{\n+\cred{2}}}_\tri~\A~\B\big) &\equiv \cred{\Pi}^{\sm^{\n+\cred{1}}}_\tri~\A~\pi\B \\
\big(\cred{\Pi}^{\sm^{\n+\cred{2}}}_\tri~\A~\B\big)_{\n+\cred{2}} &\equiv \Big(\cred{\Pi}^{\sm^{\n+\cred{1}}}_\tri \big(\A^{[\rho_\Gamma,\;\lock_\tri]}\big)^\ft~\bkt{\B\d}^{\sqbkt{\mathsf{\cred{W}}_{\cred{2}}^{\A^{[\rho_\Gamma,\;\lock_\tri]}}\ft,\;\eval^{\sm^{\n+\cred{1}}}_\tri\;\zv^{\ft\circ\ft}\;\zv^{\ft}_{\tri}}})_{\n+\cred{1}} \\
\pi\big(\lambda^{\sm^{\n+\cred{2}}}_\tri~\t\big) &\equiv \lambda^{\sm^{\n+\cred{1}}}_\tri~\pi\t \\
\big(\lambda^{\sm^{\n+\cred{2}}}_\tri~\t\big)_{\n+\cred{2}} &\equiv \lambda^{\sm^{\n+\cred{1}}}_\tri~\t\d \\
\pi\big(\eval^{\sm^{\n+\cred{2}}}_\tri~\f~\s\big) &\equiv \eval^{\sm^{\n+\cred{1}}}_\tri~\pi\f~\s\\
\big(\eval^{\sm^{\n+\cred{2}}}_\tri~\f~\s\big)_{\n+\cred{2}} &\equiv \eval^{\sm^{\n+\cred{1}}}_\tri~\f\d~\s.
\end{align*}
The verification of many identities has been omitted.

Finally, we check that $\pi$ preserves all the operations defined above.
Therefore, we can define the untruncated operations $\tri$ and $\lock_\tri$ on $\sm$, with modal context extension $x :^\tri \A$ and modal $\Pi$-types, simply by acting levelwise on each $\sm^{\n+\cred{1}}$. \bbox

\subsubsection{Pieces of the Box Modality}
\label{sec:box}

The box modality is more subtle because it is not determined levelwise by operations on truncated diagrams.
However, we can still construct it in terms of truncated data.
We start with a truncated lock functor $\bkt{\blank,~\lock_{\sq_\n}} : \MC \to \MC^{\cred{\oldDelta}^{\cred{+}}_\n}$ that constructs a constant simplicial diagram:
\begin{align*}
\big(\gamma : \Gamma,~\lock_{\sq_{\n+\cred{1}}}\big)_{\m+\cred{1}} &\equiv \Gamma \\
\big(\gamma : \Gamma,~\lock_{\sq_{\n+\cred{1}}}\big)^\b &\equiv \id_\Gamma \\
\sqbkt{\sigma,~\lock_{\sq_{\n+\cred{1}}}}_{\m+\cred{1}} &\equiv \sigma.
\end{align*}
We the define the following four new pieces of syntax.
The operation $\A_{\subsq\bkt{\n+\cred{1}}}$ is like a truncated version of $\sq$, in that it takes the limit of a truncated diagram, but yielding a finite telescope rather than a type.
\begin{mathpar}
\infer{\gamma : \Gamma,~\lock_{\sq_\n} \vdash_{\sm^\n} \A~\gamma \type_{\;\!\ell}}{\gamma : \Gamma \vdash_\dm \A_{\subsq\bkt{\n+\cred{1}}}~\gamma \tel_{\;\!\ell}}
\and
\infer{\gamma : \Gamma,~\lock_{\sq_\n} \vdash_{\sm^\n} \t~\gamma : \A~\gamma}{\gamma : \Gamma \vdash_\dm \t_{\subsq\bkt{\n+\cred{1}}} : \A_{\subsq\bkt{\n+\cred{1}}}~\gamma}
\and
\infer{\gamma : \Gamma,~\lock_{\sq_\n} \vdash_{\sm^\n} \A~\gamma \type_{\;\!\ell}}{\ft^{\A}_{\sq_\n} : \bkt{\gamma : \Gamma,~\smsq\a : \A_{\subsq\bkt{\n+\cred{1}}}~\gamma} \to \Gamma}
\and
\infer{\gamma : \Gamma,~\lock_{\sq_\n} \vdash_{\sm^\n} \A~\gamma \type_{\;\!\ell}}{\gamma : \Gamma,~\smsq\a : \A_{\subsq\bkt{\n+\cred{1}}}~\gamma,~\lock_{\sq_\n} \vdash_{\sm^\n} \zv^{\A}_{\sq_\n}~\gamma~\smsq\a : \A^{\sqbkt{\ft^{\A}_{\sq_\n},\; \lock_{\sq_\n}}}~\gamma~\smsq\a}
\end{mathpar}
These will satisfy the inductively proven property that for $\gamma : \Gamma,~\lock_{\sq_\n} \vdash_{\sm^\n} \t~\gamma : \A~\gamma$:
\[\gamma : \Gamma, ~\lock_{\sq_\n} \vdash_{\sm^\n} \zv^{\A}_{\sq_\n}~\gamma ~\bkt{\t_{\subsq\bkt{\n+\cred{1}}}~\gamma} \equiv \t~\gamma.\]
For $\sm^\minustwo$, the term $\zv^{\A}_{\sq_\minustwo}$ is trivial, since it lives in the terminal CwF structure. We also set:
\begin{align*}
\A_{\subsq\bkt{\minusone}}~\gamma &\equiv \ec_\dm \\
\t_{\subsq\bkt{\minusone}}~\gamma &\equiv \esub_\dm \\
\ft^{\A}_{\sq_\minustwo} &\equiv \id_\Gamma.
\end{align*}
Note that, in general, since $\zv^{\A}_{\sq_\n}$ is a simplicial term, we may form its matching substitution:
\[\infer{\gamma : \Gamma,~\lock_{\sq_\n} \vdash_{\sm^\n} \A~\gamma \type_{\;\!\ell}}{\gamma : \Gamma,~\smsq\a : \A_{\subsq\bkt{\n+\cred{1}}}~\gamma \vdash_\dm \big(\zv^{\A}_{\sq_\n}\big)_{\cred{\partial}\bkt{\n+\cred{1}}}~\gamma~\smsq\a : \A_{\cred{\partial}\bkt{\n+\cred{1}}}~\gamma}\]
For $\sm^{\n+\cred{1}}$ we then inductively set:
\begin{align*}
\A_{\subsq\bkt{\n+\cred{2}}} &\equiv \big(\a : \pi\A_{\subsq\bkt{\n+\cred{1}}} ~\gamma, ~\a\cblu{'} : \A_{\n+\cred{1}}~\gamma ~\big(\big(\zv^{\pi\A}_{\sq_\n}\big)_{\cred{\partial}\bkt{\n+\cred{1}}}~\gamma~\a\big) \\
\t_{\subsq\bkt{\n+\cred{2}}} &\equiv \sqbkt{\t_{\subsq\bkt{\n+\cred{1}}}, ~\t_{\n+\cred{1}}}\\
\ft^{\A}_{\sq_{\n+\cred{1}}} &\equiv \ft^{\pi\A}_{\sq_\n} \circ \ft^{\A_{\n+\cred{1}}}_\dm \\
\zv^{\A}_{\sq_{\n+\cred{1}}} &\equiv \pair{\bkt{\zv^{\pi\A}_{\sq_\n}}^{\sqbkt{\ft^{\A_{\n+\cred{1}}}_\dm,\;\lock_\sq}}}{\big(\zv^{\A}_{\sq_{\n+\cred{1}}}\big)_{\n+\cred{1}}} \\
\big(\zv^{\A}_{\sq_{\n+\cred{1}}}\big)_{\n+\cred{1}}~\gamma~\sqbkt{\a,~\a\cblu{'}} &\equiv \a\cblu{'}.
\end{align*}
The second line is well typed by the inductive hypothesis and makes the next case of the hypothesis clear. Also, note the $\ft$ substitution in the fourth line. The basic idea is that, at the top dimension, the $\bkt{\n+\cred{1}}$-st simplicial value of a boxed variables access the last component of the modal context extension, whereas lower dimensional simplicial values search further back in the linear context.

We now move on to the untruncated model.
The functor $\lock_\sq$ in $\sm$ similarly constructs a constant presheaf.
Note that $\bkt{\Gamma,~\lock_\sq}\D \equiv \bkt{\Gamma,~\lock_\sq}$, and $\rho_{\Gamma,\;\lock_\sq}$ is an identity; we will omit writing these whenever the previous rules say that a $\D$ or $\rho$ is necessary.
We now define a key natural transformation:
\begin{align*}
&\key^{\trisq \leq \id_\sm} : \id_\sm \Rightarrow \bkt{\blank,~\lock_\trisq} \\
&\bkt{\key^{\trisq \leq \id_\sm}_\Gamma}_{\m+\cred{1}} \equiv \Gamma^{\Zero^{\m+\cred{1}}}.
\end{align*}
We then use the constructions above to construct a modal type former:
\begin{mathpar}
\infer{\gamma : \Gamma,~\lock_\sq \vdash_{\sm} \A~\gamma \type_{\;\!\ell}}{\gamma : \Gamma \vdash_\dm \sq_\sm~\A~\gamma \type_{\;\!\ell}}
\and
\infer{\gamma : \Gamma, ~\lock_\sq \vdash_{\sm} \t~\gamma : \A~\gamma}{\gamma : \Gamma \vdash_\dm \sq_\sm~\t~\gamma : \sq_\sm~\A~\gamma}
\and
\infer{\gamma_\bminusone : \lock_\tri \Gamma \vdash_\dm \t~\gamma_\bminusone : \sq~\A~\gamma_\bminusone}{\gamma : \Gamma \vdash_{\sm} \unsq^{\A}_\sm~\t~\gamma : \A~\big(\key^{\trisq \leq \id_{\sm}}_{\Gamma} ~\gamma\big)}
\end{mathpar}
(Recall from \cref{sec:tri} that $\lock_\tri \Gamma \equiv \Gamma_\minusone$.)
In order to form these, we will take an $\omega$-limit of sequences $\A_\sq$ or $\t_\sq$ obtained from the $\m$-simplex levels of $\A$ or $\t$:
\begin{mathpar}
\infer{\gamma : \Gamma,~\lock_\sq \vdash_{\sm} \A~\gamma \type_{\;\!\ell}}{\gamma : \Gamma \vdash_\dm \A_\sq \inftel_{\;\!\ell}}
\and
\infer{\gamma : \Gamma,~\lock_\sq \vdash_{\sm} \t~\gamma : \A~\gamma \type_{\;\!\ell}}{\gamma : \Gamma \vdash_\dm \t_\sq~\gamma : \A_\sq~\gamma}
\end{mathpar}
These are defined as follows:
\begin{align*}
\A^{\cred{\partial}\bkt{\m+\cred{1}}}_\sq~\gamma &\equiv \bkt{\pi_\m\A}_{\subsq\bkt{\m+\cred{1}}}~\gamma \\
\A^{\m+\cred{1}}_\sq~\gamma~\smsq\a &\equiv \A_{\m+\cred{1}}~\gamma ~\big(\big(\zv^{\pi_{\m}\A}_{\sq_\m}\big)_{\cred{\partial}\bkt{\m+\cred{1}}}~\gamma~\smsq\a\big) \\
\t^{\cred{\partial}\bkt{\m+\cred{1}}}_\sq~\gamma &\equiv \bkt{\pi_\m\t}_{\subsq\bkt{\m+\cred{1}}}~\gamma \\
\t^{\m+\cred{1}}_\sq~\gamma &\equiv \t_{\m+\cred{1}}~\gamma.
\end{align*}
We then define:
\begin{align*}
\sq_\sm~\A &\equiv \lim~\A_\sq \\
\sq_\sm~\t &\equiv \lim~\t_\sq.
\end{align*}
We define the eliminator by:
\begin{align*}
\pi_{\n+\cred{1}}\bkt{\unsq^{\A}_\sm~\cblu{\mathfrak{a}}} ~\gamma_{\cblu{\n+1}} \equiv \zv^{\pi_{\n+\cred{1}}\A}_{\sq_{\n+\cred{1}}} ~{\gamma_{\cblu{\n+1}}}^{\Zero^{\n+\cred{1}}}~\sqbkt{\res^{\cred{\partial}\bkt{\n+\cred{1}}}~{\gamma_{\cblu{\n+1}}}^{\Zero^{\n+\cred{1}}}~\cblu{\mathfrak{a}}, ~\res^{\n+\cred{1}}~{\gamma_{\cblu{\n+1}}}^{\Zero^{\n+\cred{1}}} ~\cblu{\mathfrak{a}}}.
\end{align*}
One then checks the computation laws. \bbox

\subsubsection{The Extended Simplicial Model}
\label{sec:smplus}

So far, we have equipped the simplicial model $\sm$ with the locks $\lock_\tri$ and $\lock_\sq$, modal extension and modal $\Pi$-types for $\tri$, and a modality $\sq_{\sm}$ with Fitch-style introduction and elimination rules.
(Because $\sq_{\sm}$ satisfies an $\eta$-rule, we could then derive modal extension and modal $\Pi$-types for $\sq_\sm$ by simply extending and mapping out of $\sq_\sm \A$, as we will do in \cref{sec:modal-variables,sec:modal-pi-types} for our eventual model.)

The modality $\dia$ presents a different problem: in syntax, for $\Gamma \ob_\dm$, the context $\bkt{\Gamma,~\lock_\dia}$ is \emph{flat}. This creates an issue of how we store such contexts semantically.  Our solution is to extend the simplicial model constructed in \cref{sec:simplicial-model} to what we call the \emph{extended simplicial model}, $\sm_\cred{+}$, built out of a copy of $\dm$ (representing the flat contexts) and the original $\sm$ (representing the non-flat contexts).
We start with the non-modal aspects of this model.

\paragraph{Contexts and substitutions.}

We have the following introduction rules:
\begin{mathpar}
\infer{\Gamma \ob_\dm}{\inf~\Gamma \ob_{\sm_\cred{+}}}
\and
\infer{\Gamma \ob_\sm}{\ins~\Gamma \ob_{\sm_\cred{+}}}
\\
\infer{\sigma : \Delta \to_\sm \Gamma}{\ins~\sigma : \ins~\Delta \to_{\sm_\cred{+}} \ins~\Gamma}
\and
\infer{\sigma : \Delta \to_\dm \Gamma}{\inf~\sigma : \inf~\Delta \to_{\sm_\cred{+}} \inf~\Gamma}
\and
\infer{\sigma : \Delta \to_\dm \Gamma_\minusone}{\inc~\sigma : \inf~\Delta \to_{\sm_\cred{+}} \ins~\Gamma}
\end{mathpar}
Equivalently, we can say that the underlying category of $\sm_\cred{+}$, which we denote $\MC^{\cred{\oldDelta^{+}}}_\cred{+}$, is defined as follows:
\begin{align*}
\ob_{\MC^{\cred{\oldDelta^{+}}}_\cred{+}} &\cong \ob_{\MC} \sqcup \ob_{\MC^{\cred{\oldDelta^{+}}}} \\
\mathsf{mor}_{\MC^{\cred{\oldDelta^{+}}}_\cred{+}}\bkt{\inf~\Delta~,~\inf~\Gamma} &\cong \mathsf{mor}_{\MC}\bkt{\Delta~,~\Gamma} \\
\mathsf{mor}_{\MC^{\cred{\oldDelta^{+}}}_\cred{+}}\bkt{\ins~\Delta~,~\ins~\Gamma} &\cong \mathsf{mor}_{\MC^{\cred{\oldDelta^{+}}}}\bkt{\Delta~,~\Gamma} \\
\mathsf{mor}_{\MC^{\cred{\oldDelta^{+}}}_\cred{+}}\bkt{\inf~\Delta~,~\ins~\Gamma} &\cong \mathsf{mor}_{\MC}\bkt{\Delta~,~\Gamma_\minusone} \\
\mathsf{mor}_{\MC^{\cred{\oldDelta^{+}}}_\cred{+}}\bkt{\ins~\Delta~,~\inf~\Gamma} &\cong \emptyset.
\end{align*}
This makes sense, because we intuitively think of $\inf~\Delta$ as having been extended by zeroes, thus it is easy to map out of. A substitution of the form $\inc~\sigma$ is known as \emph{flat}. \bbox

\paragraph{Types and Terms.}
We have the following introduction forms for types and terms in $\sm_\cred{+}$:
\begin{mathpar}
\infer{\gamma : \Gamma \vdash_\dm \A~\gamma \type_{\;\!\ell}}{\gamma : \inf~\Gamma \vdash_{\sm_\cred{+}} \inf~\A~\gamma \type_{\;\!\ell}}
\and
\infer{\gamma : \Gamma \vdash_\sm \A~\gamma \type_{\;\!\ell}}{\gamma : \ins~\Gamma \vdash_{\sm_\cred{+}} \ins~\A~\gamma \type_{\;\!\ell}}
\and
\infer{\gamma : \Gamma \vdash_\dm \t~\gamma : \A~\gamma}{\gamma : \inf~\Gamma \vdash_{\sm_\cred{+}} \inf~\t~\gamma :  \inf~\A~\gamma}
\and
\infer{\gamma : \Gamma \vdash_\sm \t~\gamma : \A~\gamma}{\gamma : \ins~\Gamma \vdash_{\sm_\cred{+}} \ins~\t~\gamma :  \ins~\A~\gamma}.
\end{mathpar}
Formally, we set the following, depending on whether on not $\Gamma$ is flat:
\begin{align*}
\Ty_{\sm_\cred{+}}\bkt{\inf~\Gamma} &\cong \Ty_\dm~\Gamma &
\Tm_{\sm_\cred{+}}\bkt{\inf~\Gamma}~\bkt{\inf~\A} &\cong \Tm_\dm~\Gamma~\A \\
\Ty_{\sm_\cred{+}}\bkt{\ins~\Gamma} &\cong \Ty_\sm~\Gamma &
\Tm_{\sm_\cred{+}}\bkt{\ins~\Gamma}~\bkt{\ins~\A} &\cong \Tm_\sm~\Gamma~\A.
\end{align*}
Note that, in the following definition of the functorial action of substitutions, the flat case discards higher data:
\begin{align*}
\bkt{\inf~\A}^{\inf~\sigma} &\equiv \inf~\A^\sigma &
\bkt{\inf~\t}^{\inf~\sigma} &\equiv \inf~\A^\sigma \\
\bkt{\ins~\A}^{\ins~\sigma} &\equiv \ins~\A^\sigma &
\bkt{\ins~\t}^{\ins~\sigma} &\equiv \ins~\A^\sigma \\
\bkt{\ins~\A}^{\inc~\sigma} &\equiv \inf~\bkt{\A_{\minusone}}^\sigma &
\bkt{\ins~\t}^{\inc~\sigma} &\equiv \inf~\bkt{\A_{\minusone}}^\sigma.
\end{align*}
Extension of contexts operates by passing under the inclusion:
\begin{align*}
\bkt{\inf~\Gamma,~\inf~\A} &\equiv \inf~\bkt{\Gamma,~\A} \\
\bkt{\ins~\Gamma,~\ins~\A} &\equiv \ins~\bkt{\Gamma,~\A} \\
\sqbkt{\inf~\sigma,~\inf~\t} &\equiv \inf~\sqbkt{\sigma,~\t} \\
\sqbkt{\ins~\sigma,~\ins~\t} &\equiv \ins~\sqbkt{\sigma,~\t} \\
\sqbkt{\inc~\sigma,~\inf~\t} &\equiv \inc~\sqbkt{\sigma,~\t}.
\end{align*}
Note that in the last case, if we have $\inc~\sigma : \inf~\Delta \to \ins~\Gamma$ then $\sigma : \Delta \to \Gamma_\minusone$. In order to form the extension $\sqbkt{\inc~\sigma,~\s} : \inf~\Delta \to \ins~\bkt{\gamma : \Gamma, ~\a : \A~\gamma}$ , we must give $\delta : \inf~\Delta \vdash_{\sm_\cred{+}} \s~\delta : \bkt{\ins~\A}^{\inc~\sigma}~\delta$. We see then that such an $\s$ has type $\inf~\bkt{\A_{\minusone}}^\sigma$ and must be of the form $\inf~\t$. \bbox

\paragraph{$\Pi$-Types and Universes.}

We define (non-modal) $\Pi$-types and universes in $\sm_{\cred{+}}$ by reducing to the respective constructs in $\dm$ and $\sm$, depending on whether or not the context is flat:
\begin{align*}
\cred{\Pi}^{\sm_\cred{+}}~\bkt{\inf~\A}~\bkt{\inf~\B} &\equiv \inf~\bkt{\cred{\Pi}^\dm~\A~\B} \\
\cred{\Pi}^{\sm_\cred{+}}~\bkt{\ins~\A}~\bkt{\ins~\B} &\equiv \ins~\bkt{\cred{\Pi}^\sm~\A~\B} \\
\Type^{\sm_\cred{+}}_{\;\!\ell} &\equiv
\begin{cases}
\inf~\Disc_{\;\!\ell} &\text{for} \quad \inf~\Gamma \\
\ins~\Type^{\sm}_{\;\!\ell}  &\text{for} \quad \ins~\Gamma.
\end{cases}
\end{align*}
The definitions of $\lambda^{\sm_\cred{+}}$, $\eval^{\sm_\cred{+}}$, $\Code^{\sm_\cred{+}}$, and $\El^{\sm_\cred{+}}$ are similar.

Note that stability under substitution is a more general property in $\sm_\cred{+}$ since we have to additionally consider flat substitutions; if $\inc~\sigma : \inf~\Delta \to \ins~\Gamma$, then we have:
\begin{align*}
&\Big(\cred{\Pi}^{\sm_\cred{+}}~\bkt{\ins~\A}~\bkt{\ins~\B}\Big)^{\inc\;\sigma} \\
&\quad\equiv \Big(\ins~\big(\cred{\Pi}^\sm~\A~\B\big)\Big)^{\inc\;\sigma} \\
&\quad \equiv
\inf~\Big(\big(\cred{\Pi}^\sm~\A~\B\big)_\minusone\Big)^\sigma \\
&\quad \equiv
\inf~\Big(\cred{\Pi}^\dm~\A_\minusone~\B_\minusone\Big)^\sigma \\
&\quad \equiv
\inf~\Big(\cred{\Pi}^\dm~\bkt{\A_\minusone}^\sigma~\bkt{\B_\minusone}^{\mathsf{\cred{W}}_{\cred{2}}^{\A_\minusone}\sigma}\Big) \\
&\quad \equiv
\cred{\Pi}^{\sm_\cred{+}}~\big(\inf~\bkt{\A_\minusone}^\sigma\big)~\big(\inf~\bkt{\B_\minusone}^{\mathsf{\cred{W}}_{\cred{2}}^{\A_\minusone}\sigma}\big) \\
&\quad \equiv
\cred{\Pi}^{\sm_\cred{+}}~\big(\ins~\A\big)^{\inc\;\sigma}~\big(\ins~\B\big)^{\mathsf{\cred{W}}_{\cred{2}}^{\A}\;\bkt{\inc\;\sigma}}
\end{align*}
Similarly for universes:
\begin{align*}
\Big(\Type^{\sm_\cred{+}}_{\;\!\ell}\Big)^{\inc\;\sigma} &\equiv \Big(\ins~\Type^{\sm}_{\;\!\ell}\Big)^{\inc\;\sigma} \\
&\quad \equiv \inf~\Big(\bkt{\Type^{\sm}_{\;\!\ell}}_\minusone\Big)^\sigma \\
&\quad \equiv \inf~\Disc^{\sigma}_{\;\!\ell} \\
&\quad \equiv \inf~\Disc_{\;\!\ell} \\
&\quad \equiv \Type^{\sm_\cred{+}}_{\;\!\ell}
\end{align*}
What makes these calculations work is the relevant constructs have been defined to agree with their discrete counterparts in dimension $\minusone$.
In the rest of this section, we show how $\dm$ and $\sm^{\cred{+}}$ can be made into a model of all of dTT (except for the type-former $\tri$).\bbox

\subsubsection{Locks and Keys}
\label{sec:locks-keys}

The definition of $\sm^{\cred{+}}$ is tailored to allow us to define $\lock_\dia$.
Putting this together with the $\lock_\tri$ and $\lock_\sq$ defined on $\sm$ in \cref{sec:tri,sec:box}, we now define a $\cred{2}$-functor $\dsqbkt{\blank} : \M\coop \to \Cat$, where $\M\coop$ denotes the $\cred{2}$-category obtained by reversing both $\cred{1}$ and $\cred{2}$ cells. On modes, we have:
\begin{align*}
&\dsqbkt{\dm} \equiv \MC \\
&\dsqbkt{\sm} \equiv \MC^{\cred{\oldDelta^{+}}}_\cred{+}.
\end{align*}
To define this $\cred{2}$-functor on modalities, we extend the prior definitions of locks to $\sm_\cred{+}$:
\begin{alignat*}{3}
\bkt{\blank,~\lock^{\cred{+}}_\tri} &\;:\; \MC^{\cred{\oldDelta^{+}}}_\cred{+} \to \MC
&\hspace{1cm} \bkt{\blank,~\lock^{\cred{+}}_\sq} &\;:\; \MC \to \MC^{\cred{\oldDelta^{+}}}_\cred{+}
&\hspace{1cm} \bkt{\blank,~\lock^{\cred{+}}_\dia} &\;:\; \MC \to \MC^{\cred{\oldDelta^{+}}}_\cred{+}
\\
\bkt{\ins~\Gamma,~\lock^{\cred{+}}_\tri} &\equiv \bkt{\Gamma,~\lock_\tri}
&\bkt{\Gamma,~\lock^{\cred{+}}_\sq} &\equiv \ins~\bkt{\Gamma,~\lock_\sq}
&\bkt{\Gamma,~\lock^{\cred{+}}_\dia} &\equiv \inf~\Gamma \\
\bkt{\inf~\Gamma,~\lock^{\cred{+}}_\tri} &\equiv \Gamma \\
\sqbkt{\ins~\sigma,~\lock^{\cred{+}}_\tri} &\equiv \sigma_\minusone
& \sqbkt{\sigma,~\lock^{\cred{+}}_\sq}_{\m+\cred{1}} &\equiv \ins~\sqbkt{\sigma,~\lock_\sq}
&\sqbkt{\sigma,~\lock_\dia^\cred{+}} &\equiv \inf~\sigma. \\
\sqbkt{\inf~\sigma,~\lock^{\cred{+}}_\tri} &\equiv \sigma \\
\sqbkt{\inc~\sigma,~\lock^{\cred{+}}_\tri} &\equiv \sigma
\end{alignat*}
We then define the evident composites:
\begin{align*}
\bkt{\blank,~\lock^{\cred{+}}_\trisq} &\equiv \bkt{\blank, ~\lock^{\cred{+}}_\tri, ~\lock^{\cred{+}}_\sq} \\
\bkt{\blank,~\lock^{\cred{+}}_\tridia} &\equiv \bkt{\blank, ~\lock^{\cred{+}}_\tri, ~\lock^{\cred{+}}_\dia}.
\end{align*}
Finally, it is easy to check that $\bkt{\blank,~\lock^{\cred{+}}_\sq,~\lock^{\cred{+}}_\tri}$ and $\bkt{\blank,~\lock^{\cred{+}}_\dia,~\lock^{\cred{+}}_\tri}$ define identity functors. It follows that we have a contravariantly functorial assignment:
\[\infer{\mu : \p \to \q}{\dsqbkt{\mu} \equiv \bkt{\blank,~\lock^{\cred{+}}_\mu} : \dsqbkt{\q} \to \dsqbkt{\p}}\]
Next, to define this $\cred{2}$-functor on $\cred{2}$-cells, we define the \emph{key} natural transformations. We have $\sq \leq \dia$, $\trisq \leq \id_{\sm_\cred{+}}$, and $\id_{\sm_\cred{+}} \leq \tridia$, which corresponds to the following natural transformations:
\begin{align*}
&\key^{\sq \leq \dia} : \bkt{\blank,~\lock^{\cred{+}}_\dia} \Rightarrow \bkt{\blank,~\lock^{\cred{+}}_\sq}
&&\key^{\trisq \leq \id_{\sm_\cred{+}}} : \id_{\sm_\cred{+}} \Rightarrow \bkt{\blank,~\lock^{\cred{+}}_\trisq}
&&\key^{\id_{\sm_\cred{+}} \leq \tridia} :  \bkt{\blank,~\lock^{\cred{+}}_\tridia} \Rightarrow \id_{\sm_\cred{+}} \\
&\key^{\sq \leq \dia}_{\Gamma} \equiv \inc~\id_\Gamma
&&\key^{\trisq \leq \id_{\sm_\cred{+}}}_{\ins~\Gamma} \equiv \ins~\key^{\trisq \leq \id_\sm}_{\Gamma}
&&\key^{\id_\sm \leq \tridia}_{\ins_\Gamma} \equiv \inc~\id_{\Gamma_\minusone} \\
& &&\key^{\trisq \leq \id_\sm}_{\inf~\Gamma} \equiv \inc~\id_\Gamma
&&\key^{\id_\sm \leq \tridia}_{\inf_\Gamma} \equiv \inf~\id_\Gamma.
\end{align*}
We also have $\key^{\trisq \leq \tridia} \equiv \key^{\trisq \leq \id_{\sm_\cred{+}}} \circ \key^{\id_{\sm_\cred{+}} \leq \tridia}$. The keys assemble into a contravariantly functorial assignment:
\[\infer{\alpha : \mu \leq \nu}{\dsqbkt{\alpha} \equiv \key^\alpha : \dsqbkt{\nu} \Rightarrow \dsqbkt{\mu}}.\]
One checks whiskering identities to verify that $\dsqbkt{\blank}$ defines a $\cred{2}$-functor $ \M\coop \to \Cat$. \bbox

\subsubsection{Modal Types}
\label{sec:modal-types}

Displayed Type Theory has two modal type formers that we need to model (recall that we omit $\tri$ at present):
\begin{mathpar}
\infer{\gamma : \Gamma,~\lock^{\cred{+}}_\dia \vdash_{\sm_\cred{+}} \A~\gamma \type_{\;\!\ell}}{\gamma : \Gamma \vdash_\dm \dia~\A~\gamma \type_{\;\!\ell}}
\and
\infer{\gamma : \Gamma,~\lock^{\cred{+}}_\sq \vdash_{\sm_\cred{+}} \A~\gamma \type_{\;\!\ell}}{\gamma : \Gamma \vdash_\dm \sq~\A~\gamma \type_{\;\!\ell}}.
\end{mathpar}
These come with the following intro and elimination forms:
\begin{mathpar}
\infer{\gamma : \Gamma, ~\lock^{\cred{+}}_\dia \vdash_{\sm_\cred{+}} \t~\gamma : \A~\gamma}{\gamma : \Gamma \vdash_\dm \dia~\t~\gamma : \dia~\A~\gamma}
\and
\infer{\gamma : \Gamma \vdash_\dm \t~\gamma : \dia~\A~\gamma}{\gamma : \inf~\Gamma \vdash_{\sm_\cred{+}} \undia^{\A}~\t~\gamma : \A~\gamma}
\\
\infer{\gamma : \Gamma, ~\lock^{\cred{+}}_\sq \vdash_{\sm_\cred{+}} \t~\gamma : \A~\gamma}{\gamma : \Gamma \vdash_\dm \sq~\t~\gamma : \sq~\A~\gamma}
\and
\infer{\gamma : \Gamma, ~\lock^{\cred{+}}_\tri \vdash_\dm \t~\gamma : \sq~\A~\gamma}{\gamma : \Gamma \vdash_{\sm_\cred{+}} \unsq^\A~\t~\gamma : \A~\sqbkt{\key^{\trisq \leq \id_{\sm_\cred{+}}}_{\Gamma} ~\gamma}}.
\end{mathpar}
Note that there is an asymmetry between the statements of laws for $\undia$ and $\unsq$. To clear up this confusion, we could have instead written:
\[\infer{\Gamma \flat \\ \gamma : \Gamma,~\lock^{\cred{+}}_\tri \vdash_\dm \t~\gamma : \dia~\A~\gamma}{\gamma : \Gamma \vdash_{\sm^\cred{+}} \undia^{\A}~\t~\gamma : \A~\sqbkt{\key^{\tridia \geq \id_{\sm_\cred{+}}}_{\Gamma} ~\gamma}}.\]
But this is \emph{entirely equivalent}, because the semantic-side definition of the predicate $\Gamma \flat$ is that $\Gamma$ is of the form $\inf~\Delta$, in which case we have $\bkt{(\inf~\Delta),~\lock^{\cred{+}}_\tri}\equiv \Delta$ by definition.
Note that the key $\key^{\tridia \geq \id_{\sm_\cred{+}}}_{\Gamma}$ does not arise from a natural transformation and is only defined when $\Gamma \equiv \inf~\Delta$, in which case we simply have $\key^{\tridia \geq \id_{\sm_\cred{+}}}_{\inf~\Delta} \equiv \inf~\id_\Delta$. The first definition can thus be seen as a proof-relevant pattern match along the $\mathsf{flat}$ predicate.

The definition of $\dia$ and its introduction and elimination rules are done by shuffling around discrete information and inclusions:
\begin{align*}
\dia~\bkt{\inf~\A} &\equiv \A \\
\dia~\bkt{\inf~\t} &\equiv \t \\
\undia^{\A}~\t &\equiv \inf~\t.
\end{align*}
For $\sq$, we fall back to our prior construction for the type former and intro rule:
\begin{align*}
\sq~\bkt{\ins~\A} &\equiv \sq_\sm~\A \\
\sq~\bkt{\ins~\t} &\equiv \sq_\sm~\t
\end{align*}
For the eliminator, we split on whether or not $\Gamma$ is flat:
\[
\unsq^{\;\ins\;\A}~\t \equiv
\begin{cases}
\inf~\bkt{\unsq^{\A}_\dm~\t} &\text{for} \quad \inf~\Gamma \\
\ins~\bkt{\unsq^{\A}_\sm~\t}  &\text{for} \quad \ins~\Gamma
\end{cases}
\]
where the discrete case above is as follows:
\begin{mathpar}
\infer{\Gamma \ob_\dm \\ \gamma : \Gamma \vdash_\dm \t~\gamma : \lim~\A_\sq~\gamma}{\gamma : \Gamma \vdash_\dm \unsq^{\A}_\dm~\t~\gamma : \A_\minusone~\gamma}
\end{mathpar}
It is defined by:
\[\unsq^{\;\ins\;\A}~\t \equiv \res^\minusone~\gamma~\cblu{\mathfrak{a}} \eqno{\bbox}\]

\subsubsection{Modal Variables}
\label{sec:modal-variables}

We have the following rules for extending a context and substitution modally:
\begin{mathpar}
\infer{\mu: \p \to \q \\ \Gamma \ob_{\;{\small\newllbracket}\q{\small\newrrbracket}} \\ \gamma : \Gamma, ~\lock^{\cred{+}}_\mu \vdash_{\;{\small\newllbracket}\p{\small\newrrbracket}} \A~\gamma \type_{\;\!\ell}}{\bkt{\gamma : \Gamma, ~\a :^{\mu_\cred{+}} \A~\gamma} \ob_{\;{\small\newllbracket}\q{\small\newrrbracket}}}
\and
\infer{\sigma : \Delta \to_{{\small\newllbracket}\q{\small\newrrbracket}} \Gamma \\ \gamma : \Gamma, ~\lock^{\cred{+}}_\mu \vdash_{\;{\small\newllbracket}\p{\small\newrrbracket}} \t~\gamma : \A~\gamma}{\sqbkt{\sigma,~\t}_{\mu_\cred{+}} : \Delta \to_{{\small\newllbracket}\q{\small\newrrbracket}} \bkt{\gamma : \Gamma, ~\a :^{\mu_\cred{+}} \A~\gamma}}
\end{mathpar}
The case of $\tri_\cred{+}$ is defined as follows, splitting on whether or not $\Gamma$ is flat:
\begin{align*}
\bkt{\gamma : \inf~\Gamma,~\a :^{\tri_\cred{+}} \A~\gamma} &\equiv \inf~\bkt{\gamma : \Gamma,~\a : \A~\gamma}\\
\sqbkt{\inf~\sigma,~\t}_{\tri_\cred{+}} &\equiv \inf~\sqbkt{\sigma, ~\t} \\
\bkt{\gamma : \ins~\Gamma,~\a :^{\tri_\cred{+}} \A~\gamma} &\equiv \ins~\bkt{\gamma : \Gamma,~\a :^\tri \A~\gamma} \\
\sqbkt{\ins~\sigma,~\t}_{\tri_\cred{+}} &\equiv \ins~\sqbkt{\sigma, ~\t}_\tri.
\end{align*}
Following this, the rest of the definitions say that the case of modal extension reduces to extension by a variable or term of modal type:
\begin{align*}
\bkt{\gamma : \Gamma,~\a :^{\dia_\cred{+}} \A~\gamma} &\equiv \bkt{\gamma : \Gamma,~\a : \dia~\A~\gamma} \\
\sqbkt{\sigma,~\t}_{\dia_\cred{+}} &\equiv \sqbkt{\sigma,~\t} \\
\bkt{\gamma : \Gamma,~\a :^{\tridia_\cred{+}} \A~\gamma} &\equiv \bkt{\gamma : \Gamma,~\a :^{\tri_\cred{+}} \dia~\A~\gamma} \\
\sqbkt{\sigma,~\t}_{\tridia_\cred{+}} &\equiv \sqbkt{\sigma,~\t}_{\tri_\cred{+}} \\
\big(\gamma : \Gamma,~\a :^{\sq_\cred{+}} \A~\gamma\big) &\equiv \bkt{\gamma : \Gamma,~\a : \sq~\A~\gamma} \\
\sqbkt{\sigma,~\t}_{\sq_\cred{+}} &\equiv \sqbkt{\sigma,~\t} \\
\big(\gamma : \Gamma,~\a :^{\trisq_\cred{+}} \A~\gamma\big) &\equiv \bkt{\gamma : \Gamma,~\a :^{\tri_\cred{+}} \sq~\A~\gamma} \\
\sqbkt{\sigma,~\t}_{\trisq_\cred{+}} &\equiv \sqbkt{\sigma,~\t}_{\tri_\cred{+}}.
\end{align*}
Each of the context extension operations comes with a notion of parent maps and variables:
\begin{mathpar}
\infer{\gamma : \Gamma,~\lock^{\cred{+}}_\mu \vdash_{\;{\small\newllbracket}\p{\small\newrrbracket}} \A~\gamma \type_{\;\!\ell}}{\ft^{\A}_{\mu_\cred{+}} : \big(\gamma : \Gamma,~\a :^{\mu_\cred{+}} \A~\gamma\big) \to \Gamma}
\and
\infer{\gamma : \Gamma,~\lock^{\cred{+}}_\mu \vdash_{\;{\small\newllbracket}\p{\small\newrrbracket}} \A~\gamma \type_{\;\!\ell}}{\gamma : \Gamma,~\a :^{\mu_\cred{+}} \A~\gamma,~\lock^{\cred{+}}_\mu \vdash_{\;{\small\newllbracket}\p{\small\newrrbracket}} \zv^{\A}_{\mu_\cred{+}} ~ \gamma~\a : \A^{\sqbkt{\ft^{\A}_{\mu_\cred{+}},\;\lock^{\cred{+}}_\mu}}~\gamma~\a}
\end{mathpar}
For the parent maps $\ft^{\A}_{\tri_\cred{+}}$, we make a definition by cases on whether or not $\Gamma$ is flat:
\[
\ft^{\A}_{\tri_\cred{+}} \equiv
\begin{cases}
\inf~\ft^{\A}_\dm &\text{for} \quad \inf~\Gamma \\
\ins~\ft^{\A}_\tri  &\text{for} \quad \ins~\Gamma.
\end{cases}
\]
The parent maps for $\dia_\cred{+}$ and $\sq_\cred{+}$ reduce to discrete parent maps of variables of modal type:
\begin{align*}
&\ft^{\A}_{\dia_\cred{+}} \equiv \ft^{\dia\A}_\dm
&\ft^{\A}_{\sq_\cred{+}} &\equiv \ft^{\sq\A}_\dm.
\end{align*}
Then, for $\tridia_\cred{+}$ and $\trisq_\cred{+}$, we combine this with the substitution above:
\begin{align*}
&\ft^{\A}_{\tridia_\cred{+}} \equiv \ft^{\dia\A}_{\tri_\cred{+}}
&\ft^{\A}_{\trisq_\cred{+}} \equiv \ft^{\sq\A}_{\tri_\cred{+}}.
\end{align*}
For the zero variables for $\dia_\cred{+}$ and $\sq_\cred{+}$, one checks that the following are well-typed:
\begin{align*}
&\zv^{\A}_{\dia_\cred{+}} \equiv \undia^\A~\zv^{\dia\A}_\dm
&\zv^{\A}_{\sq_\cred{+}} \equiv \unsq^\A~\zv^{\sq\A}_\dm.
\end{align*}
For the zero variables $\zv^{\A}_{\tri_\cred{+}}$, we once again case split:
\[
\zv^{\A}_{\tri_\cred{+}} \equiv
\begin{cases}
\zv^{\A}_\dm &\text{for} \quad \inf~\Gamma \\
\zv^{\A}_\tri  &\text{for} \quad \ins~\Gamma.
\end{cases}
\]
Then, for $\tridia_\cred{+}$ and $\trisq_\cred{+}$, we use $\zv_{\tri_\cred{+}}$:
\begin{align*}
&\zv^{\A}_{\tridia_\cred{+}} \equiv \undia^\A~\zv^{\dia\A}_{\tri_\cred{+}}
&\zv^{\A}_{\trisq_\cred{+}} \equiv \unsq^\A~\zv^{\sq\A}_{\tri_\cred{+}}.
\tag*{\bbox}
\end{align*}

\subsubsection{Modal $\Pi$-Types}
\label{sec:modal-pi-types}

The last remaining modal construct that we must address is modal $\Pi$-types. These behave according to the following rules:
\begin{mathpar}
\infer{\mu : \p \to \q \\ \gamma : \Gamma, ~\lock^{\cred{+}}_\mu \vdash_{{\small\newllbracket}\p{\small\newrrbracket}} \A~\gamma \type_{\;\!\ell_\cblu{0}} \\ \gamma : \Gamma,~\a :^{\mu_\cred{+}} \A~\gamma \vdash_{{\small\newllbracket}\q{\small\newrrbracket}} \B~\gamma~\a \type_{\;\!\ell_\cblu{1}}}{\gamma : \Gamma \vdash_{{\small\newllbracket}\q{\small\newrrbracket}} \cred{\Pi}^{\sm_\cred{+}}_{\mu}~\A~\B~\gamma \type_{\;\!\ell_\cblu{0}\;\sqcup\;\ell_\cblu{1}}}
\and
\infer{\gamma : \Gamma, ~\a :^{\mu_\cred{+}} \A~\gamma \vdash_{{\small\newllbracket}\q{\small\newrrbracket}} \t~\gamma~\a : \B~\gamma~\a}{\gamma : \Gamma \vdash_{{\small\newllbracket}\q{\small\newrrbracket}} \lambda^{\sm_\cred{+}}_{\mu}~\t~\gamma : \cred{\Pi}^{\sm_\cred{+}}_{\mu}~\A~\B~\gamma}
\and
\infer{\gamma : \Gamma \vdash_{{\small\newllbracket}\q{\small\newrrbracket}} \f~\gamma :  \cred{\Pi}^{\sm_\cred{+}}_{\mu}~\A~\B~\gamma \\ \gamma : \Gamma, ~\lock^{\cred{+}}_\mu \vdash_{{\small\newllbracket}\p{\small\newrrbracket}} \s~\gamma : \A~\gamma}{\gamma : \Gamma \vdash_{{\small\newllbracket}\q{\small\newrrbracket}} \eval^{\sm_\cred{+}}_{\mu}~\f~\s~\gamma : \B^{\sqbkt{\id_\Gamma,\;\s}_{\mu}}~\gamma}
\end{mathpar}
For $\tri_\cred{+}$ we define:
\begin{align*}
\cred{\Pi}^{\sm_\cred{+}}_{\tri}~\A~\bkt{\ins~\B} &\equiv \ins~\big(\cred{\Pi}^{\sm}_{\tri}~\A~\B\big) \\
\cred{\Pi}^{\sm_\cred{+}}_{\tri}~\A~\bkt{\inf~\B} &\equiv \inf~\big(\cred{\Pi}^{\dm}~\A~\B\big) \\
\lambda^{\sm_\cred{+}}_{\tri}~\bkt{\ins~\t} &\equiv \ins~\big(\lambda^{\sm}_{\tri}~\t\big) \\
\lambda^{\sm_\cred{+}}_{\tri}~\bkt{\inf~\t} &\equiv \inf~\big(\lambda^{\dm}~\t\big) \\
\eval^{\sm_\cred{+}}_{\tri}~\bkt{\ins~\f}~\s &\equiv \ins~\big(\eval^{\sm}_{\tri}~\f~\s\big) \\
\eval^{\sm_\cred{+}}_{\tri}~\bkt{\inf~\f}~\s &\equiv
\inf~\big(\eval^{\dm}~\f~\s\big).
\end{align*}
The other cases reduce to functions of a modal variable:
\begin{align*}
\cred{\Pi}^{\sm_\cred{+}}_{\dia}~\A~\B &\equiv \cred{\Pi}^{\sm_\cred{+}}~\bkt{\dia~\A}~\B \\
\cred{\Pi}^{\sm_\cred{+}}_{\tridia}~\A~\B &\equiv \cred{\Pi}^{\sm_\cred{+}}_{\tri}~\bkt{\dia~\A}~\B \\
\cred{\Pi}^{\sm_\cred{+}}_{\sq}~\A~\B &\equiv \cred{\Pi}^{\sm_\cred{+}}~\bkt{\sq~\A}~\B \\
\cred{\Pi}^{\sm_\cred{+}}_{\trisq}~\A~\B &\equiv \cred{\Pi}^{\sm_\cred{+}}_{\tri}~\bkt{\sq~\A}~\B
\end{align*}
The cases of $\lambda$ and $\eval$ are similar. \bbox

\subsection{Semantics of dTT}
\label{sec:generic-semantics}

Having reviewed the general notion of model for dependent type theory in \cref{sec:categories-with-families}, and constructed our intended model of dTT in \cref{sec:simplicial-model,sec:modal-semantics}, we now describe the general notion of model for dTT.
Of course, since our syntax was presented as a Generalised Algebraic Theory, there is an immediate notion of \emph{model}, namely an algebra for that theory (with algebraic syntax being the initial model).
The point is to reformulate this in more familiar category-theoretic terms.

\subsubsection{Modal structure}
\label{sec:modal-structure}

The general multimodal type theory MTT was presented algebraically in~\cite{gknb:mtt}.
We therefore start from this as a baseline and add structure corresponding to our particular theory.

For a general mode 2-category $\M$, the starting point is a \emph{modal context structure}, which is a 2-functor $\MC:\M\coop \to \Cat$, where $\M\coop$ denotes reversal of both 1-cells and 2-cells.
The image of a mode $\p$ is the category $\MC_\p$ of contexts and substitutions at that mode, and the image of a morphism $\mu:\p\to \q$ is the lock functor $\lock_\mu : \MC_\q \to \MC_\p$, which we write postfix, $\Gamma \mapsto \Gamma\ce \lock_\mu$.
It is a \emph{modal natural model} if each $\MC_\p$ is equipped with a morphism $\pr_\p : \Tm_\p \to \Ty_\p$ in its presheaf category such that for any $\mu:\p\to \q$ the morphism $(\lock_\mu)^*(\pr_\p)$ is representable.
(In particular, taking $\mu=\id_\p$, we see that each $\MC_\p$ is an ordinary natural model, hence a CwF.)
This notion encapsulates all the rules for building contexts and substitutions from \cref{subsec:modal-type-theory} except for those that refer to flatness of contexts.

Now, specializing to our mode 2-category from \cref{subsec:mode-theory}, the rules of \cref{subsec:modal-type-theory} say that the flat contexts form a full subcategory of $\MC_{\sm}$ that contains the image of $\lock_{\dia}$.
In our algebraic theory we take as primitive the derived rules that a context is flat if and only if it admits a substitution to $(\ec, \lock_{\dia})$, and in that case the latter substitution is unique.
Thus, semantically, $1\ce\lock_{\dia}$ is subterminal and the flat contexts are the slice category $\MC_{\sm}/1\ce\lock_{\dia}$.
Since the unit of the adjunction $\lock_{\dia} \dashv \lock_{\tri}$ is an identity, $\lock_\dia$ is fully faithful, and on its image it has $\lock_{\dia}$ as an inverse and therefore also a left adjoint.
In addition, since flat contexts are fixed points (up to isomorphism) of $\lock_{\tridia}$, when $\Gamma$ is flat there is a bijection $\Ty(\Gamma) \cong \Ty(\Gamma\ce\lock_{\tridia})$, and so the modal comprehension $(\Gamma, \x:^{\tridia} \A)$ is isomorphic to an ordinary one $(\Gamma, \x:\A\cblu{'})$.
This justifies the special variable rule and key substitution.
Thus, the following definition encapsulates the judgmental structure of our modal type theory.

\begin{definition}
  A \textbf{dTT context structure} is a modal context structure $\MC:\M\coop \to \Cat$ in the sense of~\cite{gknb:mtt}, where $\M$ is as in \cref{subsec:mode-theory}, such that $1\ce\lock_{\dia}$ is subterminal and the slice category $\MC_{\sm}/1\ce \lock_{\dia}$ is the replete image of the fully faithful functor $\lock_{\dia} : \MC_\dm \to \MC_\sm$.
  A \textbf{dTT natural model} is a dTT context structure that is also a modal natural model.
\end{definition}

Next, since our modalities are Fitch-style, their semantics follows~\cite{gckgb:fitchtt}.
This requires each functor $\lock_{\mu}$ to \emph{be} a parametric right adjoint and \emph{have} a dependent right adjoint.
However, since each safe $\mu$ is already a right adjoint in $\M$, such $\lock_{\mu}$ are also ordinary (hence parametric) right adjoints.
And since $\lock_{\dia}$ is an equivalence onto a slice category, the inverse of that equivalence is a parametric left adjoint of it.
Thus, to justify our Fitch-style rules for modalities, it suffices to assume the following.

\begin{definition}
  A \textbf{dTT modal model} is a dTT natural model such that the functors $\lock_{\dia}$ and $\lock_{\sq}$ have dependent right adjoints.
\end{definition}

Our primary example is, of course, the following.

\begin{theorem}
  The simplicial model of \cref{sec:simplicial-model,sec:modal-semantics} is a dTT natural model.
\end{theorem}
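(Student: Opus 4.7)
The plan is to verify in turn each condition in the definitions of dTT context structure, modal natural model, and dTT natural model, piggybacking on the constructions already carried out in \cref{sec:simplicial-model,sec:modal-semantics}. I would begin by assembling the 2-functor $\dsqbkt{\blank} : \M\coop \to \Cat$ explicitly: on objects it is $\dsqbkt{\dm}\equiv\MC$ and $\dsqbkt{\sm}\equiv\MC^{\cred{\oldDelta^{+}}}_\cred{+}$, on 1-cells it is the lock functors $\bkt{\blank,\lock^{\cred{+}}_\mu}$, and on 2-cells it is the key natural transformations $\key^\alpha$. The whiskering and composition identities needed to make this a 2-functor were already checked at the end of \cref{sec:modal-semantics}; I would only flag the composites $\lock^\cred{+}_\sq\circ \lock^\cred{+}_\tri$ and $\lock^\cred{+}_\dia\circ \lock^\cred{+}_\tri$ reducing to the identity, which is where the two-sheeted construction $\MC^{\cred{\oldDelta^{+}}}_\cred{+}$ pays off.

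Next I would verify the two conditions specific to a dTT context structure. For subterminality of $1\ce\lock^\cred{+}_\dia$, observe that this object is $\inf\,\ec_\dm$, and by construction of $\MC^{\cred{\oldDelta^{+}}}_\cred{+}$ the hom-set $\mathsf{mor}\bkt{\inf\,\Delta,\inf\,\ec_\dm}$ is $\mathsf{mor}_\MC\bkt{\Delta,\ec_\dm}$, a singleton, while there are no morphisms from an $\ins\,\Delta$ into $\inf\,\ec_\dm$. Hence $\inf\,\ec_\dm$ is subterminal, and moreover objects admitting a map into it are precisely the objects of the form $\inf\,\Delta$. This gives the identification of the slice $\sm_\cred{+}/\inf\,\ec_\dm$ with the full subcategory on the $\inf\,\Delta$, which is the replete image of $\lock^\cred{+}_\dia = \inf$ as required.

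For the modal natural model structure, the presheaves $\Ty_\dm,\Tm_\dm$ are simply those of the given discrete CwF, and in $\sm_\cred{+}$ the presheaves $\Ty_{\sm_\cred{+}},\Tm_{\sm_\cred{+}}$ were defined piecewise by $\inf$ and $\ins$ so as to inherit (algebraic) representability componentwise: on $\inf\,\Gamma$ from the discrete CwF, and on $\ins\,\Gamma$ from the simplicial CwF built inductively in \cref{sec:simplicial-model}. Thus $\pr_\p$ is algebraically representable at each mode. The serious work is to check that $(\lock_\mu)^*(\pr_\p)$ is representable for every $\mu$, i.e.\ that each modal extension $\bkt{\gamma:\Gamma,\a:^{\mu_\cred{+}}\A~\gamma}$ together with its parent map $\ft^{\A}_{\mu_\cred{+}}$ and zero variable $\zv^{\A}_{\mu_\cred{+}}$ constructed in \cref{sec:modal-semantics} satisfies the universal property analogous to equations \eqref{eq:ft-sqbkt}--\eqref{eq:sqbkt-ft-zv}. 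I would carry this out case-by-case along the mode-theoretic classification $\mu\in\{\id,\tri,\sq,\dia,\trisq,\tridia\}$, in each case reducing to a previously verified universal property; for $\tri_\cred{+}$ this means splitting on flatness and invoking the inductive verification of $\ft^\A_\tri,\zv^\A_\tri$ at each simplicial dimension, while for $\dia_\cred{+}$ and $\sq_\cred{+}$ the extensions were \emph{defined} to reduce to ordinary extensions by terms of the modal types $\dia\A$ and $\sq\A$, so the universal property follows from that of the underlying CwF.

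The main obstacle is the $\tri_\cred{+}$ case on non-flat contexts, since there the universal property must be verified simplex-level by simplex-level against the d\'ecalage-respecting formulas such as $\bkt{\gamma:\Gamma,\a:^{\tri_{\n+\cred{1}}}\A~\gamma}\D \equiv \bkt{\gamma^\cblu{+}:\Gamma\D,\a:^{\tri_\n}\A^{[\rho_\Gamma,\lock_\tri]}~\gamma^\cblu{+}}$. I would handle this by induction on dimension exactly parallel to the verification of the non-modal variable rules in \cref{sec:fibrant-structure}, using the fact that $\tri$-annotated variables carry no displayed component so that the computations are actually simpler than in the non-modal case. Once these modal representabilities are in hand, nothing further needs checking: the dTT context structure together with algebraic representability of the $\pr_\p$ after lock pullback is the entire content of being a dTT natural model.
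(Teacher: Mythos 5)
Your proposal is correct and follows essentially the same route as the paper's proof: assemble the 2-functor from \cref{sec:modal-semantics}, read off subterminality of $\inf~\ec$ and the identification of its slice with the (replete, indeed literal) image of $\lock^{\cred{+}}_\dia$ directly from the hom-sets of $\sm_\cred{+}$, and appeal to the comprehensions already constructed there. The one point worth tightening is your claim that the universal property of the $\dia_\cred{+}$ and $\sq_\cred{+}$ extensions ``follows from that of the underlying CwF'': since these are defined as ordinary extensions by $\dia~\A$ and $\sq~\A$, one must additionally invoke the intro/elim rules of those modal type formers verified in \cref{sec:modal-types} --- i.e.\ the dependent-right-adjoint bijection, which is exactly the ingredient the paper's own proof cites --- to convert terms of $\dia~\A$ over $\Delta$ into terms of $\A$ over $(\Delta,~\lock^{\cred{+}}_\dia)$.
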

\begin{sectendproof}
  Of course, we use the extended simplicial model along with the discrete model.
  We showed explicitly in \cref{sec:modal-semantics} that this yields a modal context structure for our $\M$.
  In the notation of that section, the object $1\ce\lock_{\dia}$ referred to above is $\bkt{\ec,~\lock^{\cred{+}}_\dia} \equiv \inf~\ec$.
  The definition of the category $\sm_\cred{+}$ implies immediately that this object is subterminal and its slice category is the replete image (and even the literal image) of $\lock^{\cred{+}}_\dia$.
  Finally, in \cref{sec:modal-types} we verified the rules of~\cite{gckgb:fitchtt} for $\dia$ and $\sq$, which as shown in \textit{loc.~cit.} are equivalent to their being dependent right adjoints of $\lock^{\cred{+}}_\dia$ and $\lock^{\cred{+}}_\sq$.
\end{sectendproof}

\subsubsection{Telescopes}
\label{sec:modal-tel}

Modal telescopes generalise ordinary telescopes to modal natural models in a straightforward way.
Let $\M$ be a 2-category.

\begin{definition}
  A modal natural model $\MC : \M\coop\to\Cat$ \textbf{has telescopes} if it is equipped with:
  \begin{itemize}
  \item For each $\p\in \M$, a representable natural transformation $\tpr_\p : \PSub_\p \to \Tel_\p$, whose comprehensions we write as $(\gamma:\Gamma \ext \theta: \Theta~\gamma)$.
  \item For each $\p$, a morphism of polynomial functors $\ec_\p : \id_{\MC_\p} \to \P_{\tpr_\p}$.
  \item For any $\mu : \p\to \q$, a morphism of polynomial functors $\P_{\tpr_\q} \circ \P_{\lock_\mu^*  \pr_\p} \to \P_{\tpr_\q}$ that we write as $(\theta:\Theta,~\x:^\mu \A~\theta)$.
  \item The rules $(\gamma:\Gamma\ext \ec) = \Gamma$ and $(\gamma:\Gamma\ext(\upsilon:\Upsilon~\gamma,~ \x:^\mu \A~\gamma~\upsilon)) = ((\gamma:\Gamma\ext\upsilon:\Upsilon),~ \x:^\mu \A~\gamma~\upsilon)$ from \cref{sec:telescopes} hold.
  \item A morphism of polynomial functors $\P_\tpr \circ \P_\tpr \to \P_\tpr$, which we write as $\Upsilon\ext\Phi$.
    (This says how to concatenate telescopes.)
  \item The rules $(\gamma:\Gamma \ext (\upsilon:\Upsilon~\gamma \ext \phi:\Phi~\gamma~\upsilon)) = ((\gamma:\Gamma\ext \upsilon:\Upsilon~\gamma) \ext \phi:\Phi~\gamma~\upsilon)$ and $(\upsilon:\Upsilon \ext \ec) = \Upsilon$ and $(\upsilon:\Upsilon \ext (\phi:\Phi~\upsilon,~\x:^\mu \A~\upsilon~\phi)) = ((\upsilon:\Upsilon\ext \phi:\Phi~\upsilon),~ \x:^\mu \A~\upsilon~\phi)$ from \cref{sec:telesc-conc} hold.
  \end{itemize}
  In addition, we say $\MC$ has \textbf{$\Pi$-telescopes} if for each $\p$ there is a pullback square
    \[
      \begin{tikzcd}
        \P_{\tpr_\p}(\PSub_\p) \ar[d,"\P_{\tpr_\p}(\tpr_\p)"']\ar[r]\drpullback & \PSub_\p \ar[d,"\tpr_\p"] \\
        \P_{\tpr_\p}(\Tel_\p) \ar[r,"\cred{\Pi}"'] & \Tel_\p,
      \end{tikzcd}
    \]
    such that the computation rules from \cref{sec:pi-telescopes} hold.
\end{definition}

As in \cref{sec:semtel,sec:meta-abs}, we can equip any modal natural model with telescopes and $\Pi$-telescopes, and interpret meta-abstracted types and telescopes automatically, without needing to discuss them explicitly. \bbox

\subsubsection{Display and d\'ecalage}
\label{sec:sem-d}

If $\MC$ is a CwF with telescopes and $\Gamma\in \MC$, we write $\Telover\Gamma$ for the category whose objects are telescopes $\Theta \in \Tel(\Gamma)$ and whose morphisms are morphisms $(\Gamma\ext \Theta_\cblu{1}) \to (\Gamma\ext \Theta_\cblu{2})$ in $\MC/\Gamma$.
Thus, it is equivalent to the full subcategory of $\MC/\Gamma$ on objects of the form $\Gamma\ext \Theta$.
We call this \emph{the category of telescopes of $\Gamma$}.
Note that by substitution, it is strictly functorial in $\Gamma$, i.e.\ we have a functor $\Telover- : \MC\op \to \Cat$.
Equivalently, therefore, we can regard this as an internal category in the presheaf category $\Set^{\MC\op}$.

In fact the category $\Telover\Gamma$ is actually itself a CwF.
Its \emph{`types'} in \emph{`context'} $\Upsilon \in \Telover\Gamma$ are meta-abstracted telescopes $\Gamma \vdash \Phi \slfrac{\tel_{\;\!\ell}}{_{\upsilon\;:\;\Upsilon}}$.
These are equivalent (but not equal) to telescopes in an extended context, i.e.\ the elements of $\Tel(\Gamma \ext \Upsilon)$.
Similarly, meta-abstracted partial substitutions $\Gamma \vdash \phi : \Phi$ are equivalent to terms $\Gamma\ext(\upsilon:\Upsilon) \vdash \phi:\Phi~\upsilon$, and semantically to sections of such a projection, $(\Gamma\ext \Upsilon) \to (\Gamma\ext \Upsilon\ext\Phi)$ over $(\Gamma\ext \Upsilon)$.
Comprehension is by telescope concatenation.
Because $\Gamma \ext \ec \equiv \Gamma$, the CwF $\Telover\Gamma$ has the following special property.

\begin{definition}
  A CwF $\MC$ is \textbf{strongly democratic} if every context is the comprehension of a unique type in the empty context.
\end{definition}

Since all of the structure of $\Telover\Gamma$ is strictly stable under substitution in $\Gamma$, these categories form an internal CwF in the presheaf category $\Set^{\MC\op}$.
We call this the \emph{internal telescope model} of $\MC$ and denote it $\bTel$.
Recalling \cref{rmk:acks} and comparing to the discussion of the local theory in~\cite{acks:iparam-noint}, we find that the syntactic structure of d\'ecalage from \cref{sec:telescope-decalage,sec:metaabsdec} can be described precisely as an internal CwF morphism.

\begin{definition}\label{def:decalage}
  Let $\MC : \M\coop\to\Cat$ be a dTT natural model with telescopes.
  We say it \textbf{has d\'ecalage} if it is equipped with a strict morphism of internal strongly democratic CwFs
  \[(\blank)\D : (\lock_\trisq)^*\bTel_\sm \to \bTel_\sm \]
  in the category of presheaves over $\MC_\sm$, together with an internal natural transformation $\evens$ from $(\blank)\D$ to $(\blank)\sqbkt{\key^{\trisq\le \id_\sm}} : (\lock_\trisq)^*\bTel_\sm \to \bTel_\sm$.
\end{definition}

In particular, therefore, this structure ordinary includes functors
\((\blank)\D : \Telover(\Gamma\ce\lock_\trisq) \to \Telover\Gamma\)
and natural transformations $\evens$ consisting of maps
\( (\Gamma\ext \Upsilon\D) \to (\Gamma\ce\lock_\trisq\ext \Upsilon), \)
all strictly stable under pullback.
This, together with the strict preservation of empty contexts by the functor $(\blank)\D$, is what the rules of \cref{sec:telescope-decalage} say.
The corresponding action on \emph{`types'} (meta-abstracted telescopes) and \emph{`terms'} (meta-abstracted partial substitutions) assembles into the rules of \cref{sec:metaabsdec}.

We can phrase display in a similar way by defining an auxiliary internal CwF.
We start with telescope display (\cref{sec:dtel}).
Recall from~\cite{shulman15,kl:hoinvdia} that from any CwF $\MC$ we can construct a \emph{`Sierpinski'} model $\MC^\cred{2}$.
Its objects (contexts) are arbitrary morphisms $\gamma_\cblu{01}: \Gamma_\cred{1} \to \Gamma_\cred{0}$ in $\MC$, but its types are pairs of $\A_\cred{0} \in \Ty(\Gamma_\cred{0})$ and $\A_\cred{1} \in \Ty(\Gamma_\cred{1} \ce \A_\cred{0}\sqbkt{\gamma_\cblu{01}})$.
Moreover, there is a strict CwF morphism $(\blank)_\cred{0} : \MC^\cred{2} \to \MC$, which preserves all the type-formers, in particular $\Sigma$-types if $\MC$ has them.
If $\MC$ has $\Sigma$-types there is also a functor $(\blank)_\cred{1} : \MC^\cred{2} \to \MC$ that sends a type $(\A_\cred{0},\A_\cred{1})$ to $\Sigma(\A_\cred{0}\sqbkt{\gamma_\cblu{01}}, \A_\cred{1})\in \Ty(\Gamma_\cred{1})$, but this only preserves comprehension and $\Sigma$-types up to isomorphism.
Thus it is a pseudo CwF morphism in the sense of~\cite[Definition 10]{cd:lccc-tt}, although it preserves substitution and the empty context strictly.

Now let $\MC$ be a CwF, and apply this construction internally in the category of presheaves on $\MC$ to the internal telescope model $\bTel$.
We thus obtain another CwF $\bTel^\cred{2}$ internal to presheaves, in which the \emph{`contexts'} over $\Gamma$ are morphisms of telescopes $\theta_\cblu{01} : \Theta_\cred{1} \to \Theta_\cred{0}$ over $\Gamma$, and the \emph{`types'} in such a \emph{`context'} over $\Gamma$ are pairs of two telescopes $\Upsilon_\cred{0} \in \Tel(\Gamma\ext\Theta_\cred{0})$ and $\Upsilon_\cred{1} \in \Tel(\Gamma \ext \Theta_\cred{1} \ext \Upsilon_\cred{0}\sqbkt{\theta_\cblu{01}})$.
This is no longer strongly democratic, but as always we have the strict CwF morphism $(\blank)_\cred{0} : \bTel^\cred{2} \to \bTel$ that preserves $\Sigma$-types (i.e.\ telescope concatenation), and the pseudo CwF-morphism $(\blank)_\cred{1} : \bTel^\cred{2} \to \bTel$.
Moreover, in this case the latter is actually a strict CwF-morphism, because telescope concatenation is strictly associative.

\begin{definition}\label{def:tel-disp}
  Let $\MC : \M\coop\to\Cat$ be a dTT natural model with telescopes.
  We say $\MC$ \textbf{has telescope display} if it is equipped with
  \begin{enumerate}
  \item An internal pseudo CwF morphism that preserves substitution, the empty context, and $\Sigma$-types strictly:
    \[ (\blank)\d : (\lock_\trisq)^*\bTel_\sm \to \bTel_\sm^\cred{2} \]
  \item An equality between the composite $(\lock_\trisq)^*\bTel_\sm \xrightarrow{(\blank)\d} \bTel_\sm^\cred{2} \xrightarrow{(\blank)_\cred{0}} \bTel_\sm$ and the key transformation $(\blank)\sqbkt{\key^{\trisq\le \id_\sm}} : (\lock_\trisq)^*\bTel_\sm \to \bTel_\sm$.
    Since the latter is a strict morphism, that means that so is the former.
  \item A strict internal CwF morphism
    \[(\blank)\D : (\lock_\trisq)^*\bTel_\sm \to \bTel_\sm \]
    and an isomorphism of pseudo CwF morphisms between $(\blank)\D$ and the composite morphism $(\lock_\trisq)^*\bTel_\sm \xrightarrow{(\blank)\d} \bTel_\sm^\cred{2} \xrightarrow{(\blank)_\cred{1}} \bTel$, that is the identity on underlying functors.
  \end{enumerate}
\end{definition}

We have not assumed \textit{a priori} in \cref{def:tel-disp} that $\MC$ has d\'ecalage, but it is actually included: the morphism $(\blank)\D$ is of course the same as in \cref{def:decalage}, and the transformation $\evens : \Theta\D \to \Theta$ from \cref{def:decalage} arises in \cref{def:tel-disp} as the image of $\Theta$ under the underlying functor of $(\blank)\d$.
The additional data in \cref{def:tel-disp} beyond this is the $1$-part of the action of $(\blank)\d$ (the $\cred{0}$-part is determined by the composition with $(\blank)_\cred{0}$ equaling $(\blank)\sqbkt{\key^{\trisq\le \id_\sm}}$) making it a pseudo CwF morphism preserving substitution and $\Sigma$-types strictly, and the isomorphism on \emph{`types'} (dependent telescopes) between $(\blank)\D$ and the composite of $(\blank)\d$ with $(\blank)_\cred{1}$.
But since the latter is to be a pseudo CwF transformation (see~\cite[Appendix B]{ccd:undeceq-flccc}), and since $(\blank)\D$ is strict and the underlying functor is the identity this just means that this isomorphism must coincide with the $\cred{1}$-part of the comprehension coherence isomorphism of $(\blank)\d$ (the $\cred{0}$-part being the identity).

So all that remains is the $\cred{1}$-part of the action of $(\blank)\d$ on meta-abstracted telescopes, preserving substitution and telescope concatenation, and coherence isomorphisms relating it to comprehension.
This gives the rules of \cref{sec:meta-abstr-displ}, which have \cref{sec:teldisp} as a special case.
In particular, since the $\cred{1}$-part of the comprehension of $(\Upsilon, \Upsilon\d)$ in $\bTel^\cred{2}$ is $(\Upsilon\ext\Upsilon\d)$, the comprehension isomorphisms are the pairing $\pair{\blank}{\blank}$ together with $\ev$ and $\od$.

Finally, we consider display of types, as in \cref{sec:displ-meta-abstr}, and its relation to d\'ecalage as in \cref{sec:comp-tel-dec,sec:comp-metaabsdec}.
In some ways this is simpler, since we don't have to worry about rearranging between display and d\'ecalage; but in other ways it is more complicated, since we have to take account of extending dependent telescopes by types.

To start with, note that the internal telescope model $\bTel$ of any CwF has a \emph{`sub-model'} $\bTel_\cred{1}$ whose internal category of \emph{`contexts'} is the same (telescopes), but whose internal presheaf of \emph{`types'} consists of the \emph{length-$\cred{1}$ telescopes}, i.e.\ single types annotated by a modality.
Note that unlike $\bTel$, it does not automatically have $\Sigma$-types.

\begin{definition}\label{def:ty-disp}
  Let $\MC : \M\coop\to\Cat$ be a dTT natural model with telescopes.
  We say $\MC$ \textbf{has type display} if it is equipped with
  \begin{enumerate}
  \item An internal strict CwF morphism:
    \[ (\blank)\d : (\lock_\trisq)^*\bTel_{\sm,\cred{1}} \to \bTel_\sm^\cred{2} \]
  \item An equality between the composite $(\lock_\trisq)^*\bTel_\sm \xrightarrow{(\blank)\d} \bTel_\sm^\cred{2} \xrightarrow{(\blank)_\cred{0}} \bTel_\sm$ and the key transformation $(\blank)\sqbkt{\key^{\trisq\le \id_\sm}} : (\lock_\trisq)^*\bTel_\sm \to \bTel_\sm$.
  \item If a length-1 telescope $\A$ is non-modal, then the telescope $\A\d$ is a single non-modal type.\label{item:l1tnm}
  \item If a length-1 telescope $\A$ is nontrivially modal, then the telescope $\A\d$ is empty.\label{item:l1tm}
  \end{enumerate}
\end{definition}

Note that, like \cref{def:tel-disp}, this definition includes d\'ecalage.
It represents the rules for display from \cref{sec:disp-ty,sec:displ-meta-abstr}, and the rules for computing d\'ecalage on a telescope extended by a type from \cref{sec:comp-tel-dec}.
Of course, with both telescope display and type display we want them to be compatible.

\begin{definition}
  Let $\MC : \M\coop\to\Cat$ be a dTT natural model with telescope display.
  We say it has \textbf{complete display} if the restriction of
  $(\blank)\d : (\lock_\trisq)^*\bTel_\sm \to \bTel_\sm^\cred{2}$ to $(\lock_\trisq)^*\bTel_{\sm,\cred{1}}$ is an internal strict CwF morphism such that
  \begin{itemize}
  \item \Cref{item:l1tnm,item:l1tm} of \cref{def:ty-disp} hold.
  \item The rules in \cref{sec:comp-metaabsdec} for computing meta-abstracted d\'ecalage in terms of type display hold.
  \item The rules in \cref{sec:comp-matd} for computing meta-abstracted telescope display in terms of type display hold.
  \end{itemize}
\end{definition}

Finally, we add the compatibility conditions with type-formers:

\begin{definition}
  Let $\MC : \M\coop\to\Cat$ be a dTT natural model with telescopes, d\'ecalage, telescope display, and type display.
  We say that \textbf{display respects $\Pi$-types} (respectively \textbf{universes}) if the rules in \cref{sec:comp-telesc-displ} hold.
\end{definition}

This completes the description of the abstract categorical semantics of the theory of \cref{sec:syntax}: it is a dTT natural model with telescopes and complete display that respects $\Pi$-types and universes.
However, as noted in \cref{sec:syntax}, when telescopes are lists of types, as they almost always are, much of this structure can be deduced from the rest.

\begin{theorem}
  Let $\MC$ be a dTT natural model, with telescopes defined from types as in \cref{thm:sctx-as-lists}, and with type display defined relative to these telescopes.
  Then there is a unique way to extend this type display on $\MC$ to complete display.
\end{theorem}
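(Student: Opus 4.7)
The plan is to proceed by induction on telescope length. Under \Cref{thm:sctx-as-lists}, the telescopes of $\C$ are, level by level, either empty $\ec$ or an extension $(\Theta, x:^\mu A)$ where $\mu$ is either $\id$ or (because of the enforced restriction on which modal extensions appear in telescopes governed by display) trivial. All of the operations making up a complete display structure---telescope d\'ecalage $(\blank)\D$ and its $\ev$ projection, telescope display $(\blank)\d$ and its $\od$ projection, their meta-abstracted versions, and the pairing $\pair{\cdot}{\cdot}$ that relates them---admit equations in \Cref{sec:telescope-decalage,sec:comp-tel-dec,sec:metaabsdec,sec:comp-metaabsdec,sec:teldisp,sec:meta-abstr-displ,sec:comp-matd} giving their value on $\ec$ (trivially) and on an extension $(\Theta,x:^\mu A)$ in terms of the same operation on $\Theta$ together with \emph{type} display of $A$ and standard CwF structure. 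My first step is to read those equations as recursive definitions, extending the given type display to all telescopes in exactly one way compatible with them.

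Uniqueness then comes for free: any complete display structure on $\C$ must satisfy precisely the equations just used as definitions, so by induction on telescope length every operation is forced to agree with the one we have constructed. The content of the theorem is therefore existence: one must verify that the inductively-defined operations actually assemble into a structure of complete display, i.e.\ satisfy the axioms of \Cref{def:decalage,def:tel-disp,def:ty-disp} and the coherence rules between meta-abstracted display and meta-abstracted d\'ecalage.

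This verification I would organise into three layers. First, strict internal CwF morphism structure of $(\blank)\D : (\lock_\trisq)^*\bTel_\sm \to \bTel_\sm$: preservation of substitution, of the empty telescope, and of telescope concatenation, together with naturality of $\evens$. Second, the pseudo CwF morphism structure of $(\blank)\d$ into $\bTel_\sm^\cred{2}$, together with strict preservation of substitution, empty contexts, and $\Sigma$-types, and the coincidence of its composite with $(\blank)_\cred{0}$ and $(\blank)_\cred{1}$ with $(\blank)\sqbkt{\key^{\trisq\le\id_\sm}}$ and $(\blank)\D$ respectively. Third, the pairing/odds/evens triple as a comprehension isomorphism in $\bTel^\cred{2}_\sm$. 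In each layer, the base case at $\ec$ is trivial, and the induction step reduces both sides of the required equation to an expression in shorter telescopes and in type-level display using the computation rule; one then applies the inductive hypothesis and the assumed CwF-morphism properties of type display.

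The main obstacle will be purely combinatorial: the interaction between telescope concatenation and the pairing $\pair{\cdot}{\cdot}$ across the two different extension rules (for modal and non-modal variables) produces bookkeeping-heavy verifications. In particular, proving that meta-abstracted telescope display strictly commutes with concatenation---equivalently, that it is a strict CwF morphism when restricted to length-$\cred{1}$ telescopes and preserves $\Sigma$-types strictly---requires tracking how the interleaving of \emph{`evens'} (coming from d\'ecalage of $\Theta$) with \emph{`odds'} (coming from type display of $A$) propagates through nested extensions, and checking that the two decompositions of $(\Upsilon\ext\Phi)\d$ given by the rules of \Cref{sec:meta-abstr-displ} agree on the nose. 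All the required rewrites are already listed in those sections, so no new conceptual input is needed beyond carefully collating them; the difficulty lies in the density of the induction, not its structure.
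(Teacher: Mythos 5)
Your proposal is correct and takes essentially the same approach as the paper: the paper's proof is a one-sentence observation that the computation rules for telescope display and d\'ecalage on telescopes extended by a type uniquely determine those operations once telescopes are definitionally lists of types, which is exactly your induction on telescope length read as a recursive definition. Your additional discussion of the existence verification (checking the CwF-morphism axioms layer by layer) is work the paper leaves implicit, but it does not change the structure of the argument.
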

\begin{proof}
  The rules in \cref{sec:dtel} for computing telescope display and d\'ecalage in terms of type display uniquely determine those operations when telescopes are defined as lists of types.
\end{proof}

Using this, we can verify that our intended model is indeed a model.

\begin{theorem}\label{thm:sm-has-disp}
  The simplicial model of \cref{sec:simplicial-model,sec:modal-semantics} has type display, and hence complete display, which respects $\Pi$-types and universes.
\end{theorem}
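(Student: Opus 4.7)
The plan is to extract from the explicit level-by-level constructions of \cref{sec:simplicial-model,sec:modal-semantics} the data of an internal strict CwF morphism $(\blank)\d : (\lock_\trisq)^*\bTel_{\sm,\cred{1}} \to \bTel_\sm^\cred{2}$ required by \cref{def:ty-disp}, then invoke the immediately preceding theorem (together with the fact that telescopes in the simplicial model are built inductively from types as in \cref{thm:sctx-as-lists}) to extend to complete display, and finally verify the $\Pi$-type and universe compatibility formulas term by term. Because the construction was already carried out in a substitution-stable, essentially point-free manner, the work consists mostly in reorganising that data and checking coherence conditions.

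First I would define the functor underlying $(\blank)\d$ on internal categories of contexts by sending a telescope $\Upsilon$ over $\Gamma\ce\lock_\trisq$ to the telescope morphism $\Upsilon\D \to \Upsilon\sqbkt{\key^{\trisq\le \id_\sm}}$ induced by the $\evens$ operation of \cref{sec:telescope-decalage}; on length-$1$ type-extensions $\A$ the assignment is the pair $(\A\sqbkt{\key^{\trisq\le \id_\sm}},\; \A\d)$, which by construction defines a type of $\bTel_\sm^\cred{2}$ over this context. Strict stability under substitution, strict preservation of the empty telescope, and strict preservation of telescope concatenation all follow directly from equalities verified during the inductive construction of the simplicial model. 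The requirement that the composite with $(\blank)_\cred{0}$ agrees with the key transformation is precisely the identity $(\sigma\D)\ev \equiv \sigma\sqbkt{\key^{\trisq\le \id_\sm}}$ recorded in \cref{sec:telescope-decalage}.

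Items 3 and 4 of \cref{def:ty-disp} are then immediate from the two explicit cases of the d\'ecalage computation in \cref{sec:comp-tel-dec}: extending a telescope by a non-modal variable $\x:\A$ contributes a single non-modal displayed type $\A\d$, while extending by a variable annotated with $\tri\circ\mu$ contributes nothing to the displayed telescope. Since the telescopes in the simplicial model arise from iterating type extension rather than being postulated as separate structure, the resulting $(\blank)\d$ fits the hypotheses of the preceding theorem and so extends uniquely to complete display.

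Finally, respect for $\Pi$-types and universes amounts to matching the closed-form expressions for $(\cred{\Pi}^{\sm}~\A~\B)\d$, $\Type\d$, $(\Code~\A)\d$, and $(\El~\A)\d$ derived during the inductive constructions of $\Pi$-types and universes in the truncated models, against the rules of \cref{sec:comp-telesc-displ}; this is a direct comparison at each truncation level, using the fact that the simplicial model's definitions were set up precisely to make those formulas hold. The main obstacle I anticipate is purely bookkeeping: checking that the coherence isomorphisms arising from pairing, $\ev$, $\od$, and the comprehension witnesses of the Sierpinski model $\bTel_\sm^\cred{2}$ assemble into a genuine pseudo CwF morphism with the correct internal naturality, rather than merely giving the right underlying data pointwise. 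In practice this reduces to repeated invocations of the substitution-stability identities already proven, together with the strict associativity and unitality of telescope concatenation.
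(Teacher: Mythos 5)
Your overall strategy matches the paper's — repackage the explicit constructions into the data of \cref{def:ty-disp}, invoke the preceding theorem to extend to complete display, and read off the $\Pi$-type and universe rules from the formulas proved during the inductive construction — but you have elided the one step that carries the real content. The display operation actually constructed in \cref{sec:simplicial-model} is a \emph{global} one: from $\gamma:\Gamma \vdash_\sm \A~\gamma \type$ it produces $\A\d$ in the context $\gamma^{\cblu{+}}:\Gamma\D,~\a:\A^{\rho_\Gamma}~\gamma^{\cblu{+}}$, i.e.\ it d\'ecalages the \emph{entire} context. The operation demanded by \cref{def:ty-disp} is the \emph{local} one of the syntax: for a type over $\Gamma,\lock_\trisq\ext\Upsilon$ it must produce a displayed type over $\Gamma,~\upsilon^{\cblu{+}}:\Upsilon\D,~\a:\A^{\key^{\trisq\le\id_\sm}}$, with $\Gamma$ left untouched behind the lock. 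Your proposal treats $\A\d$ and the $\evens$ transformation as if they were already available in this local form (``by construction defines a type of $\bTel_\sm^\cred{2}$ over this context''), but \cref{sec:telescope-decalage} is a syntax section; at this point in the semantics only the global operation exists, and its context has the wrong shape, so the assignment on length-$1$ extensions is not yet well-typed.

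The missing bridge is the observation that $(\Gamma,~\lock_\sq)\D\equiv(\Gamma,~\lock_\sq)$ because $\lock_\sq$ lands in constant presheaves, so that for a telescope $\Upsilon$ built from types one has $(\Gamma,~\lock_\trisq,~\Upsilon)\D\equiv(\Gamma,~\lock_\trisq,~\Upsilon\D)$; one also needs that such a context is never flat, hence lies in $\sm$ rather than merely $\sm_+$, so that d\'ecalage is defined on it at all. Applying the global operation to this context and then substituting along $\key^{\trisq\le\id_\sm}$ yields the local rule. Once this step is supplied, the remainder of your argument — items 3 and 4 of \cref{def:ty-disp} from the d\'ecalage computation on extended telescopes, uniqueness of the extension to complete display, and the $\Pi$/universe compatibility from the closed-form display formulas — goes through essentially as in the paper.
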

\begin{sectendproof}
  We constructed a display operation for the simplicial model in \cref{sec:simplicial-model}, but it does not yet have exactly the needed form.
  What we have so far is a \emph{`global'} operation that d\'ecalages the whole context:
  \[ \infer{\gamma : \Gamma \vdash_{\sm} \A~\gamma \type}
    {\gamma^\cblu{+} : \Gamma\D, ~\a : \A^{\rho_\Gamma} ~\gamma^\cblu{+} \vdash_{\sm} \A\d ~\gamma^\cblu{+}~\a \type}.
  \]
  (Note that this is only defined on the original simplicial model $\sm$, not the extended one $\sm_\cred{+}$: indeed, d\'ecalage is not even defined on flat contexts.)
  But the (meta-abstracted version of the) operation we specified in the syntax of \cref{sec:syntax} is a \emph{`local'} one that only d\'ecalages part of the context, keeping the rest of it modally locked away:
  \[  \infer{\gamma:\Gamma,~\lock_\trisq,~\upsilon:\Upsilon \vdash_\sm \A~\gamma~\upsilon \type}{
      \gamma:\Gamma,~\upsilon^\cblu{+}:\Upsilon\D,~\a:\A^{\key^{\trisq\le \id_\sm}}~\gamma~(\upsilon^\cblu{+})\ev \vdash_\sm \A\d~\gamma~\upsilon^\cblu{+}~\a \type}
  \]
  However, it is straightforward to obtain the latter from the former.
  In $\sm_+$, a context of the form $\bkt{\gamma:\Gamma,~\lock_\trisq,~\upsilon:\Upsilon}$ is not flat, hence lies essentially in $\sm$ so that d\'ecalage is defined on it.
  Furthermore, we already observed that $\bkt{\Gamma,~\lock_\sq}\D \equiv \bkt{\Gamma,~\lock_\sq}$ since $\lock_\sq$ lands in constant presheaves.
  Thus, when $\Upsilon$ is a telescope built out of types, we have
  \[\bkt{\gamma:\Gamma,~\lock_\trisq,~\upsilon:\Upsilon}\D
    \equiv \bkt{\gamma:\Gamma,~\lock_\trisq,~\upsilon^\cblu{+}:\Upsilon\D}
  \]
  and so the global operation yields as a special case
  \[ \infer{\gamma:\Gamma,~\lock_\trisq,~\upsilon:\Upsilon \vdash_{\sm_+} \A~\gamma~\upsilon \type}
    {\gamma:\Gamma,~\lock_\trisq,~\upsilon^\cblu{+}:\Upsilon\D,~\a : \A^{\rho} ~\gamma~\upsilon^{\cblu{+}} \vdash_{\sm_+} \A\d~\gamma~\upsilon^\cblu{+}~\a \type}.
  \]
  Now we simply substitute along $\key^{\trisq\le \id_\sm}$ to obtain the desired local rule.
  The necessary computation rules for d\'ecalage, $\Pi$-types, and universes follow immediately from the rules we proved for the global operation in \cref{sec:simplicial-model}.
\end{sectendproof}

\subsubsection{Display of $\oldomega$-limits}
\label{sec:displim}

Finally, when we have both display and also $\oldomega$-limits, it is reasonable to require the former to compute on the latter, in the following way.
Suppose that $\Gamma,~\lock_{\trisq} \ext \phi:\Phi \vdash_\sm \cblu{\widetilde{\Upsilon}}~\phi \inftel$, and we want to compute $\Gamma,~\phi:\Phi\D,~\mathfrak{\cblu{u}}:\lim~\cblu{\widetilde{\Upsilon}} \vdash \lim~\cblu{\widetilde{\Upsilon}}\d~\phi~\mathfrak{\cblu{u}}$.
Then by definition, we have
\begin{align*}
  \Gamma,~\lock_{\trisq} &\vdash_\sm \cblu{\Upsilon}^{\cred{\partial}\n} \slfrac{\tel}{_{\phi\;:\;\Phi}}
  \\
  \Gamma,~\lock_{\trisq} &\vdash_\sm \cblu{\Upsilon}^{\n} \slfrac{\type}{_{\phi\;:\;\Phi,\;\cblu{\partial}\upsilon\;:\;\cblu{\Upsilon}^{\cred{\partial}\n}\;\phi}}
\end{align*}
and therefore
\begin{align*}
  \Gamma &\vdash_\sm (\cblu{\Upsilon}^{\cred{\partial}\n})\d \slfrac{\tel_{\ell}}{_{\phi\;:\;\Phi\D,\;\cblu{\partial}\upsilon \;:\; \cblu{\Upsilon}^{\cred{\partial}\n}\;\phi\ev}}
  \\
  \Gamma &\vdash_\sm (\cblu{\Upsilon}^{\n})\d \slfrac{\type_{\;\!\ell}}{_{\phi\;:\;\Phi\D,\;\cblu{\partial}\upsilon\;:\;(\cblu{\Upsilon}^{\cred{\partial}\n})\D\;\phi,\;\upsilon \;:\; \cblu{\Upsilon}^{\n}\;\phi\ev\;\cblu{\partial}\upsilon\ev}}
\end{align*}
Weakening and substituting to the needed context $\Gamma,~\phi:\Phi\D,~\mathfrak{\cblu{u}}:\lim~\cblu{\widetilde{\Upsilon}}$, we have
\begin{align*}
  \Gamma,~\phi:\Phi\D,~\mathfrak{\cblu{u}}:\lim~\cblu{\widetilde{\Upsilon}}
  &\vdash_\sm (\cblu{\Upsilon}^{\cred{\partial}\n})\d~\phi~(\res^{\cred{\partial}\n}~\mathfrak{\cblu{u}}) \tel
  \\
  \Gamma,~\phi:\Phi\D,~\mathfrak{\cblu{u}}:\lim~\cblu{\widetilde{\Upsilon}}, ~ \cblu{\partial}\upsilon : (\cblu{\Upsilon}^{\cred{\partial}\n})\d~\phi~(\res^{\cred{\partial}\n}~\mathfrak{\cblu{u}})
  &\vdash_\sm (\cblu{\Upsilon}^{\n})\d~\phi~\pair{\res^{\cred{\partial}\n}~\mathfrak{\cblu{u}}}{\cblu{\partial}\upsilon}~(\res^{\n}~\mathfrak{\cblu{u}}) \type
\end{align*}
such that
\begin{align*}
(\cblu{\Upsilon}^{\cred{\partial}(\n+\cred{1})})\d~\phi~(\res^{\cred{\partial}(\n+\cred{1})}~\mathfrak{\cblu{u}})
  &\equiv
  \bkt{\cblu{\partial}\upsilon : \cblu{\widetilde{\Upsilon}}^{\cred{\partial}\n}, \upsilon : \cblu{\widetilde{\Upsilon}}^\n~\cblu{\partial}\upsilon}
  \d~\phi~\sqbkt{\res^{\cred{\partial}\n}~\cblu{\mathfrak{\cblu{u}}}, ~\res^\n~\cblu{\mathfrak{\cblu{u}}}}\\
  &\equiv \bkt{\cblu{\partial}\upsilon : (\cblu{\widetilde{\Upsilon}}^{\cred{\partial}\n})\d~\phi~(\res^{\cred{\partial}\n}~\cblu{\mathfrak{\cblu{u}}}),~
  \upsilon : (\cblu{\widetilde{\Upsilon}}^\n)\d~\phi~\pair{\res^{\cred{\partial}\n}~\cblu{\mathfrak{\cblu{u}}}}{\cblu{\partial}\upsilon}~(\res^\n~\cblu{\mathfrak{\cblu{u}}})}.
\end{align*}
Thus, these data form another infinite telescope, which we denote
\begin{align*}
  \Gamma,~\phi:\Phi\D,~\mathfrak{\cblu{u}}:\lim~\cblu{\widetilde{\Upsilon}}
  &\vdash_\sm \widetilde{\Upsilon}\d~\phi~\mathfrak{\cblu{u}} \inftel\\
  (\Upsilon\d)^{\cred{\partial}\n}~\phi~\mathfrak{\cblu{u}}
  &\equiv
  (\cblu{\Upsilon}^{\cred{\partial}\n})\d~\phi~(\res^{\cred{\partial}\n}~\mathfrak{\cblu{u}})\\
  (\Upsilon\d)^{\n}~\phi~\mathfrak{\cblu{u}}~\cblu{\partial}\upsilon
  &\equiv
  (\cblu{\Upsilon}^{\n})\d~\phi~\pair{\res^{\cred{\partial}\n}~\mathfrak{\cblu{u}}}{\cblu{\partial}\upsilon}~(\res^{\n}~\mathfrak{\cblu{u}})
\end{align*}
We say that \textbf{display respects $\oldomega$-limits} if
\begin{align*}
  \Gamma,~\phi:\Phi\D, ~ \mathfrak{\cblu{u}}:\lim~\cblu{\widetilde{\Upsilon}}
  &\vdash_\sm
  \lim~\cblu{\widetilde{\Upsilon}}\d~\phi~\mathfrak{\cblu{u}} \equiv
  \lim(\widetilde{\Upsilon}\d~\phi~\mathfrak{\cblu{u}})\\
  \Gamma,~\phi:\Phi\D, ~ \mathfrak{\cblu{u}}:\lim~\cblu{\widetilde{\Upsilon}}
  &\vdash_\sm (\res^{\cred{\partial}\n})\d~\phi~\mathfrak{\cblu{u}} \equiv \res^{\cred{\partial}\n}~\phi~\mathfrak{\cblu{u}}\\
  \Gamma,~\phi:\Phi\D, ~ \mathfrak{\cblu{u}}:\lim~\cblu{\widetilde{\Upsilon}}
  &\vdash_\sm (\res^{\n})\d~\phi~\mathfrak{\cblu{u}} \equiv \res^{\n}~\phi~\mathfrak{\cblu{u}}.
\end{align*}
where in the last two equations, the left-hand side is a restriction relative to $\widetilde{\Upsilon}$, and on the right-hand side it is relative to $\widetilde{\Upsilon}\d$.

\begin{theorem}
  Display respects $\oldomega$-limits in the simplicial model.
\end{theorem}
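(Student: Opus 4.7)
The plan is to reduce the claim to identities we have already proved by induction in the construction of the simplicial model, and then to check that these identities assemble correctly into the statement of respect for $\oldomega$-limits.

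First I would note that, just as in the proof of \cref{thm:sm-has-disp}, it suffices to establish the equations for the \emph{global} version of display (which d\'ecalages the entire $\sm$-context) rather than the meta-abstracted local version, since the latter is obtained from the former by substitution along $\key^{\trisq\le \id_\sm}$, and substitution commutes strictly with $\lim$ and $\res$ by the substitution stability rules laid out in \cref{sec:inflim}. Thus, writing $\gamma^\cblu{+}:\Gamma\D$ for the d\'ecalaged context and viewing $\widetilde{\Upsilon}$ over it, we need to verify
\[
  \bkt{\lim^\sm~\widetilde{\Upsilon}}\d \equiv \lim^\sm~\widetilde{\Upsilon}\d, \qquad \bkt{\res^{\cred{\partial}\n}_\sm~\mathfrak{\cblu{u}}}\d \equiv \res^{\cred{\partial}\n}_\sm~\mathfrak{\cblu{u}}\d, \qquad \bkt{\res^\n_\sm~\mathfrak{\cblu{u}}}\d \equiv \res^\n_\sm~\mathfrak{\cblu{u}}\d.
\]
These are exactly the inductive hypotheses that were carried along in the construction of $\oldomega$-limits in the truncated models $\sm^\n$: in that section we defined $\pi\bkt{\lim^{\sm^{\n+\cred{2}}}~\widetilde{\Upsilon}}\equiv\lim^{\sm^{\n+\cred{1}}}~\pi\widetilde{\Upsilon}$ and $\bkt{\lim^{\sm^{\n+\cred{2}}}~\widetilde{\Upsilon}}_{\n+\cred{2}}\equiv\bkt{\lim^{\sm^\n}~\widetilde{\Upsilon}\d}_{\n+\cred{1}}$, and similarly for $\res$, precisely so as to make these three equations hold at each finite truncation. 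Since the nontruncated simplicial model is the levelwise limit of the truncated ones and all the relevant operations are defined levelwise, the identities pass to $\sm$.

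Next I would check that the telescope $\widetilde{\Upsilon}\d$ obtained semantically by the above levelwise construction agrees with the telescope defined abstractly in the statement of respect for $\oldomega$-limits, i.e.
\begin{align*}
  \bkt{\Upsilon\d}^{\cred{\partial}\n}~\phi~\mathfrak{\cblu{u}} &\equiv \bkt{\cblu{\widetilde{\Upsilon}}^{\cred{\partial}\n}}\d~\phi~\bkt{\res^{\cred{\partial}\n}~\mathfrak{\cblu{u}}},\\
  \bkt{\Upsilon\d}^\n~\phi~\mathfrak{\cblu{u}}~\cblu{\partial}\upsilon &\equiv \bkt{\cblu{\widetilde{\Upsilon}}^\n}\d~\phi~\pair{\res^{\cred{\partial}\n}~\mathfrak{\cblu{u}}}{\cblu{\partial}\upsilon}~\bkt{\res^\n~\mathfrak{\cblu{u}}}.
\end{align*}
The first of these is a direct consequence of the rule computing telescope display on a telescope extension (\cref{sec:teldisp}) iterated $\n$ times, since $\cblu{\widetilde{\Upsilon}}^{\cred{\partial}\n}$ unfolds to the concatenation of all lower stages; the second follows from the same unfolding together with the rule that display of a telescope extended by a type computes via the meta-abstracted type display $\dbkt{\cdot}\d$, interleaved by $\pair{\blank}{\blank}$. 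No symmetry is required here because all the parameters being paired arise from the same source; only telescope concatenation and the $\ev/\od/\pair{\blank}{\blank}$ isomorphism are used.

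The main obstacle I anticipate is bookkeeping: verifying that the interleaving of $\res^{\cred{\partial}\n}~\mathfrak{\cblu{u}}$ with the extra variable $\cblu{\partial}\upsilon$ in the second formula above really does match the pairing produced by the levelwise construction, and that the global-to-local translation along $\key^{\trisq\le \id_\sm}$ does not disturb the strict equalities (in particular the strict equality $\bkt{\Gamma,~\lock_\sq}\D\equiv\bkt{\Gamma,~\lock_\sq}$ from \cref{sec:simplicial-model} is essential here to ensure that telescopes of the form $\bkt{\gamma:\Gamma,~\lock_\trisq,~\upsilon:\Upsilon}$ d\'ecalage as $\bkt{\gamma:\Gamma,~\lock_\trisq,~\upsilon^\cblu{+}:\Upsilon\D}$, leaving the $\Gamma$-part intact). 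Once these bookkeeping checks are in place, the proof reduces entirely to the inductive identities established in the construction of $\oldomega$-limits in $\sm^\n$, and the result follows.
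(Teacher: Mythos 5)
Your proposal is correct and follows essentially the same route as the paper, which simply observes that the equations hold by construction of $\oldomega$-limits in the truncated models (where they were carried as inductive hypotheses) together with the global-to-local translation of display from the earlier theorem; you have merely expanded the bookkeeping that the paper leaves implicit.
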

\begin{sectendproof}
  This holds essentially by construction of $\oldomega$-limits therein, plus passing across the translation between different forms of display from \cref{thm:sm-has-disp}.
\end{sectendproof}

\subsection{Semantics of semi-simplicial types}
\label{sec:sem-sst}

Finally, we construct semantics for the displayed coinductive types of \cref{sec:dcoind}, in particular including $\SST$.
As with most kinds of coinductive definitions, they are terminal coalgebras of some sort, but in this case they are terminal coalgebras for a \emph{copointed} endofunctor.
We will construct such a terminal coalgebra by a sequential limit construction, assuming that the base (discrete) model admits such limits.

\subsubsection{Terminal coalgebras for copointed endofunctors}
\label{sec:term-coalg}

\begin{definition}
  A \textbf{copointed endofunctor} of a category $\MC$ is a functor $\F:\MC\to \MC$ together with a natural transformation $\epsilon : \F \to \id_\MC$.
  A \textbf{coalgebra} for a copointed endofunctor is an object $\X$ with a morphism $\x : \X\to \F\X$ such that the composite $\X \xrightarrow{\x} \F\X \xrightarrow{\epsilon_\X} \X$ is the identity.
  A \textbf{terminal coalgebra} is a terminal object of the category of coalgebras.
\end{definition}

Note that a coalgebra for a \emph{copointed} endofunctor is not just a coalgebra for its underlying ordinary endofunctor, but satisfies the equation $\epsilon_\X \circ \x = \id_\X$.

As usual, we can obtain terminal coalgebras by a sequential limit construction, when such limits exist.
However, in the copointed case it does not suffice to simply consider the limit of the tower $\cdots \to \F^\cred{3}\One \to \F^\cred{2}\One \to \F\One \to \One$; we have to incorporate the transformation $\epsilon$ in some way.
The classical way to do this (e.g.\ the dual of~\cite{kelly:transfinite}) is to take \emph{equalisers} at each step.
However, equalisers are difficult to understand homotopy-theoretically, so we replace them by a pullback.
The following definition is a partial dual of~\cite[Definition 8.6]{shulman:univinj}.

\begin{definition}
  Given a natural transformation $\epsilon : \F \to \G$ and a morphism $\f :\X\to \Y$ in the domain of $\F$ and $\G$, we write $\leibhom\epsilon \f$ for the gap map in the following pullback, assuming that the pullback exists.
  \[
    \begin{tikzcd}
      \F\X \ar[dr,dotted,"\leibhom{\epsilon}{\f}" ] \ar[drr,bend left,"\F\f"] \ar[ddr,bend right,"\epsilon_\X"'] \\
      & \F\Y \times_{\G\Y} \G\X \ar[r] \ar[d] \drpullback & \F\Y \ar[d,"\epsilon_\Y"] \\
      & \G\X \ar[r,"\G\f"'] & \G\Y
    \end{tikzcd}
  \]
  If the domain and codomain of $F$ and $G$ have a notion of \emph{`fibration'}, we say that $\epsilon$ is a \textbf{Quillen pre-fibration} if whenever $\f$ is a fibration, so is $\leibhom{\epsilon}{\f}$.
\end{definition}

For example, we have:

\begin{lemma}\label{thm:evens-qpf}
  In a category of telescopes, consider the fibrations to be the morphisms isomorphic to a dependent projection of some telescope.
  Then the transformation $\evens : (\blank)\D \to (\blank)\sqbkt{\key^{\trisq\le \id_\sm}}$ from \cref{def:decalage} is a Quillen pre-fibration.
\end{lemma}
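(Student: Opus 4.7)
The plan is to unwind the pullback defining $\leibhom{\evens}{f}$ using the strict computational rules for d\'ecalage, and then recognise the gap map as a dependent projection via the pairing isomorphism from \cref{sec:meta-abstr-displ}. Since the class of morphisms isomorphic to a dependent projection is evidently closed under isomorphism on both sides of the leibniz construction, I reduce to the case where $f$ is literally a dependent projection $\Theta\ext\Upsilon \to \Theta$ for some meta-abstracted telescope $\Gamma,\lock_\trisq \vdash \Upsilon \slfrac{\tel}{_\Theta}$.

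First I would compute the two faces of the naturality square explicitly. Both $(\blank)\D$ and $(\blank)\sqbkt{\key^{\trisq\le\id_\sm}}$ preserve telescope concatenation strictly (by the rules of \cref{sec:metaabsdec} and \cref{sec:telesc-conc}), so $f\D$ is the dependent projection $\Theta\D \ext \dbkt{\Upsilon}_\Theta\D \to \Theta\D$ and $f\sqbkt{\key^{\trisq\le\id_\sm}}$ is the dependent projection $\Theta\sqbkt{\key^{\trisq\le\id_\sm}}\ext \Upsilon\sqbkt{\key^{\trisq\le\id_\sm}} \to \Theta\sqbkt{\key^{\trisq\le\id_\sm}}$. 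Pulling back the latter along $\evens_\Theta : \Theta\D \to \Theta\sqbkt{\key^{\trisq\le\id_\sm}}$ simply substitutes the fiber, giving the dependent projection from $\Theta\D \ext \dbkt{\Upsilon\sqbkt{\key^{\trisq\le\id_\sm}}~\theta\ev}_{\theta:\Theta\D}$ down to $\Theta\D$. The gap map $\leibhom{\evens}{f}$ is then the identity on $\Theta\D$ together with the $\ev$-substitution $\dbkt{\Upsilon}_\Theta\D \to \dbkt{\Upsilon\sqbkt{\key^{\trisq\le\id_\sm}}~\theta\ev}_{\theta:\Theta\D}$ on the second factor.

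The final step, and where the real content sits, is recognising this second-factor morphism as a dependent projection up to isomorphism. Here the pairing/odds isomorphism from \cref{sec:meta-abstr-displ} provides a canonical iso of meta-abstracted telescopes over $\Theta\D$,
\[ \dbkt{\Upsilon}_\Theta\D \;\cong\; \dbkt{\Upsilon\sqbkt{\key^{\trisq\le\id_\sm}}~\theta\ev}_{\theta:\Theta\D} \;\ext\; \dbkt{\dbkt{\Upsilon}_\Theta\d~\theta~\upsilon}_{\theta:\Theta\D,~\upsilon} , \]
with $\langle\ev,\od\rangle$ on one side and $\pair{\blank}{\blank}$ as inverse. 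Under this rearrangement, $\ev$ becomes literally the dependent projection onto the first factor, so $\leibhom{\evens}{f}$ is isomorphic to a dependent projection and therefore a fibration. The hard part will be the bookkeeping: keeping straight the nesting of meta-abstractions, matching up the contexts $\Theta\D$ versus $\Theta\sqbkt{\key^{\trisq\le\id_\sm}}$ via $\theta\ev$, and checking that the pairing isomorphism is compatible with the ambient projection to $\Theta\D$. But every equation needed is a direct instance of the strict computation rules for d\'ecalage, telescope concatenation, and pairing already recorded in \cref{sec:dtel}.
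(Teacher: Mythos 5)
Your proposal is correct and follows essentially the same route as the paper's (much terser) proof: the paper simply observes that the gap map is isomorphic to the dependent projection $(\Theta\D\ext \Upsilon\sqbkt{\key^{\trisq\le 1}} \ext \Upsilon\d) \to (\Theta\D\ext \Upsilon\sqbkt{\key^{\trisq\le 1}})$, which is exactly the rearrangement you obtain via the evens/odds/pairing isomorphism. Your version just makes explicit the bookkeeping the paper leaves implicit.
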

\begin{proof}
  Given a dependent projection $(\Theta\ext\Upsilon) \to \Theta$ in $\Telover(\Gamma\ce\lock_\trisq)$, the gap map is isomorphic to the dependent projection of $\Upsilon\d$:
  \[ (\Theta\D\ext \Upsilon\sqbkt{\key^{\trisq\le 1}} \ext \Upsilon\d) \to (\Theta\D\ext \Upsilon\sqbkt{\key^{\trisq\le 1}}).\qedhere \]
\end{proof}

\begin{theorem}\label{thm:termcoalg}
  Suppose $\MC$ is a category with a terminal object and a notion of fibration that is stable under pullback, and that $\F$ is a copointed endofunctor of $\MC$ such that $\epsilon :\F\to \id_\MC$ is a Quillen pre-fibration.
  Suppose also that $\MC$ has limits of inverse $\omega$-sequences of fibrations, and that $\F$ preserves these limits.
  Then there is a terminal $\F$-coalgebra.
\end{theorem}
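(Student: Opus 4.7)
The plan is to construct the terminal $\F$-coalgebra as the limit of a tower of approximations $(Z^n,\; q^n : Z^n \to Z^{n-1})_{n \ge 0}$ equipped with partial coalgebra data $c^n : Z^n \to \F Z^{n-1}$ subject to two conditions at each stage: a copointed relation $\epsilon_{Z^{n-1}} \circ c^n = q^n$, and an inter-level compatibility $\F q^{n-1} \circ c^n = c^{n-1} \circ q^n$. Setting $X := \lim_n Z^n$, preservation of this limit by $\F$ will let the $c^n$ assemble into a genuine copointed coalgebra structure on $X$.

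Concretely, I would set $Z^0 = \One$, $Z^1 = \F\One$ with $q^1 : \F\One \to \One$ the canonical map and $c^1 = \id_{\F\One}$. For the inductive step, having built the data through stage $n$, I would define $Z^{n+1}$ as the pullback of the Leibniz map $\leibhom{\epsilon}{q^n} : \F Z^n \to \F Z^{n-1} \times_{Z^{n-1}} Z^n$ along the pairing $\langle c^n,\; \id_{Z^n}\rangle : Z^n \to \F Z^{n-1} \times_{Z^{n-1}} Z^n$; this pairing is well-defined precisely because of the copointed relation at stage $n$. The two projections out of the pullback supply $c^{n+1}$ and $q^{n+1}$, and the commuting square packages both stage-$(n+1)$ conditions at once. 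Fibrancy propagates: the base $q^1$ is a fibration under the standard convention that terminal projections are fibrations, and assuming inductively that $q^n$ is a fibration, Quillen pre-fibrancy of $\epsilon$ makes $\leibhom{\epsilon}{q^n}$ a fibration, so that $q^{n+1}$ is also one by pullback-stability. The $\omega$-limit hypothesis then produces $X := \lim_n Z^n$ with projections $\pi_n$, and since $\F$ preserves this limit, $\F X = \lim_n \F Z^n$ with projections $\F\pi_n$.

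The coalgebra map $c : X \to \F X$ is induced by the cone $\{c^{n+1} \circ \pi_{n+1}\}_{n \ge 0}$, whose compatibility with the transitions $\F q^n$ of the shifted tower is precisely the inter-level condition combined with $q^{n+1} \circ \pi_{n+1} = \pi_n$. The copointed axiom is then verified projection-wise, using naturality of $\epsilon$ and the copointed relation: $\pi_n \circ \epsilon_X \circ c = \epsilon_{Z^n} \circ \F\pi_n \circ c = \epsilon_{Z^n} \circ c^{n+1} \circ \pi_{n+1} = q^{n+1} \circ \pi_{n+1} = \pi_n$. For terminality, given any coalgebra $(Y, d : Y \to \F Y)$ with $\epsilon_Y \circ d = \id_Y$, I would construct $g_n : Y \to Z^n$ inductively from the pair $(\F g_n \circ d,\; g_n)$ via the pullback universal property; its compatibility against $\leibhom{\epsilon}{q^n}$ and $\langle c^n, \id_{Z^n}\rangle$ reduces, using $\epsilon_Y \circ d = \id_Y$ and naturality, to the inductively-maintained identity $c^n \circ g_n = \F g_{n-1} \circ d$, which is exactly the coalgebra-map condition at the preceding stage and is built into the definition of $g_{n+1}$. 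The induced $g : Y \to X$ is then a coalgebra morphism, and uniqueness follows by a parallel projection-wise induction.

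The main obstacle will be orchestrating the two interlocking inductive properties so that the pullback construction at each step simultaneously enforces both of them; identifying the Leibniz pullback along $\langle c^n,\id_{Z^n}\rangle$ as the right construction is what packages the copointed relation and the inter-level compatibility into a single universal property, and once this choice is made, the preservation of fibrancy under Quillen pre-fibrancy and the cone conditions in the limit both follow immediately.
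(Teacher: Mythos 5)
Your proposal is correct and follows essentially the same route as the paper: the same tower starting from the terminal object, the same inductive step via the pullback of the Leibniz gap map $\leibhom{\epsilon}{q^n}$ along the pairing of the stage-$n$ structure map with the identity, the same passage to the $\omega$-limit using preservation by $\F$, and the same pullback-universal-property argument for existence and uniqueness of the mediating coalgebra morphism. The only point worth noting is your (reasonable) explicit assumption that the terminal projection $\F\One\to\One$ is a fibration, which the paper leaves implicit but which holds in its intended category of telescopes.
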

\begin{sectendproof}
  We define inductively a sequence of objects $\X_\n$ with morphisms $\g_{\n+\cred{1}}: \X_{\n+\cred{1}} \to \X_\n$, of which the terminal $\F$-coalgebra will be the limit $\X_{\cred{\infty}}$.
  We can think of each $\X_\n$ as an approximation to the terminal coalgebra, with $\X_{\n+\cred{1}}$ extending $\X_\n$ with additional data making it a better approximation; thus each $\g_\n$ should be a fibration.
  Since $\X_{\n+\cred{1}}$ will be constructed inductively from $\X_\n$, we can expect it to contain all the data that $\X_{\cred{\infty}}$ \emph{should} contain that relates to $\X_\n$, and thus we can expect to have a map $\x_{\n+\cred{1}} : \X_{\n+\cred{1}} \to \F \X_{\n}$ (but not yet to $\F\X_{\n+\cred{1}}$).
  To achieve the copointedness condition in the limit for these data, we should demand $\epsilon_{\X_\n} \circ \x_{\n+\cred{1}} = \g_{\n+\cred{1}}$.
  And to ensure that the successive approximations are consistent with each other, we should ask that $\F\g_\n \circ \x_{\n+\cred{1}} = \x_\n \circ \g_{\n+\cred{1}}$.

  In sum, therefore, we will inductively construct a sequence of objects $\X_\n$, with fibrations $\g_{\n+\cred{1}}: \X_{\n+\cred{1}} \to \X_\n$ and morphisms $\x_{\n+\cred{1}} : \X_{\n+\cred{1}} \to \F \X_{\n}$, such that $\epsilon_{\X_\n} \circ \x_{\n+\cred{1}} = \g_{\n+\cred{1}}$ and $\F\g_\n \circ \x_{\n+\cred{1}} = \x_\n \circ \g_{\n+\cred{1}}$.

  To start with, let $\X_\cred{0}=\One$, the terminal object, and let $\X_\cred{1} = \F \X_\cred{0} = \F \One$, with $\x_{\cred{1}}$ the identity.
  Now, assume the data constructed up to level $\n>\cred{0}$.
  The idea is to define $\X_{\n+\cred{1}}$ to be the \emph{universal} object equipped with $\g_{\n+\cred{1}}$ and $\x_{\n+\cred{1}}$ satisfying the desired equations.
  This means it is a limit of some diagram.
  The usual way to write that diagram is as the equalizer of the two maps
  \[
    \begin{tikzcd}
      & \X_\n \ar[dr,"\x_\n"]\\
      \F \X_\n \ar[ur,"\epsilon"] \ar[rr, "\F \g_\n"] & & \F \X_{\n-1},
    \end{tikzcd}
  \]
  but this does not make it evident that $\g_{\n+\cred{1}}$ is a fibration.
  Instead, we can express this same limit as the following pullback:
  \begin{equation}
    \label{eq:xnplusone}
    \begin{tikzcd}
      \X_{\n+\cred{1}} \ar[r,"\x_{\n+\cred{1}}"] \ar[d,"\g_{\n+\cred{1}}"'] \drpullback & \F \X_\n \ar[d,"\leibhom{\epsilon}{\g_\n}"]\\
      \X_\n \ar[r,"{(\id,\x_\n)}"'] & \X_\n \times_{\X_{\n-\cred{1}}} \F \X_{\n-\cred{1}}.
    \end{tikzcd}
  \end{equation}
  The commutativity of this square says that $\F\g_\n \circ \x_{\n+\cred{1}} = \x_\n \circ \g_{\n+\cred{1}}$ and $\epsilon_{\X_\n} \circ \x_{\n+\cred{1}} = \g_{\n+\cred{1}}$.
  And by assumption, $\leibhom{\epsilon}{\g_\n}$ is a fibration, hence so is its pullback $\g_{\n+\cred{1}}$.

  Now let $\X_\cred{\infty}$ be the limit of the $\omega$-sequence of fibrations
  \[ \X_\cred{\infty} \cdots \xrightarrow{\g_{\n+\cred{1}}} \X_\n \xrightarrow{\g_\n} \cdots \xrightarrow{\g_\cred{2}} \X_\cred{1} \xrightarrow{\g_\cred{1}}\X_\cred{0} = \One.\]
  Since $\F$ preserves limits of inverse $\omega$-sequences, $\F \X_\cred{\infty}$ is the limit of the corresponding sequence
  \[ \F\X_\infty \cdots \xrightarrow{\F \g_{\n+\cred{1}}} \F\X_\n \xrightarrow{\F \g_\n} \cdots \xrightarrow{\F \g_\cred{2}} \F \X_\cred{1} \xrightarrow{\F \g_\cred{1}} \F \X_\cred{0} = \One.\]
  The morphisms $\x_\n$ and $\epsilon_{\X_\n}$ form fence diagrams:
  \[
    \begin{tikzcd}
      \X_\cred{\infty}  \ar[d,"\x_\cred{\infty}"'] \ar[r] & \cdots \ar[r] &
      \X_\cred{3} \ar[r,"\g_\cred{3}"'] \ar[dr,"\x_\cred{3}"'] &
      \X_\cred{2} \ar[r,"\g_\cred{2}"'] \ar[dr,"\x_\cred{2}"'] &
      \X_\cred{1} \ar[r,"\g_\cred{1}"'] \ar[dr,"\x_\cred{1}"'] &
      \X_\cred{0}\\
      \F \X_\cred{\infty} \ar[d,"\epsilon_{\X_\cred{\infty}}"'] \ar[r] & \cdots \ar[r] &
      \F\X_\cred{3} \ar[r,"\F\g_\cred{3}"'] \ar[d,"\epsilon_{\X_\cred{3}}"] &
      \F\X_\cred{2} \ar[r,"\F\g_\cred{2}"'] \ar[d,"\epsilon_{\X_\cred{2}}"] &
      \F\X_\cred{1} \ar[r,"\F\g_\cred{1}"'] \ar[d,"\epsilon_{\X_\cred{1}}"] &
      \F\X_\cred{0} \ar[d,"\epsilon_{\X_\cred{0}}"] \\
      \X_\cred{\infty} \ar[r] & \cdots \ar[r] &
      \X_\cred{3} \ar[r,"\g_\cred{3}"'] &
      \X_\cred{2} \ar[r,"\g_\cred{2}"'] &
      \X_1 \ar[r,"\g_\cred{1}"'] &
      \X_\cred{0}
    \end{tikzcd}
  \]
  composed of the parallelograms $\F\g_\n \circ \x_{\n+\cred{1}} = \x_\n \circ \g_{\n+\cred{1}}$ from our construction, and naturality squares $\epsilon_{\X_\n} \circ \F \g_{\n+\cred{1}} = \g_{\n+\cred{1}} \circ \epsilon_{\X_{\n+\cred{1}}}$.
  The former induces a map of limits $\x_\cred{\infty} : \X_\cred{\infty} \to \F \X_\cred{\infty}$, while by naturality the latter induces $\epsilon_{\X_\cred{\infty}}$.
  The universal property of limits implies that $\epsilon_{\X_\cred{\infty}}\circ \x_\cred{\infty}$ is induced by the composite fence, and since $\g_{\n+\cred{1}} = \epsilon_{\X_\n}\circ \x_{\n+\cred{1}}$ this is
  \[
    \begin{tikzcd}
      \X_\cred{\infty}  \ar[d,"\id_{\X_\cred{\infty}}"'] \ar[r] & \cdots \ar[r] &
      \X_\cred{3} \ar[r,"\g_\cred{3}"'] \ar[dr,"\g_\cred{3}"'] &
      \X_\cred{2} \ar[r,"\g_\cred{2}"'] \ar[dr,"\g_\cred{2}"'] &
      \X_\cred{1} \ar[r,"\g_\cred{1}"'] \ar[dr,"\g_\cred{1}"'] &
      \X_\cred{0}\\
      \X_\cred{\infty} \ar[r] & \cdots \ar[r] &
      \X_\cred{3} \ar[r,"\g_\cred{3}"'] &
      \X_\cred{2} \ar[r,"\g_\cred{2}"'] &
      \X_\cred{1} \ar[r,"\g_\cred{1}"'] &
      \X_\cred{0}
    \end{tikzcd}
  \]
  which induces the identity $\id_{\X_\cred{\infty}}$.
  Thus, $\X_\cred{\infty}$ is an $\F$-coalgebra.

  Now suppose $\y:\Y \to \F\Y$ is another $\F$-coalgebra.
  We construct inductively maps $\h_\n : \Y \to \X_\n$ such that $\x_{\n+\cred{1}} \circ \h_{\n+\cred{1}} = \F \h_\n \circ \y$ and $\g_{\n+\cred{1}}\circ \h_{\n+\cred{1}} = \h_\n$.
  We start with $\h_\cred{0} : \Y \to \X_\cred{0} = \One$ the unique morphism, and $\h_\cred{1} : \Y \to \X_\cred{1} = \F\X_\cred{0}$ the composite $\Y \xrightarrow{\y} \F\Y \xrightarrow{\F\h_\cred{0}} \F\X_\cred{0}$.
  Then we induce $\h_{\n+\cred{1}}$ by the universal property of the pullback defining $\X_{\n+\cred{1}}$:
  \[
    \begin{tikzcd}
      \Y \ar[ddr,bend right,"\h_\n"'] \ar[drr,bend left,"\F\h_\n\circ \y"] \ar[dr,dashed,"\h_{\n+\cred{1}}"] \\
      &\X_{\n+\cred{1}} \ar[r,"\x_{\n+\cred{1}}"] \ar[d,"\g_{\n+\cred{1}}"] \drpullback & \F \X_\n \ar[d,"\leibhom{\epsilon}{\g_\n}"]\\
      &\X_\n \ar[r,"{\bkt{\id,\;\x_\n}}"'] & \X_\n \times_{\X_{\n-\cred{1}}} \F \X_{\n-\cred{1}}.
    \end{tikzcd}
  \]
  This is valid because using the inductive assumptions about $\h_\n$ and $\h_{\n-\cred{1}}$, we have
  \begin{align*}
    \epsilon_{\X_\n} \circ \F \h_\n \circ \y
    &= \h_\n \circ \epsilon_{\Y} \circ \y\\
    &= \h_\n
  \end{align*}
  and
  \begin{align*}
    \F\g_{\n} \circ \F\h_\n \circ \y
    &= \F\bkt{\g_\n \circ \h_\n} \circ \y\\
    &= \F\h_{\n-\cred{1}} \circ \y\\
    &= \x_\n \circ \h_\n,
  \end{align*}
  and the two triangles relating to $\h_{\n+\cred{1}}$ show that it has the necessary properties.

  Now, the equations $\g_{\n+\cred{1}}\circ \h_{\n+\cred{1}} = \h_\n$ imply there is an induced map $\h_\cred{\infty} : \Y \to \X_\cred{\infty}$, such that $\x_\cred{\infty} \circ \h_\cred{\infty}$ is induced by the composites $\x_{\n+\cred{1}}\circ \h_{\n+\cred{1}}$.
  But $\x_{\n+\cred{1}} \circ \h_{\n+\cred{1}} = \F \h_\n \circ \y$, and the morphisms $\F \h_\n$ induce the limit map $\F \h_\cred{\infty}$.
  Thus, $\h_\cred{\infty}$ is an $\F$-coalgebra morphism.

  Finally, suppose $\k:\Y\to \X_\cred{\infty}$ is any $\F$-coalgebra morphism, so we have $\x_\cred{\infty} \circ \k = \F \k \circ \y$.
  Then $\k$ is uniquely determined by the maps $\k_\n : \Y \to \X_\n$, and we have $\x_{\n+\cred{1}} \circ \k_{\n+\cred{1}} = \F \k_\n \circ \y$.
  But this equation implies by induction that $\k_\n = \h_\n$ for all $\n$, hence $\k = \h_\cred{\infty}$.
\end{sectendproof}

\subsubsection{Displayed coinductive types}
\label{sec:displ-coind-types}

Let $\MC$ be a dTT natural model with levels, telescopes, d\'ecalage, telescope display, type display respecting $\Pi$-types and universes, and $\Pi$-telescopes.
We will apply \cref{thm:termcoalg} in $\Telover(\Gamma\ce\lock_\trisq\ext \Phi)$, in which the fibrations are the morphisms isomorphic to the dependent projection from some telescope,
\[(\Gamma\ce\lock_\trisq\ext \Phi\ext \Theta\ext \Upsilon) \to (\Gamma\ce\lock_\trisq\ext \Phi\ext\Theta).\]
In the presence of levels, the objects of $\Telover(\Gamma\ce\lock_\trisq\ext \Phi)$ are telescopes at \emph{any} level.

Suppose given the input data for a displayed coinductive type, consisting of
\begin{mathpar}
  \Gamma\in \MC
  \and
  \Phi\in\Tel_{\;\!\ell_{\cblu{0}}}(\Gamma\ce\lock_\trisq)
  \and
  \A \in \Ty_{\;\!\ell_{\cblu{1}}}(\Gamma\ce\lock_\trisq\ext\Phi)
  \and
  \MB \in \Tel_{\;\!\ell_{\cblu{2}}}(\Gamma\ce\lock_\trisq\ext\Phi\ce\A)
  \and
  \sigma\in \PSub_{\;\!\ell_{\cblu{0}}}\left((\Gamma\ce\lock_\trisq\ext\Phi\ce\A\ext \MB),~ \Phi\d\right).
\end{mathpar}
Categorically, this yields the data of a sort of \emph{`display polynomial'}, where we indicate fibrations with $\twoheadrightarrow$:
\[
  \begin{tikzcd}
    (\Gamma\ce\lock_\trisq\ext\Phi\D) \ar[d,->>] &
    (\Gamma\ce\lock_\trisq\ext\Phi\ce \A\ext \MB) \ar[r,->>,"\pi"] \ar[dl,->>,"\tau\pi"'] \ar[l,"\sigma"'] &
    (\Gamma\ce\lock_\trisq\ext\Phi\ce \A) \ar[dr,->>,"\tau"] \\
    (\Gamma\ce\lock_\trisq\ext\Phi) \ar[rrr,equals] &&& (\Gamma\ce\lock_\trisq\ext\Phi)
  \end{tikzcd}
\]
The left vertical map is a fibration because it is isomorphic to the dependent projection
\[ (\Gamma\ce\lock_\trisq\ext\Phi\ext\Phi\d) \to (\Gamma\ce\lock_\trisq\ext\Phi). \]
Since everything is in the category of telescopes over $\Gamma\ce\lock_\trisq$, we will omit it from the notation for conciseness, so that the above display polynomial becomes
\[
  \begin{tikzcd}
    \Phi\D \ar[d,->>] &
    (\Phi\ce \A\ext \MB) \ar[r,->>,"\pi"] \ar[dl,->>,"\tau\pi"'] \ar[l,"\sigma"'] &
    (\Phi\ce \A) \ar[dr,->>,"\tau"] \\
    \Phi \ar[rrr,equals] &&& \Phi
  \end{tikzcd}
\]

This display polynomial then defines a copointed endofunctor of $\Telover(\Gamma\ce\lock_\trisq\ext\Phi)$ as follows:
\begin{mathpar}
  \infer{\Gamma,~\lock_\trisq \vdash_\sm \X \slfrac{\tel_{\;\!\ell}}{_{\phi:\Phi}}}{\Gamma,~\lock_\trisq\vdash_\sm \F\X \slfrac{\tel_{\;\!\ell\;\join\;\ell_{\cblu{1}}\;\join\;\ell_{\cblu{2}}}}{_{\phi\;:\;\Phi,~ \x\;:\;\X~\phi}}}
  \and
  \F\X \equiv \dbkt{ \a:\A~\phi,~ \xp : (\b:\MB~\phi~\a) \to \X\d~\pair{\phi}{\sigma~\a~\b}~\x }_{\;\phi\;:\;\Phi,~\x\;:\;\X\;\phi}
\end{mathpar}
Here the $\to$ denotes a $\Pi$-telescope (\cref{sec:pi-telescopes}), and $\X\d$ denotes meta-abstracted telescope display (\cref{sec:meta-abstr-displ}); this is why we wanted those in the syntax.
Note that $\F\X$ is meta-abstracted over $\Phi$ extended by $\X$, so it lies in $\Telover(\Gamma\ce\lock_\trisq\ext\Phi\ext \X)$ rather than $\Telover(\Gamma\ce\lock_\trisq\ext\Phi)$.
The actual \emph{endo}functor of $\Telover(\Gamma\ce\lock_\trisq\ext\Phi)$ is thus
\[\Ftil(\X) \equiv (\X \ext \F\X).\]
The weakening projection $(\X\ext \F\X) \to \X$ is then a copointing $\epsilon : \Ftil \to \id$ of this endofunctor, which is evidently a fibration.
More than this, we have:

\begin{lemma}
  The copointing $\epsilon : \Ftil \to \id$ is a Quillen pre-fibration.
\end{lemma}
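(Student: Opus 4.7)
The plan is to reduce to the concrete case where $\f$ is literally a dependent projection, compute both sides explicitly using the given computation rules, and check that the gap map is itself a dependent projection. Since fibrations are defined as morphisms isomorphic to some dependent projection and the class of Quillen pre-fibrations is invariant under isomorphism in the arrow category, we may assume without loss of generality that there is a telescope $\Gamma\ce\lock_\trisq\ext\Phi\ext\Y \vdash \Psi\tel$ with $\X \equiv \Y\ext\Psi$ and $\f$ the weakening $\weak^\Psi$.

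First I would identify the pullback $\Ftil\Y \times_\Y \X$ in $\Telover(\Gamma\ce\lock_\trisq\ext\Phi)$. Because $\Ftil\Y = \Y\ext\F\Y$ and $\f$ is a dependent projection, pulling back amounts to substituting along $\f$, yielding $\Ftil\Y\times_\Y \X \cong \X \ext \F\Y\sqbkt{\f}$, where $\F\Y\sqbkt{\f}$ is $\F\Y$ reindexed along $\X = \Y\ext\Psi \twoheadrightarrow \Y$. Under this identification the gap map $\leibhom{\epsilon}{\f} : \X\ext\F\X \to \X\ext\F\Y\sqbkt{\f}$ fixes the $\X$-component and acts on the $\F$-part by the canonical map induced by $\Ftil\f$.

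The core of the argument is then to show that $\F\X$ decomposes as $\F\Y\sqbkt{\f}$ followed by an explicit telescope extension. Writing $\x \equiv \pair{\y}{\psi}$ for a generic element of $\X\equiv \Y\ext\Psi$, the telescope display rule from \cref{sec:teldisp,sec:meta-abstr-displ} for concatenations gives
\[
  \X\d~\pair{\phi}{\sigma~\a~\b}~\pair{\y}{\psi} \;\equiv\; \bigl(\y\cblu{'} : \Y\d~\pair{\phi}{\sigma~\a~\b}~\y,\;\; \psi\cblu{'} : \dbkt{\Psi}_{\Y}\d~\pair{\pair{\phi}{\sigma~\a~\b}}{\y\cblu{'}}~\psi\bigr),
\]
and then the computation rule from \cref{sec:pi-telescopes} for $\Pi$-telescopes whose codomain is an extension yields
\[
  (\b:\B~\phi~\a) \to \X\d~\pair{\phi}{\sigma~\a~\b}~\x \;\equiv\; \bigl((\b:\B)\to\Y\d\ldots~\y\bigr) \,\ext\, \bigl((\b:\B)\to\dbkt{\Psi}_\Y\d\ldots~\psi\bigr).
\]
Combining these, $\F\X$ literally decomposes as $\F\Y\sqbkt{\f} \ext \Xi$, where
\[
  \Xi \;\equiv\; \dbkt{\psi\cblu{'} : (\b:\B~\phi~\a) \to \dbkt{\Psi}_{\Y}\d~\pair{\pair{\phi}{\sigma~\a~\b}}{\y\cblu{'}~\b}~\psi}_{\phi,\;\pair{\y}{\psi},\;\a,\;\y\cblu{'}}.
\]
Under this identification, the gap map is visibly the dependent projection forgetting the $\psi\cblu{'}$ variable, and hence is a fibration, as required.

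The main obstacle I anticipate is bookkeeping rather than conceptual: one must check that the isomorphism $\F\X \cong \F\Y\sqbkt{\f}\ext\Xi$ obtained by composing the concatenation rule for $\d$ with the concatenation rule for $\Pi$-telescopes really does agree, as a morphism over $\X$, with the canonical comparison arising from functoriality of $\Ftil$, so that the dependent-projection picture of the gap map coincides with its universal description. This is ultimately ensured by the coherence of the pseudo CwF morphism $(\blank)\d$ (in particular its interaction with the pairing/evens/odds operations from \cref{def:tel-disp}) and by the stability of all constructions under the substitution $\f$; once these are unpacked, everything lines up on the nose.
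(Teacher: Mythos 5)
Your proof is correct and follows essentially the same route as the paper's: take the fibration to be a dependent telescope projection, use the concatenation rule for telescope display together with the $\Pi$-telescope computation rule on an extended codomain to split $\F$ of the total telescope as $\F$ of the base (reindexed) followed by an explicit residual telescope, and read off the gap map as the dependent projection from that residual telescope. The only differences are notational (your $\Y\ext\Psi$ is the paper's $\X\ext\Y$) plus your extra remark about verifying that this decomposition agrees with the universal comparison map, which the paper leaves implicit.
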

\begin{proof}
  Suppose given a fibration over $\X \in \Tel_{\ell_{\cblu{0}}}(\Gamma,~\lock_\trisq)$, meaning a dependent telescope
  \[\Gamma,~\lock_\trisq \vdash_\sm \Y \slfrac{\tel_{\;\!\ell_{\cblu{1}}}}{_{\phi\;:\;\Phi,~\x\;:\;\X~\phi}}.\]
  Then we have
  \[\Gamma,~\lock_\trisq \vdash_\sm \dbkt{\x\;:\;\X~\phi\ext \y\;:\;\Y~\phi~\x}_{\;\phi\;:\;\Phi} \slfrac{\tel_{\;\!\ell_{\cblu{0}}\;\join\;\ell_{\cblu{1}}}}{_{\phi\;:\;\Phi}} \]
  which we write as $\X\Y$ for conciseness.
  Then by definition, we have
  \begin{equation*}
    \F(\X\Y)
    \equiv
    \dbkt{ \a:\A~\phi,~ \z\cblu{'} : (\b:\MB~\phi~\a) \to (\X\Y)\d~\pair{\phi}{\sigma~\a~\b}~\sqbkt{\x\ext \y} }_{\;\phi\;:\;\Phi,~\x\;:\;\X~\phi,~\y\;:\;\Y~\phi~\x}.
  \end{equation*}
  To simplify this, note that by the rules in \cref{sec:meta-abstr-displ}
  \begin{equation*}
    (\X\Y)\d~\pair{\phi}{\sigma~\a~\b}~\sqbkt{\x\ext \y}
    \equiv \left(\x\cblu{'}:\X\d~\pair{\phi}{\sigma~\a~\b}~\x \ext \Y\d~\pair{\phi}{\sigma~\a~\b}~\pair{\x}{\cblu{x'}}~\y\right)
  \end{equation*}
  and therefore by the rules in \cref{sec:pi-telescopes}
  \begin{multline*}
    (\b:\MB~\phi~\a) \to (\X\Y)\d~\pair{\phi}{\sigma~\a~\b}~\sqbkt{\x\ext \y}\\
    \equiv
    \left(\delta: (\b:\MB~\phi~\a) \to \X\d~\pair{\phi}{\sigma~\a~\b}~\x ~\ext~
      \epsilon : (\b:\MB~\phi~\a) \to \Y\d~\pair{\phi}{\sigma~\a~\b}~\pair{\x}{\delta~\b}~\y\right).
  \end{multline*}
  Now when $\delta$ is paired with $\a:\A~\phi$, it yields $\F\X$.
  Thus, the relevant gap map
  \[
    \begin{tikzcd}
      \X\Y \ext \F(\X\Y) \ar[dr,dashed] \ar[drr,bend left] \ar[ddr,bend right] \\
      & \X\Y\ext \F(\X) \ar[r] \ar[d] \drpullback & \X\ext \F(\X) \ar[d] \\
      & \X\Y \ar[r] & \X
    \end{tikzcd}
  \]
  is the dependent projection from the telescope
  \begin{multline}\label{eq:fgap}
    \dbkt{(\b:\MB\;\phi\;\a) \to \Y\d\;\pair{\phi}{\sigma\;\a\;\b}\;\pair{\x}{\delta\;\b}\;\y}_{\;\phi\;:\;\Phi,\;\x\;:\;\X\;\phi,\;\y\;:\;\Y\;\phi \;\x,\;\a\;:\;\A\;\phi,}\\_{\delta\;:\;(\b\;:\;\MB\;\phi\;\a)\;\to\;\X\d\; \pair{\phi}{\sigma\;\a\;\b}\;\x}
  \end{multline}
  and thus a fibration.
\end{proof}

\begin{lemma}
  The endofunctor $\Ftil$ preserves inverse limits of $\omega$-sequences of fibrations.
\end{lemma}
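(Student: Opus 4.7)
The strategy is to decompose $\Ftil$ into building blocks that each preserve $\omega$-limits of fibrations, and then invoke the hypothesis that display respects $\omega$-limits (\cref{sec:displim}) as the one non-trivial ingredient. Concretely, $\Ftil(\X)\equiv (\X\ext\F\X)$, so since telescope concatenation commutes with limits levelwise (it is just iterated type extension, and in a natural model the representable $\tpr$ preserves limits of underlying contexts), it suffices to show separately that $\X\mapsto\X$ (trivial) and $\X\mapsto\F\X$ preserve such limits. Unpacking
\[ \F\X \;\equiv\; \dbkt{\a:\A~\phi,~\xp:(\b:\B~\phi~\a)\to\X\d~\pair{\phi}{\sigma~\a~\b}~\x}_{\phi:\Phi,~\x:\X\;\phi}, \]
the first component $\a:\A~\phi$ is constant in $\X$ (modulo weakening along the projection off $\x$, which is stable under limits), so the entire question reduces to showing that the $\Pi$-telescope $\dbkt{(\b:\B~\phi~\a)\to \X\d~\pair{\phi}{\sigma~\a~\b}~\x}$ preserves inverse $\omega$-limits of fibrations in $\X$.

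Given a tower $\cdots\twoheadrightarrow\X_{n+\cred{1}}\twoheadrightarrow\X_n\twoheadrightarrow\cdots$ of fibrations in $\Telover(\Gamma\ce\lock_\trisq\ext\Phi)$ with limit $\X_\infty$, the plan is to chain three preservation facts. First, telescope display preserves $\omega$-limits: by the rules of \cref{sec:displim}, $\X_\infty\d~\pair{\phi}{\sigma~\a~\b}~\x$ is the $\omega$-limit of the telescopes $\X_n\d~\pair{\phi}{\sigma~\a~\b}~\x$, with the transition maps induced by the $\g_n$; this is exactly the content of \emph{display respects $\omega$-limits}, applied after weakening the context by $\a:\A~\phi$. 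Second, the $\Pi$-telescope functor $(\b:\B~\phi~\a)\to(-)$ preserves $\omega$-limits of fibrations; this follows from the rules of \cref{sec:pi-telescopes} together with the fact that in any $\Pi$-type structure, forming $\bkt{\b:\B~\phi~\a}\to\Upsilon$ is right adjoint to weakening by $\B$ and hence preserves all limits in $\Upsilon$, extended to $\Pi$-telescopes by the inductive reduction rules for $\Pi$-telescopes on extended codomains. Third, the enclosing meta-abstraction $\dbkt{\blank}_{\phi:\Phi,~\x:\X~\phi}$ is again just a matter of representing a dependent telescope, which is a limit-preserving construction in the presheaf category.

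Putting these together, we obtain a canonical comparison map $\F\X_\infty \to \lim_n \F\X_n$ in $\Telover(\Gamma\ce\lock_\trisq\ext\Phi\ext\X_\infty)$ whose underlying map of contexts is built, componentwise, out of the three isomorphisms above; chasing definitions shows it is an isomorphism. Combined with the identity factor $\X\mapsto\X$ and the concatenation $\Ftil(\X)\equiv(\X\ext\F\X)$, this yields that $\Ftil$ preserves inverse $\omega$-limits of fibrations, which together with the previous lemma lets us apply \cref{thm:termcoalg} to get the display coinductive type.

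The main obstacle I anticipate is bookkeeping rather than conceptual. One must verify that the comparison map with the limit is strict (not merely an isomorphism) when telescopes are lists of types, so that the terminal coalgebra constructed from the $\omega$-sequence in \cref{thm:termcoalg} satisfies the definitional equations $\head$ and $\tail$ on the nose rather than up to transport; this requires checking that the rules of \cref{sec:displim} are stated sharply enough (which they are, being definitional equalities $\lim(\cblu{\widetilde\Upsilon})\d \equiv \lim(\cblu{\widetilde\Upsilon}\d)$ and similarly for $\res$), and that the isomorphism of \cref{eq:fgap} agrees definitionally with the one used in the pullback step \cref{eq:xnplusone} of the terminal-coalgebra construction.
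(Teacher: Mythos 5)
Your proposal is correct and follows essentially the same route as the paper, which only gives a one-sentence sketch ("this follows from the $\eta$-rules for inverse limits, together with the fact that display also preserves inverse limits"); your decomposition into concatenation, the constant factor $\A$, the $\Pi$-telescope over $\B$, and display respecting $\oldomega$-limits is a faithful elaboration of exactly that sketch.
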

\begin{proof}[Sketch of proof]
  This follows from the $\eta$-rules for inverse limits, together with the fact that display also preserves inverse limits.
\end{proof}

Therefore, by \cref{thm:termcoalg}, there exists a terminal $\Ftil$-coalgebra.
Moreover, since we have assumed that inverse limits are representable by single types, this coalgebra is a type and not just a telescope.
This type is our candidate for the displayed coinductive type; we can unpack its definition as follows.
The construction produces a tower of fibrations $g_n$, which is to say a sequence of finite telescopes dependent on the previous ones:
\begin{align*}
  \phi:\Phi &\vdash_\sm \X^{\cred{\partial}\n}~\phi \tel_{\;\!\ell} \\
  \phi:\Phi , ~\cblu{\partial}\x : \X^{\cred{\partial}\n}~\phi &\vdash_\sm \X^\n~\phi~\cblu{\partial}\x \tel_{\;\!\ell}\\
  \phi:\Phi &\vdash_\sm \X^{\cred{\partial}\cred{0}}~\phi \equiv \ec\\
  \phi:\Phi &\vdash_\sm \X^{\cred{\partial}(\n+\cred{1})}~\phi \equiv (\cblu{\partial}\x:\X^{\cred{\partial}\n}~\phi ,~ \x : \X^\n~\phi~\cblu{\partial}\x)
\end{align*}
We choose the level of the empty telescope $\X^{\cred{\partial}\cred{0}}$ to be $\ell \equiv \ell_{\cblu{0}} \;\join\;\ell_{\cblu{1}}$; the explicit description given below then implies that all the other telescopes $\X^{\cred{\partial}\n}$ and $\X^\n$ are also at level $\ell$.

The object $\X_{\n+\cred{1}}$ in \cref{sec:term-coalg} corresponds to the telescope
\[(\phi:\Phi,~\cblu{\partial}\x: \X^{\cred{\partial} (\n+\cred{1})}~\phi ) = (\phi:\Phi,~\cblu{\partial}\x: \X^{\cred{\partial}\n}~\phi,~ \x : \X^\n~\phi~\cblu{\partial}\x ).\]
Each morphism $\x_{\n+\cred{1}} : \X_{\n+\cred{1}} \to \Ftil \X_{\n}$ such that $\epsilon \circ \x_{\n+\cred{1}} = \g_{\n+\cred{1}}$ then corresponds to a term
\[ \phi:\Phi,~\cblu{\partial}\x : \X^{\cred{\partial}\n}~\phi,~ \x : \X^\n~\phi~\cblu{\partial}\x \vdash_\sm \xi_\n : \F (\X^{\cred{\partial}\n})~\phi~\cblu{\partial}\x.
\]
By definition of $\F$, $\xi_n$ is equivalent to two terms
\begin{align*}
  \phi:\Phi,~\cblu{\partial}\x : \X^{\cred{\partial}\n}~\phi,~ \x : \X^\n~\phi~\cblu{\partial}\x
  &\vdash_\sm \h_\n~\phi~\cblu{\partial}\x~\x : \A~\phi\\
  \phi:\Phi,~\cblu{\partial}\x : \X^{\cred{\partial}\n}~\phi,~ \x : \X^\n~\phi~\cblu{\partial}\x, ~ \b :\MB~\phi~(\h_\n~\phi~\cblu{\partial}\x~\x)
  &\vdash_\sm
  \begin{multlined}[t]
    \t_\n~\phi~\cblu{\partial}\x~\x~\b : \\
    (\X^{\cred{\partial}\n})\d~\pair{\phi}{\sigma~(\h_\n~\phi~\cblu{\partial}\x~\x)~\b}~\cblu{\partial}\x.
  \end{multlined}
\end{align*}
The equation $\F\g_{\n+\cred{1}} \circ \x_{\n+\cred{2}} = \x_{\n+\cred{1}} \circ \g_{\n+\cred{2}}$ means that
\[\phi:\Phi,~\cblu{\partial}\x : \X^{\cred{\partial}\n}~\phi,~ \x : \X^\n~\phi~\cblu{\partial}\x,~\x\cblu{'} : \X^{\n+\cred{1}}~\phi~\sqbkt{\cblu{\partial}\x,\x}
\vdash_\sm
\h_{\n+\cred{1}}~\phi~\sqbkt{\cblu{\partial}\x, \x}~\x\cblu{'} \equiv \h_\n~\phi~\cblu{\partial}\x~\x\]
and
\begin{multline*}
  \phi:\Phi,~\cblu{\partial}\x : \X^{\cred{\partial}\n}~\phi,~ \x : \X^\n~\phi~\cblu{\partial}\x,~\x\cblu{'} : \X^{\n+\cred{1}}~\phi~\sqbkt{\cblu{\partial}\x,\x},~\b :\MB~\phi~(\h_\n~\phi~\cblu{\partial}\x~\x)\\
  \vdash_\sm
  \t_{\n+\cred{1}}~\phi~\sqbkt{\cblu{\partial}\x,\x}~\x\cblu{'}~\b
  \equiv
  \sqbkt{\t_\n~\phi~\cblu{\partial}\x~\x~\b,~ \s_\n~\phi~\cblu{\partial}\x~\x~\x\cblu{'}~\b }
\end{multline*}
for some term
\begin{multline*}
  \phi:\Phi,~\cblu{\partial}\x : \X^{\cred{\partial}\n}~\phi,~ \x : \X^\n~\phi~\cblu{\partial}\x,~\x\cblu{'} : \X^{\n+\cred{1}}~\phi~\sqbkt{\cblu{\partial}\x,\x},~\b :\MB~\phi~(\h_\n~\phi~\cblu{\partial}\x~\x)\\
  \vdash_\sm
  \begin{multlined}[t]
    \s_\n~\phi~\cblu{\partial}\x~\x~\x\cblu{'}~\b : (\X^\n)\d~\pair{\phi}{\sigma~(\h_\n~\phi~\cblu{\partial}\x~\x)~\b}~\pair{\cblu{\partial}\x}{\t_\n~\phi~\cblu{\partial}\x~\x~\b}~\x\\
    \equiv(\X^\n)\d~\pair{\phi}{\sigma~(\h_{\n-\cred{1}}~\phi~\cblu{\partial}\x)~\b}~\pair{\cblu{\partial}\x}{\t_\n~\phi~\cblu{\partial}\x~\x~\b}~\x
  \end{multlined}
\end{multline*}

Now inspecting the actual construction, we start with $\X_\cred{0} = \X^\cred{\partial 0} = \ec$, the empty telescope, and $\X^\cred{0} = \F(\X^\cred{\partial 0}) = \F\ec = \A$.
It is easy to see by induction that the functions $\h_\n$ then all just project to $\X^\cred{0}$.
For the rest, combining \cref{eq:xnplusone,eq:fgap}, we find that $\X^{\n+\cred{1}}$ is defined by
\begin{multline*}
  \phi:\Phi , ~\cblu{\partial}\x : \X^{\cred{\partial}\n}~\phi,~\x:\X^\n~\phi~\cblu{\partial}\x \vdash_\sm\\
  \begin{multlined}[t]
  \X^{\n+\cred{1}}~\phi~\sqbkt{\cblu{\partial}\x,~\x} \equiv\\
    (\b : \MB~\phi~(\h_{\n-\cred{1}}~\phi~\cblu{\partial}\x)) \to  (\X^\n)\d~\pair{\phi}{\sigma~(\h_{\n-\cred{1}}~\phi~\cblu{\partial}\x)~\b}~\pair{\cblu{\partial}\x}{\t_\n~\phi~\cblu{\partial}\x~\x~\b}~\x
  \end{multlined}
\end{multline*}
and we have tautologically
\[ \s_\n~\phi~\cblu{\partial}\x~\x~\x\cblu{'}~\b \equiv \x\cblu{'}~\b.\]
In particular, by induction we find that in fact each $\X^\n$, though by construction a telescope, is actually just a single type for all $\n\ge \cred{0}$.
Therefore, the tower $(\X^{\cred{\partial}\n},\X^\n)$ is precisely an infinite telescope as defined in \cref{sec:inftel}, which we denote
\[  \cblu{\Xtil} = (\X^{\cred{\partial}\n},\X^\n) .\]
Our displayed coinductive type is therefore precisely the limit of this infinite telescope as in \cref{sec:inflim}:
\[\dCoind~\sqbkt{\Phi,\A,\MB,\sigma}~\phi \equiv
  \lim\bkt{ \cblu{\Xtil}~\phi}.  \]

\begin{example}
  Recall that for semi-simplicial types $\SST$, we have $\Phi\equiv\ec$, with $\A\equiv\Type$ and $\MB~\a \equiv \bkt{x:\El~\a}$.
  Therefore, in this case we have
  \begin{align*}
    \X^\cred{0} &\equiv \Type\\
    \X^\cred{0}~\A_\cblu{0} &\equiv \El~\A_\cblu{0} \to \Type\d~\A_\cblu{0}\\
    &\equiv \El~\A_\cblu{0} \to \El~\A_\cblu{0}\to \Type\\
    \X^\cred{2}~\A_\cblu{0}~\A_\cblu{1} &\equiv
    (\a_\cblu{1}:\A_\cblu{0}) \to \dbkt{\A \to \A\to \Type}_{\A\;:\;\Type}\d~\A_\cblu{0}~(\A_\cblu{1}~\a_\cblu{1})~\A_\cblu{1}\\
    &\equiv
    \begin{multlined}[t]
      (\a_\cblu{1}:\A_\cblu{0}) \to \dbkt{(\x:\A)(\x\cblu{'}:\A\cblu{'}~\x)(\y:\A)(\y\cblu{'}:\A\cblu{'}~\y) \to \Type\d~(\A\cblu{''}~\x~\y)}\\_{\A\;:\;\Type, \;\A\cblu{'}\;:\;\Type\d\;\A, \; \A\cblu{''} \;: \;\A\;\to\;\A\;\to\;\Type}~\A_\cblu{0}~(\A_\cblu{1}~\a_\cblu{1})~\A_\cblu{1}
    \end{multlined}
    \\
    &\equiv
    \begin{multlined}[t][0.8\linewidth]
      (\a\sub{\cblu{001}}:\A_\cblu{0})(\a\sub{\cblu{010}}:\A_\cblu{0})(\a\sub{\cblu{011}}:\A_\cblu{1}~\a\sub{\cblu{001}}~\a\sub{\cblu{010}})\\ (\a\sub{\cblu{100}}:\A_\cblu{0}) (\a\sub{\cblu{101}}:\A_\cblu{1}~\a\sub{\cblu{001}}~\a\sub{\cblu{100}}) (\a\sub{\cblu{110}}:\A_\cblu{1}~\a\sub{\cblu{010}}~\a\sub{\cblu{100}}) \to \Type
    \end{multlined}
  \end{align*}
  This suggests that in general, $\X^{\cred{\partial}\n}$ will be the type of $(\n-\cred{1})$-truncated semi-simplicial types, while $\X^\n~\A$ will be the type of ways to extend such an $\A$ to an $\n$-truncated one, i.e.\ the types of indexed families of $\n$-simplices.
  We will prove this formally in \cref{sec:correctness-sst}.
\end{example}

It remains to show that this construction of $\dCoind$ has the structure stipulated in \cref{sec:dcoind}.
To this end, we first unpack what it means for a type $\C\in \Ty(\Gamma\ce\lock_\trisq\ext\Phi)$ to be an $\Ftil$-coalgebra.
This means it is equipped with a section of the projection $\Ftil(\C) = (\C\ext \F(\C)) \to \C$, which syntactically is to say a partial substitution
\[ \Gamma,~\lock_\trisq\ext(\phi:\Phi), \x:\C \vdash_\sm \c : (\a:\A~\phi,~ \xp : (\b:\MB~\phi~\a) \to \X\d~\pair{\phi}{\sigma~\a~\b}~\x).
\]
But this is equivalent to giving its components, which we abstract over $x$ to emphasise their dependence on it:
\begin{align*}
  \Gamma,~\lock_\trisq\ext(\phi:\Phi) &\vdash_\sm \h : \C \to \A~\phi\\
  \Gamma,~\lock_\trisq\ext(\phi:\Phi) &\vdash_\sm \t : (\x:\C) (\b:\MB~\phi~(\h~\x)) \to \X\d~\pair{\phi}{\sigma~(\h~\x)~\b}~\x
\end{align*}
This is evidently precisely the structure of $\head$ and $\tail$ from \cref{sec:dcoind}.
Thus, our terminal $\Ftil$-coalgebra admits these destructors.

Furthermore, to give some other telescope $\Theta \in \Tel(\Gamma\ce\lock_\trisq\ext\Phi)$ an $\Ftil$-coalgebra structure is equivalent to equipping $\Upsilon \equiv (\Phi\ext\Theta)$ with the premises of the corecursor, where the indices-assigning map $\zeta : (\Phi\ext\Theta) \to \Phi$ is the dependent projection.
Thus, terminality of our terminal $\Ftil$-coalgebra implies that it admits the corecursor for telescopes $\Upsilon$ of this form.

In the models arising from type-theoretic model toposes, the underlying category is actually locally cartesian closed, and thus the functor $\Ftil$ can be extended from $\Telover(\Gamma\ce\lock_\trisq\Phi)$ to the larger slice category $(\Telover(\Gamma\ce\lock_\trisq))/\Phi$, with the same terminal coalgebra in this larger category.
This directly implies the full corecursion principle, since $\zeta$ in that rule equips $\Upsilon$ with the structure of an object of this slice.

In fact, the same is true for arbitrary models: the premises of the corecursor equip $\Upsilon$ with \emph{`enough of an $\Ftil$-coalgebra structure'} to deduce the existence of a unique compatible map to the terminal coalgebra.
In the next section we prove this in a more general abstract context.

\subsubsection{Terminal generalised coalgebras}
\label{sec:gener-term-coalg}

Let $\F$ be a copointed endofunctor of a category $\MC$ as in \cref{sec:term-coalg}, where $\MC$ is a full subcategory of some larger category $\E$.

\begin{definition}
  An object $\Y\in \E$ is a \textbf{generalised $\F$-coalgebra} if it is equipped with:
  \begin{itemize}
  \item For any $\X\in \MC$ and morphism $\h:\Y\to \X$, a specified morphism $\overline{\h}: \Y \to \F\X$, such that
  \item $\epsilon_\X \circ \overline{\h} = \h$.
  \item For any $\g : \X \to \Z$ in $\MC$, we have $\F\g \circ \overline{\h} = \overline{\g\circ \h}$.
  \end{itemize}
\end{definition}

In more abstract language, we can say that $\F$ induces a copointed endofunctor $\F^\cred{*}$ of the functor category $\Set^\MC$ by precomposition, and $\Y$ is a generalised $\F$-coalgebra if the functor $\E(\Y,-) : \MC \to \Set$ is an $\F^\cred{*}$-coalgebra.
The following observation is then a consequence of the Yoneda lemma, but we write it out explicitly.

\begin{lemma}
  If $\Y\in\MC$, then generalised $\F$-coalgebra structures on $\Y$ are bijective to ordinary $\F$-coalgebra structures.
\end{lemma}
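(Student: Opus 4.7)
The plan is to exhibit explicit inverse bijections between the two notions, which amounts to unpacking the Yoneda-style remark preceding the statement.

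In one direction, given a generalised $\F$-coalgebra structure on $\Y$, I would set $\y := \overline{\id_\Y} : \Y \to \F\Y$. The coherence condition $\epsilon_\X\circ \overline{\h} = \h$ with $\X = \Y$ and $\h = \id_\Y$ gives $\epsilon_\Y \circ \y = \id_\Y$, so this is an ordinary $\F$-coalgebra structure.

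In the other direction, given an ordinary $\F$-coalgebra structure $\y: \Y \to \F\Y$, I would define, for each $\X \in \MC$ and each $\h : \Y \to \X$, the map $\overline{\h} := \F\h \circ \y : \Y \to \F\X$. Using naturality of $\epsilon$, we have $\epsilon_\X \circ \F\h \circ \y = \h \circ \epsilon_\Y \circ \y = \h$, giving the first required identity. For the naturality clause, given $\g : \X \to \Z$ in $\MC$, functoriality of $\F$ yields $\F\g \circ \overline{\h} = \F\g \circ \F\h \circ \y = \F(\g\circ \h) \circ \y = \overline{\g\circ\h}$.

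Finally I would verify that these two assignments are mutually inverse. Starting from an ordinary $\y$, forming the induced generalised structure and then extracting $\overline{\id_\Y} = \F\id_\Y \circ \y = \y$ recovers $\y$. Starting from a generalised structure $\overline{(-)}$, setting $\y := \overline{\id_\Y}$ and then reconstructing yields, for any $\h: \Y \to \X$, $\F\h \circ \overline{\id_\Y} = \overline{\h \circ \id_\Y} = \overline{\h}$ by the naturality clause applied to $\g = \h$. There is no real obstacle here; the only thing to be careful about is that the naturality clause is precisely what is needed to recover the full generalised structure from the single datum $\overline{\id_\Y}$, so this clause cannot be weakened.
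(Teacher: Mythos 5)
Your proof is correct and follows essentially the same route as the paper's: both directions are given by $\y := \overline{\id_\Y}$ and $\overline{\h} := \F\h\circ\y$, with the same verifications via naturality of $\epsilon$, functoriality of $\F$, and the naturality clause of the generalised structure. No gaps.
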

\begin{proof}
  In one direction, if $\y:\Y\to \F\Y$ is an $\F$-coalgebra structure, then given $\h:\Y\to \X$ define $\overline{\h} = \F\h \circ \y$.
  Then we have
  \[\epsilon_\X \circ \overline{\h} = \epsilon_\X \circ \F\h \circ \y = \h\circ \epsilon_\Y \circ \y = \h \]
  and
  \[ \F\g \circ \overline{\h} = \F\g \circ \F\h \circ \y = \F(\g\circ \h) \circ \y = \overline{\g\circ \h}. \]

  In the other direction, given a generalised $\F$-coalgebra structure, let $\y = \overline{\id_\Y}  : \Y \to \F\Y$.
  Then $\epsilon_\Y \circ \y = \id_\Y$ by assumption, so $\y$ is an $\F$-coalgebra structure.
  Moreover, the other axiom implies that for any $\g : \Y \to \Z$ we have $\overline{\g} = \overline{\g\circ \id_\Y} = \F\g \circ \overline{\id_\Y} = \F\g\circ \y$.
  Thus one round-trip composite is the identity.
  The other round-trip composite simply sends $\y :\Y \to \F\Y$ to $\overline{\id_\Y} = \F(\id_\Y) \circ \y = \id_{\F\Y}\circ \y = \y$.
\end{proof}

Of course, a \textbf{morphism of generalised $\F$-coalgebras} is a morphism $\f:\Y\to \Z$ such that for any $\h:\Z\to \X\in \MC$ we have $\overline{\h}\circ \f = \overline{\h\circ \f}$.

\begin{lemma}\label{thm:mor-gen-coalg}
  If $\Y\in \E$ is a generalised $\F$-coalgebra and $\x:\X\to \F\X$ is an $\F$-coalgebra in $\MC$, then a morphism $\f:\Y\to \X$ is a generalised $\F$-coalgebra morphism if and only if $\overline{\f} = \x\circ \f$.
\end{lemma}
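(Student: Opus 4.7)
The plan is to unfold both sides into explicit conditions and then match them using the previous lemma together with the defining axioms of a generalised $\F$-coalgebra.

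First I would recall, from the proof of the preceding lemma, that when $\X\in\MC$ is an ordinary $\F$-coalgebra with structure map $\x:\X\to\F\X$, the induced generalised coalgebra structure on $\X$ is given by $\overline{\h}=\F\h\circ\x$ for every $\h:\X\to\X'$ in $\MC$. Thus, for $\f:\Y\to\X$, the condition $\overline{\h}\circ\f=\overline{\h\circ\f}$ of being a generalised coalgebra morphism rewrites as $\F\h\circ\x\circ\f=\overline{\h\circ\f}$ for all such $\h$.

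For the forward direction, suppose $\f$ is a morphism of generalised coalgebras. Specialising the rewritten condition to $\h=\id_\X$ and using functoriality of $\F$ on the identity gives $\x\circ\f=\overline{\id_\X\circ\f}=\overline{\f}$.

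For the reverse direction, suppose $\overline{\f}=\x\circ\f$. Given any $\h:\X\to\X'$ in $\MC$, I compute
\[\F\h\circ\x\circ\f\;=\;\F\h\circ\overline{\f}\;=\;\overline{\h\circ\f},\]
where the second equality uses the naturality axiom $\F\g\circ\overline{\h'}=\overline{\g\circ\h'}$ for the generalised coalgebra $\Y$ (with $\h'=\f$ and $\g=\h$). This is exactly the condition identifying $\f$ as a morphism of generalised coalgebras.

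There is no real obstacle here; the proof is essentially a one-line translation of definitions via the previous lemma's explicit description of generalised coalgebra structures on objects of $\MC$. The only thing to be careful about is keeping the two different uses of the overline notation (one for the generalised structure on $\Y$, one for the induced structure on $\X$) clearly separated when applying the naturality axiom.
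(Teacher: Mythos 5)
Your proof is correct and follows essentially the same route as the paper: the forward direction specialises the morphism condition to $\h=\id_\X$, and the converse combines the explicit formula $\overline{\h}=\F\h\circ\x$ for the induced generalised structure on $\X$ with the naturality axiom of $\Y$'s generalised coalgebra structure. If anything, your version is slightly cleaner in the converse direction, where the paper inserts an unnecessary (and somewhat misleading) intermediate expression before arriving at the same chain of equalities.
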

\begin{proof}
  If it is a generalised $\F$-coalgebra map, then taking $\h = \id_\X$ in $\overline{\h}\circ \f = \overline{\h\circ \f}$ we get $\x\circ \f = \overline{\f}$.
  On the other hand, if $\x\circ \f = \overline{\f}$ then for any $\h:\X\to \cblu{X'}$ in $\MC$ we have $\overline{\h} \circ \f = \cblu{x'} \circ \h \circ \f = \F\h \circ \x \circ \f = \F\h \circ \overline{\f} = \overline{\h\circ \f}$, as desired.
\end{proof}

\begin{theorem}\label{thm:termgencoalg}
  Let $\MC$ and $\F$ be as in \cref{thm:termcoalg}, and let $\MC$ be a full subcategory of $\E$ such that the embedding preserves the terminal object and the inverse limits of $\omega$-sequences of fibrations.
  Then the terminal $\F$-coalgebra constructed in \cref{thm:termcoalg} is also a terminal generalised $F$-coalgebra.
\end{theorem}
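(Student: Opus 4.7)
The plan is to mimic the construction of the terminal coalgebra in \cref{thm:termcoalg}, replacing each use of the $\F$-coalgebra structure $\y:\Y\to \F\Y$ of the source with the generalised-coalgebra datum $\h\mapsto \overline{\h}$, and then verifying existence and uniqueness of the induced map to $\X_\cred{\infty}$.

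First I would construct maps $\h_\n : \Y\to \X_\n$ by induction on $\n$, satisfying $\g_{\n+\cred{1}}\circ \h_{\n+\cred{1}} = \h_\n$ and $\x_{\n+\cred{1}}\circ \h_{\n+\cred{1}} = \overline{\h_\n}$. Set $\h_\cred{0}:\Y\to\X_\cred{0} = \One$ to be the unique morphism, which exists since $\MC\hookrightarrow \E$ preserves the terminal object, and set $\h_\cred{1} = \overline{\h_\cred{0}} : \Y\to \F\X_\cred{0} = \X_\cred{1}$. For the inductive step, I induce $\h_{\n+\cred{1}}$ via the pullback \eqref{eq:xnplusone} defining $\X_{\n+\cred{1}}$, feeding in $\h_\n$ and $\overline{\h_\n}$. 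The commutativity required unwinds to $\epsilon_{\X_\n}\circ \overline{\h_\n} = \h_\n$ and $\F\g_\n\circ \overline{\h_\n} = \x_\n\circ \h_\n$: the first is the second axiom of a generalised coalgebra applied to $\h_\n$, and the second follows from the third axiom, which gives $\F\g_\n\circ \overline{\h_\n} = \overline{\g_\n\circ \h_\n} = \overline{\h_{\n-\cred{1}}}$, combined with the inductive equation $\x_\n\circ \h_\n = \overline{\h_{\n-\cred{1}}}$.

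The compatibilities $\g_{\n+\cred{1}}\circ \h_{\n+\cred{1}} = \h_\n$ then induce $\h_\cred{\infty}:\Y\to \X_\cred{\infty}$, using that $\MC\hookrightarrow \E$ preserves the inverse $\omega$-limit. To see that $\h_\cred{\infty}$ is a generalised coalgebra morphism, by \cref{thm:mor-gen-coalg} it suffices to show $\overline{\h_\cred{\infty}} = \x_\cred{\infty}\circ \h_\cred{\infty}$. Since $\F\X_\cred{\infty}$ is the limit of the tower $\F\X_{\n-\cred{1}}$ in $\MC$, and hence also in $\E$, this equation can be tested after composing with each projection $\F\X_\cred{\infty}\to \F\X_{\n-\cred{1}}$. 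On one side, the third axiom of a generalised coalgebra applied to the projection $\X_\cred{\infty}\to \X_{\n-\cred{1}}$ yields $\overline{\h_{\n-\cred{1}}}$; on the other, the defining compatibility of $\x_\cred{\infty}$ with the $\x_\n$ combined with $\x_\n\circ \h_\n = \overline{\h_{\n-\cred{1}}}$ yields the same value.

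For uniqueness, let $\k:\Y\to \X_\cred{\infty}$ be another generalised coalgebra morphism and write $\k_\n$ for its composite with the projection $\X_\cred{\infty}\to \X_\n$. \Cref{thm:mor-gen-coalg} gives $\overline{\k} = \x_\cred{\infty}\circ \k$, and projecting down yields $\x_{\n+\cred{1}}\circ \k_{\n+\cred{1}} = \overline{\k_\n}$. I then verify $\k_\n = \h_\n$ by induction: the base case holds by terminality, and in the inductive step both $\g_{\n+\cred{1}}\circ \k_{\n+\cred{1}}$ and $\g_{\n+\cred{1}}\circ \h_{\n+\cred{1}}$ equal $\h_\n$, while both $\x_{\n+\cred{1}}\circ \k_{\n+\cred{1}}$ and $\x_{\n+\cred{1}}\circ \h_{\n+\cred{1}}$ equal $\overline{\h_\n}$, so $\k_{\n+\cred{1}} = \h_{\n+\cred{1}}$ by the universal property of the pullback \eqref{eq:xnplusone} (whose projections are $\g_{\n+\cred{1}}$ and $\x_{\n+\cred{1}}$). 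Hence $\k = \h_\cred{\infty}$. The main obstacle is not conceptual but bookkeeping: confirming that the three generalised-coalgebra axioms supply exactly the substitutes needed for the equations $\epsilon_\Y\circ \y = \id_\Y$ and $\F\h_\n\circ \y$ that appeared in the proof of \cref{thm:termcoalg}, and that the two preservation hypotheses on $\MC\hookrightarrow \E$ cover every universal property invoked, namely the terminal object for the base case and the inverse $\omega$-limits for the construction of $\h_\cred{\infty}$ and the verification that it lifts to a coalgebra morphism.
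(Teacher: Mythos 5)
Your proposal is correct and follows essentially the same route as the paper's proof: the same inductive construction of the $\h_\n$ via the pullback \eqref{eq:xnplusone}, the same use of the generalised-coalgebra axioms in place of $\epsilon_\Y\circ\y=\id_\Y$ and $\F\h_\n\circ\y$, the same passage to the limit, and the same appeal to \cref{thm:mor-gen-coalg} for both existence and uniqueness. Your uniqueness step is slightly more explicit than the paper's (invoking the pullback's universal property directly), but the argument is the same.
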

\begin{sectendproof}
  Indeed, the proof of terminality in \cref{thm:termcoalg} really only uses the generalised $\F$-coalgebra structure, which we can see clearly by repeating it in that language.
  Let $\Y\in \E$ be a generalised $\F$-coalgebra.
  We construct inductively maps $\h_\n : \Y \to \X_\n$ such that $\x_{\n+\cred{1}} \circ \h_{\n+\cred{1}} = \overline{\h_\cred{n}}$ and $\g_{\n+\cred{1}}\circ \h_{\n+\cred{1}} = \h_\n$.
  We start with $\h_\cred{0} : \Y \to \X_\cred{0} = \One$ the unique morphism (since $1\in\MC$ is also terminal in $\E$), and $\h_\cred{1} = \overline{\h_\cred{0}} : \Y \to \X_\cred{1} = \F\X_\cred{0}$.
  Then we induce $\h_{\n+\cred{1}}$ by the universal property of the pullback defining $\X_{\n+\cred{1}}$:
  \[
    \begin{tikzcd}
      \Y \ar[ddr,bend right,"\h_\n"'] \ar[drr,bend left,"\overline{\h_\n}"] \ar[dr,dashed,"\h_{\n+\cred{1}}"] \\
      &\X_{\n+\cred{1}} \ar[r,"\x_{\n+\cred{1}}"] \ar[d,"\g_{\n+\cred{1}}"] \drpullback & \F \X_\n \ar[d,"\leibhom{\epsilon}{\g_\n}"]\\
      &\X_\n \ar[r,"{(\id,\;\x_\n)}"'] & \X_\n \times_{\X_{\n-\cred{1}}} \F \X_{\n-\cred{1}}.
    \end{tikzcd}
  \]
  This is valid because using the inductive assumptions about $\h_\n$ and $\h_{\n-\cred{1}}$ and the properties of generalised coalgebras, we have
  \begin{align*}
    \epsilon_{\X_\n} \circ \overline{\h_\n} = \h_\n
  \end{align*}
  and
  \begin{align*}
    \F\g_{\n} \circ \overline{\h_\n}
    &= \overline{\g_\n \circ \h_\n}\\
    &= \overline{\h_{\n-\cred{1}}}\\
    &= \x_\n \circ \h_\n,
  \end{align*}
  and the two triangles relating to $\h_{\n+\cred{1}}$ show that it has the necessary properties.

  Now, the equations $\g_{\n+\cred{1}}\circ \h_{\n+\cred{1}} = \h_\n$ imply there is an induced map $\h_\cred{\infty} : \Y \to \X_\cred{\infty}$, such that $\x_\cred{\infty} \circ \h_\cred{\infty}$ is induced by the composites $\x_{\n+\cred{1}}\circ \h_{\n+\cred{1}}$.
  But $\x_{\n+\cred{1}} \circ \h_{\n+\cred{1}} = \overline{\h_\n}$, and the morphisms $\F \h_\n$ induce the limit map $\F \h_\cred{\infty}$, so $\x_\cred{\infty} \circ \h_\cred{\infty} = \overline{\h_\cred{\infty}}$.
  Thus, by \cref{thm:mor-gen-coalg}, $\h_\cred{\infty}$ is an $\F$-coalgebra morphism.

  Finally, suppose $\k:\Y\to \X_\cred{\infty}$ is any $\F$-coalgebra morphism, so we have $\x_\cred{\infty} \circ \k = \overline{\k}$.
  Then $\k$ is uniquely determined by the maps $\k_\n : \Y \to \X_\n$, and we have $\x_{\n+\cred{1}} \circ \k_{\n+\cred{1}} = \overline{\k_\n}$.
  But this equation implies by induction that $\k_\n = \h_\n$ for all $\n$, hence $\k = \h_\cred{\infty}$.
\end{sectendproof}

\subsubsection{The general corecursor}
\label{sec:general-corecursor}

Suppose $\Upsilon\in \Tel (\Gamma\ce\lock_\trisq)$ has the structure of the premises of the corecursor from \cref{sec:dcoind}:
\begin{align*}
  \Gamma,~\lock_\trisq\ext \upsilon:\Upsilon &\vdash_\sm \zeta~\upsilon : \Phi\\
  \Gamma,~\lock_\trisq\ext\upsilon:\Upsilon &\vdash_\sm \h~\upsilon : \A~{(\zeta~\upsilon)}\\
  \Gamma,~\lock_\trisq\ext \upsilon:\Upsilon\ext \y : \MB{(\zeta\upsilon, \h)} &\vdash_\sm \tau~\upsilon~\y : {\Upsilon\d~\upsilon}\\
  \Gamma,~\lock_\trisq \ext \upsilon:\Upsilon \ext \y : \MB{(\zeta\upsilon, \h)} &\vdash_\sm
  \zeta\d~\pair{\upsilon}{\tau~\upsilon~\y} = \sigma~(\zeta~\upsilon)~(\h~\upsilon)~\y
\end{align*}
Then $\zeta$ makes it an object of the slice category $(\Telover(\Gamma\ce\lock_\trisq))/\Phi$.
We will apply \cref{thm:termgencoalg} to the full subcategory $\Telover (\Gamma\ce\lock_\trisq\ext\Phi) \subseteq (\Telover(\Gamma\ce\lock_\trisq))/\Phi$.
To that end, we give $\Upsilon$ the structure of a generalised $\Ftil$-coalgebra as follows.

Suppose $\X\in \Tel(\Gamma\ce\lock_\trisq\ext\Phi)$, and suppose we have a map $\g : \Upsilon \to \X$ in $(\Telover(\Gamma\ce\lock_\trisq))/\Phi$, which is to say
\[ \Gamma,~\lock_\trisq\ext \upsilon:\Upsilon \vdash_\sm \g~\upsilon : \X~(\zeta~\upsilon). \]
We want to lift $\g$ to $\F \X$, which is to say we want to give
\begin{align*}
  \Gamma,~\lock_\trisq\ext(\upsilon:\Upsilon) &\vdash_\sm \h~\upsilon : \A~(\zeta~\upsilon)\\
  \Gamma,~\lock_\trisq\ext(\upsilon:\Upsilon),(\b:\MB~\phi~(\h~\x)) &\vdash_\sm \t~\upsilon~\b : \X\d~\pair{\zeta~\upsilon}{\sigma~(\h~(\zeta~\upsilon)~(\g~\upsilon))~\b}~(\g~\upsilon)
\end{align*}
But such an $\h$ is exactly part of the structure of $\Upsilon$, while we can define
\[ \t~\upsilon~\b \equiv \g\d~\upsilon~(\tau~\upsilon~\b).
\]
The final equation in the structure of $\Upsilon$ is precisely what is necessary to make this well-typed.
The functoriality condition is immediate from the functoriality of $\d$.

Thus $\Upsilon$ is a generalised $\Ftil$-coalgebra, and hence it admits a unique generalised $\Ftil$-coalgebra morphism to the terminal $\Ftil$-coalgebra $\C$.
This is a map $\Upsilon \to (\Phi\ext \X)$ over $\zeta$, which is precisely the right type of $\corec$.
And by \cref{thm:mor-gen-coalg}, the fact that it is a generalised $\Ftil$-coalgebra map precisely gives it the correct computation rules. \bbox

\subsubsection{Correctness of semi-simplicial types}
\label{sec:correctness-sst}

Finally, we will justify our universal characterization of $\SST$ semantically.
Specifically, we will show that when $\SST$ is constructed as a displayed coinductive type as in \cref{sec:displ-coind-types}, in a model with $\oldomega$-limits, it does in fact yield a `classifier' of Reedy fibrant semi-simplicial types in the classical sense.

We begin by constructing such a classifier category-theoretically, and then show that this construction coincides with the one obtained from \cref{sec:displ-coind-types}.
We will assume some familiarity with the classical notions of Reedy fibrant diagrams as in~\cite{kl:hoinvdia}.
For all of this section, we fix a particular universe level $\ell$.

\paragraph{Ordered direct categories.}

Our category-theoretic construction of diagram classifiers works for presheaves over any `direct category' (i.e.\ diagrams on any `inverse category').

\begin{definition}
  A \textbf{direct category} is a category such that the relation `there is a nonidentity arrow from $\x$ to $\y$' on its objects is well-founded.
  A \textbf{sieve} in a (direct) category is a full subcategory $\J$ such that if $\f:\y \to \x$ and $\x\in \J$, then $\y\in\J$.
  An \textbf{ordered direct category} is a \emph{finite} direct category together with (1) a total ordering on its objects such that if $\f:\x\to\y$ then $\x\le\y$, and (2) such that for all objects $\x$, the set of arrows with codomain $\x$ has a linear order such that $\f\circ \g \le \f$ for any composable $\f,\g$ (hence in particular $\id_\x$ is the greatest element).

  An \textbf{ordered presheaf} on a direct category is a finite presheaf together with a linear order on the finite set $\sum_{\x\in \I} \H(\x)$ such that $\H(\f)(\h) < \h$ whenever the left-hand side makes sense.
\end{definition}

An ordered direct category is equivalently the opposite of a (finite) `ordered inverse category' in the sense of~\cite[Definition 3.17]{kl:hoinvdia}, together with a suitable total ordering on its objects (we require this so that the order of variables in the classifying context is specified).
Similarly, an ordered presheaf is a `finite extension` $\cred{\emptyset} \hookrightarrow \H$ in the sense of~\cite[Definition 3.10]{kl:hoinvdia}.

\begin{example}
  Let $\cred{\oldDelta_\n}$ be the subcategory of the category $\cred{\oldDelta^{+}}$ from \cref{sec:asscat} containing the objects $\angle{\k}$ with $\cred{0}\le \k\le \n$.
  Thus $\cred{\oldDelta_\n}(\angle{\k},\angle{\l})$ is the set of length $\l+\cred{1}$ binary sequences containing exactly $\k+\cred{1}$ $\One$s.
  For fixed $\l$ we give these morphisms Campion's ordering, namely the usual ordering of binary numbers.
  Then $\cred{\oldDelta_\n}$ is an ordered direct category.
\end{example}

\def\yonp{\cred{\partial\!\yon}}
For $\x\in\I$ we write $\yonp_\x$ for the sub-presheaf of the representable $\yon_\x$ consisting of nonidentity morphisms, i.e.\ $\yonp_\x(\y) = \{ \f\in \I(\y,\x) \mid \f\neq \id_\x \} = \{ \f\in \I(\y,\x) \mid \y\prec\x \}$.

If $\I$ is a finite direct category and $\H$ is a finite presheaf on it, there is a new finite direct category $\I \oplus \H$, called the \textbf{collage} of $\H$, which contains $\I$ as a full subcategory, together with one new object $\cred{\star}$ such that $\I(\x,\cred{\star}) = \H(\x)$ for all $\x\in \I$.
Note that $\yonp_{\cred{\star}}$ restricted to $\I$ coincides with $\H$.
Moreover, $\I$ and $\H$ are ordered if and only if $\I \oplus \H$ is.
Moreover, if $\I$ is an ordered direct category of finite height with $\x$ its object of greatest rank, then $\I \cong (\I\setminus \{\x\}) \oplus \yonp_\x$.
Thus, we can treat this as an induction principle for ordered direct categories.

\paragraph{Classifying contexts.}

As our first use of this sort of induction, we construct for each ordered direct category $\I$ a `classifying context' for Reedy fibrant $\I$-presheaves.
Specifically, we construct by simultaneous induction:
\def\rG{\cred{\oldGamma}}
\def\rg{\cred{\oldgamma}}
\def\rT{\cred{\oldTheta}}
\def\rt{\cred{\oldtheta}}
\def\rB{\cred{B}}
\def\rb{\cred{b}}
\def\rbe{\cred{\beta}}
\begin{enumerate}
\item For each ordered direct category $\I$, a context $\rG^\I$.
  This will be the classifying context of Reedy fibrant $\I$-types at level $\ell$.
  \label{item:ods1}
\item For each ordered presheaf $\H$ on $\I$, a telescope $\rG^\I \vdash_\sm \rT^\H \tel_{\ell}$.
  \label{item:ods2}
\item For each map of ordered presheaves $\alpha:\H \to \H\cblu{'}$ (not necessarily order-preserving) on $\I$, a partial substitution $\rG^\I \vdash_\sm \rt^\alpha :  \rT^{\H\cblu{'}} \to \rT^\H$, varying functorially.\label{item:ods3}
\item For each object $\x\in \I$, a type $\rG^\I\ext \rT^{\yonp_\x} \vdash_\sm \rB^\x \type_\ell$.\label{item:ods4}
\item For each $\h \in \H(\x)$, inducing by the Yoneda lemma a map $\rbe_\h :\yonp_\x \subseteq \yon_\x \to \H$, a term $\rG^\I\ext \rT^\H \vdash_\sm \rb^\h : \rB^\x \sqbkt{\rt^{\rbe_\h}}$, such that $\rb^\h\sqbkt{\rt^\alpha} = \rb^{\alpha(\h)}$ for any $\alpha:\H \to \H\cblu{'}$.\label{item:ods5}
\item For each sieve $\J\subseteq \I$, a telescope $\rG^\J \vdash_\sm \rG^{\J,\I} \tel_{\lsuc\;\ell}$ and an isomorphism $\rG^\I\cong \bkt{\rG^\J\ext \rG^{\J,\I}}$.
  Morevoer, for all the structure in~\ref{item:ods2}--\ref{item:ods5}, the action of the weakening substitution $\rG^\I\cong \bkt{\rG^\J\ext \rG^{\J,\I}} \to \rG^\J$ corresponds to left Kan extension along the inclusion $\J\hookrightarrow \I$.\label{item:ods6}
\end{enumerate}

For~\ref{item:ods1}, we inductively use~\ref{item:ods2} and set
\begin{align*}
  \rG^{\cred{\emptyset}} &\equiv \ec \\
  \rG^{\I\oplus \H} &\equiv \bkt{\rG^\I,~ \A_{\cred{\star}} : \rT^\H \to \Type_\ell}.
\end{align*}

For~\ref{item:ods2}, we argue inductively on the linear ordering of $\H$.
If $\H$ is empty, we set
\[\rT^{\cred{\emptyset}} \equiv \ec.\]
Otherwise, $\H = (\H\setminus \{\h\}) \cup \{\h\}$ where $\h\in \H(\x)$ is the last element in the ordering; the condition on the ordering ensures that $\H\setminus \{\h\}$ is still an (ordered) presheaf.
By the Yoneda lemma, $\h$ induces a map $\rbe_\h: \yonp_\x \subseteq \yon_\x \to \H\setminus \{\h\}$, hence by \ref{item:ods3} a substitution $\rG^\I \vdash_\sm \rt^{\rbe_\h} : \rT^{\H\setminus\{\h\}} \to \rT^{\yonp_\x}$.
Thus, inductively using~\ref{item:ods4} as well, we can define
\[ \rT^{\H} = \bkt{\rT^{\H\setminus\{\h\}}, ~\a_\h : \rB^{\x}\sqbkt{\rt^{\rbe_\h}} }. \]

We similarly construct~\ref{item:ods3} by induction on $\H$ (the domain of $\alpha$).
The case when $\H$ is empty is trivial.
Otherwise, we inductively have $\rt^{\alpha\setminus \{\h\}} : \rT^{\H\cblu{'}} \to \rT^{\H\setminus\{\h\}}$, and to extend the codomain to $\rT^\H$ it suffices to give a term in context $\rG^\I \ext \rT^{\H\cblu{'}}$ of type
\begin{align*}
  \rB^{\x}\sqbkt{\rt^{\rbe_\h}} \sqbkt{\rt^{\alpha\setminus \{\h\}}}
  &\equiv \rB^{\x} \sqbkt{\rt^{\rbe_\h} \circ \rt^{\alpha\setminus \{\h\}}}\\
  &\equiv \rB^{\x} \sqbkt{\rt^{\rbe_{\alpha(\h)}}}.
\end{align*}
For this we can pick $\rb^{\alpha(\h)}$, using~\ref{item:ods5} inductively.
Functoriality follows from the inductive assumption of functoriality in~\ref{item:ods5}.

For~\ref{item:ods4}, note that the slice category $\I/\x$ is a sieve in $\I$ containing $\x$.
Then it suffices to define $\rB^\x$ in the case of $\I/\x$, since it can then be weakened to $\I$ using \ref{item:ods6}.
In this case we have $\I/\x = (\I/\x\setminus \{\x\}) \oplus \yonp_\x$, so the last variable in $\rG^{\I/\x}$ is $\A_{\x} : \rT^{\yonp_\x}\to \Type_\ell$.
Thus, we can define $\rG^{\I/\x} \ext \y : \rT^{\yonp_x} \vdash_\sm \rB^\x$ to be $\rG^{\I/\x \setminus \{\x\}},\A_{\x} : \rT^{\yonp_\x}\to \Type_\ell \ext \y : \rT^{\yonp_x} \vdash_\sm \A_{\x}~\y$.

For~\ref{item:ods5}, it suffices to deal with the case when $\h$ is the last element in the ordering of $\H$, since otherwise we can weaken from the sub-presheaf of all elements $\le \h$ to all of $\H$, using \ref{item:ods3} for the inclusion of this sub-presheaf.
But in this case, the last variable in $\rT^\H$ is $\a_\h : \rB^{\x}\sqbkt{\rt^\h}$, so we can take $\rb^\h \equiv \a_\h$.
Functoriality follows immediately, as does stability under weakening from initial segments for all the data.

Finally, for~\ref{item:ods6} we induct on $\I$.
For a sieve in $\I\oplus \H$ there are two possibilities: it could be $\J$ or $\J\oplus \H$ for some sieve $\J$ in $\I$, depending on whether it contains the new object $\cred{\star}$.
(Of course, if it contains $\cred{\star}$, it must also contain all objects $\y$ such that $\H(\y)\neq\cred\emptyset$, which is to say that $\H$ must be left Kan extended from $\J$.)
In these two cases, we define
\begin{align*}
  \rG^{\J,~\I\oplus\H} &\equiv \bkt{\rG^{\J,\I},~\A_{\cred{\star}} : \rT^\H \to \Type_\ell}\\
  \rG^{\J\oplus\H,~\I\oplus\H} &\equiv \rG^{\J,\I} \quad\text{ weakened to }\rG^{\J\oplus \H}.
\end{align*}

This completes the construction of the classifying context.
Note in particular that a consequence of \ref{item:ods3} is that re-ordering the elements of a presheaf $\H$ modifies $\rT^\H$ only up to isomorphism.

\paragraph{The classifying context is classifying.}

To show this, we first construct a `universal' diagram over $\rG^\I$.
Specifically, in any category with families, we construct simultaneously:
\begin{enumerate}
\item For each ordered direct category $\I$, a Reedy type $\rB$ of shape $\I$ and level $\ell$ over $\rG^\I$ in the sense of~\cite[Definition 3.22]{kl:hoinvdia}.\label{item:os1}
\item For each ordered presheaf $\H$ on $\I$, the object $\rT^\H$ is the canonical $\H$-weighted limit of $\rB$ constructed by the `master lemma` of~\cite[Lemma 3.11]{kl:hoinvdia}.\label{item:os2}
  (In particular, therefore, $\rT^{\yonp_\x}$ is the matching object of $\rB$ at $\x$.)
\item The maps $\rt^\alpha$ are the functorial action of these limits.\label{item:os3}
\item The type $\rG^\I \ext \rT^{\yon_x} \vdash_\sm \rB^\x$ is the object $\rB(\x)$ with its fibration to the matching object $\cred{M}_\x \rB = \rT^{\yonp_x}$.\label{item:os4}
\item The elements $\rb^\x$ are the projections from the weighted limit $\rT^\H$.\label{item:os5}
\end{enumerate}
The interesting case is \ref{item:os1}, where we weaken the Reedy $\I$-type $\rB$ over $\rG^\I$ to $\rG^{\I\oplus\H}$ and then must extend it to a Reedy $(\I\oplus\H)$-type by giving a type over the matching object $\rT^{\yonp_{\cred{\star}}}$.
But $\yon_{\cred{\star}}=\H$ (weakened to $\I\oplus\H$), and so we can use $\El~\A^\H$ where $\A^\H$ is the newly added variable in $\rG^{\I\oplus\H}$.
The other parts follow essentially tautologically.

Lastly, suppose $\C$ is a Reedy $\I$-type over any context $\Delta$, and suppose that it is `$\ell$-small' in the sense that each type $\C(\x)$ over the matching object $\cred{M}_\x \C$ is classified by a specified map into the universe $\cred{M}_\x\C \to \Type_\ell$.
We show by induction on $\I$ that it is classified by a unique map $\c:\Delta \to \rG^\I$ such that $\rB\sqbkt{\c} \equiv \C$.
This is trivial when $\I$ is empty.
Assuming it to be true for $\I$, if $\C$ is a Reedy $(\I\oplus \H)$-type over $\Delta$, and its restriction to an $\I$-type is classified by a map $\c : \Delta \to \rG^\I$, then to extend this to a map into $\rG^{\I\oplus \H}$ we must give a term in context $\Delta$ of type $\rT^\H\sqbkt{\c}\to \Type_\ell$.
But matching objects are preserved by substitution, so $\rT^\H\sqbkt{\c}$ is the matching object $\cred{M}_{\cred{\star}}\C$, and thus this is exactly the data extending $\C$ to a Reedy $(\I\oplus \H)$-type.

\paragraph{Display and d\'ecalage of classifying contexts.}

\def\Two{\cred{\mathbb{2}}}
Since the data $\rG^\I$, $\rT^\H$, $\rB^\x$, and so on are concrete finite syntactic objects (for any fixed $\I,\H,\x$ and so on), the rules in \cref{sec:comp-tel-dec,sec:comp-telesc-displ,sec:comp-metaabsdec,sec:comp-matd} suffice to completely compute display and d\'ecalage on them.
We can characterize the results as follows.

\def\rxi{\cred{\oldxi}}
Let $\Two$ denote the interval category $(\cred{0}\xrightarrow{\rxi} \cred{1})$, with two objects and one nonidentity morphism $\rxi$ between them.
Then if $\I$ is a direct category, so is $\Two\times \I$ with the product well-ordering where $\x\prec \y$ yields $(\cred{0},\x) \prec (\cred{1},\x) \prec (\cred{0},\y) \prec (\cred{1},\y)$.
If $\I$ is ordered, we make $\Two\times \I$ ordered as follows.
The morphisms in $\Two\times \I$ with codomain $(\cred{0},\x)$ are bijective to those in $\I$ with codomain $\x$, so we inherit that ordering.
And the morphisms in $\Two\times \I$ with codomain $(\cred{1},\x)$ are two copies of those in $\I$ with codomain $\x$, one copy indexed by $\id_\cred{1}$ and one by $\rxi$, so we give them the product well-ordering where $\rxi\prec \id_\cred{1}$: thus from $\g\prec \f$ we have $(\rxi,\g) \prec (\id_\cred{1},\g) \prec (\rxi,\f) \prec (\id_\cred{1},\f)$.

\def\rp{\cred{p}}
There is an evident projection $\rp: \Two\times \I \to \I$, inducing by precomposition from any presheaf $\H$ on $\I$ a presheaf $\rp^*\H$ on $\Two\times \I$.
Each element of $\H$, say $\h\in \H(\x)$, then induces two elements $\rp^*\H$ in $\H((\cred{0},\x))$ and $\H((\cred{1},\x))$; we denote these $(\cred{0},\h)$ and $(\cred{1},\h)$ respectively for clarity.
If $\H$ is ordered, we induce an ordering on $\rp^*\H$ by ordering each element of $\H((\cred{0},\x))$ before the corresponding element of $\H((\cred{1},\x))$ (which is necessary, since $\xi$ maps the latter to the former).

We also have an inclusion $\cred{i_0} : \I \to \Two\times \I$ defined by $\cred{i_0}(\x) = (\cred{0},\x)$, which is a sieve.
Left Kan extending along this inclusion takes a presheaf $\H$ on $\I$ to a presheaf $(\cred{i_0})_!\H$ on $\Two\times \I$ that is supported only on the objects of the form $(\cred{0},\x)$ and has exactly the same elements, hence inherits an ordering as well.

Now we prove by simultaneous induction:
\begin{enumerate}
\item For any $\I$, we have $(\rG^\I)\D \equiv \rG^{\Two\times\I}$.\label{item:dcl1}
\item For any presheaf $\H$ on $\I$, we have $(\rT^\H)\D \equiv \rT^{\rp^*\H}$ and $\gamma\cblu{'} : \rG^{\Two\times \I} \vdash_\sm \rT^{(\cred{i_0})_!\H}~\gamma\cblu{'} \equiv \rT^\H~\gamma\cblu{'}\ev$.\label{item:dcl2}
\item This identification is functorial in maps $\alpha : \H\to \H\cblu{'}$.\label{item:dcl3}
\item For each $\x\in \I$, we have:\label{item:dcl4}
  \begin{alignat*}{2}
    \gamma\cblu{'} : \rG^{\Two\times \I} &\vdash_\sm&\; \rT^{\yonp_{(\cred{0},\x)}}~\gamma\cblu{'} &\equiv \rT^{\yonp_\x}~\gamma\cblu{'}\ev\\
    \gamma\cblu{'} : \rG^{\Two\times \I},~ \y : \rT^{\yonp_{(\cred{0},\x)}}~\gamma\cblu{'} &\vdash_\sm& \rB^{(\cred{0},\x)}~\gamma\cblu{'}~\y &\equiv \rB^\x~\gamma\cblu{'}\ev~\y\\
    && \yonp_{(\cred{1},\x)} &= \rp^*\yonp_\x \cup \{ (\xi,\id_\x) \}\\
    \gamma\cblu{'} : \rG^{\Two\times \I} &\vdash_\sm & \rT^{\yonp_{(\cred{1},\x)}}~\gamma\cblu{'} &\equiv \bkt{\y\cblu{'}:(\rT^{\yonp_\x})\D~\gamma\cblu{'}, \a_{(\cred{0},\x)} : \rB^{\x}~\gamma\cblu{'}\ev~\y\cblu{'}\ev}\\
    \gamma\cblu{'} : \rG^{\Two\times \I},~\y\cblu{'} : \rT^{\yonp_{(\cred{1},\x)}}~\gamma\cblu{'} &\vdash_\sm&\; \rB^{(\cred{1},\x)}~\gamma\cblu{'}~\y\cblu{'} &\equiv (\rB^\x)\d~\gamma\cblu{'}~\y\cblu{'}
  \end{alignat*}
\item For each $\h\in \H(\x)$, we have:\label{item:dcl5}
  \begin{alignat*}{2}
    \gamma\cblu{'} : \rG^{\Two\times \I},~ \y\cblu{'} : \rT^{\rp^*\H} &\vdash_\sm&\; \rb^{(\cred{0},\h)}~\gamma\cblu{'}~\y\cblu{'} &\equiv \rb^\h~\gamma\cblu{'}\ev~\y\cblu{'}\ev\\
    \gamma\cblu{'} : \rG^{\Two\times \I},~ \y\cblu{'} : \rT^{\rp^*\H} &\vdash_\sm&\; \rb^{(\cred{1},\h)}~\gamma\cblu{'}~\y\cblu{'} &\equiv (\rb^\h)\d~\gamma\cblu{'}~\y\cblu{'}
  \end{alignat*}
\end{enumerate}
For the inductive step of~\ref{item:dcl1}, we have
\[\Two\times (\I \oplus \H) = (\Two\times \I) \oplus (\cred{i_0})_!\H \oplus (\rp^*\H \cup \{(\xi,\id_\star)\}).\]
Thus, using~\ref{item:dcl2}, we have
\begin{align*}
  \MoveEqLeft \rG^{\Two\times (\I\oplus \H)}\\
  &\equiv
  \bkt{\gamma\cblu{'} : \rG^{(\Two\times \I)},~
    \A_{(\cred{0},\star)} : \rT^{(\cred{i_0})_!\H} \to \Type_\ell,~
    \A_{(\cred{1},\star)} : \rT^{(\rp^*\H \cup \{(\xi,\id_\star)\})} \to \Type_\ell}\\
  &\equiv
  \bkt{\gamma\cblu{'} : (\rG^{\I})\D,~
    \A_{(\cred{0},\star)} : \rT^\H~\gamma\cblu{'}\ev \to \Type_\ell,~
    \A_{(\cred{1},\star)} : (\y\cblu{'} : (\rT^{\H})\D~\gamma\cblu{'}) \to \A_{(\cred{0},\star)}~\y\cblu{'}\ev \to \Type_\ell}\\
  &\equiv
  \bkt{\gamma : \rG^\I,~
    \A_\star : \rT^\H~\gamma \to \Type_\ell}\D.
\end{align*}
The other cases are similar.
We can likewise show that
\[ \rG^{\I, \Two\times \I} \equiv (\rG^\I)\d, \]
with the isomorphism $\rG^{\Two\times \I} \cong \bkt{\rG^\I\ext \rG^{\I, \Two\times \I}}$ coinciding with the evens/odds pairing isomorphism $(\rG^{\I})\D \cong \bkt{\rG^\I\ext (\rG^\I)\d}$.

\paragraph{Discrete fibrations.}

The isomorphism $\rG^\I\cong \bkt{\rG^\J\ext \rG^{\J,\I}}$ ensures that if $\J\subseteq \I$ is a sieve, we have a weakening substitution $\rG^\I \to \rG^\J$.
But more generally, we can expect to induce a context substitution from any discrete fibration.
Even more generally, we can get a \emph{partial} substitution from a `dependent' discrete fibration, in the following sense.

\begin{definition}
  If $\cblu{i} : \J\hookrightarrow \I$ is the inclusion of a sieve in a direct category, a \textbf{co-section} of it is a discrete fibration $\cblu{p} : \I \to \J$ such that $\cblu{p}\circ \cblu{i} = \id_\J$.
  In this case, if $\H$ is a presheaf on $\I$ and $\K$ a presheaf on $\J$, a morphism $\H \to \K$ over $\cblu{p}$ is a \textbf{relative isomorphism} if it induces a bijection $\sum_{\y\in \I} \H(\y) \to \sum_{\y\in \J} \K(\y)$.
\end{definition}

Note that the projection $\rp: \Two\times \I \to \I$ above is \emph{not} a co-section of the sieve $\cred{i_0} : \I \hookrightarrow \Two\times \I$, since it is not a discrete fibration.
The prototypical example of a relative isomorphism is $\yonp_\x \to \yonp_{\cblu{p}(\x)}$ for any $\x\in \I$ (this is essentially the definition of a discrete fibration).

Now we define and prove inductively:
\begin{enumerate}
\item For any co-section $\cblu{p} : \I \to \J$ of a sieve $\cblu{i} : \J\hookrightarrow \I$ in an ordered direct category, a partial substitution $\rG^\J \vdash_\sm \rg^{\cblu{p}} : \rG^{\J,\I}$.\label{item:cs1}
\item In addition, for any order-preserving relative isomorphism $\H \to \K$ between ordered presheaves, we have $\rT^\H\sqbkt{\rg^{\cblu{p}}} \equiv \rT^\K$.\label{item:cs2}
\item For $\x\in \I$, we have $\rB^\x\sqbkt{\rg^{\cblu{p}}} \equiv \rB^{\cblu{p}(\x)}$.\label{item:cs4}
\item For $\alpha : \H \to \K$ an order-preserving relative isomorphism and $\h \in \H(x)$, we have $\rb^{\x}\sqbkt{\rg^{\cblu{p}}} \equiv \rb^{\alpha(\x)}$.\label{item:cs5}
\end{enumerate}

To construct \ref{item:cs1}, note that as before there are two possibilities for a sieve in $\I\oplus\H$: it can be $\J$ or $\J\oplus\H$ for a sieve $\J$ in $\I$.
In the latter case, we have $\rG^{\J\oplus\H,~\I\oplus\H} = \rG^{\J,\I}$ weakened to $\rG^{\J\oplus \H}$, and a co-section of $\J\oplus \H \hookrightarrow \I\oplus \H$ is determined by a co-section $\cblu{p}$ of $\J\subseteq \I$; thus we can similarly weaken $\rg^{\cblu{p}}$.

In the former case, a co-section $\I\oplus\H \to \J$ is determined by a co-section $\cblu{p} : \I\to\J$ together with an object $\x\in\J$ and a relative isomorphism $\H \to \yonp_\x$.
Since $\rG^{\J,~\I\oplus\H} = \bkt{\rG^{\J,\I},~\A_{\cred{\star}} : \rT^\H \to \Type_\ell}$ in this case, to extend $\rg^{\cblu{p}}$ as desired it suffices to give a term of type $\rG^\J \vdash_\sm \rT^\H\sqbkt{\rg^{\cblu{p}}} \to \Type_\ell$.
But using \ref{item:cs2} inductively, this is equal to $\rG^\J \vdash_\sm \rT^{\yonp_\x} \to \Type_\ell$, so we can use the variable $\A_\x$ in $\rG^\J$.

Now to prove~\ref{item:cs2}, we induct on the ordering of $\H$ and $\K$, inductively using~\ref{item:cs4}.
The inductive arguments for \ref{item:cs4}--\ref{item:cs5} are similar.

\paragraph{Categorical coning.}

\def\rz{\cred{\oldzeta}}
Our last generic construction is a category-theoretic notion of `coning' a direct category.
Let $\J\subseteq \I$ be a sieve in a direct category that contains the bottom object, which we presciently denote $\angle{\cred0}$.
Let $\I^{\cred{+}}$ denote the direct category $\I$ augmented by an additional morphism $\rz_\x : \angle{\cred0} \to \x$ for all objects $\x\in \I\setminus \J$.\footnote{The notation is somewhat abusive, since the construction depends on $\J$ as well as $\I$.}
We define $\f\circ \rz_\x = \rz_\y$ for all $\f : \x\to\y$; note that $\x\in \I\setminus \J$ implies $\y\in \I\setminus \J$ since $\J$ is a sieve.
If $\I$ is ordered, we order $\I^{\cred{+}}$ by placing $\rz_\x$ \emph{before} all other morphisms with codomain $\x$; this is actually the only possibility given our definition of composition.
Note that $\J$ is still a sieve in $\I^{\cred+}$.

Similarly, for a presheaf $\H$ on $\I$, let $\H^{\cred+}$ denote the presheaf on $\I^{\cred+}$ consisting of $\H$ augmented by a new element $\rz_\H \in \H(\angle{\cred{0}})$, such that $\H^{\cred+}(\rz_\x)(\h) = \rz_\H$ for all $\h\in\H(\x)$.
If $\H$ is ordered, we order $\H^{\cred+}$ by putting $\rz_\H$ first.

We now inductively prove:
\begin{enumerate}
\item For any sieve $\J\subseteq \I$ in an ordered direct category, we have $\rG^{\J,\I^{\cred+}} \equiv (\z:\rB^{\angle{\cred0}}) \to \rG^{\J,\I}$ (meaning a $\Pi$-telescope).
\item In addition, for any $\H$ on $\I$, if we transfer $\rT^\H$ and $\rT^{\H^{\cred+}}$ across the isomorphisms
  \begin{align*}
    \rG^{\I} &\cong \bkt{\gamma : \rG^\J \ext \delta: \rG^{\J,\I}~\gamma}\\
    \rG^{\I^{\cred+}} &\cong \bkt{\gamma : \rG^\J \ext \delta: \rG^{\J,\I^{\cred+}}~\gamma}
    \equiv \bkt{\gamma : \rG^\J \ext \delta: (\z:\rB^{\angle{\cred0}}~\gamma) \to \rG^{\J,\I}~\gamma}
  \end{align*}
  to get $\smash{\cred{\tilde{\rT}}}^\H$ and $\smash{\cred{\tilde{\rT}}}^{\H^{\cred+}}$, then we have
  \[\smash{\cred{\tilde{\rT}}}^{\H^{\cred+}}~\gamma~\delta \equiv \bkt{\z:\rB^{\angle{\cred0}}~\gamma,~\smash{\cred{\tilde{\rT}}}^\H~\gamma~\bkt{\delta~\z}} \]
\end{enumerate}
Both proofs are entirely straightforward, using the inductive definition of $\Pi$-telescopes as well as $\rG^\I$ and $\rT^\H$.

\paragraph{Correctness of semi-simplicial types.}

Recall that our definition of semi-simplicial types $\SST_{\ell}$ is as a displayed coinductive type with $\Phi\equiv\ec$, $\A\equiv\Type_\ell$, and $\MB~\a \equiv \bkt{x:\El~\a}$.
Therefore, the construction in \cref{sec:displ-coind-types} simplifies as follows:
\begin{mathpar}
  \vdash_\sm \X^{\cred{\partial}\n} \tel
  \and
  \cblu{\partial}\x : \X^{\cred{\partial}\n} \vdash_\sm \X^\n~\cblu{\partial}\x \tel
  \and
  \vdash_\sm \X^{\cred{\partial0}} \equiv \ec
  \and
  \vdash_\sm \X^{\cred{\partial}(\n+\cred{1})} \equiv (\cblu{\partial}\x:\X^{\cred{\partial}\n} ,~ \x : \X^\n~\cblu{\partial}\x)
  \and
  \vdash_\sm \X^\cred{0} \equiv \Type_\ell
  \and
  \cblu{\partial}\x : \X^{\cred{\partial}\n},~ \x : \X^\n~\cblu{\partial}\x
  \vdash_\sm \h_\n~\cblu{\partial}\x~\x : \Type_\ell
  \and
  \x : \X^{\cred{0}} \vdash_\sm \h_{\cred{0}}~\x \equiv \x
  \and
  \cblu{\partial}\x : \X^{\cred{\partial}\n},~ \x : \X^\n~\cblu{\partial}\x,~\x\cblu{'} : \X^{\n+\cred{1}}~\sqbkt{\cblu{\partial}\x,\x}
  \vdash_\sm \h_{\n+\cred{1}}~\sqbkt{\cblu{\partial}\x,~\x}~\x\cblu{'} \equiv \h_\n~\cblu{\partial}\x~\x
  \and
  \cblu{\partial}\x : \X^{\cred{\partial}\n},~ \x : \X^\n~\cblu{\partial}\x, ~ \b :\El~(\h_\n~\cblu{\partial}\x~\x)
  \vdash_\sm \t_\n~\cblu{\partial}\x~\x~\b : (\X^{\cred{\partial}\n})\d~\cblu{\partial}\x
  \and
  \x : \X^{\cred{0}}, ~ \b :\El~(\h_{\cred{0}}~\x)
  \vdash_\sm \t_{\cred{0}}~\x~\b \equiv \sqbkt{}
\end{mathpar}
\begin{equation*}
  \cblu{\partial}\x : \X^{\cred{\partial}\n},~ \x : \X^\n~\cblu{\partial}\x,~\x\cblu{'} : \X^{\n+\cred{1}}~\sqbkt{\cblu{\partial}\x,\x},~\b :\El~(\h_\n~\cblu{\partial}\x~\x)
  \vdash_\sm
  \t_{\n+\cred{1}}~\sqbkt{\cblu{\partial}\x,\x}~\x\cblu{'}~\b
  \equiv
  \sqbkt{\t_\n~\cblu{\partial}\x~\x~\b,~\x\cblu{'}~\b }
\end{equation*}
\begin{align*}
\cblu{\partial}\x : \X^{\cred{\partial}\n},~\x:\X^\n~\cblu{\partial}\x &\vdash_\sm
  \X^{\n+\cred{1}}~\sqbkt{\cblu{\partial}\x,~\x} \equiv
    (\b : \El~(\h_{\n-\cred{1}}~\cblu{\partial}\x)) \to  (\X^\n)\d~\pair{\cblu{\partial}\x}{\t_\n~\cblu{\partial}\x~\x~\b}~\x
\end{align*}
We will prove inductively that
\begin{equation*}
  \X^{\cred{\partial}\n} \equiv \rG^{\cred{\oldDelta_{\n\minusone}}}
  \quad\text{and}\quad
  \X^\n \equiv \rT^{\yonp_{\angle{\n}}} \to \Type_\ell.
\end{equation*}
This will imply that $\SST = \lim_\n \X^{\cred{\partial}\n}$ is a classifying context for all of $\cred{\oldDelta}$.
The claim about $\X^\n$ clearly inductively implies the claim about $\X^{\cred{\partial}\n}$.
Also it is easy to show inductively that $\h_\n \equiv \rB^{\angle{\cred0}}$.
So it remains to say something useful about $\t_\n$.

\def\rI{\cred{I}}
\def\rJ{\cred{J}}
Let $\rI_\n$ be the subcategory of $\Two \times \cred{\oldDelta_{\n}}$ containing all the objects except $(\cred{1},\angle{\n})$, and let $\rJ_\n = \{\cred{0}\} \times \cred{\oldDelta_{\n}}$ regarded as a sieve in $\rI_\n$.
The central fact is the following.

\def\rq{\cred{q}}
\begin{lemma}
  For any $\n$, there is a co-section $\rq_\n : \rI_\n^{\cred+} \to \rJ_\n$.
\end{lemma}
\begin{proof}
  On objects, let $\rq_\n((\cred{1},\angle{\k})) = (\cred{0},\angle{\k+\cred1})$ for $\cred{0}\le \k< \n$.
  A morphism $(\cred{1},\angle{\k}) \to (\cred{1},\angle{\l})$ is a length $\l+\cred{1}$ sequence with $\k+\cred{1}$ $\One$s, and we augment it by another $\One$ on the \emph{right} to get a length $\l+\cred{2}$ sequence with $\k+\cred{2}$ $\One$s, hence a morphism $(\cred{0},\angle{\k+1}) \to (\cred{0},\angle{\l+1})$.
  A morphism $(\cred{0},\angle{\k}) \to (\cred{1},\angle{\l})$ is also a length $\l+\cred{1}$ sequence with $\k+\cred{1}$ $\One$s, but this time we augment it by a $\Zero$ on the right to get a length $\l+\cred{2}$ sequence with $\k+\cred{1}$ $\One$s, hence a morphism $(\cred{0},\angle{\k}) \to (\cred{0},\angle{\l+1})$.
  Finally, we send the new morphism $\rz_{(\cred{1},\angle{\l})}$ to the sequence of $\l+\cred{1}$ $\Zero$s followed by one $\One$.
  Functoriality is easy to check.
  And to see that it is a discrete fibration, we observe that any binary sequence of length $\l+\cred{2}$ with a positive number of $\One$s must be of exactly one of these three forms: a positive number of $\One$s followed by a $\One$, a positive number of $\One$s followed by a $\Zero$, or a sequence of $\Zero$s followed by a $\One$.
\end{proof}

Evidently $\rq_{\n+\cred{1}}$ restricts to $\rq_\n$ as we shrink the categories.
Thus, we also get a relative isomorphism $\yonp_{(\cred{1},\angle{\n})}^{\cred+} \to \yonp_{(\cred{0},\angle{\n+\cred{1}})}$ over $\rq_\n$.

Now note that if we abstract over $\b$, the type of $\t_\n$ matches that of $\rg^{\rq_\n}$.
Thus, we can now prove by simultaneous induction that:
\begin{enumerate}
\item $\X^{\cred{\partial}\n} \equiv \rG^{\cred{\oldDelta_{\n\minusone}}}$.\label{item:css1}
\item $\X^\n \equiv \rT^{\yonp_{\angle{\n}}} \to \Type_\ell$.\label{item:css2}
\item $\h_\n \equiv \rB^{\angle{\cred{0}}}$.\label{item:css3}
\item $\t_\n \equiv \rg^{\rq_\n}$.\label{item:css4}
\end{enumerate}
We have already remarked that \ref{item:css1} and~\ref{item:css3} are easy, and the base cases of \ref{item:css2} and~\ref{item:css4} are likewise trivial.
For the induction step of~\ref{item:css2}, we have
\begin{align*}
  \X^{\n+\cred{1}}~\sqbkt{\cblu{\partial}\x,~\x}
  &\equiv
  (\b : \El~(\h_{\n-\cred{1}}~\cblu{\partial}\x)) \to  (\X^\n)\d~\pair{\cblu{\partial}\x}{\t_\n~\cblu{\partial}\x~\x~\b}~\x\\
  &\equiv (\b : \rB^{\angle{\cred{0}}}~\cblu{\partial}\x) \to \rT^{\yonp_{(\cred{1},\angle{\n})}}~\pair{\cblu{\partial}\x}{\t_\n~\cblu{\partial}\x~\x~\b}~\x \to \Type_\ell\\
  &\equiv \rT^{\yonp_{(\cred{1},\angle{\n})}^{\cred+}}~\sqbkt{\rg^{\rq_\n}} \to \Type_\ell\\
  &\equiv \rT^{\yonp_{(\cred{1},\angle{\n+\cred{1}})}}~\sqbkt{\cblu{\partial}\x,~\x} \to \Type_\ell.
\end{align*}
Finally, the induction step of~\ref{item:css4} follows immediately from the definition of $\rg^\p$ and the inductive hypothesis of~\ref{item:css2}.
This completes the proof of the correctness of our construction of semi-simplicial types.\bbox

\section{Conclusion and Future Work}
\label{sec:conclusion}

In this paper we have made two main contributions.
First, we have described \emph{Displayed Type Theory (dTT)}, a new kind of type theory that incorporates (unary) internal parametricity but guarded by a modality, and showed that any model of dependent type theory with countable Reedy limits can be lifted to a model of dTT using augmented semi-simplicial diagrams.
Because the latter are diagrams on an \emph{inverse} category, their type theory is more closely related to that of the original model, and indeed the original model sits inside our model of dTT at the discrete mode.
In particular, unlike other internally parametric type theories, dTT is compatible with classical axioms such as excluded middle and choice, as long as they are formulated at the discrete mode (or under the modality $\dia$), and can be used as an internal logic to reason about arbitrary $(\infty,1)$-toposes.

Secondly, inside dTT we have introduced a notion of \emph{displayed coinductive type}, where the output of a destructor can be a parametricity `computability witness' of the input, and showed that as a particular case of this notion we can define a type of \emph{semi-simplicial types}.
This yields a new approach to the long-standing open problem of representing infinitely coherent higher structures in type theory.
Relative to other approaches, ours has the advantage that semi-simplicial types are defined (not postulated) as a simple instance of a type-former with natural introduction and elimination rules, i.e.\ a categorical universal property.
While it remains to be seen how much can actually be done in practice with our definition, early indications of its utility are promising.

There are a number of directions for future work suggested by our results; here we survey a few of them briefly.

\vspace{-0.2cm}

\paragraph{Computation and implementation.}

We conjecture that dTT satisfies canonicity and normalization, and should therefore be possible to implement in a proof assistant.

\vspace{-0.2cm}

\paragraph{Modal internal parametricity.}

We expect that most applications of ordinary internal parametricity have modal versions that can be proven in dTT (or a higher-ary version of it, as discussed below).
In addition to the traditional `free theorems' such as those mentioned in the introduction, it would be especially interesting to investigate this for the proof of the pentagon identity for the smash product in~\cite{cavallo:thesis}, since such a proof would then apply internally to any $(\infty,1)$-topos.

\vspace{-0.2cm}

\paragraph{Computing diamond.} Our intended semantic model strictly validates computation rules for diamond, such as $\dia~\bkt{\A \to \B} \equiv \dia~\A \to \dia~\B$, because type formers in the simplicial model extend type formers in the discrete model at the level of $(\minusone)$-simplices. The syntax of dTT may thus be augmented with computation rules for diamond. Formally, this would be accomplished in a manner similar to display, by extending the definition of diamond to meta-abstractions, such as to handle open terms under binders. One would introduce an operation, \emph{troncature} (meaning \emph{`truncation'} in French), to model the action of diamond on telescopes. One then has:
\begin{mathpar}
    \infer{\Gamma,~\lock_\dia \vdash_\sm \Upsilon \tel_{\;\!\ell}}{\Gamma \vdash_\dm \ddia~\Upsilon \tel_{\;\!\ell}}
    \and
    \infer{\Gamma,~\lock_\dia \vdash_\sm \cA \slfrac{\type_{\;\!\ell_\cblu{1}}}{_{\upsilon\;:\;\Upsilon}}}{\Gamma \vdash_\dm \dia~\cA \slfrac{\type_{\;\!\ell_\cblu{1}}}{_{\upsilon^\cblu{\oldflat}\;:\;\ddia\;\Upsilon}}}
\end{mathpar}

\vspace{-0.6cm}

\paragraph{Higher category theory.}

We have \emph{defined} a type of semi-simplicial types in dTT, but such a definition is not an end in itself; it is intended as a tool for developing a theory of higher categories and other higher structures.
We hope that our corecursion principle of $\SST$, and the availability of other displayed coinductive types for predicates and structures on them, should make the development of such a theory feasible in dTT.
We sketched some initial ideas in \cref{sec:eg-sst,sec:exampl-displ-coind}, but much remains to be done.

\vspace{-0.2cm}

\paragraph{Elementary models.}

In \cref{sec:semantics} we showed that any model of type theory with $\oldomega$-limits can be enhanced to a model of dTT including $\SST$, but this is probably not the only way to construct models of dTT.
In particular, we conjecture that there are `realizability' models of dTT in which $\SST$ is a classifier for `uniform' semi-simplicial types.
This suggests that perhaps displayed coinductive types might be useful to include in a definition of elementary $(\infty,1)$-topos.

\vspace{-0.2cm}

\paragraph{Higher-ary dTT.}
\label{sec:higher-ary-dtt}

The parametricity of dTT is \emph{unary}, meaning that $\A\d$ depends on one copy of $\A$; but parametricity in general can be $\n$-ary for any natural number $\n$, and some applications require higher arities.
We expect that higher-ary versions of dTT can be defined and modeled by a straightforward modification of the constructions in this paper, using higher-ary semi-cubical types in place of augmented semi-simplicial types (recall that augmented semi-simplicial types are the same as unary semi-cubical types).
In this case the binary numbers described in \cref{sec:asscat} become base-$(\n+\cred{1})$ numbers.

\vspace{-0.2cm}

\paragraph{Symmetries.}
\label{sec:symmetries}

In theories with non-modal internal parametricity, and also in Higher Observational Type Theory, it appears to be necessary to include a \emph{`symmetry'} operation on higher-dimensional types.
For instance, in our notation a symmetry operation would have the type $\A\d\d~\cblu{a\sub{00}}~\cblu{a\sub{01}}~\cblu{a\sub{10}}\to \A\d\d~\cblu{a\sub{00}}~\cblu{a\sub{10}}~\cblu{a\sub{01}}$.
The absence of symmetry in dTT is a significant simplification; for instance, it means that $\cred{\oldDelta^\cred{+}}\op$ is a strict inverse category, making possible the explicit syntactic model construction in \cref{sec:simplicial-model}.
However, we have also seen that it leads to certain limitations, e.g.\ without symmetry it is unclear how to give a corecursion principle for $\SST\d$.

It should be possible to formulate a version of dTT (unary or higher-ary) with symmetry, but in the presence of symmetry it is unclear whether it is possible for display to compute definitionally on type-formers.
However, it should work to use either the interval-based style of~\cite{bm:parametricity,bcm:pshf-parametric,moulin2016internalizing} or the `observational' style of~\cite{acks:iparam-noint}.

\vspace{-0.2cm}

\paragraph{Unimode dTT.}

We have formulated dTT with two modes, but intuitively the discrete mode is unnecessary, as the $\dm$-types are embedded in the $\sm$-types by the modality $\tri$.
Thus, it should be possible to formulate a version of dTT in which there is only one mode.
This is similar to other situations such as spatial/cohesive type theory~\cite{shulman:bfp-realcohesion} and synthetic guarded domain theory~\cite{gknb:mtt} that have both unimodal and bimodal versions.

\vspace{-0.2cm}

\paragraph{Conjectural syntax.}
\label{subsec:conjectural-syntax}

In addition to displayed coinductive types, one may consider other kinds of generalized inductive and coinductive types.
These are especially useful when taking a more `synthetic' approach to higher structures in dTT, using the $\sm$-types as augmented semi-simplicial objects rather than working with the internally defined type $\SST$ of semi-simplicial types.

Firstly, regarding display as analogous to paths in homotopy type theory suggests \emph{displayed inductive types} as analogues of higher inductive types.
Here the constructors generate displayed elements rather than ordinary ones.
%
As an example, we can construct the simplicial cone of any type:\\
\begin{minipage}{\textwidth}
\begin{lstlisting}
(*\textbf{data}*) (*$\cred{\mathsf{C}}$*) ((*$\A$*) :(*$^\trisq$*) (*$\Type$*)) : (*$\Type$*) (*\textbf{where}*)
  (*$\cred{\iota}$*) : (*$\A$*) (*$\to$*) (*$\cred{\mathsf{C}}$*) (*$\A$*)
  (*$\cred{\oldsigma}$*) : ((*$\x$*) : (*$\A$*)) (*$\to$*) ((*$\cred{\mathsf{C}}$*) (*$\A$*))(*$\d$*) ((*$\cred{\iota}$*) (*$\x$*))

(*$\f$*) : (*$\cred{\mathsf{C}}$*) (*$\A$*) (*$\to$*) (*$\B$*)
(*$\f$*) ((*$\cred{\iota}$*) (*$\x$*)) = (*\cprime{?$_\iota$}*) : (*$\B$*)
(*$\f\d$*) ((*$\cred{\iota}$*) (*$\x$*)) ((*$\cred{\oldsigma}$*) (*$\x$*)) = (*\cprime{?$_\oldsigma$}*) : (*$\B\d$*) (*\cprime{?$_\iota$}*)
\end{lstlisting}
\end{minipage}

Secondly, regarding both display and paths as a kind of modality suggests considering more general \emph{modal inductive types}, whose constructors can land in modal versions of the type.
For instance, since $\dia\A$ is the $(\minusone)$-simplices of $\A$, a $\dia$-modal constructor adds a $(\minusone)$-simplex without any higher simplices above it.
In this way we can construct the free-living $(\minusone)$-simplex, and then all the higher simplices by coning:
\begin{lstlisting}
(*\textbf{data}*) (*$\cred{\Delta^{\minusone}}$*) : (*$\Type$*) (*\textbf{where}*)
  (*$\cred{\star}$*) : (*$\dia$*) (*$\cred{\Delta^{\minusone}}$*)

(*$\f$*) : (*$\cred{\Delta^{\minusone}}$*) (*$\to$*) (*$\A$*)
(*$\dia$*) (*$\f$*) (*$\cred{\star}$*) = (*\cprime{?$_\star$}*) : (*$\dia$*) (*$\A$*)

(*$\cred{\Delta}$*) : (*$\cred{\mathbb{N}}$*) (*$\to$*) (*$\Type$*)
(*$\cred{\Delta}$*) (*$\cred{\mathsf{zero}}$*) = (*$\cred{\mathsf{C}}$*) (*$\cred{\Delta^{\minusone}}$*)
(*$\cred{\Delta}$*) ((*$\cred{\mathsf{suc}}$*) (*$\n$*)) = (*$\cred{\mathsf{C}}$*) ((*$\cred{\Delta}$*) (*$\n$*))
\end{lstlisting}

Note that in both cases, we rely on the computation behaviour of $\d$ and $\dia$ in order to directly give induction principles.\footnote{The case splits for defining a function $\f$ valued out of an inductive type in this hypothetical extension of \texttt{Agda}, as above, would result automatically by writing $\f~\x~=~?$ and pressing \texttt{C-c C-c} in the context of the hole.} For example, the pattern match $\dia~\f~\a$ requires that $\dia$ of a function type compute to a function type. This is an improvement on the treatment of higher inductive types in~\cite{hottbook}, since their natural elimination principle use $\mathsf{\cred{ap}}$, which is not there a primitive constant but a compound expression defined by path induction.
In particular, we conjecture that in dTT, these displayed and modal inductive types can be made fully computational.

\bibliographystyle{alpha}
\bibliography{main}

\newcommand{\etalchar}[1]{$^{#1}$}
\begin{thebibliography}{AFM{\etalchar{+}}21}

\bibitem[ACKS23]{ack:2ltt}
Danil Annenkov, Paolo Capriotti, Nicolai Kraus, and Christian Sattler.
\newblock Two-level type theory and applications.
\newblock {\em Mathematical Structures in Computer Science}, page 1–56, 2023.
\newblock arXiv:1705.03307.

\bibitem[ACKS24]{acks:iparam-noint}
Thorsten Altenkirch, Yorgo Chamoun, Ambrus Kaposi, and Michael Shulman.
\newblock Internal parametricity, without an interval.
\newblock To appear in POPL'24. arXiv:2307.06448, 2024.

\bibitem[Acz78]{aczel:ttint-cst}
Peter Aczel.
\newblock The type theoretic interpretation of constructive set theory.
\newblock In Angus Macintyre, Leszek Pacholski, and Jeff Paris, editors, {\em
  Logic Colloquium '77}, volume~96 of {\em Studies in Logic and the Foundations
  of Mathematics}, pages 55--66. Elsevier, 1978.

\bibitem[AFM{\etalchar{+}}21]{afmvv:bicat-unimath}
Benedikt Ahrens, Dan Frumin, Marco Maggesi, Niccol\`o Veltri, and Niels van~der
  Weide.
\newblock Bicategories in univalent foundations.
\newblock {\em Mathematical Structures in Computer Science},
  31(10):1232–1269, 2021.

\bibitem[AKS22]{HOTT}
Thorsten Altenkirch, Ambrus Kaposi, and Michael Shulman.
\newblock Towards a third-generation {HOTT}.
\newblock URL:
  \url{https://ncatlab.org/nlab/show/higher+observational+type+theory}, 2022.

\bibitem[AL19]{al:dispcat}
Benedikt Ahrens and Peter~LeFanu Lumsdaine.
\newblock Displayed categories.
\newblock {\em Logical Methods in Computer Science}, 15(1), March 2019.

\bibitem[Awo18]{awodey:natmodels}
Steve Awodey.
\newblock Natural models of homotopy type theory.
\newblock {\em Math. Structures Comput. Sci.}, 28(2):241--286, 2018.
\newblock arXiv:1406.3219.

\bibitem[BCM15]{bcm:pshf-parametric}
Jean-Philippe Bernardy, Thierry Coquand, and Guilhem Moulin.
\newblock A presheaf model of parametric type theory.
\newblock {\em Electronic Notes in Theoretical Computer Science}, 319:67 -- 82,
  2015.
\newblock The 31st Conference on the Mathematical Foundations of Programming
  Semantics (MFPS XXXI).

\bibitem[BM12]{bm:parametricity}
Jean-Philippe Bernardy and Guilhem Moulin.
\newblock A computational interpretation of parametricity.
\newblock In {\em Proceedings of the 2012 27th Annual IEEE/ACM Symposium on
  Logic in Computer Science}, LICS '12, page 135–144, USA, 2012. IEEE
  Computer Society.

\bibitem[Cav21]{cavallo:thesis}
Evan Cavallo.
\newblock {\em Higher Inductive Types and Internal Parametricity for Cubical
  Type Theory}.
\newblock PhD thesis, Carnegie Mellon University, February 2021.

\bibitem[CCD17]{ccd:undeceq-flccc}
Simon Castellan, Pierre Clairambault, and Peter Dybjer.
\newblock Undecidability of equality in the free locally cartesian closed
  category (extended version).
\newblock {\em Logical Methods in Computer Science}, 13(4), November 2017.

\bibitem[CCD21]{CASCWF}
Simon Castellan, Pierre Clairambault, and Peter Dybjer.
\newblock Categories with families: Unityped, simply typed, and dependently
  typed.
\newblock In Claudia Casadio and Philip~J. Scott, editors, {\em Joachim Lambek:
  The Interplay of Mathematics, Logic, and Linguistics}, pages 135--180.
  Springer Verlag, 2021.

\bibitem[CD11]{cd:lccc-tt}
Pierre Clairambault and Peter Dybjer.
\newblock The biequivalence of locally cartesian closed categories and
  {Martin-L\"{o}f} type theories.
\newblock In {\em Proceedings of the 10th international conference on Typed
  lambda calculi and applications}, TLCA'11, pages 91--106, Berlin, Heidelberg,
  2011. Springer-Verlag.

\bibitem[GCK{\etalchar{+}}22]{gckgb:fitchtt}
Daniel Gratzer, Evan Cavallo, G.~A. Kavvos, Adrien Guatto, and Lars Birkedal.
\newblock Modalities and parametric adjoints.
\newblock {\em ACM Trans. Comput. Logic}, 23(3), apr 2022.

\bibitem[GKNB21]{gknb:mtt}
Daniel Gratzer, G.~A. Kavvos, Andreas Nuyts, and Lars Birkedal.
\newblock Multimodal dependent type theory.
\newblock {\em Logical Methods in Computer Science}, 17(3), July 2021.

\bibitem[Kel80]{kelly:transfinite}
G.~M. Kelly.
\newblock A unified treatment of transfinite constructions for free algebras,
  free monoids, colimits, associated sheaves, and so on.
\newblock {\em Bull. Austral. Math. Soc.}, 22(1):1--83, 1980.

\bibitem[KL21]{kl:hoinvdia}
Krzysztof Kapulkin and Peter~LeFanu Lumsdaine.
\newblock Homotopical inverse diagrams in categories with attributes.
\newblock {\em Journal of Pure and Applied Algebra}, 225(4):106563, 2021.
\newblock arXiv:1808.01816.

\bibitem[Kra15]{kraus:proptrunc}
Nicolai Kraus.
\newblock The general universal property of the propositional truncation.
\newblock In Hugo Herbelin, Pierre Letouzey, and Matthieu Sozeau, editors, {\em
  20th International Conference on Types for Proofs and Programs (TYPES 2014)},
  volume~39 of {\em Leibniz International Proceedings in Informatics (LIPIcs)},
  pages 111--145, Dagstuhl, Germany, 2015. Schloss Dagstuhl--Leibniz-Zentrum
  fuer Informatik.
\newblock arXiv:1411.2682.

\bibitem[{Lin}89]{lindstrom:nwfs-mltt}
Ingrid {Lindstr\"om}.
\newblock A construction of non-well-founded sets within {Martin-L\"of's} type
  theory.
\newblock {\em The Journal of Symbolic Logic}, 54(1):57–64, 1989.

\bibitem[LOPS18]{lops:internal-universes}
Daniel~R. Licata, Ian Orton, Andrew~M. Pitts, and Bas Spitters.
\newblock Internal universes in models of homotopy type theory.
\newblock {\em Leibniz International Proceedings in Informatics (LIPIcs)},
  108(22):1--17, 2018.
\newblock arXiv:1801.07664.

\bibitem[Mou16]{moulin2016internalizing}
Guilhem Moulin.
\newblock {\em Internalizing parametricity}.
\newblock PhD thesis, Chalmers University, 2016.

\bibitem[RFL21]{rfl:synthetic-spectra}
Mitchell Riley, Eric Finster, and Daniel~R. Licata.
\newblock Synthetic spectra via a monadic and comonadic modality.
\newblock arXiv:2102.04099, 2021.

\bibitem[RS17]{rs:stt}
Emily Riehl and Michael Shulman.
\newblock A type theory for synthetic $\infty$-categories.
\newblock {\em Higher structures}, 1(1), 2017.
\newblock arXiv:1705.07442.

\bibitem[Shu15]{shulman15}
Michael Shulman.
\newblock Univalence for inverse diagrams and homotopy canonicity.
\newblock {\em Mathematical Structures in Computer Science}, 25(5):1203–1277,
  June 2015.

\bibitem[Shu18]{shulman:bfp-realcohesion}
Michael Shulman.
\newblock Brouwer's fixed-point theorem in real-cohesive homotopy type theory.
\newblock {\em Mathematical Structures in Computer Science}, 28(6):856--941,
  2018.
\newblock arXiv:1509.07584.

\bibitem[Shu19]{shulman:univinj}
Michael Shulman.
\newblock All $(\infty,1)$-toposes have strict univalent universes.
\newblock arXiv:1904.07004, 2019.

\bibitem[Shu23]{shulman:matt}
Michael Shulman.
\newblock Semantics of multimodal adjoint type theory.
\newblock {\em Electronic Notes in Theoretical Informatics and Computer
  Science}, 3 (Proceedings of MFPS XXXIX), November 2023.
\newblock arXiv:2303.02572.

\bibitem[{Uni}13]{hottbook}
{Univalent Foundations Program}.
\newblock {\em Homotopy Type Theory: Univalent Foundations of Mathematics}.
\newblock \url{http://homotopytypetheory.org/book/}, first edition, 2013.

\bibitem[Usk23]{uskuplu:thesis}
Elif Uskuplu.
\newblock {\em Formalizing two-level type theory with cofibrant exo-nat}.
\newblock PhD thesis, University of Southern California, 2023.

\bibitem[Wad89]{wadler:free-theorems}
Philip Wadler.
\newblock Theorems for free!
\newblock In {\em Functional Programming Languages and Computer Architecture},
  pages 347--359. ACM Press, 1989.

\bibitem[Wei22]{weinberger:stab-ext}
Jonathan Weinberger.
\newblock Strict stability of extension types.
\newblock arXiv:2203.07194, 2022.

\end{thebibliography}

\clearpage

\appendix
\section{Verifications for the Simplicial Model}
\label{sec:simplicial-proofs}

\subsection{Variables}
\label{sec:appendix:variables}

We first check that $\ft$ is a morphism of presheaves:
\begin{align*}
\MoveEqLeft\Big(\Gamma^{\b} \circ \bkt{\ft^{\A}_{\sm^{\n+\cred{1}}}}_{\n+\cred{1}}\Big) ~\gamma_{\cblu{\n+1}} ~\cblu{\partial}\a~\a \\
&\equiv \Gamma^\b~\gamma_{\cblu{\n+1}} \\
&\equiv \bkt{\ft^{\A}_{\sm^{\n+\cred{1}}}}_{\m+\cred{1}} ~\big(\Gamma^\b~\gamma_{\cblu{\n+1}}\big) ~\big(\nat^{\pi\A}_{\cred{\partial}\b}~\gamma_{\cblu{\n+1}} ~\cblu{\partial}\a\big) ~\big(\nat^{\A}_\b~\gamma_{\cblu{\n+1}} ~\cblu{\partial}\a~\a\big) \\
&\equiv \Big(\bkt{\ft^{\A}_{\sm^{\n+\cred{1}}}}_{\m+\cred{1}} \circ \bkt{\gamma : \Gamma, ~\a : \A~\gamma}^{\b}\Big) ~\gamma_{\cblu{\n+1}}~\cblu{\partial}\a~\a.
\end{align*}
We now verify \cref{eq:pt-zv} at the level of $\bkt{\n+\cred{1}}$-simplices:
\begin{align*}
\MoveEqLeft\Big(\bkt{\zv^{\pi\A}_{\sm^{\n+\cred{1}}}}^{\rho_{\bkt{\Gamma,\;\A}}}~\gamma^\cblu{+}~\a~\a\cblu{'}\Big)_{\n+\cred{1}} \\
& \equiv \bkt{\zv^{\pi\A}_{\sm^{\n+\cred{1}}}}_{\n+\cred{1}} ^{\bkt{\Gamma,\;\A}^{\Zero\id_{\angle{\n+\cred{1}}}}}~\gamma_{\cblu{\n+2}} ~\a_{\cred{\partial}\bkt{\n+\cred{1}}}~\a_{\n+\cred{1}}~\a\cblu{'}_{\cred{\partial}\bkt{\n+\cred{1}}}~\a\cblu{'}_{\n+\cred{1}} \\
& \equiv
\begin{aligned}[t]
  \bkt{\zv^{\pi\A}_{\sm^{\n+\cred{1}}}}_{\n+\cred{1}}~ &\big(\Gamma^{\Zero\id_{\angle{\n+\cred{1}}}}~\gamma_{\cblu{\n+2}}\big)~ ~\big(\nat^{\pi\A}_{\cred{\partial}(\Zero\id_{\angle{\n+\cred{1}}})}~\gamma_{\cblu{\n+2}} ~\a_{\cred{\partial}\bkt{\n+\cred{1}}} ~\a_{\n+\cred{1}}~\a\cblu{'}_{\cred{\partial}\bkt{\n+\cred{1}}}\big) \\
  &\big(\nat^{\A}_{\Zero\id_{\angle{\n+\cred{1}}}}~\gamma_{\cblu{\n+2}} ~\a_{\cred{\partial}\bkt{\n+\cred{1}}}~\a_{\n+\cred{1}}~\a\cblu{'}_{\cred{\partial}\bkt{\n+\cred{1}}}~\a\cblu{'}_{\n+\cred{1}}\big)
\end{aligned}
\\
&\equiv \bkt{\zv^{\pi\A}_{\sm^{\n+\cred{1}}}}_{\n+\cred{1}} ~\big(\big(\rho_{\Gamma}\big)_{\n+\cred{1}}~\gamma_{\cblu{\n+2}}\big) ~\a_{\cred{\partial}\bkt{\n+\cred{1}}}~\a_{\n+\cred{1}} \\
&\equiv \bkt{\zv^{\pi\A^{\rho_\Gamma}}_{\sm^{\n+\cred{1}}}}_{\n+\cred{1}} ~\gamma_{\cblu{\n+2}}~\a_{\cred{\partial}\bkt{\n+\cred{1}}}~\a_{\n+\cred{1}} \\
&\equiv \bkt{\zv^{\pi\A^{\rho_\Gamma}}_{\sm^{\n+\cred{1}}}}_{\n+\cred{1}}  ~\big(\big(\ft^{\A\d}_{\sm^{\n+\cred{1}}}\big)_{\n+\cred{1}} ~\gamma_{\cblu{\n+2}}~\a_{\cred{\partial}\bkt{\n+\cred{1}}}~\a_{\n+\cred{1}}, ~\a\cblu{'}_{\cred{\partial}\bkt{\n+\cred{1}}}~\a\cblu{'}_{\n+\cred{1}}\big) \\
&\equiv\big(\bkt{\zv^{\pi\A^{\rho_\Gamma}}_{\sm^{\n+\cred{1}}}}^{\ft^{\A\d}_{\sm^{\n+\cred{1}}}}\big)_{\n+\cred{1}} ~\gamma_{\cblu{\n+2}}~\a_{\cred{\partial}\bkt{\n+\cred{1}}}~\a_{\n+\cred{1}} ~\a\cblu{'}_{\cred{\partial}\bkt{\n+\cred{1}}}~\a\cblu{'}_{\n+\cred{1}} \\
&\equiv \Big(\bkt{\zv^{\pi\A^{\rho_\Gamma}}_{\sm^{\n+\cred{1}}}}^{\ft^{\A\d}_{\sm^{\n+\cred{1}}}}~\gamma^\cblu{+}~\a~\a\cblu{'}\Big)_{\n+\cred{1}}.
\end{align*}
We now need to verify \cref{eq:ft-sqbkt,eq:zv-sqbkt,eq:sqbkt-ft-zv} at the level of $\bkt{\n+\cred{2}}$-simplices. For the first two of these, for $\sigma : \Delta \to \Gamma$ and $\delta : \Delta \vdash_{\sm^{\n+\cred{2}}} \t~\delta : \A~\bkt{\sigma~\delta}$, we have that:
\begin{align*}
\Big(\ft^{\A}_{\sm^{\n+\cred{2}}}\circ \sqbkt{\sigma,~\t}\Big)_{\n+\cred{2}}
&\equiv \Big(\big(\ft^{\A}_{\sm^{\n+\cred{2}}}\circ \sqbkt{\sigma,~\t}\big)\D\Big)_{\n+\cred{1}} \\
&\equiv \Big(\big(\ft^{\A}_{\sm^{\n+\cred{2}}}\big)\D \circ \sqbkt{\sigma,~\t}\D\Big)_{\n+\cred{1}} \\
&\equiv \Big(\ft^{\pi\A^{\rho_\Gamma}}_{\sm^{\n+\cred{1}}} \circ\ft^{\A\d}_{\sm^{\n+\cred{1}}} \circ \sqbkt{\sigma\D,~\pi\t^{\rho_\Delta},~\t\d}\Big)_{\n+\cred{1}} \\
&\equiv \Big(\ft^{\pi\A^{\rho_\Gamma}}_{\sm^{\n+\cred{1}}} \circ \sqbkt{\sigma\D,~\pi\t^{\rho_\Delta}}\Big)_{\n+\cred{1}} \\
&\equiv \Big(\sigma\D\Big)_{\n+\cred{1}} \\
&\equiv \sigma_{\n+\cred{2}}.
\end{align*}
For the second:
\begin{align*}
\Big(\big(\zv^{\A}_{\sm^{\n+\cred{2}}}\big)^{\sqbkt{\sigma,\;\t}}\Big)_{\n+\cred{2}}
&\equiv \Big(\big(\big(\zv^{\A}_{\sm^{\n+\cred{2}}}\big)^{\sqbkt{\sigma,\;\t}} \big)\d\Big)_{\n+\cred{1}} \\
&\equiv \Big(\big(\big(\zv^{\A}_{\sm^{\n+\cred{2}}}\big)\d\big)^{\sqbkt{\sigma,\;\t}\D}\Big)_{\n+\cred{1}} \\
&\equiv \Big(\big(\zv^{\A\d}_{\sm^{\n+\cred{1}}}\big)^{\sqbkt{\sigma\D,\; \pi\t^{\rho_\Delta},\;\t\d}}\Big)_{\n+\cred{1}} \\
&\equiv \Big(\t\d\Big)_{\n+\cred{1}} \\
&\equiv \t_{\n+\cred{2}}.
\end{align*}
For the third, we will use \cref{eq:pt-zv}. For $\tau : \Delta \to \bkt{\gamma : \Gamma, ~\a : \A~\gamma}$ in $\sm^{\n+\cred{2}}$ we have:
\begin{align*}
\MoveEqLeft\big[\;\ft^{\A}_{\sm^{\n+\cred{2}}} \circ \tau, ~\big(\zv^{\A}_{\sm^{\n+\cred{2}}}\big)^\tau\;\big]_{\n+\cred{2}} \\
&\equiv \big(\big[\;\ft^{\A}_{\sm^{\n+\cred{2}}} \circ \tau, ~\big(\zv^{\A}_{\sm^{\n+\cred{2}}}\big)^\tau\;\big]\D\big)_{\n+\cred{1}} \\
&\equiv \big[\;\big(\ft^{\A}_{\sm^{\n+\cred{2}}}\big)\D \circ \tau\D, ~\big(\zv^{\pi\A}_{\sm^{\n+\cred{1}}}\big)^{\pi\tau \circ \rho_\Delta}, ~\big(\big(\zv^{\A}_{\sm^{\n+\cred{2}}}\big)^\tau\big)\d\;\big]_{\n+\cred{1}} \\
&\equiv \big[\;\ft^{\pi\A^{\rho_\Gamma}}_{\sm^{\n+\cred{1}}} \circ\ft^{\A\d}_{\sm^{\n+\cred{1}}} \circ \tau\D, ~\big(\zv^{\pi\A}_{\sm^{\n+\cred{1}}}\big)^{\rho_{\bkt{\Gamma,\;\A}} \circ \tau\D}, ~\big(\big(\zv^{\A}_{\sm^{\n+\cred{2}}}\big)\d\big)^{\tau\D} \;\big]_{\n+\cred{1}} \\
&\equiv \big[\;\ft^{\pi\A^{\rho_\Gamma}}_{\sm^{\n+\cred{1}}} \circ\ft^{\A\d}_{\sm^{\n+\cred{1}}} \circ \tau\D, ~\bkt{\zv^{\pi\A^{\rho_\Gamma}}_{\sm^{\n+\cred{1}}}}^{\ft^{\A\d}_{\sm^{\n+\cred{1}}} \circ \tau\D}, ~\big(\zv^{\A\d}_{\sm^{\n+\cred{1}}}\big)^{\tau\D} \;\big]_{\n+\cred{1}} \\
&\equiv \big[\;\ft^{\A\d}_{\sm^{\n+\cred{1}}} \circ \tau\D, ~\big(\zv^{\A\d}_{\sm^{\n+\cred{1}}}\big)^{\tau\D} \;\big]_{\n+\cred{1}} \\
&\equiv \big[\; \tau\D \;\big]_{\n+\cred{1}} \\
&\equiv \tau_{\n+\cred{2}}.
\tag*{\bbox}
\end{align*}

\subsection{$\Pi$-types}
\label{sec:appendix:pi-types}

We verify the $\beta$-law at the level of $\bkt{\n+\cred{2}}$-simplices:
\begin{align*}
\MoveEqLeft\bkt{\eval^{\sm^{\n+\cred{2}}}~\big(\lambda^{\sm^{\n+\cred{2}}}~\t\big)~\s}_{\n+\cred{2}} \\
&\equiv \bkt{\big(\eval^{\sm^{\n+\cred{2}}}~\big(\lambda^{\sm^{\n+\cred{2}}}~\t\big)~\s\big)\d}_{\n+\cred{1}} \\
&\equiv \bkt{\eval^{\sm^{\n+\cred{1}}} ~\big(\eval^{\sm^{\n+\cred{1}}}~\big(\lambda^{\sm^{\n+\cred{2}}}~\t\big)\d~\pi\s^{\rho_\Gamma}\big)~\s\d}_{\n+\cred{1}} \\
&\equiv \bkt{\eval^{\sm^{\n+\cred{1}}} ~\big(\eval^{\sm^{\n+\cred{1}}}~\big(\lambda^{\sm^{\n+\cred{1}}}~\big(\lambda^{\sm^{\n+\cred{1}}}~\t\d\big)\big)~\pi\s^{\rho_\Gamma}\big)~\s\d}_{\n+\cred{1}} \\
&\equiv \bkt{\eval^{\sm^{\n+\cred{1}}}~\big(\lambda^{\sm^{\n+\cred{1}}}~\t\d\big) ^{\sqbkt{\id_{\Gamma\D},\;\pi\s^{\rho_\Gamma}}}~\s\d}_{\n+\cred{1}} \\
&\equiv \bkt{\eval^{\sm^{\n+\cred{1}}}~\big(\lambda^{\sm^{\n+\cred{1}}} ~\big(\t\d\big)^{\mathsf{\cred{W_{2}}}^{\A\d}\sqbkt{\id_{\Gamma\D},\;\pi\s^{\rho_\Gamma}}}\big)~\s\d}_{\n+\cred{1}} \\
&\equiv \bkt{\bkt{\t\d}^{\sqbkt{\id_\Gamma\D,\;\pi\s^{\rho_\Gamma},\; \s\d}}}_{\n+\cred{1}} \\
&\equiv \bkt{\big(\t^{\sqbkt{\id_\Gamma,\;\s}}\big)\d}_{\n+\cred{1}} \\
&\equiv \bkt{\t^{\sqbkt{\id_\Gamma,\;\s}}}_{\n+\cred{2}}.
\end{align*}
We also verify the $\eta$-law:
\begin{align*}
\MoveEqLeft\Big(\lambda^{\sm^{\n+\cred{2}}}~\big(\eval^{\sm^{\n+\cred{2}}} \f^\ft~\zv\big)\Big)_{\n+\cred{2}} \\
&\equiv \Big(\big(\lambda^{\sm^{\n+\cred{2}}}~\big(\eval^{\sm^{\n+\cred{2}}} \f^\ft~\zv\big)\big)\d\Big)_{\n+\cred{1}} \\
&\equiv \Big(\lambda^{\sm^{\n+\cred{1}}}~\big(\lambda^{\sm^{\n+\cred{1}}}~ \big(\eval^{\sm^{\n+\cred{2}}} \f^\ft~\zv\big)\d\big) \Big)_{\n+\cred{1}} \\
&\equiv \Big(\lambda^{\sm^{\n+\cred{1}}}~\big(\lambda^{\sm^{\n+\cred{1}}}~ \big(\eval^{\sm^{\n+\cred{1}}}~\big(\eval^{\sm^{\n+\cred{1}}}~ \big(\f^\ft\big)\d~\pi\zv^{\rho_{\bkt{\Gamma,\;\A}}}\big)~\zv\d\big)\big) \Big)_{\n+\cred{1}} \\
&\equiv \Big(\lambda^{\sm^{\n+\cred{1}}}~\big(\lambda^{\sm^{\n+\cred{1}}}~ \big(\eval^{\sm^{\n+\cred{1}}}~\big(\eval^{\sm^{\n+\cred{1}}}~ \big(\f\d\big)^{\ft\D}~\zv^\ft\big)~\zv\big)\big)\Big)_{\n+\cred{1}} \\
&\equiv \Big(\lambda^{\sm^{\n+\cred{1}}}~\big(\lambda^{\sm^{\n+\cred{1}}}~ \big(\eval^{\sm^{\n+\cred{1}}}~\big(\eval^{\sm^{\n+\cred{1}}}~ \big(\f\d\big)^{\ft \circ \ft}~\zv^\ft\big)~\zv\big)\big)\Big)_{\n+\cred{1}} \\
&\equiv\Big(\lambda^{\sm^{\n+\cred{1}}}~\big(\lambda^{\sm^{\n+\cred{1}}}~ \big(\eval^{\sm^{\n+\cred{1}}}~\big(\eval^{\sm^{\n+\cred{1}}}~ \big(\f\d\big)^\ft~\zv\big)^\ft~\zv\big)\big)\Big)_{\n+\cred{1}} \\
&\equiv\Big(\lambda^{\sm^{\n+\cred{1}}}~\big(\eval^{\sm^{\n+\cred{1}}}~ \big(\f\d\big)^\ft~\zv\big)\Big)_{\n+\cred{1}} \\
&\equiv \Big(\f\d\Big)_{\n+\cred{1}} \\
&\equiv \f_{\n+\cred{2}}.
\tag*{\bbox}
\end{align*}

\subsection{Universes}
\label{sec:appendix:universes}

We verify that $\Code$ and $\El$ are mutually inverse at the level of $\bkt{\n+\cred{2}}$-simplices:
\begin{align*}
\MoveEqLeft\Big(\El^{\sm^{\n+\cred{2}}}~\big(\Code^{\sm^{\n+\cred{2}}}~\A\big)\Big)_{\n+\cred{2}} \\
&\equiv \Big(\big(\El^{\sm^{\n+\cred{2}}}~\big(\Code^{\sm^{\n+\cred{2}}}~\A\big)\big)\d\Big)_{\n+\cred{1}} \\
&\equiv \Big(\El^{\sm^{\n+\cred{1}}}~\big(\eval^{\sm^{\n+\cred{1}}}~\big(\big(\Code^{\sm^{\n+\cred{2}}}~\A\big)\d\big)^\ft~\zv\big)\Big)_{\n+\cred{1}}
\\
&\equiv \Big(\El^{\sm^{\n+\cred{1}}}~\big(\eval^{\sm^{\n+\cred{1}}}~\big(\lambda^{\sm^{\n+\cred{1}}}~\big(\Code^{\sm^{\n+\cred{1}}}~\A\d\big)\big)^\ft~\zv\big)\Big)_{\n+\cred{1}} \\
&\equiv \Big(\El^{\sm^{\n+\cred{1}}}~\big(\eval^{\sm^{\n+\cred{1}}}~\big(\lambda^{\sm^{\n+\cred{1}}}~\big(\Code^{\sm^{\n+\cred{1}}}~\A\d\big)^{\mathsf{\cred{W_{2}}}^{\pi\A^{\rho_\Gamma}}\ft}\big)~\zv\big)\Big)_{\n+\cred{1}} \\
&\equiv \Big(\El^{\sm^{\n+\cred{1}}}~\big(\Code^{\sm^{\n+\cred{1}}}~\A\d\big)^{\sqbkt{\ft,\;\zv}}\Big)_{\n+\cred{1}} \\
&\equiv \Big(\El^{\sm^{\n+\cred{1}}}~\big(\Code^{\sm^{\n+\cred{1}}}~\A\d\big)\Big)_{\n+\cred{1}} \\
&\equiv \Big(\A\d\Big)_{\n+\cred{1}} \\
&\equiv \A_{\n+\cred{2}}.
\end{align*}
In the other direction:
\begin{align*}
\MoveEqLeft\Big(\Code^{\sm^{\n+\cred{2}}}~\big(\El^{\sm^{\n+\cred{2}}}~\A\big)\Big)_{\n+\cred{2}} \\
&\equiv \Big(\big(\Code^{\sm^{\n+\cred{2}}}~\big(\El^{\sm^{\n+\cred{2}}}~\A\big)\big)\d\Big)_{\n+\cred{1}} \\
&\equiv \Big(\lambda^{\sm^{\n+\cred{1}}}~\big(\Code^{\sm^{\n+\cred{1}}}~\big(\El^{\sm^{\n+\cred{2}}}~\A\big)\d\big)\Big)_{\n+\cred{1}} \\
&\equiv \Big(\lambda^{\sm^{\n+\cred{1}}}~\big(\Code^{\sm^{\n+\cred{1}}}~\big(\El^{\sm^{\n+\cred{1}}}~\big(\eval^{\sm^{\n+\cred{1}}}~\big(\A\d\big)^\ft~\zv\big)\big)\big)\Big)_{\n+\cred{1}} \\
&\equiv \Big(\lambda^{\sm^{\n+\cred{1}}}~\big(\eval^{\sm^{\n+\cred{1}}}~\big(\A\d\big)^\ft~\zv\big)\Big)_{\n+\cred{1}} \\
&\equiv\Big(\A\d\Big)_{\n+\cred{1}} \\
&\equiv \A_{\n+\cred{2}}.
\tag*{\bbox}
\end{align*}

\subsection{$\oldomega$-limits}
\label{sec:appendix:omega-limits}

We mutually verify the identity between $\res$ and $\lim$ at the $\m$-th stage and on the boundary:
\begin{align*}
&\Big(\res^{\cred{\partial}\m}_{\sm^{\n+\cred{2}}}~\big(\lim^{\sm^{\n+\cred{2}}}\cblu{\widetilde{\mathfrak{a}}}\big)\Big)_{\n+\cred{2}}
&&\Big(\res^{\m}_{\sm^{\n+\cred{2}}}~\big(\lim^{\sm^{\n+\cred{2}}}\cblu{\widetilde{\mathfrak{a}}}\big)\Big)_{\n+\cred{2}} \\
&\quad\equiv\Big(\big(\res^{\cred{\partial}\m}_{\sm^{\n+\cred{2}}}~\big(\lim^{\sm^{\n+\cred{2}}}~\cblu{\widetilde{\mathfrak{a}}}\big)\big)\d\Big)_{\n+\cred{1}}
&&\quad\equiv\Big(\big(\res^{\m}_{\sm^{\n+\cred{2}}}~\big(\lim^{\sm^{\n+\cred{2}}}~\cblu{\widetilde{\mathfrak{a}}}\big)\big)\d\Big)_{\n+\cred{1}} \\
&\quad \equiv \Big(\res^{\cred{\partial}\m}_{\sm^{\n+\cred{1}}}~\big(\lim^{\sm^{\n+\cred{2}}}~\cblu{\widetilde{\mathfrak{a}}}\big)\d\Big)_{\n+\cred{1}}
&&\quad \equiv \Big(\res^{\m}_{\sm^{\n+\cred{1}}}~\big(\lim^{\sm^{\n+\cred{2}}}~\cblu{\widetilde{\mathfrak{a}}}\big)\d\Big)_{\n+\cred{1}} \\
&\quad \equiv \Big(\res^{\cred{\partial}\m}_{\sm^{\n+\cred{1}}}~\big(\lim^{\sm^{\n+\cred{1}}}~\cblu{\widetilde{\mathfrak{a}}}\d\big)\Big)_{\n+\cred{1}}
&&\quad \equiv \Big(\res^{\m}_{\sm^{\n+\cred{1}}}~\big(\lim^{\sm^{\n+\cred{1}}}~\cblu{\widetilde{\mathfrak{a}}}\d\big)\Big)_{\n+\cred{1}} \\
&\quad \equiv \Big(\big(\cblu{\widetilde{\mathfrak{a}}}\d\big)^{\cred{\partial}\m}\Big)_{\n+\cred{1}}
&&\quad \equiv \Big(\big(\cblu{\widetilde{\mathfrak{a}}}\d\big)^{\m}\Big)_{\n+\cred{1}} \\
&\quad \equiv \Big(\big(\cblu{\widetilde{\mathfrak{a}}}^{\cred{\partial}\m}\big)\d\Big)_{\n+\cred{1}}
&&\quad \equiv \Big(\big(\cblu{\widetilde{\mathfrak{a}}}^{\m}\big)\d\Big)_{\n+\cred{1}}\\
&\quad \equiv \Big(\cblu{\widetilde{\mathfrak{a}}}^{\cred{\partial}\m}\Big)_{\n+\cred{2}}
&&\quad \equiv \Big(\cblu{\widetilde{\mathfrak{a}}}^{\m}\Big)_{\n+\cred{2}}.
\end{align*}
We also verify the $\eta$-law:
\begin{align*}
&\Big(\lim^{\sm^{\n+\cred{2}}}~\big(\res^{\m}_{\sm^{\n+\cred{2}}}~\cblu{\widetilde{\mathfrak{u}}}\big)_\m\Big)_{\n+\cred{2}} \\
&\quad \equiv \Big(\big(\lim^{\sm^{\n+\cred{2}}}~\big(\res^{\m}_{\sm^{\n+\cred{2}}}~\cblu{\widetilde{\mathfrak{u}}}\big)_\m\big)\d\Big)_{\n+\cred{1}} \\
&\quad \equiv \Big(\lim^{\sm^{\n+\cred{1}}}~\big(\res^{\m}_{\sm^{\n+\cred{2}}}~\cblu{\widetilde{\mathfrak{u}}}\big)^{\cred{\mathsf{d}}}_\m\Big)_{\n+\cred{1}} \\
&\quad \equiv \Big(\lim^{\sm^{\n+\cred{1}}}~\big(\res^{\m}_{\sm^{\n+\cred{1}}}~\cblu{\widetilde{\mathfrak{u}}}\d\big)_\m\Big)_{\n+\cred{1}} \\
&\quad \equiv \Big(\cblu{\widetilde{\mathfrak{u}}}\d\Big)_{\n+\cred{1}} \\
&\quad \equiv \Big(\cblu{\widetilde{\mathfrak{u}}}\Big)_{\n+\cred{2}}.
\tag*{\bbox}
\end{align*}

\end{document}